\newcounter{dummy}
\newcommand\myitem[1][]{\item[#1]\refstepcounter{dummy}\def\@currentlabel{#1}}
\let\oldtocsection=\tocsection
\let\oldtocsubsection=\tocsubsection
\let\oldtocsubsubsection=\tocsubsubsection
\renewcommand{\tocsection}[2]{\hspace{0em}\vspace{0.5mm}\oldtocsection{#1}{#2}}
\renewcommand{\tocsubsection}[2]{\hspace{1.8em}\vspace{0.5mm}\oldtocsubsection{#1}{#2}}
\renewcommand{\tocsubsubsection}[2]{\hspace{4.4em}\vspace{0.5mm}\oldtocsubsubsection{#1}{#2}}
\numberwithin{equation}{section}
\newtheorem{theorem}{Theorem}[section]
\newtheorem{lemma}[theorem]{Lemma}
\newtheorem{proposition}[theorem]{Proposition}
\newtheorem{corollary}[theorem]{Corollary}
\theoremstyle{definition}
\newtheorem{example}[theorem]{Example}
\newtheorem{remark}[theorem]{Remark}
\newtheorem{definition}[theorem]{Definition}
\newtheorem{problem}[theorem]{Problem}
\newcommand{\foral}{\text{ for all }}
\newcommand{\AND}{\text{ and }}
\begin{document}
	
	
	\title{Shift-cyclicity in Analytic Function Spaces}
	
	\author{Jeet Sampat}
	\address{Faculty of Mathematics, Technion - Israel Institute of Technology,	Haifa}
	\email{sampatjeet@campus.technion.ac.il}
	
	\subjclass[2020]{47A16, 46E15, 32A10}
	
	
	\begin{abstract}
        In this survey, we consider Banach spaces of analytic functions in one and several complex variables for which: (i) polynomials are dense, (ii) point-evaluations on the domain are bounded linear functionals, and (iii) the shift operator corresponding to each variable is a bounded linear map. We discuss the problem of determining the shift-cyclic functions in such a space, i.e., functions whose polynomial multiples form a dense subspace. This problem is known to be intimately connected to some deep problems in other areas of mathematics, such as the dilation completeness problem and even the Riemann hypothesis.

        \vspace{2mm}
        
        What makes determining shift-cyclic functions so difficult is that often we need to employ techniques that are specific to the space in consideration. We therefore cover several different function spaces that have frequently appeared in the past such as the Hardy spaces, Dirichlet-type spaces, complete Pick spaces and Bergman spaces. We highlight the similarities and the differences between shift-cyclic functions among these spaces and list some important general properties that shift-cyclic functions in any given analytic function space must share. Throughout this discussion, we also motivate and provide a large list of open problems related to shift-cyclicity.
		
		\vspace{4mm}
		
		\noindent \textbf{Keywords.} Shift operator, Invariant subspaces, Cyclic vectors, Hardy spaces, Dirichlet space, Bergman space, Drury--Arveson space, Complete Pick space.
	\end{abstract}
	
	\maketitle

    \newpage
    
    \tableofcontents
	
	\section{Introduction}\label{sec:intro}
	
		Let $\mathcal{X}$ be a separable Banach space and let $\mathcal{L}(\mathcal{X})$ denote the space of all bounded linear self maps of $\mathcal{X}$.
		For a given $T \in \mathcal{L}(\mathcal{X})$, we say that a closed subspace $\mathcal{Y} \subseteq \mathcal{X}$ is a \emph{$T$-invariant subspace} if
        \begin{equation*}
            T(\mathcal{Y}) := \{ Tv : v \in \mathcal{Y} \} \subseteq \mathcal{Y}
        \end{equation*}
		If $\mathcal{X}$ is finite dimensional, then the eigenspaces corresponding to $T$ are clearly all $T$-invariant.
		Now, if $\mathcal{X}$ is a complex vector space, then we know that such eigenspaces always exist and therefore, any linear map on a finite dimensional complex Banach space has a non-trivial invariant subspace.
		Thus, it is natural to ask if one can similarly always find closed non-trivial invariant subspaces for any given $T \in \mathcal{L}(\mathcal{X})$ even when $\mathcal{X}$ is infinite dimensional.
		This is the famous \emph{invariant subspace problem (ISP)}\footnote{ISP was inspired by unpublished works of Beurling and von Neumann circa mid-$20^{\text{th}}$ century. Halmos \cite{Hal63} then popularized the ISP through expository writing.}, which has been resolved in specific cases (see \cite{AH11, AS54, Atz83, Bea85, BR66, Enf75, Hal66, Lom73, Rea85, Sli08}), but remains open -- as of the writing of this article -- for the case when $\mathcal{X}$ is a separable Hilbert space.
		\begin{problem}[ISP]\label{prob:inv.subsp.prob}
			Given any separable Hilbert space $\mathcal{X}$ and some $T \in \mathcal{L}(\mathcal{X})$, does there always exist a closed subspace $\{0\} \subsetneq \mathcal{Y} \subsetneq \mathcal{X}$ that is $T$-invariant?
		\end{problem}
		
		One way to generate $T$-invariant subspaces is by taking $0 \neq v \in \mathcal{X}$ and defining
		\begin{equation*}
			T[v] := \overline{\operatorname{span}}\left\{ v, Tv, T^2v, \dots \right\},
		\end{equation*}
		where $\overline{\operatorname{span}}$ denotes the closed linear hull in $\mathcal{X}$.
		It is clear that
        \begin{equation*}
            T(T[v]) \subseteq T[v]
        \end{equation*}
        and thus, $T[v]$ is $T$-invariant.
		We call $T[v]$ the \emph{$T$-invariant subspace generated by $v$}.
		In order to solve the ISP, one needs to guarantee that there always exists some $0 \neq v \in \mathcal{X}$ for which $T[v] \subsetneq \mathcal{X}$, however, this is not at all clear when $\mathcal{X}$ is infinite dimensional.

		This motivates us to define a \emph{$T$-cyclic vector}, which is any $v \in \mathcal{X}$ such that $T[v] = \mathcal{X}$.
		If there exists a $T$-cyclic vector in $\mathcal{X}$, then we say that \emph{$T$ is cyclic on $\mathcal{X}$}.
		The following result should now be clear from these definitions.
		\begin{proposition}\label{prop:eqv.ISP.formulation.using.cyc.vectors}
			If $\mathcal{X}$ and $T$ are as above, then the following are equivalent:
			
			\begin{enumerate}
				\item There are no closed non-trivial $T$-invariant subspaces in $\mathcal{X}$.
				
				\item Every $0 \neq v \in \mathcal{X}$ is $T$-cyclic.
			\end{enumerate}
		\end{proposition}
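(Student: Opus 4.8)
The plan is to prove the two implications separately, with the key engine being the \emph{minimality} of the generated subspace: for any $0 \neq v \in \mathcal{X}$, the subspace $T[v]$ is the smallest closed $T$-invariant subspace that contains $v$. So the first thing I would record is this minimality. Indeed, if $\mathcal{Y}$ is any closed $T$-invariant subspace with $v \in \mathcal{Y}$, then $T$-invariance forces $T^n v \in \mathcal{Y}$ for every $n \geq 0$, hence the algebraic linear span of $\{v, Tv, T^2 v, \dots\}$ lies in $\mathcal{Y}$; since $\mathcal{Y}$ is closed, the closure $T[v] = \overline{\operatorname{span}}\{v, Tv, T^2 v, \dots\}$ also lies in $\mathcal{Y}$. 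Combined with the already-noted fact that $T[v]$ is itself a closed $T$-invariant subspace containing $v$, this establishes the minimality property.

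For the implication $(1) \Rightarrow (2)$, I would take an arbitrary $0 \neq v \in \mathcal{X}$ and consider $T[v]$. Since $v \in T[v]$ with $v \neq 0$, we have $T[v] \neq \{0\}$. By hypothesis $(1)$ the only closed $T$-invariant subspaces are $\{0\}$ and $\mathcal{X}$, so necessarily $T[v] = \mathcal{X}$; that is, $v$ is $T$-cyclic. For the converse $(2) \Rightarrow (1)$, I would reason directly: let $\mathcal{Y}$ be a closed $T$-invariant subspace with $\mathcal{Y} \neq \{0\}$ and pick some $0 \neq v \in \mathcal{Y}$. By the minimality step, $T[v] \subseteq \mathcal{Y}$, while hypothesis $(2)$ gives $T[v] = \mathcal{X}$; hence $\mathcal{X} \subseteq \mathcal{Y} \subseteq \mathcal{X}$, so $\mathcal{Y} = \mathcal{X}$. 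Thus every closed $T$-invariant subspace is trivial, which is precisely $(1)$.

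The argument is essentially formal, so I do not expect a serious obstacle. The one place that genuinely uses the standing assumptions (that $\mathcal{Y}$ is closed and that $T \in \mathcal{L}(\mathcal{X})$ is bounded, so that $T^n v$ makes sense and the iterates stay inside $\mathcal{Y}$) is the containment $T[v] \subseteq \mathcal{Y}$ in the minimality step, where closedness of $\mathcal{Y}$ is exactly what lets us pass from the algebraic span to its closure. Everything else is bookkeeping with the definitions of $T$-invariance and $T$-cyclicity, and both implications collapse once the minimality of $T[v]$ is in hand.
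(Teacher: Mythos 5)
Your argument is correct; the paper itself offers no proof, stating only that the result ``should now be clear from these definitions,'' and your minimality-of-$T[v]$ argument is exactly the standard reasoning being alluded to. Both implications are handled properly, so there is nothing to add.
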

		
	    Evidently, determining the cyclicity of a vector is challenging in general. For finite dimensional spaces, however, determining whether an operator is cyclic is a fairly standard exercise in linear algebra based on the following result
		(see \cite[the second corollary to Theorem 3 in Section 7.2]{HK73}).
		
		\begin{theorem}\label{thm:cyclic.op.fin.dim.case}
			Let $T$ be a matrix operator on a finite dimensional Banach space $\mathcal{X}$. Then, $T$ is cyclic on $\mathcal{X}$ if and only if the minimal and the characteristic polynomial of $T$ are the same.
		\end{theorem}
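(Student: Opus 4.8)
The plan is to work throughout with the notion of the \emph{order} (or annihilator) of a vector and to relate it to the dimension of the cyclic subspace it generates. For \(0 \neq v \in \mathcal{X}\), let \(m_{T,v}\) denote the monic polynomial of least degree with \(m_{T,v}(T)v = 0\); a standard division argument shows that \(\deg m_{T,v}\) equals the dimension of \(T[v] = \operatorname{span}\{v, Tv, T^2v, \dots\}\), since \(\{v, Tv, \dots, T^{k-1}v\}\) is a basis of \(T[v]\) precisely when \(k = \deg m_{T,v}\). In particular, writing \(n = \dim \mathcal{X}\), the vector \(v\) is \(T\)-cyclic if and only if \(\deg m_{T,v} = n\). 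I would also recall at the outset the two standard facts that \(m_{T,v} \mid m_T\) for every \(v\) (since \(m_T(T) = 0\) forces \(m_T(T)v = 0\)), and that \(m_T \mid c_T\) with \(\deg c_T = n\), so that \(\deg m_T \le n\) always holds.

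For the forward implication, suppose \(v\) is a \(T\)-cyclic vector. Then \(T[v] = \mathcal{X}\) has dimension \(n\), so \(\deg m_{T,v} = n\) by the observation above. Since \(m_{T,v} \mid m_T\) and \(m_T \mid c_T\), I obtain \(n = \deg m_{T,v} \le \deg m_T \le \deg c_T = n\), which forces \(\deg m_T = n = \deg c_T\). As both polynomials are monic and \(m_T \mid c_T\), equality of degrees yields \(m_T = c_T\).

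The substantive direction is the converse, and its crux is the following claim: if \(\deg m_T = n\), then there exists a single vector \(v\) with \(m_{T,v} = m_T\). Granting this, \(\deg m_{T,v} = \deg m_T = n\), so \(v\) is cyclic and we are done. To prove the claim I would factor \(m_T = \prod_i p_i^{e_i}\) into powers of distinct monic irreducible polynomials and produce, for each \(i\), a vector \(v_i\) whose order is exactly \(p_i^{e_i}\): taking \(q_i = m_T / p_i^{e_i}\), minimality of \(m_T\) gives a \(u\) with \((m_T/p_i)(T)u \neq 0\), and then \(v_i = q_i(T)u\) satisfies \(p_i^{e_i}(T)v_i = m_T(T)u = 0\) while \(p_i^{e_i - 1}(T)v_i = (m_T/p_i)(T)u \neq 0\), pinning down \(m_{T,v_i} = p_i^{e_i}\). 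The final step is the elementary but essential lemma that vectors with pairwise coprime orders add to a vector whose order is the product of those orders; applying it to \(v = \sum_i v_i\) gives \(m_{T,v} = \prod_i p_i^{e_i} = m_T\), as required.

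I expect the main obstacle to be precisely this existence claim — everything else is bookkeeping with divisibility and degrees. The delicate points are verifying that \(q_i(T)u\) genuinely has order \(p_i^{e_i}\) (which rests on \(\gcd\) manipulations together with the minimality of \(m_T\)) and proving the coprime-orders addition lemma, typically via a Bézout identity \(af + bg = 1\) used to isolate the two annihilators. Once that lemma is in hand, the assembly of the global cyclic vector and the comparison of degrees complete the argument.
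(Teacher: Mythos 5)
Your proposal is correct. Note first that the paper does not actually prove this statement: it is quoted as a standard fact with a pointer to Hoffman--Kunze, where it appears as a corollary of the cyclic decomposition theorem (Theorem 3 of Section 7.2 there). Your argument therefore cannot be ``the same as the paper's,'' but it is a complete and more elementary substitute: rather than invoking the full cyclic decomposition of $\mathcal{X}$ into $T$-cyclic summands, you directly manufacture a single vector whose $T$-annihilator equals $m_T$. All the steps check out: the identity $\deg m_{T,v} = \dim T[v]$ reduces cyclicity of $v$ to $\deg m_{T,v} = n$; the forward direction is the degree sandwich $n = \deg m_{T,v} \le \deg m_T \le \deg c_T = n$ together with monicity and $m_T \mid c_T$; and in the converse, your $v_i = q_i(T)u$ genuinely has order $p_i^{e_i}$ because its order divides $p_i^{e_i}$ (hence is a power of the irreducible $p_i$) yet fails to divide $p_i^{e_i-1}$ since $(m_T/p_i)(T)u \neq 0$. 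The coprime-orders lemma is also correctly identified as the glue; the cleanest proof is the one you could make explicit: if $h(T)(v_1+v_2)=0$ with $v_1, v_2$ of coprime orders $f, g$, apply $g(T)$ to get $f \mid gh$, hence $f \mid h$, and symmetrically $g \mid h$. Since the field here is $\mathbb{C}$, you could even shorten the construction by taking each $p_i$ linear, but the general-field version you wrote is equally valid. What your route buys is self-containment and brevity; what the textbook route buys is the stronger structural statement (the rational canonical form) of which this theorem is only one consequence.
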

		If $T$ is cyclic on a finite dimensional space $\mathcal{X}$, then there exist computational algorithms to generate the $T$-cyclic vectors as well (see \cite[Sections 5 and 6]{AC97}).
		This is not true in general if $\mathcal{X}$ is infinite dimensional, and we typically need to employ other tools -- specific to the space -- in order to determine cyclic vectors.
		We showcase this with a concrete example in the next subsection.

	\subsection{Smirnov--Beurling theorem}\label{subsec:Smirnov-Beurling.thm}
	
		The simplest infinite dimensional space is
		\begin{equation*}
			\ell^2(\mathbb{Z}_+) := \left\{ (a_k)_{k \in \mathbb{Z}_+} \subset \mathbb{C} : \sum_{k \in \mathbb{Z}_+} |a_k|^2 < \infty \right\},
		\end{equation*}
		where $\mathbb{Z}_+$ denotes the set of all non-negative integers.
		$\ell^2(\mathbb{Z}_+)$ turns into a Hilbert space with the inner product
		\begin{equation}\label{eqn:l^2.ip}
			\left\langle (a_k)_{k \in \mathbb{Z}_+}, (b_k)_{k \in \mathbb{Z}_+} \right\rangle_{\ell^2(\mathbb{Z}_+)} := \sum_{k \in \mathbb{Z}_+} a_k \overline{b_k},
		\end{equation}
		and has a Schauder basis $\mathcal{B} := \left\{ e^{(0)}, e^{(1)}, \dots \right\}$, where $e^{(j)}$'s have $1$ in the $j^{\text{th}}$ position and $0$'s elsewhere. Consider the \emph{$\ell^2$-shift} $S_{\ell^2} \in \mathcal{L}(\ell^2(\mathbb{Z}_+))$ given by
		\begin{equation*}
			S_{\ell^2}((a_0, a_1, \dots)) := (0, a_0, a_1, \dots).
		\end{equation*}
		Clearly, $S_{\ell^2} e^{(j)} = e^{(j+1)}$ for each $j \in \mathbb{Z}_+$ and hence, $S_{\ell^2}$ is a cyclic operator with $e^{(0)}$ as an $S_{\ell^2}$-cyclic vector. It turns out that determining other $S_{\ell^2}$-cyclic vectors requires one to think outside the box.
		
		Let $\mathbb{D}$ denote the \emph{unit disk} in the complex plane $\mathbb{C}$ and let $\operatorname{Hol}(\mathbb{D})$ be the vector space of holomorphic functions on $\mathbb{D}$. Now, we define
		\begin{equation}\label{eqn:def.H^2(D)}
			H^2(\mathbb{D}) := \left\{ f(z) = \sum_{k \in \mathbb{Z}_+} a_k z^k \in \operatorname{Hol}(\mathbb{D}) : \sum_{k \in \mathbb{Z}_+} |a_k|^2 < \infty \right\}.
		\end{equation}
		Clearly, $H^2(\mathbb{D})$ is also a Hilbert space with the inner product
		\begin{equation*}
			\left\langle f = \sum_{k \in \mathbb{Z}_+} a_k z^k, g = \sum_{k \in \mathbb{Z}_+} b_k z^k \right\rangle_{H^2(\mathbb{D})} := \sum_{k \in \mathbb{Z}_+} a_k \overline{b_k},
		\end{equation*}
		and is unitarily equivalent to $\ell^2(\mathbb{Z}_+)$ via the correspondence $z^k \leftrightarrow e^{(k)}$. Under this correspondence, note that $S_{\ell^2}$ becomes the \emph{Hardy shift}
		\begin{equation*}
			S_{H^2} f(z) := z f(z),
		\end{equation*}
		and that $S_{\ell^2}$-cyclic vectors correspond to $S_{H^2}$-cyclic functions. Smirnov \cite{Smi32} and Beurling \cite{Beu49} independently identified $S_{H^2}$-cyclic functions.
		\begin{theorem}\label{thm:Beurling.thm.H^2}
			A function $f \in H^2(\mathbb{D})$ is $S_{H^2}$-cyclic if and only if
			\begin{equation}\label{eqn:outer.func.H^2}
				f(0) \neq 0 \AND \log |f(0)| = \int_0^{2 \pi} \log |f^*(e^{i \theta})| \frac{d \theta}{2 \pi}.
			\end{equation}
		\end{theorem}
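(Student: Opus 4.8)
The plan is to prove the three-fold equivalence that $f$ is $S_{H^2}$-cyclic $\iff$ $f$ is an \emph{outer} function $\iff$ the mean-value identity \eqref{eqn:outer.func.H^2} holds. The tools I would invoke are the standard boundary-function theory for $H^2(\mathbb{D})$: every $f \in H^2(\mathbb{D})$ admits nontangential boundary values $f^* \in L^2(\mathbb{T})$ (where $\mathbb{T} = \partial\mathbb{D}$), with $\log|f^*| \in L^1(\mathbb{T})$ whenever $f \not\equiv 0$; the inner--outer factorization $f = I_f\, O_f$, where $I_f$ is inner ($|I_f^*| = 1$ a.e.) and $O_f$ is outer and zero-free with $\log|O_f|$ equal to the Poisson integral of $\log|f^*|$; and Smirnov's theorem, that a function in the Smirnov class $N^{+}$ with $L^2$ boundary values already lies in $H^2(\mathbb{D})$, together with $1/O_f \in N^{+}$ for outer $O_f$. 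These are the space-specific tools that replace the finite-dimensional criterion of Theorem~\ref{thm:cyclic.op.fin.dim.case}.

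For the equivalence \textbf{outer $\iff$ \eqref{eqn:outer.func.H^2}}, I would use the refined factorization $f = B\, S\, O_f$ into a Blaschke product $B$, a singular inner function $S$, and the outer part. Evaluating moduli at the origin and using $|B^*| = |S^*| = 1$ a.e. gives $\log|O_f(0)| = \tfrac{1}{2\pi}\int_0^{2\pi}\log|f^*(e^{i\theta})|\,d\theta$, while $\log|f(0)| = \log|B(0)| + \log|S(0)| + \log|O_f(0)|$. Since $\log|B(0)| \le 0$ and $\log|S(0)| \le 0$, with equality iff $B$ and $S$ are unimodular constants, the Jensen inequality $\log|f(0)| \le \tfrac{1}{2\pi}\int_0^{2\pi}\log|f^*|\,d\theta$ becomes an equality exactly when $I_f$ is constant, i.e. when $f$ is outer; the requirement $f(0) \neq 0$ is automatic for outer functions (they are zero-free) and is exactly what makes the left-hand side finite.

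The direction \textbf{not outer $\Rightarrow$ not cyclic} is the easy half. If $I_f$ is a nonconstant inner function, then multiplication $M_{I_f}$ is an isometry of $H^2(\mathbb{D})$, so $I_f\, H^2(\mathbb{D})$ is a closed $S_{H^2}$-invariant subspace containing $f = I_f O_f$; hence $S_{H^2}[f] \subseteq I_f\, H^2(\mathbb{D})$. This inclusion is proper because $1 \notin I_f\, H^2(\mathbb{D})$: otherwise $1/I_f \in H^2(\mathbb{D})$ and $|I_f| \equiv 1$ on $\mathbb{D}$ by the maximum principle, forcing $I_f$ constant. Thus $f$ is not cyclic.

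The crux is the converse, \textbf{outer $\Rightarrow$ cyclic}, which I would attack by duality, since cyclicity of $f$ is equivalent to $S_{H^2}[f]^{\perp} = \{0\}$. Suppose $g \in H^2(\mathbb{D})$ satisfies $\langle z^n f, g \rangle_{H^2} = 0$ for all $n \in \mathbb{Z}_+$; the goal is $g = 0$. Passing to the boundary, $F := f^*\overline{g^*} \in L^1(\mathbb{T})$ has $\widehat{F}(k) = 0$ for all $k \le 0$, so $F$ is the boundary function of some $\Phi \in H^1(\mathbb{D})$ with $\Phi(0) = 0$. Here outerness enters decisively: setting $h := \Phi/O_f$, the facts $\Phi \in H^1(\mathbb{D}) \subseteq N^{+}$ and $1/O_f \in N^{+}$ give $h \in N^{+}$, while on $\mathbb{T}$ one has $h^* = F/f^* = \overline{g^*} \in L^2(\mathbb{T})$. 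Smirnov's theorem then forces $h \in H^2(\mathbb{D})$; but $h^* = \overline{g^*}$ has spectrum in $\{k \le 0\}$ whereas $h \in H^2(\mathbb{D})$ has spectrum in $\{k \ge 0\}$, so $h$ is constant, and $h(0) = \Phi(0)/f(0) = 0$ gives $h \equiv 0$. Hence $f^*\overline{g^*} = 0$ a.e.; as $f$ is outer, $f^* \neq 0$ a.e., so $g^* = 0$ and $g = 0$. I expect the main obstacle to be precisely this step: identifying $F$ as an $H^1$ boundary value and controlling $1/O_f$ through the Smirnov class, which is where the full weight of $H^p$-boundary theory and the Smirnov maximum principle is needed.
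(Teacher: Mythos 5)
Your proof is correct, but it follows a genuinely different route from the one the paper sketches. The paper attributes the theorem to Smirnov and Beurling and outlines Beurling's original argument: one first proves the structure theorem $S[f] = f_{inn}H^2(\mathbb{D})$ by showing that the orthogonal projection of $z^{m(0)}$ onto $S[f]$ can be normalized to an inner function (the wandering-subspace argument), and the cyclicity criterion then drops out as the special case where $f_{inn}$ is a unimodular constant. You instead prove only the cyclicity statement, directly, by an annihilator argument: if $g \perp S[f]$ then $f^{*}\overline{g^{*}}$ has one-sided spectrum, hence is the boundary function of a $\Phi \in H^{1}(\mathbb{D})$ vanishing at $0$, and dividing by the outer function and invoking the Smirnov maximum principle ($N^{+}$ with $L^{2}$ boundary data lies in $H^{2}$) forces $g = 0$. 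Each approach buys something: Beurling's yields the full lattice of shift-invariant subspaces, which your argument does not recover, while your duality argument is self-contained for the stated equivalence and is essentially the mechanism by which the paper says the result extends to $H^{p}(\mathbb{D})$ for $p \neq 2$ via $L^{p}$--$L^{q}$ duality, where the Hilbert-space projection is unavailable. Your preliminary reductions (the equivalence of \eqref{eqn:outer.func.H^2} with outerness via $\log|B(0)|, \log|f_\sigma(0)| \leq 0$, and the easy implication that a nonconstant inner factor obstructs cyclicity because $1 \notin f_{inn}H^2(\mathbb{D})$) are standard and correctly executed; the only steps worth writing out in full in a formal version are that $\lvert 1/f_{inn}\rvert \leq 1$ on $\mathbb{D}$ (again Smirnov's maximum principle) and that $1/O_f \in N^{+}$ because $-\log|f^{*}| \in L^{1}(\mathbb{T})$, both of which you correctly flag as the load-bearing inputs.
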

		Here, $f^*(e^{i \theta})$ denotes the \emph{radial limit}
        \begin{equation*}
            f^*(e^{i \theta}) := \lim_{r \to 1^-} f(r e^{i \theta}),
        \end{equation*}
        which is known to exist for a.e. $\theta \in [0,2\pi)$ and for each $f \in H^2(\mathbb{D})$ (see Theorem \ref{thm:radial.limits.H^p(D)}). A function satisfying \eqref{eqn:outer.func.H^2} is called an \emph{outer function}. Outer functions play a crucial role in understanding cyclicity beyond $H^2(\mathbb{D})$.	Note that outerness cannot be realized as a property of the power-series coefficients and thus, the Smirnov--Beurling theorem exhibits an interesting overlap between operator theory and function theory. It is also important to note that such a result is rare even for general function spaces over $\mathbb{D}$. Nevertheless, the Smirnov--Beurling theorem spearheaded a deeper analysis of the \emph{shift operator} $S : f \mapsto z f$, its invariant subspaces and its cyclic vectors.

	\subsection{Cyclicity in general}\label{subsec:cyclicity}
	
		In this article, we shall mostly study cyclicity with respect to the shift operator $S : f \mapsto z f$ and its generalization to function spaces in several variables. However, it will be convenient to provide the most general setting in which one can study cyclicity. Let $\mathcal{X}$ be a Banach space and $(\mathcal{G}, \cdot)$ be a semigroup. A family
        \begin{equation*}
            \tau = \left\{ T_g \right\}_{g \in \mathcal{G}} \subset \mathcal{L}(\mathcal{X})
        \end{equation*}
        is called \emph{$\mathcal{G}$-compatible} if
		\begin{equation*}
			T_g \circ T_h = T_{g \cdot h} \foral g,h \in \mathcal{G}.
		\end{equation*}
		We define the \emph{$\tau$-invariant subspace generated by $v \in \mathcal{X}$} to be
		\begin{equation*}
			\tau[v] := \overline{\operatorname{span}}\left\{ T_g v : g \in \mathcal{G} \right\},
		\end{equation*}
		and say that $v$ is \emph{$\tau$-cyclic} if $\tau[v] = \mathcal{X}$.
		
		\begin{example}\label{eg:tau-cyclicity}
			
			Let us mention a few important examples of $\tau$-cyclicity.
			\begin{enumerate}
				\item $S_{H^2}$-cyclicity is equivalent to $\tau$-cyclicity if $(\mathcal{G},\cdot) = (\mathbb{Z}_+,+)$ and
				\begin{equation*}
					\tau = \left\{ S_{H^2}^k : k \in \mathbb{Z}_+ \right\}.
				\end{equation*}
				
				\item Consider
				\begin{equation*}
					H^2_0(\mathbb{D}) := \left\{ f \in H^2(\mathbb{D}) : f(0) = 0 \right\} = \overline{\operatorname{span}}^{H^2(\mathbb{D})}\left\{ z^n : n \in \mathbb{N} \right\}
				\end{equation*}
				and let $(\mathcal{G},\cdot) = (\mathbb{N},\times)$. Now, define the \emph{power dilations} $T_n \in \mathcal{L}(H^2_0(\mathbb{D}))$
				\begin{equation*}
					T_n f(z) = f(z^n) \foral n \in \mathbb{N}
				\end{equation*}
				and let
				\begin{equation*}
					\tau = \left\{ T_n : n \in \mathbb{N} \right\}.
				\end{equation*}
				Clearly, $\tau$ is $\mathbb{N}$-compatible and the identity map $\operatorname{id}_{\mathbb{D}} : z \mapsto z$ is $\tau$-cyclic.
				
				\item For $L^2(0,\infty)$, we define the \emph{dilations} $D_n \in \mathcal{L}(L^2(0,\infty))$
				\begin{equation*}
					D_n \varphi(x) = \varphi(nx) \foral n \in \mathbb{N}
				\end{equation*}
				and let
				\begin{equation*}
					\vartheta = \left\{ D_n : n \in \mathbb{N} \right\}.
				\end{equation*}
				If we take $(\mathcal{G},\cdot) = (\mathbb{N}, \times)$ as before, then $\vartheta$ is $\mathbb{N}$-compatible. It turns out that $\vartheta$-cyclicity is incredibly complex as the following theorem suggests.
			\end{enumerate}
		\end{example}
		
		\begin{theorem}\label{thm:NBBD.criterion}
			Let $\{t\}$ be the fractional part of $t \in \mathbb{R}$ and $\chi_{[0,1]}$ be the indicator function of $[0,1]$. Then, the Riemann hypothesis is true if and only if
			\begin{equation}\label{eqn:NBBD.criterion}
				\chi_{[0,1]} \in \vartheta[\varrho],
			\end{equation}
            where $\varrho \in L^2(0,\infty)$ is the function
            \begin{equation*}
                \varrho(x) := \left\{ \frac{1}{x} \right\}.
            \end{equation*}
		\end{theorem}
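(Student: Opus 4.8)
The plan is to transport the whole question to the critical line by the Mellin transform and then recognise the resulting density statement as the Nyman--Beurling--B\'aez-Duarte criterion. Recall that, suitably normalised, the Mellin transform $\mathcal{M}f(s)=\int_0^\infty f(x)x^{s-1}\,dx$ restricts on the line $\operatorname{Re}(s)=\tfrac12$ to a unitary map from $L^2(0,\infty)$ onto $L^2$ of that line, and that $\mathcal{M}(D_n\varphi)(s)=n^{-s}\mathcal{M}\varphi(s)$. First I would compute the three relevant transforms on the strip $0<\operatorname{Re}(s)<1$: directly $\mathcal{M}\chi_{[0,1]}(s)=1/s$, while the substitution $x=1/u$ combined with the classical identity $\zeta(s)=\tfrac{s}{s-1}-s\int_1^\infty\{u\}u^{-s-1}\,du$ gives $\mathcal{M}\varrho(s)=-\zeta(s)/s$. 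Since $\mathcal{M}$ is unitary and intertwines $D_n$ with multiplication by $n^{-s}$, it carries $\vartheta[\varrho]$ onto the closed linear span in $L^2(\operatorname{Re}(s)=\tfrac12)$ of the family $\{n^{-s}\zeta(s)/s : n\in\mathbb{N}\}$. Thus \eqref{eqn:NBBD.criterion} is equivalent to
\[ 1/s \in \overline{\operatorname{span}}\left\{ \frac{\zeta(s)}{s}\,P(s) : P \text{ a Dirichlet polynomial} \right\}. \]
(The restriction to integer dilations $n^{-s}$, rather than a continuum of scalings, is precisely the B\'aez-Duarte refinement, which relies on the density of Dirichlet polynomials.)

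Next I would rephrase this membership as a concrete approximation problem. Pulling out the common factor $1/s$, the condition becomes
\[ \inf_{P}\int_{-\infty}^{\infty}\frac{\bigl|1-\zeta(\tfrac12+it)P(\tfrac12+it)\bigr|^2}{\tfrac14+t^2}\,dt=0, \]
the infimum taken over all Dirichlet polynomials $P$. The goal is therefore to show that this infimum vanishes if and only if $\zeta$ has no zeros in the half-plane $\mathbb{H}:=\{\operatorname{Re}(s)>\tfrac12\}$, i.e.\ if and only if RH holds. The natural arena is the Hardy space $H^2(\mathbb{H})$, whose boundary space is $L^2(\operatorname{Re}(s)=\tfrac12)$: here $1/s\in H^2(\mathbb{H})$, each $n^{-s}$ with $n\ge1$ is a bounded multiplier, and the weight $(\tfrac14+t^2)^{-1}=|s|^{-2}$ is exactly the one attached to the factor $1/s$.

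For the direction RH $\Rightarrow$ density, I would use that under RH the function $1/\zeta$ is holomorphic on $\mathbb{H}$, so that $1/\zeta(s)$ can be approximated by Dirichlet polynomials $P$ (built from truncated and mollified versions of $\sum_n \mu(n)n^{-s}$); feeding these into the integral makes $\zeta P\to 1$ in the weighted norm, giving infimum $0$. For the converse, suppose the infimum is $0$ and, for contradiction, that $\zeta(\rho)=0$ for some $\rho\in\mathbb{H}$. Point evaluation $F\mapsto F(\rho)$ is a bounded functional on $H^2(\mathbb{H})$ that annihilates every $\zeta(s)P(s)/s$ (because the factor $\zeta(s)$ vanishes at $\rho$) yet sends $1/s$ to $1/\rho\neq 0$; this contradicts $1/s$ lying in the closed span, so no such $\rho$ exists and RH follows.

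The main obstacle is twofold, and both parts are tied to the pole of $\zeta$ at $s=1\in\mathbb{H}$. First, that pole prevents the functions $\zeta(s)P(s)/s$ from genuinely lying in $H^2(\mathbb{H})$, so the clean point-evaluation argument for the implication density $\Rightarrow$ RH must be justified with care: for instance by restricting to Dirichlet polynomials $P$ with $P(1)=0$ (and checking that this does not shrink the closure, since the residue at $s=1$ of an approximating sequence must tend to $0$ when the limit $1/s$ is regular there), or by subtracting the principal part at $s=1$ before applying the reproducing-kernel functional. Second, and more seriously, the implication RH $\Rightarrow$ density is not formal: producing Dirichlet polynomials $P$ for which $\zeta P\to1$ in the weighted $L^2$-norm requires genuine quantitative input about $\zeta$ -- control of its growth on and near the critical line together with the zero-free region supplied by RH -- in order to bound the tails of the approximation. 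This constructive step is the technical heart of the theorem, whereas the reduction to the critical line and the converse direction are comparatively soft.
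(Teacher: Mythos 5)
The paper itself does not prove this theorem: as Remark \ref{rem:NBBD.criterion} states, it is quoted from Nyman, Beurling and B\'aez-Duarte, so your attempt can only be measured against the known proofs. Your reduction to the critical line is the standard and correct one. The Mellin--Plancherel unitarity, the intertwining $\mathcal{M}(D_n\varphi)(s)=n^{-s}\mathcal{M}\varphi(s)$, and the identities $\mathcal{M}\chi_{[0,1]}(s)=1/s$ and $\mathcal{M}\varrho(s)=-\zeta(s)/s$ on $0<\Re(s)<1$ all check out, so \eqref{eqn:NBBD.criterion} is indeed equivalent to $1/s$ lying in the closed span of $\{n^{-s}\zeta(s)/s\}$ in $L^2$ of the line $\Re(s)=\tfrac12$. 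Your handling of the direction (density $\Rightarrow$ RH), including the obstruction coming from the pole of $\zeta$ at $s=1$, is also essentially sound: since $\varrho_n(x)=1/(nx)$ for $x>1$ while $\chi_{[0,1]}$ vanishes there, the $L^2(1,\infty)$-norm of an approximating combination forces its residue data at $s=1$ to tend to $0$, which legitimizes the point-evaluation (reproducing kernel) argument at a hypothetical zero $\rho$ with $\Re(\rho)>\tfrac12$.

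The genuine gap is the direction RH $\Rightarrow$ density, which you explicitly defer to ``genuine quantitative input about $\zeta$.'' This is not a deferrable detail; it is the entire content of B\'aez-Duarte's contribution and the reason the theorem carries his name. Knowing that $1/\zeta$ is holomorphic on $\Re(s)>\tfrac12$ under RH does not by itself produce Dirichlet polynomials $P_N$ with $\|1-\zeta P_N\|\to 0$ in the weighted $L^2$-norm on the boundary line, where RH gives no pointwise control of $1/\zeta(\tfrac12+it)$; one must establish convergence of suitably mollified partial sums of $\sum_n\mu(n)n^{-s}$ using quantitative mean-value information about $\zeta$. Restricting to integer dilations is precisely what rules out the softer duality/annihilator argument that handles the forward direction of the original Nyman--Beurling criterion with a continuum of dilation parameters. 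As written, your proposal is an accurate roadmap of the proof architecture together with a complete proof of only one implication; the constructive implication, which you correctly identify as the technical heart, is missing.
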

		
		\begin{remark}\label{rem:NBBD.criterion}
			This theorem is due to B{\'a}ez-Duarte \cite{Bae03}, who strengthened a similar criterion given by Beurling \cite{Beu55}, who in turn strengthened a result of his student Nyman from his Ph.D. thesis \cite{Nym50}. Thus, \eqref{eqn:NBBD.criterion} is called the \emph{Nyman--Beurling--B{\'a}ez-Duarte criterion (NBBD)} for the Riemann hypothesis.
		\end{remark}
		
		Somewhat surprisingly, all notions of cyclicity mentioned in Example \ref{eg:tau-cyclicity} are related to each other -- albeit loosely. We give a snippet of this relationship in Section \ref{subsubsec:func.th.junc}. For now, we refer the reader to the following excellent references on the \emph{Euler $\zeta$ function}, the Riemann hypothesis and NBBD \cite{Bag06, Edw01, HLS97, Nik19, Noo19}.
		
	\subsection{Shift-cyclicity}\label{subsec:Shift.cyc}
	
		Let us first set up some important notation that will be used throughout our discussion.
	
		\subsubsection{Analytic function spaces}\label{subsubsec:anal.func.sp} Fix $d,l \in \mathbb{N}$ and some open set $\Omega \subset \mathbb{C}^d$. A function $f : \Omega \to \mathbb{C}^l$ is said to be \emph{holomorphic on $\Omega$} if it is continuous on $\Omega$ and holomorphic in each variable separately. We write $\operatorname{Hol}(\Omega)$ for the space of all $\mathbb{C}$-valued holomorphic functions on $\Omega$. The domains of primary interest for us are the \emph{unit polydisk}
		\begin{equation*}
			\mathbb{D}^d := \left\{ w = (w_1,\dots,w_d) \in \mathbb{C}^d : \max_{1 \leq j \leq d} |w_j| < 1 \right\}
		\end{equation*}
		and the \emph{unit ball}
		\begin{equation*}
			\mathbb{B}_d := \left\{ w = (w_1,\dots,w_d) \in \mathbb{C}^d : \sum_{j = 1}^d |w_j|^2 < 1 \right\}.
		\end{equation*}
		
		From now on, $z = (z_1,\dots,z_d)$ shall denote both, the $d$-tuple of complex variables and also the $d$-tuple of functions
        \begin{equation*}
            z_j : w \mapsto w_j.
        \end{equation*}
        Let $\mathbb{Z}_+^d$ be the collection of all $d$-tuples of non-negative integers. For a given $\alpha = (\alpha_1,\dots,\alpha_d) \in \mathbb{Z}_+^d$, we use the \emph{multinomial notation}
		\begin{equation*}
			z^\alpha = z_1^{\alpha_1} z_2^{\alpha_2} \dots z_d^{\alpha_d}
		\end{equation*}
		and denote the space of all \emph{analytic polynomials in $d$ variables} as
		\begin{equation*}
			\mathcal{P}_d = \operatorname{span} \left\{ z^\alpha : \alpha \in \mathbb{Z}_+^d \right\}.
		\end{equation*}
		For $\Omega = \mathbb{D}^d$ or $\mathbb{B}_d$, every $f \in \operatorname{Hol}(\Omega)$ has a \emph{(global) power-series representation}
		\begin{equation*}
			f(z) = \sum_{\alpha \in \mathbb{Z}_+^d} c_\alpha z^\alpha,
		\end{equation*}
		and the series converges absolutely and uniformly on all compact subsets of $\Omega$ (see \cite[Section 1.1.3]{Rud69} and \cite[Section 1.2.6]{Rud80}). Thus, we can endow $\operatorname{Hol}(\Omega)$ with the topology of uniform convergence on compact subsets of $\Omega$. Henceforth, $\mathcal{X} \subset \operatorname{Hol}(\Omega)$ will always represent a linear subspace with a topology such that convergence in $\mathcal{X}$ implies convergence in the topology of uniform convergence on compact subsets. In fact, we can guarantee that this holds by introducing other properties that $\mathcal{X}$ satisfies. These properties are important to even make sense of the notion of shift-cyclicity.
		
		\begin{definition}\label{def:Banach.sp.anal.func}
			A vector space $\mathcal{X} \subset \operatorname{Hol}(\Omega)$ is said to be a \emph{Banach/Hilbert space of analytic functions} if it is a Banach/Hilbert space with norm $\|\cdot\|_\mathcal{X}$ that satisfies:
			
			\begin{enumerate}
				\myitem[\textbf{P1}] $\mathcal{P}_d$ is dense in $\mathcal{X}$.\label{item:P1}
				
				\myitem[\textbf{P2}] The \emph{evaluation functional}
                \begin{equation*}
                    \Lambda_w : f \mapsto f(w)
                \end{equation*}
                is bounded for each $w \in \Omega$.\label{item:P2}
				
				\myitem[\textbf{P3}] The \emph{shift operators}
                \begin{equation*}
                    S_j : f \mapsto z_j f
                \end{equation*}
                are well-defined for each $1 \leq j \leq d$.\label{item:P3}
			\end{enumerate}
		\end{definition}
		
		\begin{remark}\label{rem:props.P1-P3}
			Let us mention a few words about the properties \ref{item:P1}-\ref{item:P3}.
			\begin{enumerate}
				\item Depending on what the norm is, \ref{item:P2} -- along with Fatou's lemma -- typically guarantees that convergence in $\mathcal{X}$ implies uniform convergence on compact subsets of $\Omega$.
				
				\item Note that, using the closed graph theorem, if \ref{item:P2} and \ref{item:P3} hold then $S_j \in \mathcal{L}(\mathcal{X})$ for each $1 \leq j \leq d$.
				
				\item \ref{item:P1} is used as a proxy condition to ensure that there is a cyclic vector with respect to the notion of cyclicity we work with, as we shall see shortly.
			\end{enumerate}
		\end{remark}
		
		\subsubsection{Shift-cyclic functions} Let $\mathcal{X}$ be a Banach space of analytic functions on some open set $\Omega \subset \mathbb{C}^d$ as above. Let $S = (S_1, \dots, S_d)$ be the $d$-tuple of shift operators, and consider the semigroup $(\mathbb{Z}_+^d, +)$ with coordinate-wise addition as the semigroup operation. For any $\alpha = (\alpha_1, \dots, \alpha_d) \in \mathbb{Z}_+^d$, we write
		\begin{equation*}
			S^\alpha = S_1^{\alpha_1} S_2^{\alpha_2}\dots S_d^{\alpha_d}.
		\end{equation*}
		Now, note that
		\begin{equation*}
			\sigma = \left\{ S^\alpha : \alpha \in \mathbb{Z}_+^d \right\}
		\end{equation*}
		is $\mathbb{Z}_+^d$-compatible. Also, \ref{item:P1} guarantees that the \emph{constant function $1$} is $\sigma$-cyclic. We refer to $\sigma$-cyclicity as \emph{shift-cyclicity} and refer to $\sigma$-cyclic functions as \emph{shift-cyclic functions}. Moreover, we write $S[f]$ instead of $\sigma[f]$, borrowing the notation from the $1$-variable case.
		
		We end this section with a few properties of shift-cyclic functions that will be important for later discussion. Our first result is immediate from the definitions.	
		\begin{proposition}\label{prop:equiv.def.of.shift.cyc}
			
			For any $f \in \mathcal{X}$, we have
			\begin{equation*}
				S[f] = \overline{\operatorname{span}} \left\{ z^\alpha f : \alpha \in \mathbb{Z}_+^d \right\} = \overline{\{Pf : P \in \mathcal{P}_d\}}.
			\end{equation*}
			
			Moreover, the following are equivalent.
			
			\begin{enumerate}
				\item $f$ is shift-cyclic.
				
				\item There exists a sequence of polynomials $\{P_n\}_{n \in \mathbb{N}}$ such that
				\begin{equation}\label{eqn:1-P_nf.goes.to.0}
					\lim_{n \to \infty} \|1 - P_n f\|_\mathcal{X} = 0.
				\end{equation}
				
				\item There exists a shift-cyclic $g \in S[f]$.
			\end{enumerate}
		\end{proposition}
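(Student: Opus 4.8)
The plan is to first establish the set identity by unwinding the definitions, and then to prove the chain of equivalences by exploiting two structural features of $S[f]$: that it is a \emph{closed} subspace and that it is \emph{shift-invariant}. For the set identity, I would recall that by definition $S[f] = \sigma[f] = \overline{\operatorname{span}}\{S^\alpha f : \alpha \in \mathbb{Z}_+^d\}$ and that $S^\alpha f = z^\alpha f$, which gives the first equality. For the second, observe that $\operatorname{span}\{z^\alpha f : \alpha \in \mathbb{Z}_+^d\}$ coincides as a set with $\{Pf : P \in \mathcal{P}_d\}$, since a finite linear combination $\sum_\alpha c_\alpha z^\alpha f$ is exactly $Pf$ for $P = \sum_\alpha c_\alpha z^\alpha \in \mathcal{P}_d$, and conversely; taking closures then yields $S[f] = \overline{\{Pf : P \in \mathcal{P}_d\}}$.

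The crucial observation underlying the equivalences is that $S[f]$ is shift-invariant. Indeed, since each $S_j \in \mathcal{L}(\mathcal{X})$ by Remark \ref{rem:props.P1-P3}, every $S^\beta$ is a bounded operator, and $S^\beta(S^\alpha f) = S^{\alpha + \beta} f$ lies in $\operatorname{span}\{S^\gamma f : \gamma \in \mathbb{Z}_+^d\}$; continuity of $S^\beta$ then pushes this containment to the closure, so that $S^\beta(S[f]) \subseteq S[f]$ for every $\beta$. From this I would extract the key lemma: if $1 \in S[f]$, then $z^\alpha = S^\alpha 1 \in S[f]$ for all $\alpha$, so $\mathcal{P}_d \subseteq S[f]$; since $S[f]$ is closed and $\mathcal{P}_d$ is dense in $\mathcal{X}$ by \ref{item:P1}, this forces $S[f] = \mathcal{X}$.

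With this lemma in hand, the equivalences follow quickly. For (1) $\Rightarrow$ (2), shift-cyclicity gives $1 \in \mathcal{X} = S[f] = \overline{\{Pf : P \in \mathcal{P}_d\}}$, so there is a sequence $P_n \in \mathcal{P}_d$ with $\|1 - P_n f\|_\mathcal{X} \to 0$. For (2) $\Rightarrow$ (1), condition \eqref{eqn:1-P_nf.goes.to.0} says precisely that $1 \in \overline{\{Pf : P \in \mathcal{P}_d\}} = S[f]$, and the key lemma then gives $S[f] = \mathcal{X}$. For (1) $\Rightarrow$ (3) I would simply take $g = f$, which lies in $S[f]$ and is shift-cyclic by hypothesis. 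Finally, for (3) $\Rightarrow$ (1), given a shift-cyclic $g \in S[f]$, invariance and closedness of $S[f]$ yield $S[g] = \overline{\operatorname{span}}\{S^\alpha g\} \subseteq S[f]$; since $g$ is shift-cyclic, $\mathcal{X} = S[g] \subseteq S[f] \subseteq \mathcal{X}$, so $S[f] = \mathcal{X}$.

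Since the statement is essentially formal, there is no serious obstacle here; the only point requiring genuine care is that the passage from $1 \in S[f]$ to $S[f] = \mathcal{X}$ uses \emph{both} the boundedness of the shifts (to guarantee invariance of the closed span) and property \ref{item:P1} (to guarantee that polynomials are dense) — dropping either hypothesis would break the argument, which is precisely why these are built into Definition \ref{def:Banach.sp.anal.func}.
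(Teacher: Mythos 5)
Your proof is correct, and it is essentially the argument the paper has in mind: the paper offers no written proof, stating only that the result is ``immediate from the definitions,'' and your write-up is precisely the spelled-out version of that, correctly isolating the two ingredients (shift-invariance of the closed span via boundedness of the $S_j$, and density of $\mathcal{P}_d$ via \ref{item:P1}) that make $1 \in S[f]$ equivalent to $S[f] = \mathcal{X}$. No gaps.
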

		
		\eqref{eqn:1-P_nf.goes.to.0} combined with \ref{item:P2} gives us a necessary condition for cyclicity.
		
		\begin{corollary}\label{cor:poly.necc.cond.for.cyc}
			If $f \in \mathcal{X}$ is shift-cyclic then $f(w) \neq 0 \foral w \in \Omega$.
		\end{corollary}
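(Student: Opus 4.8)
The plan is to argue by contradiction, exploiting the continuity of point-evaluations that is built into the definition of $\mathcal{X}$. Suppose $f \in \mathcal{X}$ is shift-cyclic but $f(w_0) = 0$ for some $w_0 \in \Omega$. By the equivalence in Proposition \ref{prop:equiv.def.of.shift.cyc}, shift-cyclicity of $f$ furnishes a sequence of polynomials $\{P_n\}_{n \in \mathbb{N}} \subset \mathcal{P}_d$ satisfying \eqref{eqn:1-P_nf.goes.to.0}, that is, $P_n f \to 1$ in the norm of $\mathcal{X}$.

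The key step is to test this convergence against the evaluation functional $\Lambda_{w_0}$ at the point where $f$ is assumed to vanish. On the one hand, since $f(w_0) = 0$ we have $\Lambda_{w_0}(P_n f) = P_n(w_0) f(w_0) = 0$ for every $n \in \mathbb{N}$. On the other hand, property \ref{item:P2} guarantees that $\Lambda_{w_0}$ is a bounded, hence continuous, linear functional, so the convergence $P_n f \to 1$ in $\mathcal{X}$ forces $\Lambda_{w_0}(P_n f) \to \Lambda_{w_0}(1) = 1$. Passing to the limit then gives $0 = \lim_{n \to \infty} \Lambda_{w_0}(P_n f) = 1$, a contradiction. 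Therefore $f(w) \neq 0 \foral w \in \Omega$.

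I do not expect any substantial obstacle here: the statement is essentially immediate once \ref{item:P2} is invoked, and this is precisely the role that continuity of point-evaluations plays in Definition \ref{def:Banach.sp.anal.func}. The one point to check is that the conclusion is required at \emph{every} $w \in \Omega$, but the argument above is uniform in the choice of the vanishing point $w_0$, so nothing further is needed. Looking ahead, it is worth flagging that this zero-free necessary condition is generally far from sufficient, which is exactly what makes the identification of shift-cyclic functions delicate in the spaces treated later in the survey.
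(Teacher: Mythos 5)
Your proof is correct and is precisely the argument the paper intends: the corollary is stated as an immediate consequence of combining \eqref{eqn:1-P_nf.goes.to.0} with the boundedness of the evaluation functionals from \ref{item:P2}, which is exactly what you do by testing $P_n f \to 1$ against $\Lambda_{w_0}$. No gaps.
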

		
		\eqref{eqn:1-P_nf.goes.to.0} also implies that a shift-cyclic function $f$ is `almost invertible'. It is important to note, however, that having $1/f \in \mathcal{X}$ is not sufficient for the shift-cyclicity of $f$ (see \cite[Theorem 1.4]{BH97}). We shall study about this algebraic nature of shift-cyclicity in Section \ref{subsubsec:alg.prop.mult-cyc.func} below. Let us also mention another useful property that helps us determine the shift-cyclicity of polynomials.
		
		\begin{proposition}\label{prop:poly.cyc.iff.irred.fac.cyc}
			Let $P \in \mathcal{P}_d$ and $f \in \mathcal{X}$. Then, $Pf$ is shift-cyclic if and only if both $P$ and $f$ are shift-cyclic. Consequently, a polynomial is shift-cyclic if and only if all of its irreducible factors are shift-cyclic.
			
		\end{proposition}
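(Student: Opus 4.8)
The plan is to prove the two implications of the stated equivalence separately, and then deduce the statement about irreducible factors by an easy induction. Throughout I will use the description $S[g] = \overline{\{Qg : Q \in \mathcal{P}_d\}}$ from Proposition~\ref{prop:equiv.def.of.shift.cyc}, the approximation characterization \eqref{eqn:1-P_nf.goes.to.0} of cyclicity, and --- crucially --- the fact that multiplication by the polynomial $P$, i.e. the operator $M_P : g \mapsto Pg$, is \emph{bounded} on $\mathcal{X}$. Indeed, writing $P = \sum_\alpha c_\alpha z^\alpha$ we have $M_P = \sum_\alpha c_\alpha S^\alpha$, a finite linear combination of the bounded operators $S^\alpha$ (recall $S_j \in \mathcal{L}(\mathcal{X})$ by Remark~\ref{rem:props.P1-P3}). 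I also note that $\mathcal{P}_d \subseteq \mathcal{X}$ by \ref{item:P1}, so that $S[P]$ is meaningful.

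For the forward implication, suppose $Pf$ is shift-cyclic. Every generator of $S[Pf]$ has the form $Q(Pf) = (QP)f$ with $QP \in \mathcal{P}_d$, hence lies in $S[f]$; therefore $S[Pf] \subseteq S[f]$, and $\mathcal{X} = S[Pf] \subseteq S[f]$ forces $f$ to be cyclic. To see that $P$ is cyclic, I would first check that $Pf \in S[P]$: choosing polynomials $R_n \to f$ in $\mathcal{X}$ (possible by \ref{item:P1}) and applying the bounded operator $M_P$ gives $R_n P = M_P R_n \to M_P f = Pf$, where each $R_n P$ lies in $\{RP : R \in \mathcal{P}_d\}$. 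Since $S[P]$ is a closed shift-invariant subspace containing $Pf$, it contains $S[Pf] = \mathcal{X}$, so $P$ is cyclic as well.

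For the reverse implication, assume both $P$ and $f$ are shift-cyclic. Since $f$ is cyclic, \eqref{eqn:1-P_nf.goes.to.0} supplies polynomials $Q_n$ with $\|1 - Q_n f\|_{\mathcal{X}} \to 0$. Applying $M_P$ and using its boundedness, $\|P - Q_n(Pf)\|_{\mathcal{X}} = \|M_P(1 - Q_n f)\|_{\mathcal{X}} \leq \|M_P\|\,\|1 - Q_n f\|_{\mathcal{X}} \to 0$, where $Q_n(Pf) = P Q_n f$ is a polynomial multiple of $Pf$. Hence $P \in S[Pf]$, and since $P$ is cyclic we conclude $\mathcal{X} = S[P] \subseteq S[Pf]$, i.e. $Pf$ is shift-cyclic. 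The main subtlety to watch here is the asymmetry of the argument: one is tempted to run the symmetric computation multiplying by $f$, but $M_f$ need not be bounded (indeed need not even be well-defined) for a general $f \in \mathcal{X}$, so the proof must be routed through the polynomial multiplier $M_P$ together with the cyclic approximation coming from $f$. This is essentially the only place where any real care is required.

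Finally, for the statement about irreducible factors, I would first record that a nonzero constant is always shift-cyclic: if $c \in \mathbb{C} \setminus \{0\}$ then $S[c] = \overline{c\,\mathcal{P}_d} = c\,\overline{\mathcal{P}_d} = c\,\mathcal{X} = \mathcal{X}$ by \ref{item:P1}, since scalar multiplication by $c$ is a homeomorphism of $\mathcal{X}$. Writing a polynomial as $P = c\,P_1 \cdots P_k$ with each $P_i$ irreducible, each partial product $P_{i}\cdots P_k$ is again a polynomial, hence an element of $\mathcal{X}$, so the main equivalence applies repeatedly with $f$ taken to be the remaining product. A straightforward induction on $k$ then shows that $P$ is shift-cyclic if and only if every $P_i$ is, which is the desired consequence.
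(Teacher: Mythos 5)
Your proof is correct and follows essentially the same route as the paper: the forward direction via $S[Pf]\subseteq S[P]\cap S[f]$ (using \ref{item:P1} and the boundedness of $M_P$ to get $Pf\in S[P]$), and the converse by applying $M_P$ to $1-Q_nf\to 0$ to place $P$ in $S[Pf]$ and then invoking the cyclicity of $P$. Your added remarks on why the argument must go through $M_P$ rather than $M_f$, and the explicit induction for the irreducible-factor statement, are just elaborations of steps the paper leaves implicit.
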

		
		\begin{proof}
			Clearly, $Pf \in S[f]$ and hence, $S[Pf] \subseteq S[f]$ always holds. Since $f$ can be approximated by polynomials (\ref{item:P1}) and multiplication by $P$ is continuous (\ref{item:P3}), we also have $Pf \in S[P]$ and hence, $S[Pf] \subseteq S[P]$. Thus,
			\begin{equation*}
				S[Pf] \subseteq S[P] \cap S[f]
			\end{equation*}
			and we get that if $Pf$ is shift-cyclic then both $P$ and $f$ are shift-cyclic as well.
			
			Conversely, suppose $f$ is shift-cyclic and let $\left\{ P_n \right\}_{n \in \mathbb{N}}$ be as in \eqref{eqn:1-P_nf.goes.to.0}. As noted above, multiplication by $P$ is a continuous operator and hence, $P \in S[Pf]$ since
			\begin{equation*}
				\lim_{n \to \infty} \| P - P_n Pf \|_\mathcal{X} = \lim_{n \to \infty} \| P (1 - P_n f) \|_\mathcal{X} = 0.
			\end{equation*}
			Now, if $P$ is shift-cyclic, then Proposition \ref{prop:equiv.def.of.shift.cyc} $(3)$ shows that $Pf$ is shift-cyclic.
			
			The remaining assertion is immediate from the first part.
		\end{proof}
		
		We now turn our attention to specific Banach spaces of analytic functions and survey what we know about shift-cyclicity in these spaces.


	\section{Hardy spaces}\label{sec:Hardy.sp}
	
	\subsection{On the unit disk \texorpdfstring{$\mathbb{D}$}{}}\label{subsec:Hardy.on.D}
	
	We begin by generalizing the space $H^2(\mathbb{D})$ introduced in Section \ref{subsec:Smirnov-Beurling.thm}. The motivation to study these spaces comes from a natural property of functions in $\operatorname{Hol}(\mathbb{D})$ that was discovered by Hardy \cite{Har15}.
    
    For any $f \in \operatorname{Hol}(\mathbb{D})$, $r < 1$ and $0 < p < \infty$, we define the \emph{radial $p$-means}
	\begin{equation*}
		M_p(r,f) := \left( \int_0^{2\pi} |f(r e^{i \theta})|^p \frac{d \theta}{2 \pi} \right)^{\frac{1}{p}}.
	\end{equation*}
	We also define
	\begin{equation*}
		M_\infty(r,f) := \sup_{\theta \in [0,2\pi)} |f(re^{i \theta})|.
	\end{equation*}
	By the maximum modulus principle, it is clear that $M_\infty(r,f)$ is non-decreasing in $r$ for a fixed $f$, and by the Hadamard three-circles theorem (see \cite[Theorem 12.8]{Rud74}), it follows that $\log M_\infty(r,f)$ is a convex function of $\log r$. Hardy observed that the same is true for $M_p(r,f)$ for all values of $p$\footnote{Hardy mentioned in his paper that the case $p = 1$ was suggested to him by H. Bohr (brother of the physicist N. Bohr) and E. Landau more than a year before he published his result.}.
	
	\begin{theorem}[Theorem I, \cite{Har15}]\label{thm:Hardy.convexity.thm}
		If $f \in \operatorname{Hol}(\mathbb{D})$ and $0 < p \leq \infty$, then
		
		\begin{enumerate}
			\item $M_p(r,f)$ is non-decreasing in $r$ and
			
			\item $\log M_p(r,f)$ is a convex function of $\log r$.
		\end{enumerate}
	\end{theorem}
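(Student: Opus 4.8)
The plan is to deduce both assertions from a single structural fact, namely that $|f|^{p}$ is subharmonic, combined with the behavior of circular means of subharmonic functions. The case $p=\infty$ is already covered by the maximum modulus principle and the Hadamard three-circles theorem quoted above, so I would concentrate on $0<p<\infty$. The key auxiliary statement I would isolate is this: if $v$ is subharmonic on an annulus $A=\{\rho_1<|z|<\rho_2\}$, then its circular mean
\begin{equation*}
	\Phi_v(r):=\frac{1}{2\pi}\int_0^{2\pi} v(re^{i\theta})\,d\theta
\end{equation*}
is a convex function of $\log r$, and if moreover $v$ is subharmonic on the full disk $\{|z|<\rho_2\}$ then $\Phi_v$ is non-decreasing.

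For smooth $v$ this auxiliary fact is a direct computation. Writing $t=\log r$ and using the polar form of the Laplacian one obtains
\begin{equation*}
	\frac{d^2}{dt^2}\,\Phi_v(r)=\frac{r^2}{2\pi}\int_0^{2\pi}\Delta v\,d\theta\ \ge\ 0,
\end{equation*}
where the angular contribution $\int_0^{2\pi} v_{\theta\theta}\,d\theta$ has cancelled by periodicity; this gives convexity in $\log r$. Monotonicity then follows because $\tfrac{d}{dt}\Phi_v=r\,\Phi_v'(r)$ is non-decreasing in $r$ and tends to $0$ as $r\to 0^{+}$, so it is non-negative throughout. For general (non-smooth) $v$ I would pass to the same conclusion by the standard smooth approximation of subharmonic functions, or simply invoke the general theory.

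Next I would apply this lemma twice. Since $\log|f|$ is subharmonic and $t\mapsto e^{pt}$ is convex and increasing, the composition $|f|^{p}=\exp(p\log|f|)$ is subharmonic on all of $\mathbb{D}$; as $\Phi_{|f|^{p}}(r)=M_p(r,f)^{p}$, the lemma shows $M_p(r,f)^{p}$ is non-decreasing, hence so is $M_p(r,f)$, which is $(1)$. For $(2)$, fix $0<r_1<r_2<1$ and note that for every real $\gamma$ the function $v_\gamma(z):=|f(z)|^{p}|z|^{-\gamma}=\exp\!\big(p\log|f|-\gamma\log|z|\big)$ is again subharmonic on $A$, with circular mean $\Phi_{v_\gamma}(r)=r^{-\gamma}M_p(r,f)^{p}$. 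Choosing $\gamma$ and $c>0$ so that $c\,r_i^{\gamma}=M_p(r_i,f)^{p}$ for $i=1,2$, the mean $\Phi_{v_\gamma}$ is convex in $\log r$ with equal endpoint values $c$, hence $\Phi_{v_\gamma}(r)\le c$ on $[r_1,r_2]$. Unwinding, for $\log r=(1-s)\log r_1+s\log r_2$ we get $M_p(r,f)^{p}\le c\,r^{\gamma}=M_p(r_1,f)^{p(1-s)}M_p(r_2,f)^{ps}$, i.e. $\log M_p(r,f)\le(1-s)\log M_p(r_1,f)+s\log M_p(r_2,f)$, which is exactly the convexity of $\log M_p(r,f)$ in $\log r$.

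The technical heart, and the step I expect to be the main obstacle, is the auxiliary lemma precisely when $f$ has zeros: there $|f|^{p}$ and $v_\gamma$ fail to be $C^{2}$ (indeed $\log|f|=-\infty$), so the clean computation above is only valid on the complement of the zero set. I would bridge this gap either by approximating the subharmonic functions from above by smooth subharmonic ones and passing to the limit, or by appealing to the general theorem that circular means of subharmonic functions are convex in $\log r$ with no smoothness hypothesis at all.
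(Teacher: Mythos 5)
Your argument is correct, but note that the survey does not actually prove this statement: Theorem \ref{thm:Hardy.convexity.thm} is quoted directly from Hardy's 1915 paper \cite{Har15}, so there is no in-text proof to compare against. Hardy's original argument is a hands-on computation with the integral means (differentiating $M_p(r,f)^p$ in $\log r$ and controlling the second derivative directly), whereas you use the later and now standard subharmonicity framework: $|f|^p=\exp(p\log|f|)$ is subharmonic because $\exp(p\,\cdot)$ is convex and increasing, circular means of subharmonic functions are non-decreasing and convex in $\log r$, and the three-lines-type statement in $(2)$ follows from the clever normalization $v_\gamma=|f|^p|z|^{-\gamma}$, which stays subharmonic on the annulus since $-\gamma\log|z|$ is harmonic there. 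Your endpoint-matching computation unwinds correctly to
\begin{equation*}
\log M_p(r,f)\le(1-s)\log M_p(r_1,f)+s\log M_p(r_2,f),
\end{equation*}
which is exactly the claimed convexity (after disposing of the trivial case $f\equiv 0$, where $M_p(r_i,f)=0$ and the choice of $c,\gamma$ degenerates). You are also right to flag the only genuinely delicate point, namely that $|f|^p$ and $v_\gamma$ are not $C^2$ at the zeros of $f$; this is handled exactly as you say, either by mollifying subharmonic functions from above or by invoking the general theorem that circular means of arbitrary subharmonic functions are convex in $\log r$. The trade-off between the two routes is the usual one: Hardy's computation is self-contained and elementary but opaque, while your argument outsources the work to the general theory of subharmonic functions and in return makes the statement transparent and immediately generalizable (e.g.\ to $|f|^p$ replaced by any subharmonic weight), which is the form in which the result is used throughout the modern Hardy space literature.
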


	The \emph{Hardy space $H^p(\mathbb{D})$} for $0 < p \leq \infty$ is defined as
	\begin{equation*}
		H^p(\mathbb{D}) := \left\{ f \in \operatorname{Hol}(\mathbb{D}) : \lim_{r \to 1^-} M_p(r,f) < \infty \right\}.
	\end{equation*}
	
	\begin{remark}\label{rem:equiv.def.H^2(D)}
		The definition of $H^2(\mathbb{D})$ in \eqref{eqn:def.H^2(D)} is equivalent to the above, since by \emph{Parseval's identity} we have
		\begin{equation*}
			\lim_{r \to 1^-} M_2(r,f)^2 = \lim_{r \to 1^-} \sum_{k \in \mathbb{Z}_+} r^{2k}|c_k|^2 = \langle f,f \rangle_{H^2(\mathbb{D})} \foral f \in H^2(\mathbb{D}).
		\end{equation*}
	\end{remark}
	
	In this section, we describe a few of the many different properties of Hardy space functions. We have cherry-picked only those that will be important to our discussion on shift-cyclicity. The reader is referred to \cite{Dur70, Gar06, Hof07, Nik19, Rud74} to learn more about the Hardy spaces. We start with the most fundamental of these properties. Let us write $\mathbb{T} := \partial \mathbb{D}$ for the \emph{unit circle}.
	
	\begin{theorem}\label{thm:radial.limits.H^p(D)}
		For any $0 < p \leq \infty$ and $f \in H^p(\mathbb{D})$, the limit
        \begin{equation*}
            \lim_{r \to 1^-} f(r\zeta)
        \end{equation*}
        exists for (Lebesgue) a.e. $\zeta \in \mathbb{T}$.
	\end{theorem}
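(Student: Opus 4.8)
The plan is to reduce the theorem to the classical \emph{Fatou theorem} for Poisson integrals, and then to strip away the zeros of $f$ by a factorization so that the reduction applies uniformly for every $0 < p \leq \infty$. Write $P[\mu](r\zeta)$ for the Poisson integral at $r\zeta \in \mathbb{D}$ of a finite complex Borel measure $\mu$ on $\mathbb{T}$. The whole argument rests on three ingredients: (i) every Poisson integral of a finite measure has a radial limit almost everywhere; (ii) every $f \in H^2(\mathbb{D})$ is itself such a Poisson integral; and (iii) a general $f \in H^p(\mathbb{D})$ factors as a Blaschke product times a zero-free $H^p$-function, the latter reducing to the $H^2$ case after extracting a root.

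The analytic heart --- and the step I expect to be the main obstacle --- is ingredient (i). Here I would bound the radial maximal function $\mathcal{M}f(\zeta) := \sup_{0 < r < 1} |P[\mu](r\zeta)|$ by a fixed multiple of the Hardy--Littlewood maximal function of $\mu$, exploiting that the Poisson kernel is dominated by an even, radially decreasing bump of unit mass (an ``approximate identity controlled by its decreasing rearrangement'' estimate). The weak-type $(1,1)$ inequality for the maximal function, together with the Lebesgue differentiation theorem and a density argument --- continuous functions, for which radial convergence is elementary, are dense --- then yields that $P[\mu](r\zeta)$ converges as $r \to 1^-$ for a.e. $\zeta$, the limit being the Radon--Nikodym density of $\mu$ with respect to normalized arclength $m$. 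This is the only genuinely measure-theoretic input; everything afterwards is comparatively formal.

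For ingredient (ii), given $f = \sum_{k \geq 0} c_k z^k \in H^2(\mathbb{D})$ with $\sum_k |c_k|^2 < \infty$ (Remark \ref{rem:equiv.def.H^2(D)}), I take the boundary candidate $\varphi \in L^2(\mathbb{T})$ whose Fourier coefficients are $c_k$ for $k \geq 0$ and $0$ otherwise; expanding the Poisson kernel in its Fourier series gives $P[\varphi](r e^{i\theta}) = \sum_{k \geq 0} c_k r^k e^{ik\theta} = f(r e^{i\theta})$, so $f = P[\varphi]$ and (i) applies. For ingredient (iii), Jensen's formula combined with the uniform bound on $M_p(r,f)$ shows the zeros $\{a_n\}$ of $f$ obey the Blaschke condition $\sum_n (1 - |a_n|) < \infty$, whence the Blaschke product $B(z) = \prod_n \tfrac{|a_n|}{a_n}\tfrac{a_n - z}{1 - \overline{a_n} z}$ converges to a function in $H^\infty(\mathbb{D}) \subset H^2(\mathbb{D})$; in particular $B$ already has radial limits a.e. by (ii). Since the finite subproducts $B_N$ satisfy $\min_{|z| = \rho}|B_N(z)| \to 1$ as $\rho \to 1^-$, the monotonicity of $M_p$ (Theorem \ref{thm:Hardy.convexity.thm}) gives $M_p(r, f/B_N) \leq \lim_{\rho \to 1^-} M_p(\rho,f)$ for every $r$ and $N$; letting $N \to \infty$ shows the zero-free quotient $g := f/B$ lies in $H^p(\mathbb{D})$ (the F.\ and M.\ Riesz factorization lemma).

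Finally, as $g$ is zero-free on the simply connected $\mathbb{D}$, it has a holomorphic logarithm, so $h := \exp\!\big(\tfrac{p}{2}\log g\big)$ is holomorphic with $|h|^2 = |g|^p$; hence $M_2(r,h)^2 = M_p(r,g)^p$ is bounded and $h \in H^2(\mathbb{D})$. By (ii) and (i) the boundary function $h^*$ exists a.e., and since $h \not\equiv 0$ one has $h^* \neq 0$ a.e.\ (the $H^2$ uniqueness theorem, or $\log|h^*| \in L^1$). At any $\zeta$ with $h(r\zeta) \to h^*(\zeta) \neq 0$, the radial trajectory eventually stays in a neighborhood of $h^*(\zeta)$ avoiding the origin, on which a branch of $w \mapsto w^{2/p}$ is continuous and single-valued, so $g(r\zeta) = h(r\zeta)^{2/p}$ also converges. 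Writing $f = B \cdot g$ as a product of two functions each possessing radial limits a.e.\ completes the proof. The one point requiring care is exactly this passage from the limits of $h$ to those of $g$, which is why the non-vanishing of $h^*$ a.e.\ must be recorded.
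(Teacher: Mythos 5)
Your argument is correct, but it reaches the conclusion by a different reduction than the paper uses. The paper disposes of $1 \leq p \leq \infty$ by citing Fatou's theorem for Poisson integrals of finite complex measures on $\mathbb{T}$ (every $H^1$ function being such a Poisson integral), and handles $0 < p < 1$ via the F.\ and R.\ Nevanlinna representation $f = \varphi/\psi$ with $\varphi, \psi \in H^\infty(\mathbb{D})$, after which the a.e.\ existence and a.e.\ non-vanishing of $\psi^*$ finish the job. You instead prove Fatou's theorem only in the case you actually need --- Poisson integrals of $L^2$ densities, so the singular-measure half of Fatou never enters --- and then treat \emph{every} $0 < p < \infty$ uniformly by the Riesz--Blaschke factorization $f = Bg$ followed by the root trick $h = g^{p/2} \in H^2(\mathbb{D})$. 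What your route buys is economy of analytic input (only the weak-type maximal estimate for $L^1$ densities plus a density argument); what it costs is the factorization machinery, the $\log|h^*| \in L^1$ fact needed to know $h^* \neq 0$ a.e., and the branch-tracking step when passing from limits of $h$ to limits of $g = h^{2/p}$ --- which you correctly identify as the delicate point and resolve by localizing to a disk about $h^*(\zeta)$ avoiding the origin. Two small housekeeping items: if $f(0) = 0$ you should first divide out $z^{m(0)}$ before forming the Blaschke product (the factors $|a_n|/a_n$ are undefined at $a_n = 0$), and the case $f \equiv 0$ is trivial and should be set aside at the start. Neither affects the substance.
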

	
	  \noindent \emph{References.} This is essentially a result of Fatou \cite{Fat06}, who proved a general result about radial limits of Poisson integrals of complex measures supported on $\mathbb{T}$. Fatou's theorem gives the case $1 \leq p \leq \infty$ while the rest follows from a result of F. and R. Nevanlinna \cite{NN22}, which states that for any $f \in H^p(\mathbb{D})$ with $0 < p < \infty$, there exist $\varphi, \psi \in H^\infty(\mathbb{D})$ such that $f = \varphi/\psi$.

    \subsubsection{Boundary properties}\label{subsubsec:boundry.prop}
    
	For each $f \in H^p(\mathbb{D})$, we define its \emph{boundary function} a.e. on $\mathbb{T}$ as
	\begin{equation*}
		f^*(\zeta) := \lim_{r \to 1^-} f(r\zeta).
	\end{equation*}
	Boundary functions play an important role in understanding the structure of functions in the Hardy spaces. Let us write $\mathcal{Z}(f)$ for the set of zeros in $\mathbb{D}$ of each $f \in \operatorname{Hol}(\mathbb{D})$ (counting multiplicities) and let $m(0)$ be the multiplicity of the zero at $0$ (if $0 \in \mathcal{Z}(f)$). Riesz \cite{Rie23} gave the following factorization result.
	
	\begin{theorem}\label{thm:Riesz.fact.H^p(D)}
		Given $0 < p \leq \infty$ and $f \in H^p(\mathbb{D})$, consider the Blaschke product
		\begin{equation*}
			B(z) = z^{m(0)} \prod_{w \in \mathcal{Z}(f) \setminus \{0\}} \frac{|w|}{w} \frac{z - w}{1 - \overline{w}z}.
		\end{equation*}
		Then, $|B| < 1$ on $\mathbb{D}$, $f/B \in H^p(\mathbb{D})$, $\mathcal{Z}(f/B) = \emptyset$ and $|f^*| = |(f/B)^*|$ a.e. on $\mathbb{T}$.
	\end{theorem}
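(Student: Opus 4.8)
The plan is to prove the Riesz factorization theorem by first establishing that the Blaschke product $B$ is a well-defined holomorphic function on $\mathbb{D}$ with the stated zero set, and then showing that dividing by $B$ preserves membership in $H^p(\mathbb{D})$ while removing all zeros and leaving the boundary modulus unchanged. The crucial preliminary step is the \emph{Blaschke condition}: for $f \in H^p(\mathbb{D})$ with $f \not\equiv 0$, the zeros $\{w\} = \mathcal{Z}(f) \setminus \{0\}$ must satisfy $\sum_{w} (1 - |w|) < \infty$. I would derive this from \emph{Jensen's formula}, which relates $\log|f(0)|$ (or the leading nonzero Taylor coefficient if $0$ is a zero) to the integral of $\log|f(re^{i\theta})|$ over the circle of radius $r$ together with a sum of terms $\log(r/|w|)$ over zeros inside that circle. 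Since Theorem \ref{thm:Hardy.convexity.thm} bounds $M_p(r,f)$ (hence controls $\int \log|f(re^{i\theta})|\,d\theta$ from above via Jensen's inequality for the logarithm), the sum over zeros stays bounded as $r \to 1^-$, and a standard estimate converts $\sum \log(1/|w|)$ into $\sum (1-|w|)$.

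Once the Blaschke condition holds, each factor $\frac{|w|}{w}\frac{z-w}{1-\overline{w}z}$ has modulus $1$ on $\mathbb{T}$ and modulus less than $1$ on $\mathbb{D}$, with a single zero at $w$; the unimodular constants $\tfrac{|w|}{w}$ are chosen so that each factor is positive at $z=0$, which is exactly what makes the infinite product converge. I would verify convergence of the partial products uniformly on compact subsets by estimating $1 - \left|\frac{|w|}{w}\frac{z-w}{1-\overline{w}z}\right|$ and comparing with $1-|w|$, so that $\sum(1-|w|) < \infty$ forces the product to converge to a holomorphic $B$ with $|B| < 1$ on $\mathbb{D}$ and $\mathcal{Z}(B) = \mathcal{Z}(f)$. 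This gives $g := f/B \in \operatorname{Hol}(\mathbb{D})$ with $\mathcal{Z}(g) = \emptyset$.

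The next step is to show $g \in H^p(\mathbb{D})$ with $\|g\|_{H^p} = \|f\|_{H^p}$, or at least $M_p(r,g)$ bounded. Here I would work with the finite partial Blaschke products $B_N$ (using only the first $N$ zeros): since $|B_N| \to 1$ uniformly on each fixed circle $|z| = \rho$ as we increase $\rho \to 1$ (the finitely many factors approach unimodular boundary values), one gets $M_p(r, f/B_N) \leq M_p(\text{slightly larger radius}, f)$, and letting $N \to \infty$ and then $r \to 1^-$ controls $M_p(r,g)$ by $\lim_{r\to1^-} M_p(r,f) < \infty$. Finally, the boundary identity $|f^*| = |(f/B)^*|$ a.e.\ follows because $|B^*| = 1$ a.e.\ on $\mathbb{T}$, which is itself a consequence of the fact that a Blaschke product is an \emph{inner} function — its radial boundary values are unimodular almost everywhere, provable again from the factorwise modulus-$1$ boundary behavior together with dominated convergence.

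The main obstacle I anticipate is the interchange of limits needed to transfer the uniform boundary control from the finite products $B_N$ to the infinite product $B$ while simultaneously taking $r \to 1^-$: one must argue carefully that no mass is lost in $M_p(r,g)$ when passing to the full product, and that $|B^*| = 1$ almost everywhere rather than merely $|B^*| \leq 1$. The cleanest route is to first prove the theorem for a \emph{finite} Blaschke product (where everything is elementary since $B_N$ is a rational function, continuous up to $\mathbb{T}$ with unimodular boundary values), establish the $H^p$-norm-preservation and boundary-modulus identity in that case, and then pass to the limit using Fatou's lemma for the lower bound and the monotone control from Theorem \ref{thm:Hardy.convexity.thm} for the upper bound. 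This limiting argument, rather than any single computation, is where the real care is required.
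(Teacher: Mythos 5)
The paper does not prove this theorem; it is quoted as a classical result of Riesz, so the benchmark is the standard argument (as in Duren, Chapter 2), and your outline follows that standard route faithfully: Jensen's formula gives the Blaschke condition $\sum(1-|w|)<\infty$, the factorwise estimate gives locally uniform convergence of $B$, and the passage through finite partial products $B_N$ together with the monotonicity of $M_p(r,\cdot)$ from Theorem \ref{thm:Hardy.convexity.thm} and monotone convergence ($|f/B_N|\uparrow|f/B|$ since $|B_{N+1}|\le|B_N|$) gives $f/B\in H^p(\mathbb{D})$. All of that is correct and is exactly the intended proof. (One incidental remark: as printed, the factor $\tfrac{|w|}{w}\tfrac{z-w}{1-\overline{w}z}$ equals $-|w|$ at $z=0$, not $+|w|$ as you assert; the convergence argument needs the positive normalization, so you have silently corrected a sign in the paper's display.)

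The one step where your sketch would not survive being written out is the claim that $|B^*|=1$ a.e.\ follows from ``the factorwise modulus-$1$ boundary behavior together with dominated convergence.'' Radial limits do not commute with infinite products, and dominated convergence only yields $\lim_{r\to 1^-}\tfrac{1}{2\pi}\int|B(re^{i\theta})|\,d\theta=\tfrac{1}{2\pi}\int|B^*|\,d\theta$ together with the trivial bound $|B^*|\le 1$; Fatou's lemma also points in the wrong direction for the lower bound you need. The missing idea is to write $B=B_N R_N$ with $R_N$ the tail product, apply the mean-value inequality to the holomorphic function $R_N$ at the origin to get
\begin{equation*}
\prod_{n>N}|w_n| \;=\; |R_N(0)| \;\le\; \frac{1}{2\pi}\int_0^{2\pi}|R_N^*(e^{i\theta})|\,d\theta \;=\; \frac{1}{2\pi}\int_0^{2\pi}|B^*(e^{i\theta})|\,d\theta,
\end{equation*}
using $|B_N^*|=1$ a.e., and then let $N\to\infty$ so that the left-hand side tends to $1$ by the Blaschke condition; combined with $|B^*|\le 1$ this forces $|B^*|=1$ a.e., and hence $|f^*|=|(f/B)^*|$ a.e. You correctly flagged this as the delicate point, but the tools you name do not close it; the tail-product trick is the ingredient you need to add.
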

	
	Riesz used this factorization property to obtain the connection between a Hardy space function $f$ and its boundary function $f^*$. For each $r < 1$, we write $f_r$ for the \emph{radial dilates of $f$}, i.e.,
    \begin{equation*}
        f_r : w \mapsto f(rw)
    \end{equation*}
    Clearly, $f_r \in H^p(\mathbb{D})$ whenever $f \in H^p(\mathbb{D})$ for some $0 < p \leq \infty$.
	
	\begin{theorem}[Satz 1--3, \cite{Rie23}]\label{thm:mean.conv.prop}
		For any $0 < p < \infty$ and $f \in H^p(\mathbb{D})$, we have
		\begin{equation*}
			\lim_{r \to 1^-} \|f_r^* - f^*\|_{L^p(\mathbb{T})} = 0.
		\end{equation*}
		
		Moreover, $\log |f^*| \in L^1(\mathbb{T})$ unless $f \equiv 0$. Thus, $f \mapsto f^*$ is a bijective map.
	\end{theorem}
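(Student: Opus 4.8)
The plan is to split the statement into the $L^p$-convergence of the dilates, the integrability of $\log|f^*|$, and the injectivity of $f\mapsto f^*$, with the last following from the second. I first identify $f_r^*$ with $f(r\,\cdot\,)$ on $\mathbb{T}$: as $f_r$ is holomorphic on the disk of radius $1/r>1$, it extends continuously to $\overline{\mathbb{D}}$ and $f_r^*(\zeta)=f(r\zeta)$. For the convergence, I would reduce to two ingredients. Theorem~\ref{thm:radial.limits.H^p(D)} already gives $f_r^*\to f^*$ a.e.\ on $\mathbb{T}$. The second ingredient is convergence of the $L^p$ masses,
\begin{equation*}
\lim_{r\to1^-}\int_0^{2\pi}|f_r^*(e^{i\theta})|^p\,\frac{d\theta}{2\pi}=\int_0^{2\pi}|f^*(e^{i\theta})|^p\,\frac{d\theta}{2\pi}.
\end{equation*}
Granting both, the Riesz--Scheff\'e lemma---a.e.\ convergence plus convergence of $L^p$-norms forces $L^p$-convergence for every $0<p<\infty$, via Pratt's generalized dominated convergence theorem applied to $|f_r^*-f^*|^p\le 2^p(|f_r^*|^p+|f^*|^p)$---yields $\|f_r^*-f^*\|_{L^p(\mathbb{T})}\to0$.

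The mass convergence is the heart of the matter. By Theorem~\ref{thm:Hardy.convexity.thm} the quantities $M_p(r,f)^p=\int_0^{2\pi}|f_r^*|^p\,\frac{d\theta}{2\pi}$ increase in $r$ to a finite supremum $A$, and Fatou's lemma gives $\int|f^*|^p\,\frac{d\theta}{2\pi}\le A$. The reverse inequality $A\le\int|f^*|^p\,\frac{d\theta}{2\pi}$---that no mass escapes---is the step I expect to be the main obstacle, and I would handle it by Riesz factorization (Theorem~\ref{thm:Riesz.fact.H^p(D)}). Write $f=Bg$ with $g=f/B\in H^p(\mathbb{D})$ zero-free and $|f^*|=|g^*|$ a.e.; since $|B|<1$ on $\mathbb{D}$ we have $|f|\le|g|$, hence $A\le\sup_r\int|g_r^*|^p\,\frac{d\theta}{2\pi}$. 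As $g$ is zero-free on the simply connected $\mathbb{D}$, the branch $G:=e^{(p/2)\log g}$ is holomorphic with $|G|^2=|g|^p$, so $G\in H^2(\mathbb{D})$; for $H^2$ the mass convergence is elementary from the coefficient description and Parseval (Remark~\ref{rem:equiv.def.H^2(D)}), since for $G=\sum_k c_k z^k$ the dilates $G_r^*$ are $L^2(\mathbb{T})$-Cauchy as $r\to1^-$ and $\sup_r\int|G_r^*|^2\,\frac{d\theta}{2\pi}=\sum_k|c_k|^2=\int|G^*|^2\,\frac{d\theta}{2\pi}$. Transporting this back through $|G_r^*|^2=|g_r^*|^p$ and $|G^*|^2=|g^*|^p$ gives $\sup_r\int|g_r^*|^p\,\frac{d\theta}{2\pi}=\int|g^*|^p\,\frac{d\theta}{2\pi}=\int|f^*|^p\,\frac{d\theta}{2\pi}$, so $A\le\int|f^*|^p\,\frac{d\theta}{2\pi}$, as needed.

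For $\log|f^*|\in L^1(\mathbb{T})$, I would first reduce to $f(0)\neq0$: if $f\not\equiv0$, factor $f=z^m h$ with $h\in H^p(\mathbb{D})$ and $h(0)\neq0$; as $|\zeta|=1$ on $\mathbb{T}$ we get $\log|f^*|=\log|h^*|$ a.e., so it suffices to treat $h$ (which I again call $f$). The positive part is harmless: $\log^+ t\le\frac1p t^p$ gives $\int\log^+|f^*|\,\frac{d\theta}{2\pi}\le\frac1p\int|f^*|^p\,\frac{d\theta}{2\pi}<\infty$. For the negative part I would use Jensen's formula,
\begin{equation*}
\int_0^{2\pi}\log|f(re^{i\theta})|\,\frac{d\theta}{2\pi}=\log|f(0)|+\sum_{w\in\mathcal{Z}(f),\,|w|<r}\log\frac{r}{|w|}\ \ge\ \log|f(0)|,
\end{equation*}
so that $\int\log^-|f_r^*|\,\frac{d\theta}{2\pi}=\int\log^+|f_r^*|\,\frac{d\theta}{2\pi}-\int\log|f_r^*|\,\frac{d\theta}{2\pi}\le\frac1p\sup_{\rho<1}M_p(\rho,f)^p-\log|f(0)|$ uniformly in $r$, a finite constant since $f\in H^p(\mathbb{D})$. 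Since $\log^-|f_r^*|\to\log^-|f^*|$ a.e., Fatou's lemma bounds $\int\log^-|f^*|\,\frac{d\theta}{2\pi}$ by the same constant, and combining with the $\log^+$ bound gives $\log|f^*|\in L^1(\mathbb{T})$.

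Injectivity of $f\mapsto f^*$ is then immediate: if $f^*=g^*$ a.e.\ then, by linearity of the radial limit, $(f-g)^*=0$ a.e., so $\log|(f-g)^*|\equiv-\infty\notin L^1(\mathbb{T})$; the contrapositive of the integrability just established forces $f-g\equiv0$. Hence $f\mapsto f^*$ is a bijection onto its image, completing the proof.
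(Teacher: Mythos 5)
Your proof is correct, and it is essentially the classical argument of Riesz that the paper cites (Satz 1--3 of \cite{Rie23}; see also \cite[Section 3.2]{Dur70}): reduce the mass convergence to the zero-free factor via Theorem \ref{thm:Riesz.fact.H^p(D)}, pass to $H^2$ through the branch $g^{p/2}$ where Parseval settles the matter, upgrade a.e.\ convergence plus norm convergence to $L^p$-convergence, and obtain $\log|f^*|\in L^1(\mathbb{T})$ from Jensen's formula and Fatou. The only step left implicit is the identification of the $L^2(\mathbb{T})$-limit of the Cauchy net $\{G_r^*\}$ with the radial boundary function $G^*$ (take an a.e.-convergent subsequence of the $L^2$-limit and compare with the a.e.\ radial limits), after which $\int_{\mathbb{T}}|G^*|^2 = \sum_k|c_k|^2$ follows as you claim.
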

	
	It is easy to check that $H^p(\mathbb{D})$ is a vector space for any given $p$. We now define
	\begin{equation*}
		\|f\|_p := \|f^*\|_{L^p(\mathbb{T})} \foral f \in H^p(\mathbb{D}).
	\end{equation*}
    Theorem \ref{thm:mean.conv.prop} shows that
    \begin{equation*}
        \|f\|_p = \lim_{r \to 1^-} M_p(r,f),
    \end{equation*}
    which is how $\|f\|_p$ is typically defined. It can then be checked that $\left\| \cdot \right\|_p$ is a norm on $H^p(\mathbb{D})$ for $1 \leq p \leq \infty$, and that
	\begin{equation*}
		\operatorname{d}(f,g) := \|f - g\|_p^p \foral f, g \in H^p(\mathbb{D})
	\end{equation*}
	is a translation-invariant metric on $H^p(\mathbb{D})$ for $0 < p < 1$. Among other things, Riesz's theorems allow us to obtain the following description of Hardy spaces. See \cite[Section 3.2]{Dur70} for all the details.
	
	\begin{corollary}\label{cor:bdy.prop.H^p(D)}
		For $0 < p < \infty$, consider $\Psi_p : f \mapsto f^*$ from $H^p(\mathbb{D})$ into $L^p(\mathbb{T})$.
		
		\begin{enumerate}
			\item $\Psi_p$ is an isometric isomorphism between $H^p(\mathbb{D})$ and
			\begin{equation*}
				H^p(\mathbb{T}) := \overline{\operatorname{span}}^{L^p(\mathbb{T})} \left\{ e^{i k \theta} : k \in \mathbb{Z}_+ \right\}.
			\end{equation*}
			
			\item $H^p(\mathbb{D})$ is a Banach space for $1 \leq p \leq \infty$ and an F-space for $0 < p < 1$.
			
			\item The set $\mathcal{P}$ of polynomials is dense in $H^p(\mathbb{D})$ for any $0 < p < \infty$.
			
			\item If $1 \leq p \leq \infty$ and $f = \sum_k c_k z^k \in H^p(\mathbb{D})$, then $f^* \in L^p(\mathbb{T})$ corresponds to the Fourier series
            \begin{equation*}
                f^* \sim \sum_{k \in \mathbb{Z}_+} c_k e^{i k \theta}.
            \end{equation*}
		\end{enumerate}
	\end{corollary}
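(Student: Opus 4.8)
The plan is to deduce all four assertions from the two theorems of Riesz (Theorems \ref{thm:Riesz.fact.H^p(D)} and \ref{thm:mean.conv.prop}) together with one standard pointwise growth bound, and to establish the parts in the order: isometry and injectivity of $\Psi_p$, then the inclusion $\Psi_p(H^p(\mathbb{D}))\subseteq H^p(\mathbb{T})$, then completeness (2), then surjectivity to finish (1), and finally (3) and (4), since each step reuses the previous ones. First, $\Psi_p$ is isometric essentially by fiat: the norm on $H^p(\mathbb{D})$ was defined as $\|f\|_p:=\|f^*\|_{L^p(\mathbb{T})}$, so $\|\Psi_p f\|_{L^p(\mathbb{T})}=\|f\|_p$ (and likewise for the metric $\operatorname{d}(f,g)=\|f-g\|_p^p$ when $0<p<1$). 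Injectivity is immediate from Theorem \ref{thm:mean.conv.prop}: a nonzero element $h\in H^p(\mathbb{D})$ has $\log|h^*|\in L^1(\mathbb{T})$, hence $h^*\neq 0$ a.e.; so $f^*=g^*$ forces $f\equiv g$. To see that the image lands in $H^p(\mathbb{T})$, I would first note that an analytic polynomial $P(z)=\sum_{k=0}^n c_k z^k$ has boundary function $P^*(e^{i\theta})=\sum_{k=0}^n c_k e^{ik\theta}\in\operatorname{span}\{e^{ik\theta}:k\in\mathbb{Z}_+\}$; for general $f$ the radial dilate $f_r$ is holomorphic on a neighborhood of $\overline{\mathbb{D}}$, so its Taylor partial sums converge to $f_r$ uniformly on $\mathbb{T}$, giving $f_r^*\in H^p(\mathbb{T})$. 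Since $f_r^*\to f^*$ in $L^p(\mathbb{T})$ by the mean-convergence part of Theorem \ref{thm:mean.conv.prop}, and $H^p(\mathbb{T})$ is by definition closed in $L^p(\mathbb{T})$, we get $f^*\in H^p(\mathbb{T})$.

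For part (2) I would prove completeness directly, invoking the standard pointwise estimate $|f(w)|\le C_w\|f\|_p$, locally uniform in $w\in\mathbb{D}$, valid for all $0<p<\infty$ (for $p\ge 1$ this comes from the Cauchy integral together with $\|\cdot\|_1\le\|\cdot\|_p$; for $0<p<1$ it follows from the Riesz factorization of Theorem \ref{thm:Riesz.fact.H^p(D)} and subharmonicity of $|f|^p$). Granting this, a Cauchy sequence $\{f_n\}$ in $H^p(\mathbb{D})$, being Cauchy uniformly on compact subsets, converges locally uniformly to some $f\in\operatorname{Hol}(\mathbb{D})$. Then $M_p(r,f)=\lim_n M_p(r,f_n)\le\sup_n\|f_n\|_p<\infty$ for every $r<1$ shows $f\in H^p(\mathbb{D})$; and for fixed $n$, letting $m\to\infty$ in $M_p(r,f_n-f_m)\le\|f_n-f_m\|_p$ and then $r\to 1^-$ shows $f_n\to f$ in $H^p(\mathbb{D})$. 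That the space is Banach for $p\ge 1$ and an F-space for $0<p<1$ then follows from the norm and metric facts already recorded before the corollary.

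With completeness in hand, the image $\Psi_p(H^p(\mathbb{D}))$ is complete, hence closed, in $L^p(\mathbb{T})$; it contains every analytic trigonometric polynomial $\sum_{k=0}^n c_k e^{ik\theta}$ and therefore their closure $H^p(\mathbb{T})$, so $\Psi_p(H^p(\mathbb{D}))=H^p(\mathbb{T})$, finishing (1). For (3), I would use $\|f_r-f\|_p=\|f_r^*-f^*\|_{L^p(\mathbb{T})}\to 0$ (Theorem \ref{thm:mean.conv.prop}) together with the fact that each $f_r$ is an $H^p$-norm limit of its Taylor polynomials, so polynomials are dense. For (4), assuming $1\le p\le\infty$ so that $f^*\in L^1(\mathbb{T})$, I would compute $\widehat{f^*}(n)=\lim_{r\to 1^-}\widehat{f_r^*}(n)$ using $f_r^*\to f^*$ in $L^1(\mathbb{T})$; term-by-term integration of the uniformly convergent series for $f_r$ on $\mathbb{T}$ gives $\widehat{f_r^*}(n)=c_n r^n$ for $n\ge 0$ and $0$ for $n<0$, whence $\widehat{f^*}(n)=c_n$ for $n\ge 0$ and $0$ for $n<0$, i.e. $f^*\sim\sum_{k\in\mathbb{Z}_+}c_k e^{ik\theta}$.

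The main obstacle is the completeness argument of the second paragraph, and within it the pointwise bound $|f(w)|\le C_w\|f\|_p$ over the full range $0<p<\infty$; once that (and the resulting closedness of the image) is secured, surjectivity onto $H^p(\mathbb{T})$ is immediate and the remaining parts are routine consequences of the mean-convergence and factorization theorems of Riesz.
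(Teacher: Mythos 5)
Your argument is correct and is essentially the standard one: the paper itself gives no proof of this corollary, deferring entirely to \cite[Section 3.2]{Dur70}, and your chain of reasoning (isometry by definition of $\|\cdot\|_p$, injectivity from $\log|f^*|\in L^1(\mathbb{T})$, membership of the image in $H^p(\mathbb{T})$ via $f_r^*\to f^*$, completeness from the locally uniform bound $|f(w)|\le C_w\|f\|_p$, surjectivity from closedness of the image plus density of trigonometric polynomials, and (3)--(4) from mean convergence) is precisely the route taken in that reference. The only point worth flagging is the endpoint $p=\infty$ in parts (2) and (4), where Theorem \ref{thm:mean.conv.prop} does not apply; you correctly route (4) through $L^1(\mathbb{T})$ convergence, and completeness of $H^\infty(\mathbb{D})$ is immediate from the trivial bound $|f(w)|\le\|f\|_\infty$, so nothing is missing.
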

	
	\begin{theorem}\label{thm:H^p(D).is.BSoAF}
		For $1 \leq p < \infty$, $H^p(\mathbb{D})$ is a Banach space of analytic functions.
	\end{theorem}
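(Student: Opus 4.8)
The plan is to verify the three defining properties \ref{item:P1}--\ref{item:P3} of a Banach space of analytic functions, since the underlying normed-space structure is already in hand. First I would invoke Corollary \ref{cor:bdy.prop.H^p(D)}: part (2) tells us that $\bigl(H^p(\mathbb{D}), \|\cdot\|_p\bigr)$ is a genuine Banach space when $1 \leq p < \infty$, and part (3) tells us that the polynomials are dense, which is exactly \ref{item:P1}. So the only real tasks remaining are \ref{item:P2} and \ref{item:P3}.

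For \ref{item:P3}, I would check that $S : f \mapsto zf$ maps $H^p(\mathbb{D})$ into itself, which is immediate from the defining integral means: for any $r < 1$,
\begin{equation*}
    M_p(r, zf)^p = \int_0^{2\pi} \bigl| r e^{i\theta} f(re^{i\theta}) \bigr|^p \frac{d\theta}{2\pi} = r^p\, M_p(r,f)^p \leq M_p(r,f)^p,
\end{equation*}
so that $\lim_{r \to 1^-} M_p(r, zf) \leq \|f\|_p < \infty$ and $zf \in H^p(\mathbb{D})$; in fact $(zf)^* = \zeta f^*$ a.e.\ on $\mathbb{T}$, so $S$ is even an isometry. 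By Remark \ref{rem:props.P1-P3}(2), boundedness of $S$ would then follow from \ref{item:P2} via the closed graph theorem, but here it is entirely transparent.

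For \ref{item:P2}, the cleanest route uses the Fourier description from Corollary \ref{cor:bdy.prop.H^p(D)}(4). If $f = \sum_k c_k z^k \in H^p(\mathbb{D})$ with $1 \leq p < \infty$, then the $c_k$ are the Fourier coefficients of $f^* \in L^p(\mathbb{T}) \subseteq L^1(\mathbb{T})$, whence $|c_k| \leq \|f^*\|_{L^1(\mathbb{T})} \leq \|f^*\|_{L^p(\mathbb{T})} = \|f\|_p$, using that $\mathbb{T}$ carries unit mass and that $p \geq 1$. Thus for $w \in \mathbb{D}$ with $|w| = \rho < 1$,
\begin{equation*}
    |f(w)| \leq \sum_{k \in \mathbb{Z}_+} |c_k|\, \rho^k \leq \|f\|_p \sum_{k \in \mathbb{Z}_+} \rho^k = \frac{\|f\|_p}{1 - |w|},
\end{equation*}
so $\Lambda_w$ is bounded with $\|\Lambda_w\| \leq (1 - |w|)^{-1}$. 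Equivalently, one may represent $f$ through its Cauchy integral against $f^*$ and estimate the kernel directly.

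No step here is a serious obstacle: the genuine analytic content has already been absorbed into the Riesz factorization and mean-convergence theorems underpinning Corollary \ref{cor:bdy.prop.H^p(D)}, so this theorem is essentially a matter of unwinding definitions. The only point demanding (minor) care is \ref{item:P2}, where one passes from the boundary function back to interior values; note that the containment $L^p(\mathbb{T}) \subseteq L^1(\mathbb{T})$ is exactly what makes the coefficient bound — and hence the argument — valid, and it fails for $0 < p < 1$, consistent with the hypothesis $1 \leq p < \infty$.
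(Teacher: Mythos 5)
Your proof is correct, and parts \ref{item:P1} and \ref{item:P3} match the paper's treatment (the paper dismisses \ref{item:P3} as trivial; your integral-means computation just makes that explicit). Where you genuinely diverge is \ref{item:P2}. The paper first establishes the case $p=2$ via the reproducing kernel $k_w(z) = (1-\overline{w}z)^{-1}$ and Cauchy--Schwarz, and then transfers the bound to general $p$ by substituting $(f/B)^{p/2}$ for $f$, where $B$ is the Blaschke product from Theorem \ref{thm:Riesz.fact.H^p(D)}; this leans on the Riesz factorization and the identity $\|f\|_p = \|f/B\|_p$, but in exchange it works for all $0 < p < \infty$ and yields the sharp growth rate $\|\Lambda_w\| \lesssim (1-|w|^2)^{-1/p}$. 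Your route instead uses the elementary Fourier coefficient bound $|c_k| \leq \|f^*\|_{L^1(\mathbb{T})} \leq \|f\|_p$ from Corollary \ref{cor:bdy.prop.H^p(D)}(4) and a geometric series, which avoids factorization entirely and is arguably cleaner for the stated range $1 \leq p < \infty$; the price is the cruder bound $(1-|w|)^{-1}$ (harmless here, since only boundedness is needed) and, as you correctly note, the argument genuinely breaks down for $0 < p < 1$, where the embedding $L^p(\mathbb{T}) \subseteq L^1(\mathbb{T})$ and the Fourier correspondence in Corollary \ref{cor:bdy.prop.H^p(D)}(4) are both unavailable --- this is precisely the range the paper's Blaschke-product trick is designed to reach, though the theorem as stated does not require it.
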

	
	\begin{proof}
		It follows from Corollary \ref{cor:bdy.prop.H^p(D)} $(2)$ and $(3)$ that $H^p(\mathbb{D}) \subset \operatorname{Hol}(\mathbb{D})$ is a Banach space satisfying \ref{item:P1}. $H^p(\mathbb{D})$ also satisfies \ref{item:P3} trivially.
		
		To check that \ref{item:P2} also holds, first let $p = 2$ and let $w \in \mathbb{D}$ be arbitrary. Now, for
        \begin{equation*}
            k_w(z) := \frac{1}{1 - \overline{w}z} \in H^2(\mathbb{D}),
        \end{equation*}
        it is easy to check that
		\begin{equation}\label{eqn:bdd.eval.H^2(D)}
			|f(w)| = |\langle f, k_w \rangle_{H^2(\mathbb{D})}| \leq \|k_w\|_2 \|f\|_2 \foral f \in H^2(\mathbb{D}).
		\end{equation}
		Thus, \ref{item:P2} is satisfied by $H^2(\mathbb{D})$. For all other $p$, we can replace $f \in H^p(\mathbb{D})$ with $(f/B)^{\frac{p}{2}}$ (where $B$ is as in Theorem \ref{thm:Riesz.fact.H^p(D)}) in \eqref{eqn:bdd.eval.H^2(D)}, and use the fact that $|B| < 1$ on $\mathbb{D}$ and that
        \begin{equation*}
            \|f\|_p = \|f/B\|_p
        \end{equation*}
        to conclude that \ref{item:P2} holds.
	\end{proof}
	
	\subsubsection{Smirnov factorization and shift-cyclicity} While Riesz's theorems are sufficient to understand the Banach space structure of the Hardy spaces, we need to take things one step further and give a concrete description of the non-vanishing function $f/B$ (as in Theorem \ref{thm:Riesz.fact.H^p(D)}) to determine when $f$ is shift-cyclic.
	
	\begin{definition}\label{def:inn.out.func.H^2(D)}
		Let $f \in \operatorname{Hol}(\mathbb{D})$.
		\begin{enumerate}
			\item $f$ is said to be \emph{inner}\footnote{Beurling coined the terms inner/outer in \cite{Beu49}. Therefore, the terminology `Beurling inner/outer' is quite common in the literature.} if $|f| < 1$ on $\mathbb{D}$ and $|f^*| = 1$ a.e. on $\mathbb{T}$.
			
			\item $f$ is said to be \emph{singular inner} if it is a non-vanishing inner function.
			
			\item $f$ is said to be \emph{outer} if $f(0) \neq 0$, $\log |f^*| \in L^1(\mathbb{T})$, and
			\begin{equation}\label{eqn:outer.func.H^p}
				\log |f(0)| = \int_0^{2\pi} \log |f^*(e^{i \theta})| \frac{d \theta}{2 \pi}.
			\end{equation}
		\end{enumerate}
	\end{definition}
	
	It is important to note that every Blaschke product is inner, and every polynomial that is non-vanishing on $\mathbb{D}$ is outer. Also, every inner function evidently lies in $H^\infty(\mathbb{D})$, and every outer function is non-vanishing on $\mathbb{D}$\footnote{$f$ is outer $\Leftrightarrow$ $f(z) = \exp ( \int_\mathbb{T} \frac{w+z}{w-z} |\varphi(w)| \frac{d w}{2 \pi} )$, where $\varphi \in L^1(\mathbb{T})$ (see \cite[Section 4.4.3]{Rud69}).}. Smirnov \cite{Smi29} obtained a factorization property using Fatou's theorem \cite{Fat06} and Jensen's formula \cite{Jen99} (see \cite[Chapter 2]{Dur70} or \cite[Section 2.6]{Nik19} for a proof).
	
	\begin{theorem}\label{thm:Smirnov.fac.H^p(D)}
		For every $f \in H^p(\mathbb{D})$ with $0 < p \leq \infty$, there exists a unique singular inner function $f_\sigma$ and a unique outer function $f_\mu \in H^p(\mathbb{D})$ such that
		\begin{equation*}
			\mathbb{R} \ni f_\sigma(0) > 0 \AND f = B f_\sigma f_\mu,
		\end{equation*}
		where $B$ is the Blaschke product corresponding to $\mathcal{Z}(f)$.
	\end{theorem}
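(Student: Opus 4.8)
The plan is to peel off the three factors one at a time, constructing the outer part by an explicit Poisson--Herglotz integral and then recognizing the leftover as a singular inner function. Assume $f \not\equiv 0$, since the statement is vacuous otherwise. First I would apply the Riesz factorization (Theorem \ref{thm:Riesz.fact.H^p(D)}) to write $f = Bg$ with $g := f/B \in H^p(\mathbb{D})$ zero-free and $|g^*| = |f^*|$ a.e.\ on $\mathbb{T}$. By Theorem \ref{thm:mean.conv.prop} we have $\log|f^*| \in L^1(\mathbb{T})$, so the candidate outer factor
\[
F(z) := \exp\left( \frac{1}{2\pi} \int_0^{2\pi} \frac{e^{i\theta} + z}{e^{i\theta} - z} \log|f^*(e^{i\theta})| \, d\theta \right)
\]
is a well-defined, zero-free holomorphic function on $\mathbb{D}$. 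Taking real parts turns the Herglotz kernel into the Poisson kernel $P_z(\theta)$, so $\log|F| = P[\log|f^*|]$; Fatou's theorem for Poisson integrals of $L^1$ data then gives $|F^*| = |f^*|$ a.e., while $F(0) = \exp\left(\frac{1}{2\pi}\int \log|f^*|\,d\theta\right) > 0$. Hence $F$ satisfies Definition \ref{def:inn.out.func.H^2(D)}(3) and is outer.

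Next I would check $F \in H^p(\mathbb{D})$. Writing $|F(z)|^p = \exp\left(P_z[\log|f^*|^p]\right)$ and applying Jensen's inequality to the probability measure $P_z(\theta)\tfrac{d\theta}{2\pi}$ yields $|F(z)|^p \le P_z[|f^*|^p]$; averaging over the circle of radius $r$ and using that the circular mean of a Poisson integral recovers its boundary integral gives $M_p(r,F)^p \le \|f\|_p^p$ for every $r < 1$, so $F \in H^p(\mathbb{D})$.

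The heart of the proof is the domination inequality
\[
\log|g(z)| \le \frac{1}{2\pi}\int_0^{2\pi} P_z(\theta)\log|g^*(e^{i\theta})|\,d\theta = \log|F(z)|, \qquad z \in \mathbb{D},
\]
which is exactly what forces the leftover factor to have modulus at most $1$. To prove it, I would use the dilates $g_r(w) = g(rw)$: each $g_r$ is zero-free and continuous on $\overline{\mathbb{D}}$, so $\log|g_r|$ is harmonic up to the boundary and equals $P_z[\log|g_r^*|]$ exactly. Letting $r \to 1$, the left side converges to $\log|g(z)|$, while on the right the positive parts $\log^+|g_r^*|$ are uniformly integrable (by the $L^p$-bound on $g^*$ together with the mean convergence $\|g_r^* - g^*\|_p \to 0$ from Theorem \ref{thm:mean.conv.prop}) and the negative parts are controlled via Fatou's lemma; together these give $\limsup_{r\to1} P_z[\log|g_r^*|] \le P_z[\log|g^*|]$, hence the claimed inequality. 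Managing the interplay between $\log^+$ and $\log^-$ under this limit is the main obstacle, since $\log$ is unbounded below and the naive limit of the integrals need not exist.

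Finally I would set $f_\sigma := g/F$. It is holomorphic and zero-free (as $F$ is), satisfies $|f_\sigma^*| = |g^*|/|F^*| = 1$ a.e.\ on $\mathbb{T}$, and $|f_\sigma| \le 1$ on $\mathbb{D}$ by the domination inequality, so $f_\sigma$ is a singular inner function in the sense of Definition \ref{def:inn.out.func.H^2(D)}(2). To secure the normalization $f_\sigma(0) > 0$, I would absorb the unimodular constant $e^{-i\gamma}$ with $e^{i\gamma} = f_\sigma(0)/|f_\sigma(0)|$ into $f_\sigma$ and its conjugate into $F =: f_\mu$, which leaves $f_\mu$ outer and yields $f = B f_\sigma f_\mu$ with $f_\sigma(0) > 0$. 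For uniqueness, comparing two such factorizations and cancelling $B$ gives $f_\sigma f_\mu = f_\sigma' f_\mu'$; taking boundary moduli forces $|f_\mu^*| = |f_\mu'^*|$ a.e., whence $\log|f_\mu/f_\mu'| = P\left[\log(|f_\mu^*|/|f_\mu'^*|)\right] = 0$, so $f_\mu/f_\mu'$ is a unimodular constant, and the normalization $f_\sigma(0), f_\sigma'(0) > 0$ pins that constant to $1$.
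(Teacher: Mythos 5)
The paper gives no proof of this theorem---it is quoted as a classical result with pointers to \cite[Chapter 2]{Dur70} and \cite[Section 2.6]{Nik19}---and your argument is precisely the standard one found in those references: Riesz factorization to strip the zeros, the Herglotz-integral construction of the outer part, the domination inequality $\log|g(z)| \le P_z[\log|g^*|]$ obtained from the dilates via uniform integrability of $\log^+|g_r^*|$ and Fatou's lemma on the negative part, and the maximum principle to identify the quotient as singular inner. The proof is correct; the only step stated more tersely than it deserves is uniqueness, where the identity $\log|f_\mu(z)| = P_z[\log|f_\mu^*|]$ for an \emph{arbitrary} outer $f_\mu$ (which Definition \ref{def:inn.out.func.H^2(D)} pins down only by equality at the origin) needs the one-line observation that $P_z[\log|f_\mu^*|] - \log|f_\mu(z)|$ is a nonnegative \emph{harmonic} function (as $f_\mu$ is zero-free) vanishing at $0$, hence identically zero.
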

	
	Smirnov \cite{Smi32} and Beurling \cite{Beu49} used this factorization property to show that $f \in H^2(\mathbb{D})$ is shift-cyclic if and only if it is outer. In fact, Beurling showed that if we consider the \emph{inner part of $f$}, i.e., $f_{inn} := B f_\sigma$, then
	\begin{equation}\label{eqn:Smirnov-Beurling.thm.inner.repn}
		S[f] = f_{inn} H^2(\mathbb{D}) = \left\{ f_{inn} g : g \in H^2(\mathbb{D}) \right\}.
	\end{equation}
	Moreover, every closed shift-invariant subspace $E \subset H^2(\mathbb{D})$ is of the form $\varphi H^2(\mathbb{D})$ where $\varphi$ is the \emph{greatest common divisor} of the inner factors of each $f \in E$ (see \cite[Chapter 6 p. 85]{Hof07}), i.e.,
    \begin{enumerate}
        \item $\varphi$ is inner,

        \item $f_{inn}/\varphi$ is inner for each $f \in E$, and

        \item if $\widetilde{\varphi}$ is another such divisor then $\varphi / \widetilde{\varphi}$ is inner
    \end{enumerate}
    Essentially, Beurling shows that the orthogonal projection
    \begin{equation*}
        \operatorname{Proj}_{S[f]} (z^{m(0)})
    \end{equation*}
    can be modified to be an inner function, which then turns out to be $f_{inn}$. Here, $m(0)$ is the multiplicity of the zero of $f$ at $0$.
    
    Beurling's approach extends to every $1 \leq p < \infty$ using $L^p$-duality and the fact that $H^p(\mathbb{D})$ can be identified as a closed subspace of $L^p(\mathbb{T})$ (see \cite[Theorem 7.4]{Dur70}). Lastly, note that we can also make sense of the shift-invariant subspace $S[f]$ for any $f \in H^p(\mathbb{D})$ with $0 < p < 1$ by declaring it as the closure of polynomial multiples of $f$. Gamelin \cite{Gam66} then showed that the Smirnov-Beurling theorem extends to the case $0 < p < 1$ as well. We summarize these results in the following theorem.
	
	\begin{theorem}[Shift-cyclic functions in Hardy spaces]\label{thm:cyc.func.Hardy.sp}
		If $f \in H^p(\mathbb{D})$ with $0 < p < \infty$, then
        \begin{equation*}
            S[f] = H^p(\mathbb{D})
        \end{equation*}
        if and only if $f$ is outer. Also, every closed shift-invariant subspace is of the form $\varphi H^p(\mathbb{D})$ for some inner $\varphi$.
	\end{theorem}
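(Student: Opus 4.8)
The plan is to reduce both assertions to a single structural fact — that every nonzero closed shift-invariant subspace $E \subseteq H^p(\mathbb{D})$ has the form $\varphi H^p(\mathbb{D})$ for some inner $\varphi$ — and then to read off the cyclicity criterion by specializing to $E = S[f]$ and matching $\varphi$ with the inner part $f_{inn}$ supplied by Smirnov factorization (Theorem \ref{thm:Smirnov.fac.H^p(D)}). I would organize the argument by the value of $p$: the Hilbert case $p = 2$ via Beurling's orthogonality argument, the range $1 \le p < \infty$ via $L^p$-duality, and the range $0 < p < 1$ by appealing to Gamelin's extension.

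For $p = 2$, let $E \neq \{0\}$ be closed and shift-invariant. First I would check that $zE$ is a proper closed subspace of $E$: if $zE = E$ then $E = z^n E \subseteq z^n H^2(\mathbb{D})$ for every $n$, and since $\bigcap_n z^n H^2(\mathbb{D}) = \{0\}$ this forces $E = \{0\}$. Choosing a unit vector $\varphi \in E \ominus zE$, the relations $\langle \varphi, z^n \varphi \rangle = 0$ for all $n \ge 1$ (together with their conjugates) show that every nonzero Fourier coefficient of $|\varphi^*|^2$ vanishes, so $|\varphi^*| = 1$ a.e.\ and $\varphi$ is inner. A short orthogonality computation then yields $E = \varphi H^2(\mathbb{D})$, using that multiplication by an inner function is an isometry, so that $\varphi H^2(\mathbb{D})$ is closed.

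To pass from the structure theorem to the cyclicity criterion — uniformly in $p$ — I would specialize to $E = S[f]$, giving $S[f] = \varphi H^p(\mathbb{D})$ for some inner $\varphi$. On the one hand $f \in \varphi H^p(\mathbb{D})$ together with uniqueness of Smirnov factorization gives $\varphi \mid f_{inn}$; on the other hand $S[f] \subseteq f_{inn} H^p(\mathbb{D})$ (the latter being closed, shift-invariant, and containing $f = f_{inn} f_{out}$), so $\varphi \in f_{inn} H^p(\mathbb{D})$ forces $f_{inn} \mid \varphi$. Hence $\varphi = f_{inn}$ up to a unimodular constant and $S[f] = f_{inn} H^p(\mathbb{D})$. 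Since multiplication by $f_{inn}$ is isometric, $S[f] = H^p(\mathbb{D})$ if and only if $f_{inn}$ is a unimodular constant, i.e.\ if and only if $f$ is outer, which is exactly the claim (consistent with the necessary condition $f(w) \neq 0$ on $\Omega$ from Corollary \ref{cor:poly.necc.cond.for.cyc}). Note that this step uses only Smirnov factorization and is therefore independent of $p$.

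The remaining — and genuinely harder — task is the structure theorem for $p \neq 2$, where the Hilbert-space projection is unavailable. For $1 \le p < \infty$ I would identify $H^p(\mathbb{D})$ with the closed subspace $H^p(\mathbb{T}) \subseteq L^p(\mathbb{T})$ (Corollary \ref{cor:bdy.prop.H^p(D)}) and run the Helson–Lowdenslager argument: a proper closed invariant subspace $E$ is again simply invariant ($\overline{zE} \neq E$, as above), and one produces a unimodular generator $\varphi$ as an extremal element detected through the dual pairing with $L^{p'}(\mathbb{T})$, the role formerly played by orthogonality now being played by annihilation against $\overline{zE}$. Verifying $|\varphi^*| = 1$ a.e.\ and then $E = \varphi H^p(\mathbb{D})$ is the main obstacle, since both the extremal selection and the passage to $|\varphi^*|$ constant must be carried out without an inner product. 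Finally, for $0 < p < 1$ the space is only an F-space and $L^p$-duality degenerates, so here I would invoke Gamelin's theorem, which establishes the same structure by approximation-theoretic means; the generator-identification and cyclicity steps above then apply verbatim.
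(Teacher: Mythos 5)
Your proposal is correct and follows essentially the same route as the paper, which likewise handles $p=2$ by a Hilbert-space orthogonality argument producing an inner generator (you use the wandering subspace $E \ominus zE$, the paper phrases it as modifying $\operatorname{Proj}_{S[f]}(z^{m(0)})$ -- the same idea in Helson's formulation), identifies the generator with $f_{inn}$ via the uniqueness in Theorem \ref{thm:Smirnov.fac.H^p(D)}, and then extends to $1 \le p < \infty$ by $L^p$-duality and to $0 < p < 1$ by citing Gamelin. The only caveat is that, like the paper, you leave the $p \neq 2$ structure theorem at the level of a cited sketch rather than a worked argument, which is appropriate here.
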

	
	This theorem was generalized by Lax \cite{Lax59} and Halmos \cite{Hal61} to the case of vector-valued Hardy spaces. Several other generalizations and modifications of this theorem exist that this article is too short to contain.
	
	\subsubsection{Properties of shift-cyclic functions}\label{subsubsec:prop.shift-cyc.func}
	
	It will be evident from the upcoming sections that Theorem \ref{thm:cyc.func.Hardy.sp} is indeed quite special, and a complete characterization of shift-cyclic functions seems out of reach in most other spaces. Nevertheless, we can extend several different properties of shift-cyclic functions in the Hardy spaces to general Banach spaces of analytic functions. We end this section by listing a few of these properties below to motivate later discussion.
	
	\begin{enumerate}
		\myitem[\textbf{SCP}]\label{item:SCP} Note that $f(z) = z - w$ satisfies \eqref{eqn:outer.func.H^p} exactly when $w \not\in \mathbb{D}$ (see \cite[Lemma 15.17]{Rud74}). If we combine Theorem \ref{thm:cyc.func.Hardy.sp} and Proposition \ref{prop:poly.cyc.iff.irred.fac.cyc}, we get that $P$ is shift-cyclic in any of the Hardy spaces if and only if it is non-vanishing on $\mathbb{D}$. For a proof that does not use Theorem \ref{thm:cyc.func.Hardy.sp}, see \cite[Theorem 5]{GNN70}.
		
		\myitem[\textbf{MC}]\label{item:MC} It is not difficult to show that $H^\infty(\mathbb{D})$ is the \emph{multiplier algebra} of $H^p(\mathbb{D})$ for each $0 < p < \infty$, i.e.,
		\begin{equation*}
			H^\infty(\mathbb{D}) = \left\{ f : \mathbb{D} \to \mathbb{C} : fg \in H^p(\mathbb{D}) \foral g \in H^p(\mathbb{D}) \right\}.
		\end{equation*}
		For any $f \in H^p(\mathbb{D})$ with $0 < p < \infty$, a result of Garnett \cite[Chapter II Theorem 7.4]{Gar06} then shows that
		\begin{equation*}
			S[f] = \overline{f H^\infty(\mathbb{D})} = \overline{\left\{ fg : g \in H^\infty(\mathbb{D}) \right\}}.
		\end{equation*}
				
		\myitem[\textbf{NI}]\label{item:NI} Note that
        \begin{equation*}
            H^p(\mathbb{D}) \subsetneq H^q(\mathbb{D}) \foral 0 < q < p \leq \infty.
        \end{equation*}
        Since the outerness of any given $f \in H^p(\mathbb{D})$ is independent of $p$, Theorem \ref{thm:cyc.func.Hardy.sp} shows that $f$ is shift-cyclic in $H^p(\mathbb{D})$ if and only if it is shift-cyclic in $H^q(\mathbb{D})$ for all/some $0 < q < p$.
		
		\myitem[\textbf{Mult}]\label{item:Mult} Suppose
        \begin{equation*}
            \frac{1}{p} + \frac{1}{q} = \frac{1}{r}
        \end{equation*}
        and let $\Pi_{p,q} : H^p(\mathbb{D}) \times H^q(\mathbb{D}) \to H^r(\mathbb{D})$ be defined as
		\begin{equation*}
			\Pi_{p,q}(f,g) = fg \foral f \in H^p(\mathbb{D}) \AND g \in H^q(\mathbb{D}).
		\end{equation*}
		$\Pi_{p,q}$ is onto, since, for any given $h = B \widetilde{h} \in H^r(\mathbb{D})$ (see Theorem \ref{thm:Riesz.fact.H^p(D)}), let
		\begin{equation*}
			f = B \widetilde{h}^\frac{r}{p} \in H^p(\mathbb{D}) \AND g = \widetilde{h}^\frac{r}{q} \in H^q(\mathbb{D}).
		\end{equation*}
		Moreover, $\Pi_{p,q}$ is bicontinuous, since
		\begin{equation*}
			\|fg\|_r \leq \|f\|_p \|g\|_q \foral f \in H^p(\mathbb{D}) \AND H^q(\mathbb{D}).
		\end{equation*}
		From here, we can conclude that $f$ and $g$ are both shift-cyclic (in $H^p(\mathbb{D})$ and $H^q(\mathbb{D})$ respectively) if and only if $fg$ is shift-cyclic in $H^r(\mathbb{D})$.
		
		\myitem[\textbf{Inv}]\label{item:Inv} A quick application of \ref{item:Mult} shows that if $f \in H^p(\mathbb{D})$ and $1/f \in H^q(\mathbb{D})$ for some $0 < p,q < \infty$, then $f$ is shift-cyclic in $H^p(\mathbb{D})$.
	\end{enumerate}
	
	\ref{item:SCP} refers to the shift-cyclicity of polynomials. It will be a general theme moving forward that the vanishing set of a polynomial plays a crucial role in determining its shift-cyclicity.
	
	\ref{item:MC} shows that shift-cyclicity in the Hardy spaces is equivalent to \emph{multiplier-cyclicity}, i.e., $f \in H^p(\mathbb{D})$ is multiplier-cyclic if
	\begin{equation*}
		\mathcal{M}[f] := \overline{f H^\infty(\mathbb{D})} = H^p(\mathbb{D}).
	\end{equation*}
	Multiplier-cyclicity is a natural substitute for shift-cyclicity in non-analytic function spaces. We shall learn more about this topic in Section \ref{subsec:mult.cyc}.
	
	\ref{item:NI} refers to the `norm independence' and \ref{item:Mult} refers to the `multiplicativity' of shift-cyclic functions in the Hardy spaces. \ref{item:Inv} refers to the fact that `invertibility' (within the family of Hardy spaces) implies shift-cyclicity. We revisit these properties in Section \ref{subsubsec:alg.prop.mult-cyc.func} for general function spaces.
	
	\subsection{On the unit polydisk \texorpdfstring{$\mathbb{D}^d$}{}}\label{subsec:Hardy.on.D^d}
	
	Before venturing out into exotic function spaces in one variable, let us briefly discuss the situation in the several variable analogue of Hardy spaces. These spaces serve as a reminder of how important the factorization results in $H^p(\mathbb{D})$ are to the understanding of shift-cyclic functions.
	
	Fix $d \in \mathbb{N}$ and consider the unit polydisk $\mathbb{D}^d$ (as in Section \ref{subsubsec:anal.func.sp}). We also consider the \emph{unit $d$-torus}
	\begin{equation*}
		\mathbb{T}^d := \bigtimes_{j = 1}^d  \mathbb{T} = \left\{ \zeta = (\zeta_1,\dots,\zeta_d) \in \mathbb{C}^d : |\zeta_j| = 1 \foral 1 \leq j \leq d \right\},
	\end{equation*}
	equipped with the normalized Lebesgue measure $\lambda_d$. For any $\varphi \in L^1(\mathbb{T}^d)$, we use the compact notation
	\begin{equation*}
		\int_{\mathbb{T}^d} \varphi := \int_{\mathbb{T}^d} \varphi(\zeta) d \lambda_d(\zeta) = \int_{[0,2\pi)^d} \varphi(e^{i \theta_1},\dots,e^{i \theta_d}) \frac{d \theta_1}{2 \pi} \dots \frac{d \theta_d}{2 \pi}.
	\end{equation*}
	For a given $f \in \operatorname{Hol}(\mathbb{D}^d)$ and $r < 1$, let $f_r^* : \mathbb{T}^d \to \mathbb{C}$ be the map
	\begin{equation*}
		f_r^*(\zeta) := f(r\zeta_1,\dots,r\zeta_d) \foral \zeta = (\zeta_1,\dots,\zeta_d) \in \mathbb{T}^d.
	\end{equation*}
	We now define the radial $p$-means of $f$ for each $r < 1$ and $0 < p \leq \infty$ as before:
	\begin{equation*}
		M_p(r,f) := \left( \int_{\mathbb{T}^d} |f_r^*|^p \right)^{\frac{1}{p}} \foral 0 < p < \infty
	\end{equation*}
	and
	\begin{equation*}
		M_\infty(r,f) := \sup_{\mathbb{T}^d} |f_r^*|.
	\end{equation*}
    
	The \emph{Hardy spaces on $\mathbb{D}^d$} are then similarly defined for each $0 < p \leq \infty$ as
	\begin{equation*}
		H^p(\mathbb{D}^d) := \left\{ f \in \operatorname{Hol}(\mathbb{D}^d) : \lim_{r \to 1^-} M_p(r,f) < \infty \right\}.
	\end{equation*}
	As in Remark \ref{rem:equiv.def.H^2(D)}, $H^2(\mathbb{D}^d)$ is the collection of all $f \in \operatorname{Hol}(\mathbb{D}^d)$ with square-summable power-series coefficients. Also, note that
    \begin{equation*}
        H^p(\mathbb{D}^d) \subsetneq H^q(\mathbb{D}^d) \foral p > q.
    \end{equation*}
    The boundary properties of $H^p(\mathbb{D})$ functions are explained by the results of Fatou \cite{Fat06} and Riesz \cite{Rie23}.
	These results are, in essence, consequences of the symmetries of the Poisson kernel and the Poisson integral representation of Hardy space functions.
	Many of these properties can be generalized to $d > 1$ via \emph{slice functions}, i.e., given any $f \in \operatorname{Hol}(\mathbb{D}^d)$ and $\zeta \in \mathbb{T}^d$, consider
	\begin{equation*}
		f_\zeta (w) := f(w\zeta_1,\dots,w\zeta_d) \foral w \in \mathbb{D}.
	\end{equation*}
	For instance, we can use slice functions to show that the radial limits
    \begin{equation*}
        f^*(\zeta) := \lim_{r \to 1^-} f_r^*(\zeta)
    \end{equation*}
    exist a.e. on $\mathbb{T}^d$ \cite[Theorem 3.3.3]{Rud69}. Bochner \cite{Boc69} used slice functions to extend Riesz's theorem and showed that $\log|f^*| \in L^1(\mathbb{T}^d)$ unless $f \equiv 0$, and
	\begin{equation*}
		\lim_{r \to 1^-} \|f^* - f_r^*\|_{L^p(\mathbb{T}^d)} = 0
	\end{equation*}
	for all $f \in H^p(\mathbb{D}^d)$ with $0 < p < \infty$ (see \cite[Theorem 3.4.3]{Rud69} for a different treatment). It is then not difficult to verify that Corollary \ref{cor:bdy.prop.H^p(D)} and Theorem \ref{thm:H^p(D).is.BSoAF} can be generalized for $d > 1$. Unfortunately, we cannot go too far from here in terms of factorization and shift-cyclicity when $d > 1$.
	
	\subsubsection{Negative results}\label{subsubsec:neg.res}
	
	All examples mentioned in this subsection are non-trivial, as well as technical, so we omit the details to keep our discussion compact. The reader is encouraged to look at the references below for these details. Let us start with the following result of Rudin (see \cite[corollary to Theorem 4.4.2]{Rud69}) which shows that Theorem \ref{thm:cyc.func.Hardy.sp} cannot be generalized as is to the case $d > 1$.
	
	\begin{theorem}\label{thm:eg.inftly.gen.shift-inv.subsp}
		$H^2(\mathbb{D}^2)$ has an invariant subspace that is not finitely generated.
	\end{theorem}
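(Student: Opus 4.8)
The plan is to exhibit an explicit shift-invariant subspace of $H^2(\mathbb{D}^2)$ that cannot be generated by finitely many functions. The key structural contrast with the one-variable Beurling theorem is that in two variables the shift operators $S_1, S_2$ commute but their joint invariant subspaces are governed by the common zero sets of the generators, and zero sets of several holomorphic functions in $\mathbb{C}^2$ behave very differently from those in $\mathbb{C}^1$: a single nonzero function in $\operatorname{Hol}(\mathbb{D})$ has an isolated (discrete) zero set, whereas the common zeros of functions on $\mathbb{D}^2$ can be forced to be an analytic variety of positive dimension. The construction I would use is to pick a subspace consisting of all functions that vanish on a suitable analytic subvariety, or that vanish to prescribed order along a curve, and then show that no finite collection of functions can cut out that variety.

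Concretely, I would first let $V \subset \mathbb{D}^2$ be a one-dimensional analytic variety (for instance the graph $\{(w, \psi(w)) : w \in \mathbb{D}\}$ of a suitable bounded holomorphic $\psi$, or a diagonal-type set) and define
\begin{equation*}
	E := \{ f \in H^2(\mathbb{D}^2) : f|_V = 0 \}.
\end{equation*}
One checks immediately that $E$ is closed (point evaluations on $\mathbb{D}^2$ are bounded by \ref{item:P2}, so vanishing on $V$ is a closed condition) and that $E$ is shift-invariant, since if $f$ vanishes on $V$ then so does $z_j f$ for each $j$. The heart of the matter is to choose $V$ so that $E \neq \{0\}$ yet $E$ admits no finite generating set. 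The plan for non-triviality is to arrange that $V$ is thin enough that some nonzero Hardy function vanishes on it; the plan for non-finite-generation is to show that the ideal of functions vanishing on $V$ requires infinitely many generators, using a dimension or order-of-vanishing argument along $V$.

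The precise mechanism I would exploit is order of vanishing. Suppose $g_1, \dots, g_N$ were a finite generating set for $E$, so that $E = \overline{\operatorname{span}}\{ z^\alpha g_i : \alpha \in \mathbb{Z}_+^2,\ 1 \le i \le N\}$. Every element of the algebraic span $\sum_i \mathcal{P}_2 g_i$ vanishes on $V$ to at least the minimum of the vanishing orders of the $g_i$ at each point. I would then produce a sequence of functions $f_n \in E$ whose vanishing behaviour along $V$ (e.g. the order of contact with $V$, or vanishing on successively larger finite subsets of $V$ that the $g_i$ do not simultaneously control) cannot be matched by any finite polynomial combination of $g_1,\dots,g_N$, and show that these $f_n$ cannot be approximated in $H^2(\mathbb{D}^2)$-norm by such combinations. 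The approximation step uses boundedness of point evaluations: norm convergence forces convergence of the relevant finite-order jet functionals along $V$, so a limit of combinations of the $g_i$ inherits a uniform lower bound on vanishing order that $f_n$ eventually violates.

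The main obstacle, and the step requiring genuine care, is precisely this last approximation argument: one must verify that the relevant jet or evaluation functionals along $V$ are \emph{bounded} on $H^2(\mathbb{D}^2)$, so that norm limits commute with them, and simultaneously that the candidate witnesses $f_n$ genuinely lie in $H^2(\mathbb{D}^2)$ (square-summable coefficients) while escaping the finitely-generated submodule. Because this is a survey, rather than reconstruct Rudin's delicate explicit example in full, I would cite \cite[corollary to Theorem 4.4.2]{Rud69} for the existence of $V$ and the non-finite-generation, and present the above as the conceptual skeleton: the failure of the one-variable factorization theory in several variables is exactly the statement that vanishing sets can have positive dimension, and positive-dimensional vanishing conditions are not finitely generated as invariant subspaces.
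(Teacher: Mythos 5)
The paper itself offers no proof of this statement: it is quoted directly from Rudin \cite[corollary to Theorem 4.4.2]{Rud69}, and the surrounding text explicitly says that the details of all examples in that subsection are omitted. So your closing citation is consistent with what the survey does. The problem is with the ``conceptual skeleton'' you put forward, which I do not think survives scrutiny. First, the candidate subspace $E = \{ f \in H^2(\mathbb{D}^2) : f|_V = 0\}$ for a one-dimensional graph-type variety $V$ is a poor witness: for $V$ the diagonal, or a graph $z_2 = \psi(z_1)$ with $\psi$ a multiplier, $E$ contains $S[z_1 - z_2]$ (resp.\ $S[z_2 - \psi(z_1)]$) and is plausibly equal to it, i.e.\ singly generated; nothing in your sketch rules this out, so ``positive-dimensional vanishing conditions are not finitely generated'' is not a correct slogan. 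Second, and more seriously, your order-of-vanishing obstruction points the wrong way. If $E = \overline{\textstyle\sum_i \mathcal{P}_2\, g_i}$, then boundedness of the point evaluations (\ref{item:P2}) and of the interior derivative functionals shows that every $f \in E$ vanishes at least as much as all the $g_i$ do simultaneously; in particular $\mathcal{Z}(E) = \bigcap_i \mathcal{Z}(g_i)$. Producing functions $f_n \in E$ that vanish \emph{more} (on larger subsets of $V$, or to higher order of contact) is no obstruction at all to their lying in the closed span of the $z^\alpha g_i$. To get a contradiction you would need an element of $E$ that vanishes \emph{less} than the generators jointly force; but the generators lie in $E$ and hence only force vanishing to order $\geq 1$ on $V$, which is exactly the defining condition of $E$, so this mechanism produces nothing.

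What Rudin actually proves is a constraint on the common zero set of any finite generating set of an invariant subspace of $H^2(\mathbb{D}^2)$ --- roughly, that such a zero set cannot contain a sequence $(\alpha_j,\beta_j)$ with $\alpha_j$ tending to $\mathbb{T}$ while $\beta_j$ remains in a compact subset of $\mathbb{D}$ --- and the corollary then takes $E$ to be the set of functions vanishing on a \emph{discrete} set such as $\{(\alpha_j,0)\}_{j}$ with $\{\alpha_j\}$ a Blaschke sequence. This $E$ is nonzero because it contains the one-variable Blaschke product $B(z_1)$, it is closed and shift-invariant for the reasons you give, and its zero set violates the constraint, so it cannot be finitely generated. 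The genuinely hard step is thus a quantitative theorem about how the common zeros of finitely many $H^2(\mathbb{D}^2)$ functions may approach the boundary; your proposal neither contains that step nor substitutes a working alternative for it, and the example it gestures at is of the wrong type.
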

	
	One might wonder if we could provide an appropriate description of those shift-invariant subspaces that are finitely generated. The following result of Miles \cite{Mil73} sheds some light on this. Similar to the one variable case, we let $\mathcal{Z}(f)$ denote the zero set of any $f \in \operatorname{Hol}(\mathbb{D}^d)$ in $\mathbb{D}^d$ counting multiplicities\footnote{The multiplicity of a zero at $w$ is the smallest integer $k$ such that the $k$-th term in the \emph{homogeneous expansion} of $f$ around $w$ does not vanish at $w$ (see \cite[Section 1.1]{Rud69}).}.
	
	\begin{theorem}\label{thm:zero.set.problem.d>1}
		There exists $0 \not\equiv f \in H^p(\mathbb{D}^2)$ such that for each $g \in H^q(\mathbb{D}^2)$ with $p < q \leq \infty$ and
        \begin{equation*}
            \mathcal{Z}(f) \subseteq \mathcal{Z}(g),
        \end{equation*}
        we have $g \equiv 0$.
	\end{theorem}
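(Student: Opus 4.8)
The plan is to construct the function $f$ explicitly as a convergent infinite product whose zero variety $\mathcal{Z}(f)$ is a countable union of genuinely two-dimensional analytic hypersurfaces, arranged so as to accumulate at the distinguished boundary $\mathbb{T}^2$ densely enough to be a \emph{determining set} for every larger Hardy space, yet sparsely enough that $f$ itself survives in $H^p(\mathbb{D}^2)$. The essential point I would stress at the outset is that the one-variable analogue is \emph{false}: by Theorem \ref{thm:Riesz.fact.H^p(D)} and Theorem \ref{thm:mean.conv.prop} the zero sets in $H^p(\mathbb{D})$ are exactly the Blaschke sequences, a condition independent of $p$, so that $\mathcal{Z}(f) \subseteq \mathcal{Z}(g)$ always lets one divide out a Blaschke product and keep $g/B \in H^q(\mathbb{D})$. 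Any proof must therefore exploit the geometry of $\mathbb{D}^2$. In particular the variety $\mathcal{Z}(f)$ cannot be a union of coordinate slices $\{z_2 = a_n\}$, since such a variety only forces divisibility of $g$ by a Blaschke product in $z_2$ and never kills $g$; the hypersurfaces must be truly coupled in the two variables.

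First I would fix $p$ and choose factors $\phi_n \in H^\infty(\mathbb{D}^2)$ whose zero sets $V_n = \mathcal{Z}(\phi_n)$ are genuine graphs (for instance $\{z_1 = \psi_n(z_2)\}$ or level sets of a product such as $z_1 z_2$), normalized so that $\phi_n(0) = 1$ and $\sum_n (1 - |\phi_n|)$ converges uniformly on compact subsets of $\mathbb{D}^2$. Then $f = \prod_n \phi_n$ defines a holomorphic function with $f(0) = 1 \neq 0$, so $f \not\equiv 0$. Membership $f \in H^p(\mathbb{D}^2)$ would be verified by bounding the radial $p$-means $M_p(r,f)$ uniformly in $r$, i.e. by showing the partial products form a Cauchy sequence in $H^p(\mathbb{D}^2)$; this is the step where the sparsity of the $V_n$ is spent.

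Next, given $g \in H^q(\mathbb{D}^2)$ with $p < q \leq \infty$ and $\mathcal{Z}(f) \subseteq \mathcal{Z}(g)$, I would reduce the several-variable statement to one-variable zero-counting by passing to a parametrized family of analytic slices, of which the slice functions $g_\zeta(w) = g(w\zeta_1, w\zeta_2)$ are the prototype. For a.e. parameter the slice lies in $H^q(\mathbb{D})$, with the slice norms $q$-integrable over $\mathbb{T}^2$, say $\int_{\mathbb{T}^2} \|g_\zeta\|_{H^q(\mathbb{D})}^q \, d\lambda_2(\zeta) < \infty$ because $g \in H^q(\mathbb{D}^2)$, and the inclusion of zero sets forces each such slice of $g$ to vanish on the corresponding trace of $\mathcal{Z}(f)$. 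The one-variable Jensen/Blaschke theory then bounds the trace of the zero mass in terms of the slice norm, and integrating this bound over $\mathbb{T}^2$ yields a global finiteness constraint on the Blaschke mass of $\mathcal{Z}(g)$, hence of $\mathcal{Z}(f)$. The factors $\phi_n$ must be calibrated so that this mass is exactly critical: finite enough that $f \in H^p(\mathbb{D}^2)$, yet so large that the $H^q$-finiteness constraint is satisfiable only by $g \equiv 0$.

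The hard part will be precisely this calibration of the gap between $p$ and $q$. Since the one-variable zero condition is $p$-independent, all the $p$-sensitivity must be manufactured from the \emph{anisotropic distribution} of the slices of $\mathcal{Z}(f)$ over $\mathbb{T}^2$: the higher integrability $q > p$ constrains the slice behavior more uniformly, and the whole construction must live in the narrow regime where this extra uniformity fails while the weaker $H^p$ control does not. Forcing the infinite product into this window — convergent and nonvanishing in $H^p(\mathbb{D}^2)$, yet imposing infinitely many independent constraints on every $H^q(\mathbb{D}^2)$ with $q > p$ — is the entire technical content of Miles' argument, and is the reason the full estimates are not reproduced here.
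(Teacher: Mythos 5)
Your proposal is a strategy outline rather than a proof, and the gap is not a detail that can be deferred: the object whose existence the theorem asserts is never exhibited. You say the factors $\phi_n$ ``must be calibrated so that this mass is exactly critical,'' but producing such a calibrated product --- convergent in $H^p(\mathbb{D}^2)$ while imposing constraints that no nonzero $g \in H^q(\mathbb{D}^2)$ can satisfy --- \emph{is} the theorem. Asserting that the construction must live in a narrow window does not show the window is nonempty; indeed, your own opening remarks explain why the one-variable analogue fails, so some genuinely two-variable mechanism has to be demonstrated, not merely postulated. (For what it is worth, the paper itself offers no proof either: it cites Miles \cite{Mil73} and explicitly omits the details, so there is no in-paper argument to measure you against --- but that does not make your sketch a proof.)

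There is also a structural problem with the one reduction you do describe. Slicing through the origin via $g_\zeta(w) = g(w\zeta_1, w\zeta_2)$ and applying one-variable Jensen/Blaschke theory to each slice yields, after integrating over $\zeta \in \mathbb{T}^2$, a Blaschke-type mass condition on $\mathcal{Z}(g)$ that is the \emph{same} for every $q \in (0,\infty)$ --- exactly because, as you note, the one-variable zero condition is $p$-independent. So this route cannot distinguish $H^p$ from $H^q$ and cannot yield the conclusion $g \equiv 0$ for $q > p$ while leaving room for $f \in H^p$. You acknowledge this tension and appeal to the ``anisotropic distribution'' of the slices, but you supply no mechanism by which anisotropy converts into a $q$-sensitive obstruction. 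In Miles' actual argument the $q$-sensitivity comes from quantitative lower bounds on how much negative logarithmic mass the prescribed zero variety forces onto $\log|g^*|$ over $\mathbb{T}^2$ (via Jensen-type inequalities adapted to the specific factors), compared against the integrability $\log|g^*| \in L^1(\mathbb{T}^2)$ guaranteed by Bochner's theorem for $g \not\equiv 0$; none of that appears here. As written, the proposal identifies the right obstacles but resolves none of them.
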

	
	Now, let $f \in H^p(\mathbb{D}^d)$ for some $0 < p < \infty$ and note that
	\begin{equation*}
		\mathcal{Z}(f) \subseteq \mathcal{Z}(g) \foral g \in S[f].
	\end{equation*}
	Combined with Theorem \ref{thm:zero.set.problem.d>1}, the above inclusion gives the following result. Note that while Miles' result holds when $d = 2$, one can easily generalize it to any $d > 1$ by taking
    \begin{equation*}
        \widetilde{f}(z_1,\dots,z_d) := f(z_1,z_2)
    \end{equation*}
    instead of $f$ in the theorem.
	
	\begin{corollary}\label{cor:no.Beurling.thm.d>1}
		For any $d>1$ and $0 < p < \infty$, there is an $f \in H^p(\mathbb{D}^d)$ such that $S[f]$ contains no function from $H^q(\mathbb{D}^d)$ for any given $p < q \leq \infty$. Thus,
        \begin{equation*}
            S[f] \neq g H^p(\mathbb{D}^d) \text{ for any } g \in H^q(\mathbb{D}^d)
        \end{equation*}
	\end{corollary}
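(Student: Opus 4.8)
The plan is to deduce both assertions directly from Miles' theorem (Theorem \ref{thm:zero.set.problem.d>1}) together with the zero-set inclusion for cyclic subspaces, so that essentially all of the analytic difficulty is already packaged into the construction behind that theorem. First I would fix the function supplied by the $d$-variable version of Miles' theorem: by the remark preceding the corollary, there is a $0 \not\equiv f \in H^p(\mathbb{D}^d)$ (namely $\widetilde{f}(z_1,\dots,z_d) = f_0(z_1,z_2)$ for Miles' two-variable function $f_0$) with the property that every $g \in H^q(\mathbb{D}^d)$ with $p < q \leq \infty$ and $\mathcal{Z}(f) \subseteq \mathcal{Z}(g)$ must vanish identically.

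Next I would invoke the zero-set inclusion $\mathcal{Z}(f) \subseteq \mathcal{Z}(g)$ for every $g \in S[f]$, which is precisely the inclusion noted just before the corollary. One line of justification suffices: by Proposition \ref{prop:equiv.def.of.shift.cyc} any such $g$ is an $H^p$-limit of polynomial multiples $P_n f$, and since convergence in $H^p(\mathbb{D}^d)$ forces uniform convergence on compact subsets of $\mathbb{D}^d$ (a consequence of \ref{item:P2}), the limit $g$ together with all its partial derivatives inherits every zero of $f$ with at least the same multiplicity. Combining this with the defining property of $f$ yields the first assertion: if $g \in S[f]$ also lies in $H^q(\mathbb{D}^d)$ for some $p < q \leq \infty$, then $\mathcal{Z}(f) \subseteq \mathcal{Z}(g)$ forces $g \equiv 0$, so $S[f]$ contains no nonzero function from $H^q(\mathbb{D}^d)$.

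Finally I would obtain the non-factorization statement by contradiction. Suppose $S[f] = g\,H^p(\mathbb{D}^d)$ for some $g \in H^q(\mathbb{D}^d)$. If $g \equiv 0$ then $g\,H^p(\mathbb{D}^d) = \{0\}$, contradicting $0 \neq f \in S[f]$. Otherwise, since the constant function $1$ belongs to $H^p(\mathbb{D}^d)$, we have $g = g \cdot 1 \in g\,H^p(\mathbb{D}^d) = S[f]$, so $g$ is a nonzero element of $S[f] \cap H^q(\mathbb{D}^d)$, contradicting the first assertion. Hence no such $g$ exists.

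The deduction itself is short; the genuine obstacle lies entirely inside Theorem \ref{thm:zero.set.problem.d>1}, whose function $f$ must have the delicate feature that its zero set is rich enough to be inherited by all of $S[f]$ yet so positioned that it cannot be contained in the zero set of any nonzero function from a strictly smaller Hardy space. The only points requiring care on our end are the multiplicity-counting version of the zero-set inclusion, for which one uses that uniform convergence on compacts propagates to all partial derivatives, and the elementary observation that constants lie in every $H^p(\mathbb{D}^d)$, which is exactly what lets a putative generator $g$ be reabsorbed into $S[f]$.
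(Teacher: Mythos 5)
Your proposal is correct and follows essentially the same route as the paper: the corollary is deduced directly from Miles' theorem (extended to $d>1$ via $\widetilde{f}(z_1,\dots,z_d)=f_0(z_1,z_2)$) together with the zero-set inclusion $\mathcal{Z}(f)\subseteq\mathcal{Z}(g)$ for all $g\in S[f]$, exactly as in the discussion preceding the statement. Your added justifications (uniform convergence on compacts propagating zeros with multiplicity, and the observation that $1\in H^p(\mathbb{D}^d)$ forces a putative generator $g$ back into $S[f]$) are sound fillings-in of details the paper leaves implicit.
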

	
	The penultimate nail in the coffin is that there is no real hope for a useful factorization result for $d > 1$ either. Firstly, Theorem \ref{thm:zero.set.problem.d>1} shows that the result of F. and R. Nevanlinna \cite{NN22} (see p. \pageref{thm:radial.limits.H^p(D)}) does not hold for $d > 1$. Next, we have the following result of Rosay \cite{Ros75} (generalizing a result of Rudin \cite{Rud66} and Miles \cite{Mil75}). Recall from \ref{item:Mult} that the Smirnov factorization result implies that every $H^1(\mathbb{D})$ function can be factored into a product of two $H^2(\mathbb{D})$ functions.
	
	\begin{theorem}\label{thm:factrzn.problem.d>1}
		If $d > 1$, then there is an $f \in H^1(\mathbb{D}^d)$ that cannot be factored into a product of two $H^2(\mathbb{D}^d)$ functions.
	\end{theorem}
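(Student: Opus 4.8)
The plan is to reduce to $d = 2$ and to locate the obstruction in the rigidity of zero sets. Recall first why the factorization $H^1(\mathbb{D}) = H^2(\mathbb{D}) \cdot H^2(\mathbb{D})$ succeeds in one variable: after peeling off the Blaschke product $B$ of $f$, the remaining factor $f_\sigma f_\mu$ (singular inner times outer, as in Theorem \ref{thm:Smirnov.fac.H^p(D)}) is zero-free and hence admits a holomorphic square root, so writing $f = g h$ with $g = B (f_\sigma f_\mu)^{1/2}$ and $h = (f_\sigma f_\mu)^{1/2}$ places both factors in $H^2(\mathbb{D})$. Every step exploits the one-variable inner--outer calculus, which — as already signalled by the failure of the Nevanlinna quotient representation noted after Theorem \ref{thm:zero.set.problem.d>1} — has no analogue on $\mathbb{D}^2$. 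This suggests attacking the problem through the one place where that calculus leaves a concrete footprint: the divisor of zeros.

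Accordingly, I would take $f$ to be a function furnished by Theorem \ref{thm:zero.set.problem.d>1} with $p = 1$, arranged so that its zero divisor $\mathcal{Z}(f)$ is \emph{reduced and irreducible}. Suppose, for contradiction, that $f = g h$ with $g, h \in H^2(\mathbb{D}^2)$. Since the Hardy spaces are nested, $H^2(\mathbb{D}^2) \subseteq H^{3/2}(\mathbb{D}^2)$, so in particular $g, h \in H^{3/2}(\mathbb{D}^2)$. The divisor of a product is the sum of the divisors, whence $\mathcal{Z}(f) = \mathcal{Z}(g) + \mathcal{Z}(h)$. At a generic point of the $1$-dimensional variety $V$ underlying $\mathcal{Z}(f)$ the orders satisfy $\operatorname{ord}(g) + \operatorname{ord}(h) = 1$ (using that $\mathcal{Z}(f)$ is reduced), so exactly one of $g, h$ vanishes there. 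The locus in $V$ where $g$ vanishes is analytic, hence either all of $V$ or a $0$-dimensional subset, and likewise for $h$; since their union is $V$ and $V$ is irreducible and $1$-dimensional, one factor — say $g$ — must vanish along all of $\mathcal{Z}(f)$, i.e. $\mathcal{Z}(f) \subseteq \mathcal{Z}(g)$. As $g \in H^{3/2}(\mathbb{D}^2)$ with $3/2 > 1 = p$, Theorem \ref{thm:zero.set.problem.d>1} forces $g \equiv 0$, contradicting $f \not\equiv 0$.

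The routine steps above — the nesting of Hardy spaces, the additivity of divisors, and the irreducibility dichotomy — are not where the difficulty lies. The hard part is the opening sentence of the construction: producing a single $f \in H^1(\mathbb{D}^2)$ that is \emph{simultaneously} rigid in the sense of Theorem \ref{thm:zero.set.problem.d>1} (no higher-integrability function shares its zeros) \emph{and} carries an indivisible, reduced, irreducible zero divisor, so that a hypothetical factorization is forced to load all of the zeros onto one side. This geometric control of $\mathcal{Z}(f)$ is precisely the technical heart supplied by the constructions of Rudin \cite{Rud66}, Miles \cite{Mil75} and Rosay \cite{Ros75}, and is what I would expect to occupy the bulk of a complete proof; I would cite it rather than reproduce it. Finally, the statement for general $d > 1$ follows by the lifting device used earlier: replace $f$ with $\widetilde{f}(z_1, \dots, z_d) := f(z_1, z_2)$, note that $\widetilde{f} \in H^1(\mathbb{D}^d)$, and observe that any factorization $\widetilde{f} = \widetilde{g}\,\widetilde{h}$ in $H^2(\mathbb{D}^d)$ restricts, on the coordinate slice $z_3 = \dots = z_d = 0$, to a factorization of $f$ in $H^2(\mathbb{D}^2)$, which we have just excluded.
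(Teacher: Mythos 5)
The paper itself offers no proof of this statement --- it is quoted from Rosay \cite{Ros75} (generalizing Rudin \cite{Rud66} and Miles \cite{Mil75}) --- so your attempt has to be judged against the literature. The outer layers of your argument are fine: the reduction from general $d$ to $d=2$ by restricting to the slice $z_3=\dots=z_d=0$ works (plurisubharmonicity of $|\widetilde{g}|^2$ shows the restriction of an $H^2(\mathbb{D}^d)$ function lies in $H^2(\mathbb{D}^2)$ with no increase of norm, and this is the same lifting device used for Corollary \ref{cor:no.Beurling.thm.d>1}), and the deduction you run from your hypothesized $f$ --- additivity of divisors, the identity-principle dichotomy on an irreducible one-dimensional analytic set, then Theorem \ref{thm:zero.set.problem.d>1} with $q>1$ --- is internally sound. (The detour through $H^{3/2}(\mathbb{D}^2)$ is unnecessary; $q=2>1$ already qualifies.)

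The genuine gap is the input you defer to the references: an $f \in H^1(\mathbb{D}^2)$ that \emph{simultaneously} has the rigidity property of Theorem \ref{thm:zero.set.problem.d>1} for $p=1$ and a reduced, \emph{irreducible} zero divisor. No such function is produced in \cite{Rud66}, \cite{Mil73}, \cite{Mil75} or \cite{Ros75}, and nothing in the statement of Theorem \ref{thm:zero.set.problem.d>1} gives any control on the component structure of $\mathcal{Z}(f)$; the known constructions behind that theorem yield zero sets that are countable unions of distinct irreducible pieces accumulating at the distinguished boundary --- about as reducible as a divisor can be. For such an $f$, a hypothetical factorization $f=gh$ may distribute the components between $\mathcal{Z}(g)$ and $\mathcal{Z}(h)$, so neither factor need contain all of $\mathcal{Z}(f)$ and your appeal to Theorem \ref{thm:zero.set.problem.d>1} produces no contradiction. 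This is precisely why the actual proofs take different routes: Rudin's and Miles' arguments are finer zero-set arguments showing that no partition of the zero set into two pieces realizes both pieces as zero sets of $H^2(\mathbb{D}^2)$ functions, while Rosay's proof is soft functional analysis --- if the multiplication map $H^2(\mathbb{D}^2)\times H^2(\mathbb{D}^2)\to H^1(\mathbb{D}^2)$ were onto, a Baire-category/open-mapping argument would give a uniform bound of the form $\inf\left\{\|g\|_2\|h\|_2 : gh=f\right\}\leq C\|f\|_1$, which explicit test functions then violate. To repair your write-up you must either prove (not cite) the existence of an irreducible rigid zero variety, which would be a new and nontrivial construction, or adopt one of these two standard strategies.
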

	
	Finally, if one only cares about shift-cyclicity of a function, then it is natural to ask if there is an analogue of an outer function when $d > 1$. Rudin defines an outer function as a function $f \in H^p(\mathbb{D}^d)$ such that $f(0) \neq 0$ and
	\begin{equation*}
		\log |f(0)| = \int_{\mathbb{T}^d} \log |f^*|,
	\end{equation*}
	which carries over many properties from the one variable case; for instance, the properties \ref{item:NI}, \ref{item:Mult} and \ref{item:Inv} were inspired by outerness anyway, so these are obviously carried over to outer functions when $d > 1$.
 
    In \cite[Theorem 4.4.6]{Rud69}, Rudin shows that every shift-cyclic function in $H^p(\mathbb{D}^d)$ is outer. Thus, one-half of the Smirnov--Beurling holds. However, in \cite[Theorem 4.4.8 (b)]{Rud69}, Rudin then constructs an explicit example of an outer function in $H^2(\mathbb{D}^2)$ that is not shift-cyclic.
	
	\begin{theorem}\label{thm:ex.outer.but.not.shift-cyc}
        The function
        \begin{equation*}
        f(z_1,z_2) = \exp \left( \frac{z_1 + z_2 + 2}{z_1 + z_2 - 2} \right)    
        \end{equation*}
		is outer but not shift-cyclic in $H^2(\mathbb{D}^2)$.
	\end{theorem}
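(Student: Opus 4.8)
The plan is to exploit the fact that $f$ depends only on $w:=z_1+z_2$. Writing $f=\Phi(z_1+z_2)$ with $\Phi(w):=\exp\!\big(\tfrac{w+2}{w-2}\big)$, I would first record the elementary features. The Möbius map $w\mapsto\tfrac{w+2}{w-2}$ carries the disk $2\mathbb{D}:=\{|w|<2\}$ onto the left half-plane $\{\operatorname{Re}<0\}$ (it sends $0\mapsto-1$, $2i\mapsto -i$, $2\mapsto\infty$), and the image of $\mathbb{D}^2$ under $(z_1,z_2)\mapsto z_1+z_2$ is exactly $2\mathbb{D}$. Hence $|f|=\exp(\operatorname{Re}\tfrac{w+2}{w-2})<1$ on $\mathbb{D}^2$, so $f\in H^\infty(\mathbb{D}^2)\subset H^2(\mathbb{D}^2)$, and $f(0)=\Phi(0)=e^{-1}\neq0$. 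For a.e.\ $\zeta\in\mathbb{T}^2$ we have $\zeta_1+\zeta_2\in 2\mathbb{D}$ (the exceptions form the null set $\{\zeta_1=\zeta_2\}$), so $f^*(\zeta)=\Phi(\zeta_1+\zeta_2)$ and $\log|f^*(\zeta)|=U(\zeta_1+\zeta_2)$, where $U(w):=\operatorname{Re}\tfrac{w+2}{w-2}$.

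For outerness, the point is that $U$ is harmonic and negative on $2\mathbb{D}$, with $U(0)=-1$ and boundary values $0$ a.e.\ on $\{|w|=2\}$. Let $\nu$ be the push-forward of $\lambda_2$ under $(\zeta_1,\zeta_2)\mapsto\zeta_1+\zeta_2$. Then $\nu$ is a rotation-invariant probability measure on $\overline{2\mathbb{D}}$ assigning no mass to the circle $\{|w|=2\}$ (whose preimage is the null diagonal). Since $U$ is harmonic on $2\mathbb{D}$, its mean over each circle $\{|w|=\rho\}$ with $\rho<2$ equals $U(0)$, so by Tonelli
\[
\int_{\mathbb{T}^2}\log|f^*|\,d\lambda_2=\int_{2\mathbb{D}}U\,d\nu=U(0)=-1=\log|f(0)|,
\]
which is precisely Rudin's outerness condition (and shows $\log|f^*|\in L^1(\mathbb{T}^2)$).

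For non-cyclicity the plan is to descend to the diagonal. I would consider $R:H^2(\mathbb{D}^2)\to\operatorname{Hol}(\mathbb{D})$, $RF(z):=F(z,z)$. Writing $F=\sum_{j,k}a_{jk}z_1^jz_2^k$ and applying Cauchy--Schwarz to the $n+1$ monomials of total degree $n$ gives $\sum_n\tfrac{1}{n+1}\big|\sum_{j+k=n}a_{jk}\big|^2\le\sum_{j,k}|a_{jk}|^2$, so $R$ is a contraction from $H^2(\mathbb{D}^2)$ into the Bergman space $A^2(\mathbb{D})$ (normalized by $\|z^n\|_{A^2}^2=\tfrac1{n+1}$), and $R(z_jF)=z\,RF$, i.e.\ $R$ intertwines each shift $S_j$ with the Bergman shift. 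Moreover $Rf(z)=\exp(\tfrac{z+1}{z-1})=:\theta(z)$ is the atomic singular inner function with unit point mass at $\zeta=1$, and $\{R(Pf):P\in\mathcal{P}_2\}=\{Q\,\theta:Q\in\mathcal{P}_1\}$. Thus, if $f$ were shift-cyclic, taking polynomials with $P_nf\to1$ and applying the continuous map $R$ would yield $Q_n\theta\to1$ in $A^2(\mathbb{D})$ with $Q_n(z)=P_n(z,z)$; that is, $\theta$ would be cyclic for the Bergman shift.

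The crux --- and the step I expect to be the main obstacle --- is therefore to show that $\theta$ is \emph{not} cyclic in $A^2(\mathbb{D})$. This is a genuinely Bergman-space phenomenon: in contrast with $H^2(\mathbb{D})$, where no singular inner function is cyclic, some singular inner functions \emph{do} become cyclic in $A^2$, and the dividing line is the Korenblum--Roberts theorem, by which $S_\mu$ is cyclic in $A^2$ precisely when the singular measure $\mu$ charges no Beurling--Carleson set. Since $\theta$ corresponds to $\mu=\delta_1$ and the singleton $\{1\}$ is a Beurling--Carleson set, $\theta$ is non-cyclic, completing the argument. All the difficulty lives here: equivalently one must produce a nonzero $g\in A^2(\mathbb{D})$ with $\langle z^n\theta,g\rangle_{A^2}=0$ for every $n\ge0$, whose diagonal lift $R^*g$ then annihilates $S[f]$ in $H^2(\mathbb{D}^2)$ --- an object that cannot be manufactured from outerness alone, which is exactly why outer functions can fail to be shift-cyclic once $d>1$.
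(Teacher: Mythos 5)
Your proof is correct, but for the non-cyclicity half it takes a genuinely different route from the paper's. The paper follows Rudin's Theorem 4.4.8 in two steps: outerness comes from the operator $T_{out}g(z_1,z_2) = g\bigl(\tfrac{z_1+z_2}{2}\bigr)$ together with the mean-value property of $\log|g|$ (harmonic, since the relevant $g$ is zero-free) integrated against the rotation-invariant push-forward measure on the disk of radius $2$ --- which is exactly the computation you carry out directly with $U = \operatorname{Re}\tfrac{w+2}{w-2}$, so there the two arguments coincide. For non-cyclicity, however, Rudin slices through the boundary point $(1,1)$ via $T_{cyc}F(z) = F\bigl(\tfrac{1+z}{2},\tfrac{1+z}{2}\bigr)$, shows this map is bounded from $H^2(\mathbb{D}^2)$ into $H^{1/4}(\mathbb{D})$ and preserves shift-cyclicity, and observes that $T_{cyc}f$ is a constant multiple of a singular inner function, hence not cyclic in $H^{1/4}(\mathbb{D})$ by the Smirnov--Beurling theorem as extended by Gamelin to $H^p(\mathbb{D})$ with $0<p<1$. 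You instead restrict to the genuine diagonal, landing contractively in the Bergman space $L^2_a(\mathbb{D})$ via a one-line Cauchy--Schwarz estimate, where $Rf = \exp\bigl(\tfrac{z+1}{z-1}\bigr)$ is the atomic singular inner function, and you invoke the Korenblum--Roberts criterion to conclude it is not cyclic there. Both routes are valid. Yours is easier to set up (the diagonal embedding into $L^2_a(\mathbb{D})$ versus Rudin's more delicate boundedness estimate for the slice map into $H^{1/4}(\mathbb{D})$), but the price is that the conclusion rests on a deep Bergman-space theorem, whereas Rudin needs only the elementary fact that a singular inner function is not outer in one variable. The citation is legitimate --- the paper itself cites Korenblum \cite{Kor81} for the classification of cyclic inner functions in $L^2_a(\mathbb{D})$, and non-cyclicity of atomic singular inner functions in Bergman spaces goes back further --- and you correctly flag that this is where all the genuine difficulty of your argument is concentrated.
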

	
	\subsubsection{Positive results}\label{subsubsec:pos.res} We start with a result of Ahern and Clark which shows that all shift-invariant subspaces of $H^2(\mathbb{D}^d)$ with finite codimension are generated by finitely many polynomials (see \cite[Theorem 3 and Corollary 4]{AC70}).
	
	\begin{theorem}\label{thm:fin.codim.inv.subsp}
		A shift-invariant subspace $E \subset H^2(\mathbb{D}^d)$ is finite codimensional if and only if it is the closure of a finite codimensional polynomial ideal $\mathcal{I} \triangleleft \mathcal{P}_d$. Moreover,
        \begin{equation*}
            \dim(H^2(\mathbb{D}^d)/E) = \dim(\mathcal{P}_d/\mathcal{I}).
        \end{equation*}
		
		Consequently, if $k < d$ and $f_1, \dots, f_k \in H^2(\mathbb{D}^d)$, then
		\begin{equation*}
			S[f_1,\dots,f_k] := \overline{\operatorname{span}}_{1 \leq j \leq k} S[f_j]
		\end{equation*}
		is either all of $H^2(\mathbb{D}^d)$ or infinite codimensional.
	\end{theorem}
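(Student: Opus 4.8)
The plan is to prove the equivalence by analyzing the orthogonal complement $E^\perp = H^2(\mathbb{D}^d) \ominus E$, and then to deduce the consequence from the dimension theory of analytic zero sets. First, for the easy implication, suppose $E = \overline{\mathcal{I}}$ for a finite-codimensional ideal $\mathcal{I} \triangleleft \mathcal{P}_d$. Since multiplication by each $z_j$ is continuous, $\overline{\mathcal{I}}$ is shift-invariant. Writing $\mathcal{P}_d = \mathcal{I} \oplus V$ with $V$ a finite-dimensional space of monomials of dimension $n = \dim(\mathcal{P}_d/\mathcal{I})$, density of $\mathcal{P}_d$ (property \ref{item:P1}) gives $H^2(\mathbb{D}^d) = \overline{\mathcal{I} + V} = \overline{\mathcal{I}} + V = E + V$, using that adding a finite-dimensional subspace to a closed subspace preserves closedness. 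Hence $\dim(H^2(\mathbb{D}^d)/E) \le n < \infty$, so $E$ is finite-codimensional.

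For the converse together with the dimension formula, I would assume $E$ finite-codimensional, so that $E^\perp$ is finite-dimensional. Because $E$ is $S_j$-invariant, $E^\perp$ is invariant under each backward shift $S_j^*$, and these commute. The key step is to describe $E^\perp$ explicitly: over $\mathbb{C}$ the commuting family $\{S_j^*\}$ can be simultaneously upper-triangularized on the finite-dimensional space $E^\perp$, decomposing it into joint generalized eigenspaces. Since $S_j^* k_w = \overline{w}_j k_w$ for the reproducing kernel $k_w(z) = \prod_{j=1}^d (1 - \overline{w}_j z_j)^{-1}$, and the point spectrum of each backward shift is the open disk, every joint eigenvalue $(\overline{w}_1, \dots, \overline{w}_d)$ corresponds to a point $w \in \mathbb{D}^d$, with joint eigenvector a multiple of $k_w$ and joint generalized eigenvectors spanned by the derivatives $\overline{\partial}^\alpha k_w$. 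Pairing against $f$ turns these into functionals $f \mapsto \partial^\alpha f(w)$, so $E$ is cut out by finitely many jet conditions at finitely many interior points $w_1, \dots, w_r \in \mathbb{D}^d$. I expect this structural description to be the main obstacle, as it requires the full spectral picture of the commuting backward shifts and the verification that the eigenvalues land strictly inside $\mathbb{D}$.

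Setting $\mathcal{I} := E \cap \mathcal{P}_d$, shift-invariance of $E$ makes $\mathcal{I}$ an ideal, and it consists exactly of the polynomials annihilated by $E^\perp$. Because one can prescribe arbitrary finite jet data at finitely many points of $\mathbb{D}^d$ by polynomials, the functionals in $E^\perp$ stay linearly independent on $\mathcal{P}_d$, giving $\dim(\mathcal{P}_d/\mathcal{I}) = \dim E^\perp = \dim(H^2(\mathbb{D}^d)/E)$. Finally $\overline{\mathcal{I}} \subseteq E$ is clear, while the density argument of the first paragraph applied to $\mathcal{I}$ yields $\dim(H^2(\mathbb{D}^d)/\overline{\mathcal{I}}) \le \dim(\mathcal{P}_d/\mathcal{I}) = \dim(H^2(\mathbb{D}^d)/E)$; combined with the reverse inequality coming from $\overline{\mathcal{I}} \subseteq E$, this forces $\overline{\mathcal{I}} = E$, completing the equivalence and the dimension formula.

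For the consequence, I would argue by contradiction: suppose $E := S[f_1, \dots, f_k]$ is finite-codimensional but $E \ne H^2(\mathbb{D}^d)$, with $k < d$. By the first part, $\mathcal{I} = E \cap \mathcal{P}_d$ is a proper finite-codimensional ideal, so its zero set $\mathcal{Z}(\mathcal{I})$ is finite; hence the set of common zeros in $\mathbb{D}^d$ of all functions in $E$ is finite. Since $E$ is proper, $E^\perp$ contains an honest kernel $k_w$ with $w \in \mathbb{D}^d$, and as $f_i \in E$ we get $f_i(w) = \langle f_i, k_w \rangle = 0$ for every $i$, so $w \in \mathcal{Z}(f_1, \dots, f_k)$ and this set is nonempty. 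On the other hand, every $g \in E$ is a limit of polynomial combinations of the $f_i$, whence $\mathcal{Z}(f_1, \dots, f_k)$ is contained in the common zero set of $E$ and is therefore finite. But locally at $w$ the germ of $\mathcal{Z}(f_1, \dots, f_k)$ has dimension at least $d - k \ge 1$ by the principal ideal theorem in $\mathcal{O}_{\mathbb{C}^d, w}$ (each equation lowers dimension by at most one), so this zero set is positive-dimensional and infinite. This contradiction shows $E$ cannot be both proper and finite-codimensional, so $S[f_1, \dots, f_k]$ is either all of $H^2(\mathbb{D}^d)$ or infinite codimensional.
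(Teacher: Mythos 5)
Your proof is correct: the analysis of $E^\perp$ via joint (generalized) eigenvectors of the commuting backward shifts, the identification of these with derivatives of reproducing kernels at interior points $w \in \mathbb{D}^d$, and the local dimension bound $\dim_w \mathcal{Z}(f_1,\dots,f_k) \geq d-k$ for $k < d$ are precisely the ingredients of the Ahern--Clark argument; the paper itself gives no proof and simply cites \cite{AC70}. Nothing essential is missing -- the only steps you leave implicit, namely that $\bigcap_{j}\ker(S_j^* - \overline{w}_j)^N$ is spanned by the vectors $\overline{\partial}^\alpha k_w$ (via the tensor-product structure of $H^2(\mathbb{D}^d)$) and that a finite-codimensional polynomial ideal has finite variety, are standard and routine to verify.
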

	
	Finite codimensionality of polynomial ideals is fairly well-understood, both algebraically and computationally (see \cite[Section 3.7]{KR00}). In fact, the codimension of such an ideal is precisely the number of common zeros (in $\mathbb{D}^d$) of polynomials in the ideal. Also, the general approach of Ahern and Clark has been expanded upon in several different ways. The text of Chen and Guo \cite{CG03} is an in-depth exploration with further results in this direction (also see \cite{DP89}). For certain operator theoretic characterization of shift-invariant subspaces, see \cite{MMSS19, Man88, SSW13}. 
	
	Soltani gave an equivalent condition for shift-cyclicity of an outer function in $H^2(\mathbb{D}^2)$ via \emph{stationary random fields} (see \cite[Theorem 2.18]{Sol84}), however, from a function theoretic point of view, it leaves a lot to be desired. For instance, it is not clear if the properties mentioned in Section \ref{subsubsec:prop.shift-cyc.func} generalize to $d > 1$. The following result of Ginsberg, Neuwirth and Newman extends \ref{item:SCP} to $d > 1$ (see \cite[Theorem 5]{GNN70})\footnote{The original statement of Theorem 5 in \cite{GNN70} is only valid for $p = 2$ but the argument generalizes easily to all the other values of $p$.}.
	
	\begin{theorem}\label{thm:SCP.Hardy.d>1}
		For any $d \in \mathbb{N}$ and $0 < p < \infty$, a polynomial $P \in \mathcal{P}_d$ is shift-cyclic if and only if it is non-vanishing on $\mathbb{D}^d$.
	\end{theorem}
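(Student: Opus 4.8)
The forward implication is immediate from the necessary condition in Corollary \ref{cor:poly.necc.cond.for.cyc}: if $P$ is shift-cyclic then $P(w) \neq 0 \foral w \in \mathbb{D}^d$. So the content is the converse, and the plan is to produce, for a non-vanishing $P$ and each fixed $0 < p < \infty$, a sequence of polynomials $Q_n$ with $\|Q_n P - 1\|_p \to 0$; by Proposition \ref{prop:equiv.def.of.shift.cyc} this is precisely shift-cyclicity. (One could first invoke Proposition \ref{prop:poly.cyc.iff.irred.fac.cyc} to reduce to irreducible $P$, and note that since $\lambda_d$ is a probability measure one has $\|g\|_q \le \|g\|_p$ for $q < p$, so cyclicity in $H^p(\mathbb{D}^d)$ automatically descends to $H^q(\mathbb{D}^d)$; neither reduction is essential to what follows.)

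The engine is a dilation argument. For $0 < r < 1$ write $D_r f(z) = f(rz)$, so that $D_r P$ is again a polynomial. Since $r\overline{\mathbb{D}^d} \subset \mathbb{D}^d$ and $P$ is zero-free on $\mathbb{D}^d$, the dilate $D_r P$ is zero-free on the \emph{closed} polydisk $\overline{\mathbb{D}^d}$. Hence $1/D_r P$ is holomorphic on a neighborhood of $\overline{\mathbb{D}^d}$, and the partial sums $Q_{r,N}$ of its Taylor series converge uniformly to $1/D_r P$ on $\overline{\mathbb{D}^d}$. In particular $Q_{r,N} D_r P \to 1$ uniformly on $\overline{\mathbb{D}^d}$, so $\|Q_{r,N} D_r P - 1\|_p \to 0$ as $N \to \infty$. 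This recovers the elementary fact that a polynomial zero-free on $\overline{\mathbb{D}^d}$ is cyclic, and it is the mechanism I would transfer to $P$ itself.

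To pass from $D_r P$ to $P$, I would estimate
\[
\|Q_{r,N} P - 1\|_p \le \left\| Q_{r,N}(P - D_r P) \right\|_p + \|Q_{r,N} D_r P - 1\|_p.
\]
Letting $N \to \infty$ with $r$ fixed, the last term vanishes, while $Q_{r,N} \to 1/D_r P$ uniformly on $\overline{\mathbb{D}^d}$ forces the first term to converge; thus $\limsup_{N} \|Q_{r,N} P - 1\|_p \le \left\| (P - D_r P)/D_r P \right\|_p$. A diagonal choice of $r \to 1^-$ followed by $N$ large then reduces the entire theorem to the single limit
\[
\left\| \frac{P - D_r P}{D_r P} \right\|_{L^p(\mathbb{T}^d)} \longrightarrow 0 \qquad \text{as } r \to 1^-,
\]
which I will refer to as $(\star)$.

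The verification of $(\star)$ is the heart of the matter and the step I expect to be the main obstacle. Because $D_r P$ is zero-free on $\overline{\mathbb{D}^d}$, the ratio in $(\star)$ is continuous on $\mathbb{T}^d$, so only its size matters. Away from the boundary zero set $\mathcal{Z}(P) \cap \mathbb{T}^d$ the denominator stays bounded below while $P(\zeta) - P(r\zeta)$ is $O(1-r)$ uniformly, so the integrand tends to $0$ there; the difficulty is the thin region near $\mathcal{Z}(P) \cap \mathbb{T}^d$, where numerator and denominator degenerate together. The plan is to bound $|P(\zeta) - P(r\zeta)| \lesssim (1-r)$, to bound $|P(r\zeta)|$ from below by a power of $\operatorname{dist}(r\zeta, \mathcal{Z}(P))$ via a {\L}ojasiewicz-type inequality for the polynomial $P$, and to use $\operatorname{dist}(r\zeta, \mathcal{Z}(P)) \gtrsim (1-r)$ for $\zeta$ near $\mathcal{Z}(P) \cap \mathbb{T}^d$; one then shows the set on which the ratio fails to be small has $\lambda_d$-measure $O(1-r)$, forcing the $L^p$-integral to vanish. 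The model cases $P = 1 - z_1$ and $P = (1-z_1)(1-z_2)$ (where $(\star)$ can be checked directly) both confirm $(\star)$ and warn that the crude Lipschitz bound on the numerator alone is too lossy, so the \emph{matching} vanishing of numerator and denominator must genuinely be exploited. Making this estimate uniform in $r$ for all $0 < p < \infty$ and for zero sets of arbitrary order and dimension is the technical crux; the remaining steps are routine.
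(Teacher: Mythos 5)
The paper itself offers no proof of this theorem; it defers entirely to \cite[Theorem 5]{GNN70}, so your proposal has to stand on its own. Your reduction is sound: the forward direction via Corollary \ref{cor:poly.necc.cond.for.cyc}, the approximation of $1/D_rP$ by Taylor partial sums $Q_{r,N}$ uniformly on $\overline{\mathbb{D}^d}$, and the splitting that reduces everything to the limit $(\star)$, namely $\|P/D_rP - 1\|_{L^p(\mathbb{T}^d)} \to 0$, are all correct (for $0<p<1$ you should run the splitting through the subadditive quantity $\|\cdot\|_p^p$ rather than $\|\cdot\|_p$, but that is cosmetic). The genuine gap is that $(\star)$ --- which you correctly identify as carrying the entire content of the theorem --- is not proved. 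Worse, the attack you outline for it would fail as stated: bounding the numerator by $|P(\zeta)-P(r\zeta)| \lesssim (1-r)$ and the denominator below by $\operatorname{dist}(r\zeta,\mathcal{Z}(P))^{k} \gtrsim (1-r)^k$ via a {\L}ojasiewicz inequality leaves a ratio of size $(1-r)^{1-k}$, which blows up whenever the vanishing order $k$ exceeds $1$; and such higher-order tangential degeneration does occur (already $2-z_1-z_2$ vanishes quadratically in $\operatorname{dist}$ along the antidiagonal of $\mathbb{T}^2$). You acknowledge that the ``matching vanishing'' must be exploited but give no mechanism for doing so, so the proof is incomplete at its crux.

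The good news is that $(\star)$ is true and admits a short proof that bypasses all semi-analytic machinery; this is essentially the device in \cite{GNN70}. Fix $\zeta \in \mathbb{T}^d$ and consider the slice polynomial $p_\zeta(w) := P(w\zeta_1,\dots,w\zeta_d)$, of degree $m \leq n := \deg P$ in $w$ and non-vanishing on $\mathbb{D}$ since $w\zeta \in \mathbb{D}^d$. Writing $p_\zeta(w) = c\prod_{j=1}^{m}(w-a_j)$ with every $|a_j|\geq 1$, and using $|1-a_j| \leq (1-r)+|r-a_j|$ together with $|r-a_j| \geq |a_j|-r \geq 1-r$, one gets
\[
\frac{|P(\zeta)|}{|P(r\zeta)|} \;=\; \prod_{j=1}^{m}\frac{|1-a_j|}{|r-a_j|} \;\leq\; \prod_{j=1}^{m}\left(1+\frac{1-r}{|r-a_j|}\right) \;\leq\; 2^{n}
\]
uniformly in $\zeta \in \mathbb{T}^d$ and $0<r<1$. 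Hence $|P/D_rP - 1| \leq 2^n+1$ on $\mathbb{T}^d$, while $P(\zeta)/P(r\zeta) \to 1$ off the $\lambda_d$-null set $\mathcal{Z}_{\mathbb{T}^d}(P)$, and dominated convergence yields $(\star)$ for every $0<p<\infty$ at once. With this single lemma inserted, your argument becomes a complete and correct proof; without it, the proposal is a correct reduction of the theorem to an unproven (and, along your suggested route, unprovable-as-sketched) estimate.
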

	
	It is known that $H^\infty(\mathbb{D}^d)$ is the multiplier algebra of $H^p(\mathbb{D}^d)$ for $0 < p < \infty$ (see \cite[Section 2.1 (8)]{Nik12}), i.e.,
    \begin{equation*}
        H^\infty(\mathbb{D}^d) = \left\{ f : \mathbb{D}^d \to \mathbb{C} : fg \in H^p(\mathbb{D}^d) \foral g \in H^p(\mathbb{D}^d) \right\}
    \end{equation*}
    Moreover, \ref{item:MC} extends to $d > 1$ (see \cite[Section 2.1 (9)]{Nik12}), however, we do not know if \ref{item:NI}, \ref{item:Mult} or \ref{item:Inv} generalize to $d > 1$. The following result, which is a slightly modified version of a result of Nikolski (see \cite[Theorem 3.3 (3)]{Nik12}), shows that \ref{item:Inv} `almost' holds when $d > 1$.
	
	\begin{theorem}\label{thm:inv.almost.true.Hardy.d>1}
		If, for some $d > 1$ and $0 < p,q < \infty$, we have $f \in H^p(\mathbb{D}^d)$ and $1/f \in H^q(\mathbb{D}^d)$, then $f$ is shift-cyclic in $H^r(\mathbb{D}^d)$ for all $0 < r < p$.
	\end{theorem}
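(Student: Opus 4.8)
The plan is to reduce shift-cyclicity to an explicit approximation of the constant function $1$, extract the optimal exponent from a Hölder inequality using the hypothesis $1/f\in H^q(\mathbb{D}^d)$, and then push the exponent all the way up to (but not including) $p$ by interpolation. By Proposition \ref{prop:equiv.def.of.shift.cyc}, $f$ is shift-cyclic in $H^r(\mathbb{D}^d)$ precisely when there are polynomials $P_n$ with $\|1-P_nf\|_r\to 0$, and the identity that drives everything is $1-P_nf = f\,(\tfrac1f - P_n)$. Since $1/f\in H^q(\mathbb{D}^d)$ and polynomials are dense in $H^q(\mathbb{D}^d)$ for every $0<q<\infty$ (the $d>1$ analogue of Corollary \ref{cor:bdy.prop.H^p(D)}(3), which the discussion in Section \ref{subsec:Hardy.on.D^d} affirms), I would pick polynomials $P_n\to 1/f$ in $H^q$. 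The elementary Hölder inequality for boundary functions on $\mathbb{T}^d$, namely $\|gh\|_{r_0}\le\|g\|_p\|h\|_q$ whenever $1/r_0 = 1/p+1/q$ (the bound underlying \ref{item:Mult}, valid for all $d$), then yields $\|1-P_nf\|_{r_0}\le\|f\|_p\,\|1/f-P_n\|_q\to 0$. Because $\mathbb{T}^d$ carries a probability measure, the norms $\|\cdot\|_r$ are nondecreasing in $r$, so this \emph{same} sequence already certifies shift-cyclicity in $H^r(\mathbb{D}^d)$ for every $0<r\le r_0$.

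Since $1/r_0 = 1/p + 1/q > 1/p$ forces $r_0 < p$ strictly, the genuine content of the theorem lies in the remaining band $r_0<r<p$, and this is where I expect the main obstacle. The natural device is interpolation: on $\mathbb{T}^d$ one has the log-convexity estimate $\|g\|_r\le\|g\|_{r_0}^{1-\theta}\|g\|_p^{\theta}$ with $1/r = (1-\theta)/r_0 + \theta/p$ and $\theta\in(0,1)$. Consequently, if the approximants can be arranged so that $\|1-P_nf\|_{r_0}\to 0$ \emph{while} $\sup_n\|1-P_nf\|_p<\infty$, then $\|1-P_nf\|_r\to 0$ for every $r\in(r_0,p)$, and the proof is complete. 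The difficulty is exactly this uniform $H^p$-control: the approximants produced above converge only in the weaker norm $H^{r_0}$, and a priori their $H^p$-norms may diverge, because multiplying the $H^p$-function $f$ by the error $1/f-P_n\in H^q$ naturally lands one in $H^{r_0}$ rather than $H^p$.

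To furnish $H^p$-bounded approximants I would replace the crude polynomials by truncations of radial dilates. The dilate $\phi_\rho := (1/f)_\rho$ is holomorphic on a neighborhood of $\overline{\mathbb{D}^d}$, hence uniformly approximable there by its Taylor polynomials; choosing $P_n$ to be such a polynomial for a suitable $\rho_n\to 1^-$ gives $\|1-P_nf\|_{r_0}\to 0$ via the mean convergence $\|f_{\rho_n}-f\|_p\to 0$ recorded in Section \ref{subsec:Hardy.on.D^d} together with the contraction $\|\phi_\rho\|_q\le\|1/f\|_q$. For the $H^p$-bound one rewrites $1-f\phi_\rho = (f_\rho-f)\,\phi_\rho$ and invokes the standard growth estimate $M_\infty(\rho, 1/f)\lesssim (1-\rho)^{-d/q}\|1/f\|_q$ for functions in $H^q(\mathbb{D}^d)$; the real work is then to balance this blow-up against the decay of $\|f_{\rho_n}-f\|_p$ through a careful diagonal choice of $\rho_n$, extracting a convergent subsequence by using that $H^p$-balls are closed in $H^{r_0}$ (by Fatou's lemma). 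I expect this balancing step—equivalently, the statement that the $H^{r_0}$-approximation of $1$ by polynomial multiples of $f$ can be realized by an $H^p$-\emph{bounded} sequence—to be the delicate heart of the argument, and it is precisely here that the full force of the hypothesis $1/f\in H^q$ (as opposed to mere non-vanishing of $f$, which by Corollary \ref{cor:poly.necc.cond.for.cyc} is only necessary) is consumed. Finally, the strict inequality $r<p$ is intrinsic to this scheme, since the interpolation exponent $\theta$ degenerates at the endpoint and no control in $H^p$ itself is ever obtained.
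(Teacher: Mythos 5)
The paper does not actually prove Theorem \ref{thm:inv.almost.true.Hardy.d>1} -- it quotes it from Nikolski -- so I am judging your argument on its own terms. Your first step is correct and complete: with $1/r_0 = 1/p + 1/q$, taking polynomials $P_n \to 1/f$ in $H^q(\mathbb{D}^d)$ and writing $1 - P_n f = f\,(1/f - P_n)$ gives $\|1 - P_n f\|_{r_0} \le \|f\|_p \|1/f - P_n\|_q \to 0$, and monotonicity of the $L^s(\mathbb{T}^d)$-norms settles every $r \le r_0$. The genuine gap is in the band $r_0 < r < p$. Your interpolation scheme needs polynomials $P_n$ with $\|1 - P_n f\|_{r_0} \to 0$ \emph{and} $\sup_n \|1 - P_n f\|_p < \infty$; you never establish that such a sequence exists (it is not implied by the theorem itself, a posteriori, either), and the construction you sketch cannot produce one. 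Writing $1 - f\,(1/f)_\rho = (f_\rho - f)(1/f)_\rho$ and using $\sup_{\mathbb{D}^d}|(1/f)_\rho| \lesssim (1-\rho)^{-d/q}\|1/f\|_q$ forces you to need $\|f_{\rho_n} - f\|_p = O\bigl((1-\rho_n)^{d/q}\bigr)$ along some $\rho_n \to 1^-$. But $\|f_\rho - f\|_p \to 0$ carries no rate for a general $f \in H^p(\mathbb{D}^d)$: already for $p=2$, $\|f_\rho - f\|_2^2 = \sum_\alpha |c_\alpha|^2 (1-\rho^{|\alpha|})^2$, and demanding a power-rate decay is a Besov-type regularity condition strictly stronger than membership in $H^2$. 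So no ``careful diagonal choice of $\rho_n$'' can keep the product bounded; the balancing step you defer is not merely delicate, it fails as set up.

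The missing idea is multiplicative rather than interpolative. Since $1/f$ is holomorphic, $f$ is zero-free on the simply connected $\mathbb{D}^d$, so fractional powers $f^\theta$ are well defined, with $f^\theta \in H^{p/\theta}$ and $f^{-\theta} \in H^{q/\theta}$; your Step 1 applied to $f^\theta$ shows it is shift-cyclic in $H^{r_0/\theta}$. Combine this with the one-sided half of \ref{item:Mult} that survives for $d>1$: if $u$ is shift-cyclic in $H^a$, $v \in H^b$ and $1/c \ge 1/a + 1/b$, then $\|v - Q_n u v\|_c \le \|1 - Q_n u\|_a \|v\|_b \to 0$, so $v \in S[uv]$ in $H^c$ and cyclicity of $v$ in $H^c$ forces that of $uv$. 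Applying this to $f = f^\theta \cdot f^{1-\theta}$ with $\theta$ small enough that $\theta/r_0 + (1-\theta)/p < 1/r$ reduces cyclicity of $f$ in $H^r$ to that of $f^{1-\theta}$ in $H^r$; iterating on $f^{(1-\theta)^k}$ only relaxes the H\"older constraint, and once $r_0 (1-\theta)^{-K} \ge r$ the function $f^{(1-\theta)^K}$ is cyclic in $H^r$ by Step 1, after which one descends back to $f$. Some such bootstrapping through powers of $f$ is what your outline is missing.
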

	
	Since we do not know if \ref{item:NI} holds for $d > 1$, we cannot conclude that $f$ above is shift-cyclic in $H^p(\mathbb{D}^d)$. In fact, if we can find an $f \in H^p(\mathbb{D}^d)$ such that $1/f \in H^q(\mathbb{D}^d)$ for some $0 < p,q < \infty$ but $f$ is not shift-cyclic in $H^p(\mathbb{D}^d)$, then we can conclude that \ref{item:NI} and \ref{item:Mult} do not generalize as such to $d > 1$ (see \cite[Proposition 2.1.1]{Sam22}). This brings us to some interesting open problems.
	
	\begin{problem}\label{prob:ni.Hardy.d>1}
		Does there exist $f \in H^2(\mathbb{D}^2)$ such that $f$ is shift-cyclic in $H^1(\mathbb{D}^2)$ but $f$ is not shift-cyclic in $H^2(\mathbb{D}^2)$?
	\end{problem}
	
	\begin{problem}\label{prob:mult.Hardy.d>1}
		Does there exist $f, g \in H^2(\mathbb{D}^2)$ such that $f$ and $g$ are both shift-cyclic in $H^2(\mathbb{D}^2)$ and that $fg \in H^2(\mathbb{D}^2)$ but $fg$ is not shift-cyclic in $H^2(\mathbb{D}^2)$?
	\end{problem}
	
	\begin{problem}\label{prob:inv.Hardy.d>1}
		Does there exist an $f \in H^2(\mathbb{D}^2)$ such that $1/f \in H^2(\mathbb{D}^2)$ but $f$ is not shift-cyclic in $H^2(\mathbb{D}^2)$?
	\end{problem}
	
	
	\subsection{On the unit multidisk \texorpdfstring{$\mathbb{D}^\infty_2$}{}}\label{subsec:Hardy.on.multi.D}
	
	Hilbert \cite{Hil09} introduced a theory of holomorphic functions on the \emph{unit multidisk} (also, \emph{Hilbert multidisk}) defined as
	\begin{equation*}
		\mathbb{D}^\infty_2 := \left\{ w = (w_1, w_2, \dots ) : |w_n| < 1 \foral n \in \mathbb{N} \AND w \in \ell^2(\mathbb{N}) \right\}.
	\end{equation*}
	We denote $\mathbb{Z}^\infty_+$ to be the collection of sequences $\alpha = (\alpha_1, \alpha_2, \dots)$ of non-negative integers with finite support, i.e., $\alpha_n = 0$ for all $n \in \mathbb{N}$ large enough. We extend the multinomial notation for a given $\alpha \in \mathbb{Z}^\infty_+$ by writing
    \begin{equation*}
        z^\alpha = z^{\alpha_1} z^{\alpha_2} \dots,
    \end{equation*}
    where the formal product truncates after finitely many steps (once $\alpha_n$'s are $0$). Now, we can introduce formal power-series in infinitely many variables
	\begin{equation*}
		F(z) = \sum_{\alpha \in \mathbb{Z}^\infty_+} c_\alpha z^\alpha,
	\end{equation*}
	and define the Hardy space on $\mathbb{D}^\infty_2$ as
	\begin{equation}\label{eqn:def.Hardy.on.multi.D}
		H^2(\mathbb{D}^\infty_2) := \left\{ F = \sum_{\alpha \in \mathbb{Z}^\infty_+} c_\alpha z^\alpha : \sum_{\alpha \in \mathbb{Z}^\infty_+} |c_\alpha|^2 < \infty \right\}.
	\end{equation}
	It is not difficult to check that any power-series whose coefficients satisfy the given condition in \eqref{eqn:def.Hardy.on.multi.D} is holomorphic in the sense of \cite{Hil09}, and that it converges uniformly and absolutely on compact subsets of $\mathbb{D}^\infty_2$ (see \cite[Lemma 6.6.2]{Nik19}). Now, $H^2(\mathbb{D}^\infty_2)$ turns into a Hilbert space under the inner product borrowed from $\ell^2(\mathbb{Z}^\infty_+)$ by the coefficients. Moreover, it can be checked that $H^2(\mathbb{D}^\infty_2)$ satisfies appropriate modifications of \ref{item:P1}-\ref{item:P3} (see \cite[Lemma 6.6.2]{Nik19}).
	
	Note that shift-cyclicity also makes sense for $H^2(\mathbb{D}^\infty_2)$ since the family
	\begin{equation*}
		\{ S^\alpha := S_1^{\alpha_1} S_2^{\alpha_2} \ldots \in \mathcal{L}(H^2(\mathbb{D}^\infty_2)) : \alpha \in \mathbb{Z}^\infty_+ \}
	\end{equation*}
 	is $\mathbb{Z}^\infty_+$-compatible under coordinate-wise addition. Clearly, $H^2(\mathbb{D}^d)$ is embedded isometrically inside $H^2(\mathbb{D}^\infty_2)$ for each $d \in \mathbb{N}$. Moreover, every shift-cyclic $F$ in $H^2(\mathbb{D}^d)$ is also shift-cyclic in $H^2(\mathbb{D}^\infty_2)$, but there are shift-cyclic functions in $H^2(\mathbb{D}^\infty_2)$ that cannot be realized as functions over any such $\mathbb{D}^d$, e.g.,
 	\begin{equation*}
 		K_w(z) := \prod_{n \in \mathbb{N}} \frac{1}{1 - \overline{w_n} z_n},
 	\end{equation*}
 	where $w \in \mathbb{D}^\infty_2$ is arbitrary (see \cite[Corollary 3.7]{Nik12}).
 	
 	Let $H^\infty(\mathbb{D}^\infty_2)$ be the space of all bounded holomorphic functions on $\mathbb{D}^\infty_2$. It is possible to define $H^p(\mathbb{D}^\infty_2)$ for any $0 < p < \infty$ as well. The curious reader can find more information on this fact and the following sample of results about shift-cyclicity in $H^2(\mathbb{D}^\infty_2)$ in \cite{Nik12} (see also the correction from \cite{Nik18}).
 	
 	\begin{theorem}\label{thm:shift-cyc.in.H^2.of.multidisk}
 		$F$ is shift-cyclic in $H^2(\mathbb{D}^\infty_2)$ if either of the following hold.
 		\begin{enumerate}
 			\item $1/F \in H^\infty(\mathbb{D}^\infty_2)$.
 			
 			\item The real part $\mathfrak{Re}(F)$ of $F$ is non-negative on $\mathbb{D}^\infty_2$.
 			
 			\item $F \in \operatorname{Hol}((1+\epsilon) \mathbb{D}^d)$ for some $\epsilon > 0$, $d \in \mathbb{N}$ and $F$ is non-vanishing on $\mathbb{D}^d$.
 			
 			\item $F \in H^p(\mathbb{D}^\infty_2)$ and $1/F \in H^q(\mathbb{D}^\infty_2)$ for some $p > 2$ and $0 < q \leq \infty$.
 			
 			\item $F = F_1 F_2 F_3 F_4$, where each $F_j$ satisfies the condition (j) above.
 		\end{enumerate}
 	\end{theorem}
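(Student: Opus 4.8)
The plan is to verify each of the five sufficient conditions by reducing it, as far as possible, to the single ``invertibility'' mechanism behind condition (1). Throughout I would work with the identification of $H^2(\mathbb{D}^\infty_2)$ with the boundary space $H^2(\mathbb{T}^\infty) \subset L^2(\mathbb{T}^\infty)$ (the closure of the analytic polynomials in $L^2$ of the infinite torus with its Haar measure), which is the natural analogue of Corollary \ref{cor:bdy.prop.H^p(D)}. Under this identification $\langle z^\alpha F, h \rangle_{H^2(\mathbb{D}^\infty_2)} = \int_{\mathbb{T}^\infty} z^\alpha F \overline{h}$, and by Proposition \ref{prop:equiv.def.of.shift.cyc} it suffices in every case to put the constant $1$ into $S[F]$, i.e.\ to produce polynomials $P_n$ with $\|1 - P_n F\|_{H^2(\mathbb{D}^\infty_2)} \to 0$. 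I will also repeatedly use the two transfer facts recorded above the statement: that cyclicity in $H^2(\mathbb{D}^d)$ forces cyclicity in $H^2(\mathbb{D}^\infty_2)$, and that a bounded holomorphic function on $\mathbb{D}^\infty_2$ (whose boundary values lie in $L^\infty(\mathbb{T}^\infty)$ with Fourier spectrum in $\mathbb{Z}^\infty_+$) acts as a bounded multiplier.

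Engine (condition (1)). Suppose $g := 1/F \in H^\infty(\mathbb{D}^\infty_2)$ and let $h \perp S[F]$; I want to conclude $h = 0$. Orthogonality gives $\int_{\mathbb{T}^\infty} z^\alpha F \overline{h} = 0$ for every $\alpha \in \mathbb{Z}^\infty_+$, which says that the Fourier coefficients of $\phi := F\overline{h} \in L^1(\mathbb{T}^\infty)$ vanish at every index $m \le 0$ (coordinatewise). Since $gF = 1$ a.e.\ on $\mathbb{T}^\infty$ we have $\overline{h} = g\,\phi$, and I would compute $\widehat{\overline h}(n)$ by convolving the Fourier data of $g$ (supported on $\mathbb{Z}^\infty_+$) with that of $\phi$: for any $n \le 0$ and any $k \ge 0$ one has $n - k \le 0$, so $\widehat{\phi}(n-k) = 0$, whence $\widehat{\overline h}(n) = 0$ for all $n \le 0$. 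But $h \in H^2$, so $\overline h$ is supported on indices $\le 0$; hence $\overline h = 0$ and $S[F] = H^2(\mathbb{D}^\infty_2)$. The only points needing care are the boundary identification and the Fourier-convolution step, which is cleanly justified by approximating $g$ by its Fej\'er means (trigonometric polynomials with spectrum in $\mathbb{Z}^\infty_+$, uniformly bounded, converging in $L^2$), so that each term in the relevant coefficient is literally zero.

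Conditions (3) and (4). For (3), $F$ depends on finitely many variables and extends holomorphically past $\overline{\mathbb{D}^d}$, so $F \in H^\infty(\mathbb{D}^d)$, while $1/F$ is holomorphic and non-vanishing on $\mathbb{D}^d$ with at worst finite-order zeros on $\partial\mathbb{D}^d$, giving $1/F \in H^q(\mathbb{D}^d)$ for some small $q > 0$. Thus $F \in H^p(\mathbb{D}^d)$ for all $p$ and $1/F \in H^q(\mathbb{D}^d)$, so Theorem \ref{thm:inv.almost.true.Hardy.d>1} yields cyclicity of $F$ in $H^2(\mathbb{D}^d)$, which transfers up to $H^2(\mathbb{D}^\infty_2)$. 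Condition (4) is the genuinely infinite-variable analogue of Theorem \ref{thm:inv.almost.true.Hardy.d>1}, and here I would rerun the Engine with $g = 1/F \in H^q$ in place of $H^\infty$. The margin $p > 2$ already places $\phi = F\overline h$ in some $L^r$ with $r > 1$; the difficulty is that for small $q$ the product $g\,\phi$ need not be integrable enough to apply the convolution identity directly, so I expect to need a preliminary reduction (improving the integrability of $1/F$, e.g.\ by passing to a suitable power of the non-vanishing $F$, or by arguing on finite truncations through Theorem \ref{thm:inv.almost.true.Hardy.d>1} and transferring) before the cancellation $\widehat{\overline h}(n)=0$ closes.

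Conditions (2) and (5). For (2), if $\mathfrak{Re}(F) \ge 0$ then (excluding the trivial case $F\equiv\mathrm{const}$) the minimum principle forces $\mathfrak{Re}(F) > 0$, so $F$ is non-vanishing and each shift $F + \epsilon$ has $\mathfrak{Re}(F+\epsilon) \ge \epsilon$, hence $1/(F+\epsilon) \in H^\infty(\mathbb{D}^\infty_2)$; by the Engine each $F+\epsilon$ is cyclic, and $F + \epsilon \to F$ in $H^2(\mathbb{D}^\infty_2)$. The remaining step is to pass to the limit $\epsilon \to 0^+$, and this I expect to be the main obstacle of the whole theorem, since cyclicity is not preserved under $H^2$-limits: I would route around it either through the Herglotz/Poisson representation of positive-real-part functions (exhibiting $F$ as ``outer'' and feeding it straight into the Engine) or through uniform weak-$\ast$ control of the multipliers $1/(F+\epsilon)$. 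Finally, (5) is a multiplicativity statement, and because arbitrary products of cyclic functions need not be cyclic (cf.\ Problem \ref{prob:mult.Hardy.d>1} and \ref{item:Mult}) one must exploit the concrete structure of (1)--(4): each hypothesis places $1/F_j$ in an explicit Hardy or multiplier class with $\log|F_j^*| \in L^1(\mathbb{T}^\infty)$, so that $1/F = \prod_j 1/F_j$ again lies in some $H^q(\mathbb{D}^\infty_2)$ while $F \in H^2(\mathbb{D}^\infty_2)$, and the Engine applies to $F$ itself. As in (4), the principal technical burden is the integrability bookkeeping that makes the duality computation close.
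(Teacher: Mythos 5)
First, a remark on the comparison you asked me to make: the paper does not prove this theorem at all. It is quoted as ``a sample of results'' with the proofs delegated to Nikolski's papers (and the correction to them), so there is no in-paper argument to measure your proposal against; I can only assess it on its own terms.

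Your ``Engine'' for (1) is sound: the identification $H^2(\mathbb{D}^\infty_2)\cong H^2(\mathbb{T}^\infty)$, the vanishing of $\widehat{F\overline{h}}$ on $-\mathbb{Z}^\infty_+$, and the Fej\'er-mean justification of the convolution step all go through (indeed one can skip duality entirely: with $P_N$ the Fej\'er means of $g=1/F$, dominated convergence gives $\|1-P_NF\|_2=\|(g-P_N)F\|_2\to 0$). Condition (3) is also essentially right, modulo the unproved claim that $1/F\in H^q(\mathbb{D}^d)$ for some $q>0$, which needs a {\L}ojasiewicz-type estimate near $\mathcal{Z}(F)\cap\mathbb{T}^d$ before Theorem \ref{thm:inv.almost.true.Hardy.d>1} can be invoked. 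The genuine gaps are in (2), (4) and (5). For (2) you correctly identify that ``$F+\epsilon$ is cyclic and $F+\epsilon\to F$'' proves nothing, but the repair you gesture at is the entire content of the case: the standard route is the identity $1-F\cdot\frac{1}{F+\epsilon}=\frac{\epsilon}{F+\epsilon}$, noting $\frac{1}{F+\epsilon}\in H^\infty(\mathbb{D}^\infty_2)$ so that $F\cdot\frac{1}{F+\epsilon}\in S[F]$, followed by a quantitative $L^2$-estimate showing $\|\epsilon/(F+\epsilon)\|_2\to 0$ for half-plane-valued functions; none of this appears. For (4) the convolution argument does not close when $q$ is small, and neither proposed fix is carried out (truncating to finitely many variables does not obviously preserve the hypothesis $1/F\in H^q(\mathbb{D}^\infty_2)$, and ``passing to a power of $F$'' changes which space $F$ lives in).

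The treatment of (5) is the step that would actually fail as written. You argue that $1/F=\prod_j 1/F_j$ lies in some $H^{q'}(\mathbb{D}^\infty_2)$ while $F\in H^2(\mathbb{D}^\infty_2)$, ``and the Engine applies to $F$ itself.'' But the Engine requires $1/F\in H^\infty$, and its extension to $1/F\in H^{q'}$ is precisely hypothesis (4), which demands $F\in H^p$ for some $p>2$ strictly; the product is only known to lie in $H^2$, so this reduction is circular at exactly the margin that makes (4) nontrivial. (You also assert without proof that $\mathfrak{Re}(F_2)\geq 0$ forces $1/F_2$ into a Hardy class on $\mathbb{D}^\infty_2$; this is a one-variable Smirnov-type fact whose infinite-variable version needs uniform control over finite-dimensional sections.) The viable route for (5) is not to verify a single hypothesis for the product but to prove multiplicativity case by case: e.g.\ for $F_1$ and $F_3$ one shows multiplication by $F_j$ preserves cyclicity because $1/F_j$ is (locally) a bounded multiplier and multiplier-cyclicity coincides with shift-cyclicity here, and the cases of $F_2$ and $F_4$ each require reworking the corresponding single-factor argument in the presence of the extra factor. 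As it stands, three of the five cases are either open sketches or rest on a reduction that does not apply.
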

 	
 	Other interesting partial results along the same lines were obtained in \cite{DG21}. For partial results on the generalization of Theorem \ref{thm:fin.codim.inv.subsp}, see \cite{DGH18}. Let us now see how shift-cyclicity in $H^2(\mathbb{D}^\infty_2)$ connects to, and, in fact, is equivalent to some other interesting problems as hinted in Example \ref{eg:tau-cyclicity}.
	
	\subsubsection{Function theory junction}\label{subsubsec:func.th.junc}
	
	If we realize every $\varphi \in L^2(0,1)$ as an odd $2$-periodic function on $\mathbb{R}$, we obtain a trigonometric series expansion
	\begin{equation*}
		\varphi(x) = \sum_{n \in \mathbb{N}} a_n (\sqrt{2} \sin(\pi n x)).
	\end{equation*}
	Note that $\{\sqrt{2} \sin(\pi n x) : n \in \mathbb{N}\}$ forms an orthonormal basis of $L^2(0,1)$ under this realization. We define the \emph{dilations} (as in Example \ref{eg:tau-cyclicity} $(3)$)
	\begin{equation*}
		D_n : \sqrt{2} \sin(\pi m x) \mapsto \sqrt{2} \sin(\pi m n x),
	\end{equation*}
	and note that $\vartheta := \{ D_n : n \in \mathbb{N} \}$ is $\mathbb{N}$-compatible.
	
	The problem of determining $\vartheta$-cyclic functions is called the \emph{periodic dilation completeness problem (PDCP)}. For $\vartheta$ as in Example \ref{eg:tau-cyclicity} $(3)$, determining when $\chi_{[0,1]} \in \vartheta[\varphi]$ for a given $\varphi \in L^2(0,\infty)$ is called the \emph{dilation completeness problem (DCP)}. Theorem \ref{thm:NBBD.criterion} shows that solving DCP also solves the Riemann hypothesis, but $\varrho$ in the theorem is not $2$-periodic. The connection between PDCP and the Riemann hypothesis was observed by Noor \cite{Noo19}. Wintner \cite{Win44}, Nyman \cite{Nym50} and Beurling \cite{Beu55} were the first to observe the relationship between PDCP and certain properties of \emph{Dirichlet series}. In modern literature, the works of Hedenmalm, Lindquist and Seip \cite{HLS97}, Bagchi \cite{Bag06}, Nikolski \cite{Nik12, Nik18, Nik19} and Noor \cite{Noo19} have been highly influential.
 
    Let $H^2_0(\mathbb{D})$ be as in Example \ref{eg:tau-cyclicity} $(2)$ and note that $L^2(0,1)$ and $H^2_0(\mathbb{D})$ are unitarily equivalent via
    \begin{equation*}
        \sqrt{2} \sin(\pi n x) \leftrightarrow z^n.
    \end{equation*}
    Under the same correspondence, it is also clear that $\vartheta$-cyclicity is equivalent to $\tau$-cyclicity (again, from Example \ref{eg:tau-cyclicity} $(2)$). Taking things one step further we can show that $\tau$-cyclicity is equivalent to shift-cyclicity in $H^2(\mathbb{D}^\infty_2)$. Indeed, we can use an idea of Bohr \cite{Boh13} for this. Let us order all the prime numbers $p_m$ in increasing order along $m \in \mathbb{N}$. Every $n \in \mathbb{N}$ has a unique prime factorization
    \begin{equation*}
        n = p_1^{\kappa_1 (n)} p_2^{\kappa_2 (n)} \ldots,
    \end{equation*}
    which gives us a bijective map $\kappa : \mathbb{N} \to \mathbb{Z}^\infty_+$ via
	\begin{equation*}
		\kappa : n \mapsto (\kappa_1 (n), \kappa_2 (n), \dots).
	\end{equation*}
	We use this bijective map to obtain a correspondence, called the \emph{Bohr transform},
	\begin{equation*}
		\mathfrak{B} : \sum_{n \in \mathbb{N}} c_n z^n \in H^2_0(\mathbb{D}) \mapsto \sum_{\alpha \in \mathbb{Z}_+^\infty} c_{\kappa^{-1}(\alpha)} z^\alpha \in H^2(\mathbb{D}^\infty_2).
	\end{equation*}
	It is a standard exercise to check that $\mathfrak{B} : H^2_0(\mathbb{D}) \to H^2(\mathbb{D}^\infty_2)$ is a unitary and $\tau$-cyclicity is equivalent to shift-cyclicity in $H^2(\mathbb{D}^\infty_2)$ (see \cite[Section 6.6.3]{Nik19}).
	
	While things are clear from an operator theoretic point of view, notice that point-wise relationships are lost under these unitary equivalences. For instance, Nikolski \cite{Nik12} used a theorem of Green and Tao \cite{GT08} about the existence of arbitrarily long arithmetic progressions among the prime numbers to show that
    \begin{equation*}
        \mathfrak{B}(H^\infty(\mathbb{D})) \not\subset H^\infty(\mathbb{D}^\infty_2)
    \end{equation*}
    (also see \cite[Exercise 6.7.4 (c)]{Nik19}). Moreover, from \ref{item:P2} it is clear that if $F$ is shift-cyclic in $H^2(\mathbb{D}^\infty_2)$ then it is non-vanishing on $\mathbb{D}^\infty_2$, but it is not clear what this property corresponds to under $\mathfrak{B}^{-1}$. We therefore end this subsection with the following open problem.
	
	\begin{problem}\label{prob:necc.condn.tau.cyc}
		Are $f(\mathbb{D})$ and $\mathfrak{B}f(\mathbb{D}^\infty_2)$ related to each other for any given $f \in H^2_0(\mathbb{D})$? In particular, can we classify all $f \in H^2_0(\mathbb{D})$ such that $\mathfrak{B}f$ is non-vanishing on $\mathbb{D}^\infty_2$.
	\end{problem}
	
	
	\section{Dirichlet-type spaces}\label{sec:Dirich-type.sp}
	
	
	\subsection{On the unit disk \texorpdfstring{$\mathbb{D}$}{}}\label{subsec:Dirich.on.d=1}
	
	In this section, we present a situation where shift-cyclicity is mysterious despite functions in the space exhibiting the same factorization and boundary properties as the one variable Hardy spaces, but the reason this time is partly due to functions having a bit more rigidity along the boundary. We follow the notation from \cite{EKMR14} throughout this section.
	
	\subsubsection{The Dirichlet space \texorpdfstring{$\mathcal{D}$}{}}\label{subsubsec:Dirich.sp}
	
	The \emph{Dirichlet space on $\mathbb{D}$} is defined as
	\begin{equation*}
		\mathcal{D} := \left\{ f = \sum_{k \in \mathbb{Z}_+} c_k z^k : \sum_{k \in \mathbb{Z}_+} (k + 1) |c_k|^2 < \infty \right\}.
	\end{equation*}
	The following properties of $\mathcal{D}$ are evident from the definition:
	\begin{enumerate}
		\item $\mathcal{D}$ turns into a Hilbert space with the inner product
		\begin{equation*}
			\left\langle \sum_{k \in \mathbb{Z}_+} c_k z^k, \sum_{k \in \mathbb{Z}_+} d_k z^k \right\rangle_\mathcal{D} := \sum_{k \in \mathbb{Z}_+} (k + 1) c_k \overline{d_k}.
		\end{equation*}
		
		\item $\mathcal{D} \subsetneq H^2(\mathbb{D})$\footnote{In fact, $\mathcal{D} \subsetneq \bigcap_{p < \infty} H^p(\mathbb{D})$, $H^\infty(\mathbb{D}) \not\subset \mathcal{D}$ and $\mathcal{D} \not\subset H^\infty(\mathbb{D})$ (see \cite[Exercise 1.1 (7)]{EKMR14}).}. Thus, functions in $\mathcal{D}$ have radial limits a.e. on $\mathbb{T}$ and exhibit an inner-outer factorization.
		
		\item $\mathcal{D}$ is a Hilbert space of analytic functions on $\mathbb{D}$.
		
		\item An inequality of Hardy (see \cite[the corollary to Theorem 3.15]{Dur70}) shows that if $f' \in H^1(\mathbb{D})$ then $f \in \mathcal{D}$. The function $f(z) = \sum_n \frac{z^n}{n}$ shows that $f$ need not lie in $\mathcal{D}$ even if $f' \in H^p(\mathbb{D})$ for all $p < 1$.
	\end{enumerate}
	
	Things get complicated from here on. For starters, one can check that if $\{w_n : n \in \mathbb{N}\}$ is the zero set of a Blaschke product $B$, then $f(z) = (1-z)^2 B(z)$ is such that $f' \in H^\infty(\mathbb{D})$. Thus, $(4)$ above shows that $\{w_n : n \in \mathbb{N}\}$ is the zero set of some $f \in \mathcal{D}$. Interestingly, infinite Blaschke products do not lie in $\mathcal{D}$. In fact, the only inner functions that lie in $\mathcal{D}$ are finite Blaschke products (this follows from a result of Carleson \cite{Car60}; see also \cite[Theorem 4.1.3]{EKMR14}). Determining all the zero sets of functions in $\mathcal{D}$ is still an open problem.
	
	Next, we note that $H^\infty(\mathbb{D})$ is not the multiplier algebra of $\mathcal{D}$\footnote{$\mathcal{M}(\mathcal{D})$ is not even algebraically isomorphic to $H^\infty(\mathbb{D})$ (see \cite[Corollary 7.4]{DHS15}).}. In fact, the multiplier algebra $\mathcal{M}(\mathcal{D})$ does not seem to have a simple description in terms of integral or functional properties of its members. Since $\mathcal{D}$ does not contain every inner function, we know that $H^\infty(\mathbb{D}) \not\subset \mathcal{D}$ and it is not difficult to check that
    \begin{equation*}
        \mathcal{M}(\mathcal{D}) \subset \mathcal{D} \cap H^\infty(\mathbb{D}),
    \end{equation*}
    but \cite[Theorem 5.1.6]{EKMR14} shows that
    \begin{equation*}
        (\mathcal{D} \cap H^\infty(\mathbb{D})) \setminus \mathcal{M}(\mathcal{D}) \neq \emptyset.
    \end{equation*}
 
    Lastly, every shift-invariant subspace of $\mathcal{D}$ is of the form $S[\varphi]$ for some $\varphi \in \mathcal{M}(\mathcal{D})$\footnote{A corollary of this fact is that the zero sets of members in $\mathcal{D}$ and $\mathcal{M}(\mathcal{D})$ are the same.} (see \cite[Theorem 2 (c)]{RS88}), but we do not have a good description of all such $\varphi$'s. This does tell us, though, that \ref{item:MC} generalizes to $\mathcal{D}$ with $\mathcal{M}(\mathcal{D})$ instead of $H^\infty(\mathbb{D})$. \cite[Theorem 9.1.3]{EKMR14} shows that if $f \in \mathcal{D}$, then
	\begin{equation*}
		S[f] = f_{inn} S[f_{out}] \cap \mathcal{D} = S[f_{out}] \cap f_{inn} H^2(\mathbb{D}).
	\end{equation*}
	Consequently, if $f$ is shift-cyclic in $\mathcal{D}$, then it must be outer. However, just like the Hardy spaces for $d > 1$, there are outer functions that are not shift-cyclic (see \cite[Theorem 9.2.8]{EKMR14}). It turns out that in order to understand shift-cyclicity in $\mathcal{D}$ we have one more necessary condition to consider alongside outerness.
	
	\subsubsection{Logarithmic capacity and shift-cyclicity}\label{subsubsec:log.cap.and.shift-cyc}
	
	Fix a closed set $\Gamma \subset \mathbb{T}$ and let $\mathfrak{M}^+_1(\Gamma)$ be the space of all probability measures on $\Gamma$. The \emph{logarithmic capacity} of $\Gamma$ is defined as
	\begin{equation*}
		\operatorname{cap}(\Gamma) := \left[ \inf_{\mu \in \mathfrak{M}^+_1(\Gamma)} \int_{\Gamma \times \Gamma} \log \left(\frac{2}{|\zeta_1 - \zeta_2|}\right) d\mu(\zeta_1) d\mu(\zeta_2)\right]^{-1}.
	\end{equation*}
	For any arbitrary set $\Gamma \subset \mathbb{T}$, we define its \emph{inner logarithmic capacity} as
	\begin{equation*}
		\operatorname{cap}_*(\Gamma) := \sup_{\substack{V \subset \Gamma \\ \text{compact}}} \operatorname{cap}(V),
	\end{equation*}
	and its \emph{outer logarithmic capacity} as
	\begin{equation*}
		\operatorname{cap}^*(\Gamma) := \inf_{\substack{U \supset \Gamma \\ \text{open}}} \operatorname{cap}_*(U).
	\end{equation*}
	
	A set $\Gamma \subset \mathbb{T}$ is said to be \emph{capacitable} if $\operatorname{cap}_*(\Gamma) = \operatorname{cap}^*(\Gamma)$, in which case we simply write $\operatorname{cap}(\Gamma)$ for either. A classical result of Choquet \cite{Cho54} shows that every Borel subset of $\mathbb{T}$ is capacitable.
	%
	%
	The capacity of a Borel set $\Gamma \subset \mathbb{T}$ is intimately connected to its (normalized) Lebesgue measure $|\Gamma|$. For instance, \cite[Theorem 2.4.5]{EKMR14} shows that if $|\Gamma| > 0$ then
	\begin{equation*}
		\operatorname{cap}(\Gamma) \geq \frac{1}{1 - \log |\Gamma|}.
	\end{equation*}
	It follows that if $\operatorname{cap}(\Gamma) = 0$ then $|\Gamma| = 0$ as well. However, the \emph{Cantor middle thirds set $C_{1/3}$} is such that $|C_{1/3}| = 0$ but $\operatorname{cap}(C_{1/3}) > 0$ (see \cite[Theorem 2.3.5]{EKMR14}). For this reason, the following result of Beurling \cite{Beu40} highlights an important deviation of $\mathcal{D}$ from the Hardy spaces. Recall that a property holds \emph{quasi everywhere (q.e.)} if it holds everywhere except on a set $\Gamma$ with $\operatorname{cap}^*(\Gamma) = 0$.
	
	\begin{theorem}\label{thm:q.e.rad.lim.Dirich.sp}
		Every $f \in \mathcal{D}$ has radial limits $\lim_{r \to 1^-} f(r \zeta)$ for q.e. $\zeta \in \mathbb{T}$.
	\end{theorem}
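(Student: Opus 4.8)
The plan is to derive the quasi-everywhere statement from a capacitary weak-type maximal inequality. The starting point is that $f \in \mathcal{D}$ is equivalent to finiteness of the \emph{Dirichlet integral} $D(f) = \int_{\mathbb{D}} |f'|^2\,dA = \pi\sum_{k \ge 0} k\,|c_k|^2$, so that $f' \in L^2(dA)$. Writing $f(r\zeta) = f(0) + \int_0^r f'(t\zeta)\,\zeta\,dt$, one sees that the radial limit of $f$ exists at $\zeta$ as soon as the radial variation $\int_0^1 |f'(t\zeta)|\,dt$ is finite; more usefully, the radial oscillation of $f$ at $\zeta$ is dominated by $2\,Mf(\zeta)$, where $Mf(\zeta) := \sup_{0 \le r < 1} |f(r\zeta)|$ is the radial maximal function. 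Thus everything reduces to controlling, in capacity, the level sets of $M$.

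The technical core I would establish is the estimate: there is a constant $C$ so that for all $g \in \mathcal{D}$ and $\lambda > 0$,
\[
\operatorname{cap}^*\bigl(\{\zeta \in \mathbb{T} : Mg(\zeta) > \lambda\}\bigr) \le \frac{C}{\lambda^2}\,\|g\|_{\mathcal{D}}^2 .
\]
To prove it, I would first use that Borel sets are capacitable (Choquet's theorem, cited above) to reduce to bounding $\operatorname{cap}(K)$ for an arbitrary compact $K$ contained in the level set; after subtracting the constant term, which does not increase $\|g\|_{\mathcal{D}}$, assume $g(0) = 0$. Let $\mu$ be the equilibrium measure of $K$, so that its logarithmic energy satisfies $I(\mu) = 1/\operatorname{cap}(K)$. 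Since $\lambda < Mg(\zeta) \le \int_0^1 |g'(t\zeta)|\,dt$ on $K$, integrating against $\mu$ gives $\lambda \le \int_K \int_0^1 |g'(t\zeta)|\,dt\,d\mu(\zeta)$. Viewing the right-hand side as the pairing of $|g'| \in L^2(dA)$ with the measure $\sigma$ obtained by spreading $\mu$ uniformly along the radii $\{t\zeta : 0 \le t \le 1\}$, and applying Cauchy--Schwarz through Green's identity, I would bound it by $D(g)^{1/2}\,\mathcal{E}(\sigma)^{1/2}$, with $\mathcal{E}(\sigma)$ the Green energy of $\sigma$. Combining the two inequalities yields $\lambda \le C\,\|g\|_{\mathcal{D}}\,\operatorname{cap}(K)^{-1/2}$, which is exactly the claim.

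The main obstacle is the energy comparison $\mathcal{E}(\sigma) \le C\,I(\mu)$: one must show that integrating the Green kernel of $\mathbb{D}$ along pairs of radii reproduces, up to a bounded additive error, the logarithmic kernel $\log(2/|\zeta_1 - \zeta_2|)$ on $\mathbb{T}$ (the additive error being harmless because every $\mu$ on $\mathbb{T}$ has $I(\mu) \ge 1/\operatorname{cap}(\mathbb{T})$). This is the step where the precise geometry of the disk and the choice of capacity enter, and it is where a careful potential-theoretic computation is unavoidable. Granting the claim, the theorem follows: let $E = \{\zeta : \text{the radial limit of } f \text{ fails to exist}\} = \bigcup_{j \in \mathbb{N}} \{\zeta : \operatorname{osc}(f,\zeta) > 1/j\}$. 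Because polynomials are dense in $\mathcal{D}$ by \ref{item:P1} and a polynomial is continuous on $\overline{\mathbb{D}}$ (hence has no radial oscillation), for any polynomial $p$ we have $\{\operatorname{osc}(f) > 1/j\} \subseteq \{M(f-p) > 1/(2j)\}$, so the claim gives $\operatorname{cap}^*(\{\operatorname{osc}(f) > 1/j\}) \le C(2j)^2\,\|f-p\|_{\mathcal{D}}^2$. Letting $p \to f$ in $\mathcal{D}$ forces each set $\{\operatorname{osc}(f) > 1/j\}$ to have outer capacity zero, and since a countable union of sets of capacity zero again has capacity zero, $\operatorname{cap}^*(E) = 0$, which is the assertion.
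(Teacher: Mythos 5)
The paper does not prove this theorem; it is stated as Beurling's classical result with only the citation \cite{Beu40}, so there is no in-paper argument to compare against. Your two-step architecture --- a capacitary weak-type inequality for the radial maximal/variation function, followed by the density of polynomials and countable subadditivity of outer capacity on null sets --- is exactly the standard modern proof (it is how \cite[Chapter 3]{EKMR14} derives Theorem \ref{thm:q.e.rad.lim.Dirich.sp}), and your second step is carried out correctly: $\operatorname{osc}(f,\zeta)=\operatorname{osc}(f-p,\zeta)\le 2M(f-p)(\zeta)$ for any polynomial $p$, so the weak-type bound applied to $f-p$ with $\|f-p\|_{\mathcal{D}}\to 0$ kills each set $\{\operatorname{osc}(f)>1/j\}$ in outer capacity. (Two harmless loose ends there: $(f-p)(0)$ need not vanish, so you must either absorb the constant into the weak-type constant or note that $|g(0)|\le\|g\|_{\mathcal{D}}$; and you should say explicitly that a countable union of sets of outer capacity zero has outer capacity zero.)

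The genuine gap is in the technical core. You reduce the weak-type inequality to
\begin{equation*}
\int_{\mathbb{D}} |g'| \, d\sigma \;\le\; D(g)^{1/2}\,\mathcal{E}(\sigma)^{1/2},
\end{equation*}
justified by ``Cauchy--Schwarz through Green's identity,'' where $\sigma$ is the equilibrium measure $\mu$ swept along radii and $\mathcal{E}$ is the Green energy. Green's identity does give $\int u\,d\sigma = \frac{1}{2\pi}\int \nabla u\cdot\nabla G\sigma\,dm \le \frac{1}{2\pi}D(u)^{1/2}D(G\sigma)^{1/2}$, but only for \emph{harmonic} $u$; the function you are integrating is $|g'|$, which is merely subharmonic, and its Dirichlet integral is $\int|\nabla|g'||^2\,dm \le \int|g''|^2\,dm$, which is not $D(g)$. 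For subharmonic $v\ge 0$ the same identity produces extra terms of the correct sign only to bound $\int v\,d\sigma$ by a boundary integral of $v$, which is useless here since boundary values of $|g'|$ need not exist. So the displayed inequality does not follow as stated, and this is in addition to the energy comparison $\mathcal{E}(\sigma)\lesssim I(\mu)$ that you already flag as ``the main obstacle.'' The standard repair runs differently: apply Cauchy--Schwarz on each radius with a weight built from the equilibrium potential $U^\mu$ (Beurling's original device), or equivalently pair $f$ against the $\mu$-averaged reproducing kernel $k_\mu=\int_{\mathbb{T}} k_\zeta\,d\mu(\zeta)$ of $\mathcal{D}$ and use that $\|k_\mu\|_{\mathcal{D}}^2=\iint k(\zeta,\xi)\,d\mu(\xi)\,d\mu(\zeta)\asymp I(\mu)$ because the Dirichlet kernel grows like $\log\frac{1}{|1-z\overline{w}|}$. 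Until one of these is executed, the weak-type inequality --- and hence the theorem --- is not proved; what you have is a correct strategy with its hardest step left as an unverified black box.
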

	
	For a given $f \in \mathcal{D}$, let
	\begin{equation*}
		\mathcal{Z}_\mathbb{T}(f^*) := \{ \zeta \in \mathbb{T} : f^*(\zeta) = 0 \}.
	\end{equation*}
	The following result of Brown and Shileds \cite[Theorem 5]{BS84} shows how logarithmic capacity relates to shift-cyclicity.
	
	\begin{theorem}\label{thm:shift-cyc.in.Dirich.sp.implies.cap=0}
		If $f \in \mathcal{D}$ is such that $\operatorname{cap}^*(\mathcal{Z}_\mathbb{T}(f^*)) > 0$ then $f$ is not shift-cyclic.
	\end{theorem}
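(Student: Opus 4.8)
The plan is to prove the contrapositive in its equivalent positive form: I want to show that if $f \in \mathcal{D}$ is shift-cyclic, then $\operatorname{cap}^*(\mathcal{Z}_\mathbb{T}(f^*)) = 0$. Equivalently, assuming $\operatorname{cap}^*(\mathcal{Z}_\mathbb{T}(f^*)) > 0$, I must produce an obstruction to approximating the constant function $1$ by polynomial multiples $P_n f$ in the $\mathcal{D}$-norm. The key idea is that convergence $\|1 - P_n f\|_\mathcal{D} \to 0$ forces convergence of the boundary values in a suitable sense, and the set $\mathcal{Z}_\mathbb{T}(f^*)$ is ``large enough'' (having positive capacity, hence being a genuine obstruction in $\mathcal{D}$) that $P_n f^*$ must vanish there while $1$ does not.

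First, I would set up the boundary-value machinery. By property $(2)$ of $\mathcal{D}$ in the excerpt, every $f \in \mathcal{D}$ lies in $H^2(\mathbb{D})$ and so has radial boundary values $f^*$ a.e.\ on $\mathbb{T}$; moreover, by Theorem \ref{thm:q.e.rad.lim.Dirich.sp}, these radial limits exist quasi-everywhere, not just almost everywhere. The crucial strengthening I would invoke is that $\mathcal{D}$-convergence controls boundary values \emph{up to capacity}: if $g_n \to g$ in $\mathcal{D}$, then a subsequence of $g_n^*$ converges to $g^*$ quasi-everywhere on $\mathbb{T}$. This is the analogue for the Dirichlet space of the fact that $L^2$-convergence yields a.e.-convergent subsequences, but with Lebesgue-null exceptional sets replaced by capacity-zero exceptional sets; it reflects the finer boundary rigidity of $\mathcal{D}$ alluded to in the section's introduction. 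I would cite this ``weak-type'' capacitary inequality from \cite{EKMR14} (it is the standard tool behind Theorem \ref{thm:q.e.rad.lim.Dirich.sp}).

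Granting that, the argument closes as follows. Suppose $f$ is shift-cyclic; by Proposition \ref{prop:equiv.def.of.shift.cyc} there are polynomials $P_n$ with $\|1 - P_n f\|_\mathcal{D} \to 0$. Passing to a subsequence, $(P_n f)^* \to 1$ quasi-everywhere on $\mathbb{T}$. But on $\mathcal{Z}_\mathbb{T}(f^*)$ we have $f^* = 0$, and since $P_n$ is a polynomial (a bounded analytic function with boundary values everywhere), $(P_n f)^* = P_n^* \cdot f^* = 0$ at every point of $\mathcal{Z}_\mathbb{T}(f^*)$. Hence the quasi-everywhere limit of $(P_n f)^*$ equals $0$ on all of $\mathcal{Z}_\mathbb{T}(f^*)$, yet it must equal $1$ quasi-everywhere. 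The two statements are compatible only if $\mathcal{Z}_\mathbb{T}(f^*)$ is contained, up to a capacity-zero set, in the exceptional set of the subsequential convergence, i.e.\ only if $\operatorname{cap}^*(\mathcal{Z}_\mathbb{T}(f^*)) = 0$. This contradicts the hypothesis $\operatorname{cap}^*(\mathcal{Z}_\mathbb{T}(f^*)) > 0$, and the theorem follows.

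The main obstacle, and the step I would scrutinize most carefully, is the capacitary convergence lemma: justifying that $\mathcal{D}$-norm convergence passes to quasi-everywhere convergence of boundary functions along a subsequence. This is where all the real analysis lives, and it rests on a weak-type capacity estimate of the form $\operatorname{cap}^*\{\zeta : |g^*(\zeta)| > \lambda\} \leq C\|g\|_\mathcal{D}^2/\lambda^2$ (or a refinement thereof), which controls the boundary behaviour in terms of the Dirichlet norm rather than the weaker $H^2$-norm. Everything else---the reduction to the approximation sequence, the pointwise vanishing of $P_n f$ on $\mathcal{Z}_\mathbb{T}(f^*)$, and the final incompatibility---is essentially bookkeeping once this capacitary control is in hand. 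I would therefore isolate this estimate as the technical heart of the proof and treat the remainder as a clean deduction.
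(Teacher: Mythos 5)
Your argument is correct and is essentially the standard Brown--Shields proof that the paper relies on (the survey gives no proof of this theorem, only the citation to \cite{BS84}): cyclicity produces polynomials with $\|1 - P_n f\|_{\mathcal{D}} \to 0$, the weak-type capacitary inequality upgrades norm convergence to quasi-everywhere convergence of boundary values along a subsequence, and the pointwise vanishing of $(P_n f)^* = P_n f^*$ on $\mathcal{Z}_{\mathbb{T}}(f^*)$ then forces that set into the capacity-zero exceptional set by monotonicity of $\operatorname{cap}^*$. You have correctly isolated the estimate $\operatorname{cap}^*\{\zeta : |g^*(\zeta)| > \lambda\} \leq C\|g\|_{\mathcal{D}}^2/\lambda^2$ as the sole nontrivial ingredient; it is the same tool underlying Theorem \ref{thm:q.e.rad.lim.Dirich.sp} and is proved in \cite{EKMR14}, so citing it is appropriate.
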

	
	As a result of this theorem, we arrive at the famous \emph{Brown--Shields conjecture} (appeared as Question 12 in \cite{BS84}), which is still not resolved.
	
	\begin{problem}\label{prob:Brown-Shields.conj}
		Is it true that an outer $f \in \mathcal{D}$ is shift-cyclic if $\operatorname{cap}^*(\mathcal{Z}_{\mathbb{T}}(f^*)) = 0$?
	\end{problem}
	
	A compilation of sufficient conditions for the shift-cyclicity of a given $f \in \mathcal{D}$ can be found in \cite[Chapter 9]{EKMR14}, but, in general, the question is still open. We end this section with some notable results about shift-cyclicity in $\mathcal{D}$.
	\begin{enumerate}
		\item A polynomial is shift-cyclic in $\mathcal{D}$ if and only if it is non-vanishing on $\mathbb{D}$ (see \cite[Theorem 9.2.1]{EKMR14}).
		
		\item If Problem \ref{prob:Brown-Shields.conj} can be resolved for $f \in \mathcal{M}(\mathcal{D})$, then it can be resolved entirely (see \cite[Corollary 2.5.2]{Sam22}).
	\end{enumerate}
	Let us now introduce an important family of spaces containing $H^2(\mathbb{D})$ and $\mathcal{D}$ before moving to the several variable analogue of $\mathcal{D}$.
	
	\subsubsection{Dirichlet-type spaces on \texorpdfstring{$\mathbb{D}$}{the unit disk}}\label{subsubsec:D_t.d=1}
	
	For $t \in \mathbb{R}$, we define the \emph{Dirichlet-type space}
	\begin{equation*}
		\mathcal{D}_t := \left\{ f(z) = \sum_{k \in \mathbb{Z}_+} c_k z^k : \sum_{k \in \mathbb{Z}_+} (k + 1)^t |c_k|^2 < \infty \right\}.
	\end{equation*}
	Just like the Dirichlet space, it is easy to verify that each $\mathcal{D}_t$ is a Hilbert space of analytic functions on $\mathbb{D}$ with the inner product
	\begin{equation*}
		\left\langle \sum_{k \in \mathbb{Z}_+} c_k z^k, \sum_{k \in \mathbb{Z}_+} d_k z^k \right\rangle_{\mathcal{D}_t} := \sum_{k \in \mathbb{Z}_+} (k+1)^t c_k \overline{d_k}.
	\end{equation*}
	For $t < 0$, we have an equivalent norm for $\mathcal{D}_t$ given by
	\begin{equation*}
		\|f\|_{\mathcal{D}_t} := \int_\mathbb{D} |f(w)|^2 (1 - |w|^2)^{-1-t} dA(w),
	\end{equation*}
	where
    \begin{equation*}
        dA := \frac{1}{\pi} dx dy
    \end{equation*}
    is the normalized area measure on $\mathbb{D}$ (see \cite[Lemma 2]{Tay66}).	Note that for $t = 0$ and $t = 1$, we recover $H^2(\mathbb{D})$ and $\mathcal{D}$ respectively. Another important space in this family is the \emph{Bergman space $\mathcal{D}_{-1} = L^2_a(\mathbb{D})$} which will turn up frequently in Section \ref{sec:complete.Pick.sp}. Also, note that
    \begin{equation*}
        \mathcal{D}_t \subsetneq \mathcal{D}_s \foral t > s.
    \end{equation*}
	
	For $t > 1$, functions in $\mathcal{D}_t$ are continuous up to $\overline{\mathbb{D}}$. In fact, $\mathcal{D}_t$ is an algebra under point-wise multiplication and, as a result, $\mathcal{D}_t$ is its own multiplier algebra. It is then not so difficult to check that $f$ is shift-cyclic in $\mathcal{D}_t$ if and only if it is non-vanishing on $\overline{\mathbb{D}}$ (see \cite[pp. 274]{BS84}). For $t \leq 0$, the above equivalent integral norm shows that the multiplier algebra of $\mathcal{D}_t$ is $H^\infty(\mathbb{D})$, whereas the situation is much more complicated for $0 < t \leq 1$. Also, for $t < 0$, there are singular inner functions that turn out to be shift-cyclic in $\mathcal{D}_t$. Korenblum \cite{Kor81} classified all shift-cyclic inner functions in $L^2_a(\mathbb{D})$. With these properties in mind, we highlight the following results from \cite{BS84} (also see \cite{BCLSS151} for a modern treatment of Dirichlet-type spaces on $\mathbb{D}$).
	\begin{enumerate}
		
		\item \ref{item:MC} holds for $\mathcal{D}_t$ for all $t \in \mathbb{R}$, i.e., for any $t \in \mathbb{R}$ and $f \in \mathcal{D}_t$, we have
		\begin{equation*}
			S[f] = \overline{f \mathcal{M}(\mathcal{D}_t)},
		\end{equation*}
		where
        \begin{equation*}
            \mathcal{M}(\mathcal{D}_t) := \{ f : \mathbb{D} \to \mathbb{C} : fg \in \mathcal{D}_t \foral g \in \mathcal{D}_t \}
        \end{equation*}
        denotes the multiplier algebra of $\mathcal{D}_t$.
		
		\item If $f \in \mathcal{D}_t$ and $\varphi \in \mathcal{M}(\mathcal{D}_t)$ for any $t \in \mathbb{R}$, then $\varphi f$ is shift-cyclic in $\mathcal{D}_t$ if and only if both $\varphi$ and $f$ are shift-cyclic in $\mathcal{D}_t$.
		
		\item If $f \in H^2(\mathbb{D})$\footnote{This result is valid for all $f \in H^1(\mathbb{D})$ as well, since one can check that $H^1(\mathbb{D}) \subset L^2_a(\mathbb{D})$.} then it is shift-cyclic in $L^2_a(\mathbb{D})$ if and only if its inner part is shift-cyclic in $L^2_a(\mathbb{D})$ (which are determined in \cite{Kor81}).
		
		\item If $f$ is shift cyclic in $\mathcal{D}_{t_0}$ for some $t_0 \in \mathbb{R}$, then it is shift-cyclic in $\mathcal{D}_t$ for all $t > t_0$. However, (3) shows that there is an $f$ that is shift-cyclic in $\mathcal{D}_{-1}$ but not shift-cyclic in $\mathcal{D}_0$. Thus, \ref{item:NI} does not generalize to the family $\{\mathcal{D}_t : t \in \mathbb{R}\}$.
	\end{enumerate}
	
	Lastly, we remark that a polynomial is shift-cyclic in $\mathcal{D}_t$ with $t \leq 1$ if and only if it is non-vanishing on $\mathbb{D}$. This follows from the first part of (4) above, combined with \cite[Theorem 9.2.1]{EKMR14}. As noted before, a polynomial is shift-cyclic in $\mathcal{D}_t$ with $t > 1$ if and only if it is non-vanishing on $\overline{\mathbb{D}}$. We saw with the Hardy spaces over $\mathbb{D}^d$ that determining the shift-cyclic polynomials is tractable for all $d \in \mathbb{N}$. In the following two subsections, we introduce generalizations of the Dirichlet-type spaces over $\mathbb{D}^d$ and $\mathbb{B}_d$, and note that the problem of determining shift-cyclic polynomials when $d = 2$ is resolved, but it remains open for $d > 2$.
	
	\subsection{On the unit polydisk \texorpdfstring{$\mathbb{D}^d$}{}}\label{subsec:Dirich.on.D^d}
	
	For $d \in \mathbb{N}$ and $t \in \mathbb{R}$, we define the \emph{Dirichlet-type spaces on $\mathbb{D}^d$} as
	\begin{equation*}
		\mathcal{D}_t(\mathbb{D}^d) := \left\{ f(z) = \sum_{\alpha \in \mathbb{Z}_+^d} c_\alpha z^\alpha : \sum_{\alpha \in \mathbb{Z}_+^d} \left( \prod_{j = 1}^d (\alpha_j + 1)^t \right) |c_\alpha|^2  < \infty \right\}.
	\end{equation*}
	Just like their one variable counter-part, these are all Hilbert spaces of analytic functions on $\mathbb{D}^d$, and we have the natural inclusion
    \begin{equation*}
        \mathcal{D}_t(\mathbb{D}^d) \subsetneq \mathcal{D}_s(\mathbb{D}^d) \foral t > s.
    \end{equation*}
    Moreover, we obtain the Bergman, Hardy and Dirichlet spaces on $\mathbb{D}^d$ by taking $t = -1, 0$ and $1$ respectively. For $t > 1$, $\mathcal{D}_t(\mathbb{D}^d)$ is a subalgebra of functions in $\operatorname{Hol}(\mathbb{D}^d)$ that are continuous up to $\overline{\mathbb{D}^d}$ and thus, is its own multiplier algebra. Also, just like in the one variable case, a function $f$ is shift-cyclic in $\mathcal{D}_t(\mathbb{D}^d)$ with $t > 1$ if and only if it is non-vanishing on $\overline{\mathbb{D}^d}$ (see \cite[Section 1.1]{BKKLSS16}).
	
	While we can extend some of the general properties mentioned in the previous section to all $d > 1$, one only needs to look at the polynomials to realize that classifying all shift-cyclic functions is further beyond the scope for these spaces than one might anticipate. In fact, we only have a complete classification of shift-cyclic polynomials for $d = 2$. Let $\mathcal{Z}_\Gamma(P)$ denote the set of zeros of a polynomial $P \in \mathcal{P}_d$ on some $\Gamma \subseteq \mathbb{C}^d$. The following is a joint result of B{\'e}n{\'e}teau, Condori, Knese, Kosi{\'n}ski, Liaw, Seco and Sola \cite{BCLSS152, BKKLSS16}\footnote{See also \cite{KKRS19} for a similar result for the \emph{anisotropic Dirichlet-type spaces on $\mathbb{D}^2$}.}.
	
	\begin{theorem}\label{thm:cyc.poly.Dirich-type.d=2}
		If $P$ is an irreducible polynomial with $\mathcal{Z}_{\mathbb{D}^2}(P) = \emptyset$, then it is shift-cyclic in $\mathcal{D}_t(\mathbb{D}^2)$ for
		\begin{enumerate}
			\item $t \leq 1/2$,
			
			\item $1/2 < t \leq 1$ if and only if either
                \begin{enumerate}
                    \item $\mathcal{Z}_{\mathbb{T}^2}(P)$ is a finite set,

                    \item $P = z_1 - \zeta$ for some $\zeta \in \mathbb{C} \setminus \mathbb{D}$, or

                    \item $P = z_2 - \zeta$ for some $\zeta \in \mathbb{C} \setminus \mathbb{D}$.
                \end{enumerate}
			
			\item $t > 1$ if and only if $\mathcal{Z}_{\mathbb{T}^2}(P) = \emptyset$.
		\end{enumerate}
	\end{theorem}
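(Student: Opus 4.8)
The plan is to translate shift-cyclicity into the decay of optimal polynomial approximants and then read off the three regimes from the size of the distinguished boundary zero set $\mathcal{Z}_{\mathbb{T}^2}(P)$. For each $n$ let $p_n \in \mathcal{P}_2$ of degree $\le n$ minimize $\|1 - qP\|_{\mathcal{D}_t(\mathbb{D}^2)}$; by Proposition \ref{prop:equiv.def.of.shift.cyc}, $P$ is shift-cyclic iff $\|1 - p_n P\|_{\mathcal{D}_t(\mathbb{D}^2)} \to 0$. Two structural reductions organize the argument. Since $\prod_j(\alpha_j+1)^t$ is nondecreasing in $t$ on $\mathbb{Z}_+^2$, we have $\|\cdot\|_{\mathcal{D}_t(\mathbb{D}^2)} \le \|\cdot\|_{\mathcal{D}_s(\mathbb{D}^2)}$ whenever $t \le s$; hence cyclicity at a parameter $s$ forces cyclicity at every $t \le s$, so each positive assertion need only be proved at the relevant endpoint ($t = 1/2$ or $t = 1$). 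Second, $\mathcal{D}_t(\mathbb{D}^2)$ is the Hilbert tensor product of two copies of $\mathcal{D}_t(\mathbb{D})$, so a polynomial in a single variable is cyclic in $\mathcal{D}_t(\mathbb{D}^2)$ exactly when it is cyclic in $\mathcal{D}_t(\mathbb{D})$; this disposes of the cases $P = z_j - \zeta$ via the known one-variable fact (cyclic for $t \le 1$, not for $t > 1$).

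For $t > 1$ the space $\mathcal{D}_t(\mathbb{D}^2)$ consists of functions continuous on $\overline{\mathbb{D}^2}$ and is its own multiplier algebra, so $P$ is cyclic iff $1/P \in \mathcal{D}_t(\mathbb{D}^2)$, which by continuity holds iff $P$ is zero-free on all of $\overline{\mathbb{D}^2}$. To match the statement I would show that, for irreducible $P$ with $\mathcal{Z}_{\mathbb{D}^2}(P)=\emptyset$ and $P \ne z_j - \zeta$, zero-freeness on $\overline{\mathbb{D}^2}$ is equivalent to $\mathcal{Z}_{\mathbb{T}^2}(P) = \emptyset$: a Hurwitz argument shows that if $|\zeta_1| = 1$ then the slice $P(\zeta_1,\cdot) = \lim_{z_1 \to \zeta_1} P(z_1,\cdot)$ is either zero-free on $\mathbb{D}$ or identically zero, the latter forcing $z_1 - \zeta_1 \mid P$ and hence $P = c(z_1 - \zeta_1)$ by irreducibility. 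Thus any boundary zero of such a $P$ lies on $\mathbb{T}^2$, yielding case (3).

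For $t \le 1$ I would measure $\mathcal{Z}_{\mathbb{T}^2}(P)$ by the capacity attached to $\mathcal{D}_t(\mathbb{D}^2)$, whose boundary values form the anisotropic Sobolev space $H^{t/2}(\mathbb{T}) \otimes H^{t/2}(\mathbb{T})$. The non-cyclic half of each equivalence would follow from a two-variable analogue of the Brown--Shields obstruction of Theorem \ref{thm:shift-cyc.in.Dirich.sp.implies.cap=0}: if $\mathcal{Z}_{\mathbb{T}^2}(P)$ carries positive $t$-capacity then $P$ is not cyclic, proved by an energy/test-measure lower bound on $\|1 - p_n P\|_{\mathcal{D}_t}$. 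One then computes the capacities of the only shapes $\mathcal{Z}_{\mathbb{T}^2}(P)$ can take for an irreducible $P$: a finite point set has zero capacity for all $t \le 1$; a coordinate circle $\{\zeta\}\times\mathbb{T}$ has positive capacity only for $t > 1$ (by slicing, its capacity is governed by the $H^{t/2}(\mathbb{T})$-capacity of the single point $\zeta$); and a non-coordinate real-analytic arc has positive capacity exactly for $t > 1/2$. Irreducibility forces this trichotomy: if $\mathcal{Z}_{\mathbb{T}^2}(P)$ contains a coordinate circle then $z_j - \zeta \mid P$ and $P = c(z_j-\zeta)$, while any other infinite toral zero set contains an arc of positive capacity once $t > 1/2$. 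This produces precisely the forbidden configurations in (2) and (3).

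It remains to construct approximants realizing cyclicity whenever the capacity vanishes. When $\mathcal{Z}_{\mathbb{T}^2}(P)$ is finite I would work at the endpoint $t = 1$: away from the finitely many toral zeros $1/P$ is bounded, while near each zero $P$ vanishes to finite order, so one patches local inverses against smooth cutoffs and controls $\|1 - p_n P\|_{\mathcal{D}_1}$ by the one-variable Dirichlet rates, which tend to $0$; monotonicity then gives cyclicity for all $t \le 1$. The hard part --- and the source of the threshold $1/2$ rather than $1$ --- is the curve case at the endpoint $t = 1/2$, for instance $P = 1 - z_1 z_2$, where the naive choice $p_n = \sum_{k \le n}(z_1 z_2)^k$ yields $1 - p_n P = (z_1 z_2)^{n+1}$ with $\|\cdot\|_{\mathcal{D}_t}^2 = (n+2)^{2t} \not\to 0$. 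One must instead analyze the genuinely optimal approximants and prove a sharp decay estimate for $\|1 - p_n P\|_{\mathcal{D}_t}^2$ whose critical exponent is exactly $t = 1/2$ (with a logarithmic gain rendering $t = 1/2$ itself cyclic); carrying out this extremal computation, and matching it against the capacity lower bound along the zero arc when $t > 1/2$, is the technical crux of the theorem.
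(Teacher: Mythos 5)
Your outline gets the architecture right in several places: the monotonicity of $\|\cdot\|_{\mathcal{D}_t(\mathbb{D}^2)}$ in $t$ (so positive results only need the endpoints $t=1/2$ and $t=1$), the isometric one-variable reduction for $P = z_j - \zeta$, the algebra argument for $t>1$ combined with the Hurwitz/irreducibility argument locating all boundary zeros of a non-coordinate $P$ on $\mathbb{T}^2$, the Riesz-capacity obstruction for the non-cyclicity halves of (2) and (3) (this is exactly the mechanism in \cite{BKKLSS16}, in the spirit of Theorem \ref{thm:shift-cyc.in.Dirich.sp.implies.cap=0}), and the diagonal computation identifying the $(z_1z_2)$-powers inside $\mathcal{D}_t(\mathbb{D}^2)$ with $\mathcal{D}_{2t}(\mathbb{D})$, which is genuinely where the threshold $1/2$ comes from. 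The problem is that the two positive directions carrying the weight of the theorem are not actually proved.

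First, for $t \le 1/2$ you must show that \emph{every} irreducible $P$ with $\mathcal{Z}_{\mathbb{D}^2}(P)=\emptyset$ is cyclic, not just the model $1 - z_1z_2$; deferring this to ``analyzing the genuinely optimal approximants'' restates the problem rather than solving it, since for a general such $P$ there is no visible one-variable structure to exploit. The published proof supplies exactly the structural input your sketch lacks: Knese's determinantal (sum-of-squares) representation for polynomials non-vanishing on $\mathbb{D}^2$, which reduces an arbitrary such $P$ to a form amenable to one-variable estimates --- and whose possible failure for $d>2$ is precisely why the theorem stops at two variables. Second, your construction for the finite-toral-zero-set case at $t=1$, ``patching local inverses against smooth cutoffs,'' cannot be carried out inside a space of holomorphic functions: a cutoff destroys analyticity, and no sequence of polynomial approximants can be read off from such a patchwork. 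What actually works is the {\L}ojasiewicz inequality applied to $|P|^2$, which produces a pointwise domination $|Q| \le C|P|$ on $\overline{\mathbb{D}^2}$ by a product $Q$ of one-variable polynomials (each cyclic for $t \le 1$), together with a comparison lemma (\cite[Lemma 3.3]{BKKLSS16}) converting domination from below by a cyclic polynomial into cyclicity of $P$. Without these two ingredients the proposal establishes only the non-cyclicity assertions and the special cases, so the gap is real.
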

	
	There are a few things to unpack here. First, the proof of this theorem relies on something called a \emph{determinantal representation} for polynomials that are non-vanishing on $\mathbb{D}^2$, which may not always exist for $d > 2$ \cite{Kne101, Kne102}. Next, this result clearly indicates how boundary zeros are important to study shift-cyclicity of polynomials. In particular, the authors use the notion of \emph{Riesz $t$-capacity} $\operatorname{cap}_t$ to determine the non-cyclicity of a polynomial $P$ in $\mathcal{D}_t(\mathbb{D}^2)$ for large enough $t$, based on how large $\operatorname{cap}_t(\mathcal{Z}_{\mathbb{T}^2}(P))$ is.
	
	Arguably, the most interesting aspect of Theorem \ref{thm:cyc.poly.Dirich-type.d=2} is the case $1/2 < t \leq 1$, where finiteness of the boundary zero set is crucial to determining whether a polynomial is shift-cyclic or not. The authors achieve this via the use of tools from \emph{semi-analytic geometry}. More specifically, they use an inequality of {\L}ojasiewicz \cite{Loj59}, which states that if $f$ is a non-zero real analytic function on some open set $U \subset \mathbb{R}^N$ with $N \in \mathbb{N}$ whose zero set $\mathcal{Z}_U(f)$ in $U$ is non-empty, then, for every compact $V \subset U$, there are constants $C_V > 0$ and $n_V \in \mathbb{N}$ such that
	\begin{equation*}
		|f(x)| \geq C_V \operatorname{dist}(x, \mathcal{Z}_U(f))^{n_V} \foral x \in V.
	\end{equation*}
	Applying this inequality to $|P|^2$, where $P$ is a non-vanishing polynomial on $\mathbb{D}^2$ with finitely many zeros in $\mathbb{T}^2$, allows us to compare $P$ with other polynomials that are shift-cyclic (see \cite[Lemma 3.3]{BKKLSS16}). This further enables us to determine the shift-cyclicity of such $P$ in $\mathcal{D}_t(\mathbb{D}^2)$ when $t \leq 1$.
    
    Bergqvist \cite{Ber18} extended some of these ideas to the $d > 2$ case and presented several conditions on polynomials that guarantee shift-cyclicity as well as non-cyclicity of polynomials in $\mathcal{D}_t(\mathbb{D}^d)$. However, the problem of determining shift-cyclicity of a general polynomial in $\mathcal{D}_t(\mathbb{D}^d)$ remains unresolved.
	
	\subsection{On the unit ball \texorpdfstring{$\mathbb{B}_d$}{}}\label{subsec:Dirich.on.B_d}

    In this subsection, we note that the {\L}ojasiewicz inequality and the notion of Riesz $t$-capacity helps generalize this result to a similar family of spaces over $\mathbb{B}_2$, and also allows us to go beyond the $d = 2$ case to obtain some partial results on shift-cyclicity of polynomials.
    
    Let $d \in \mathbb{N}$ be fixed. For each $\alpha \in \mathbb{Z}_+^d$, we write
	\begin{equation*}
		|\alpha| = \sum_{j = 1}^d \alpha_j, \AND \alpha! = \prod_{j = 1}^{d} \alpha_j !.
	\end{equation*}
	Now, for each $t \in \mathbb{R}$, we define the \emph{Dirichlet-type spaces on $\mathbb{B}_d$} as
	\begin{equation*}
		\mathcal{D}_t(\mathbb{B}_d) := \left\{ f(z) = \sum_{\alpha \in \mathbb{Z}_+^d} c_\alpha z^\alpha : \sum_{\alpha \in \mathbb{Z}_+^d} (d + |k|)^t \frac{(d-1)!\alpha!}{(d - 1 + |\alpha|)!} |c_\alpha|^2 < \infty \right\}.
	\end{equation*}
	Per usual, these can be verified to be Hilbert spaces of analytic functions on $\mathbb{B}_d$ such that
    \begin{equation*}
        \mathcal{D}_{t_1}(\mathbb{B}_d) \subsetneq \mathcal{D}_{t_2}(\mathbb{B}_d) \foral t_1 > t_2.
    \end{equation*}
    Some important spaces within this family are the Bergman, Hardy, \emph{Drury--Arveson} and Dirichlet spaces on $\mathbb{B}_d$, corresponding to $t = -1, \, 0, \, d-1$ and $d$ respectively. Moreover, $\mathcal{D}_t(\mathbb{B}_d)$ is a subalgebra of functions in $\operatorname{Hol}(\mathbb{B}_d)$ that are continuous up to $\overline{\mathbb{B}_d}$ whenever $t > d$. For more details on function theory over $\mathbb{B}_d$, see \cite{Rud80} and \cite{Zhu05}.
	
	Kosinski and Vavitsas \cite{KV23} used the {\L}ojasiewicz inequality and some more tools from semi-analytic geometry to obtain the following classification of shift-cyclic polynomials for $d = 2$. Let $\mathbb{S}_d$ be the boundary of $\mathbb{B}_d$ for $d \in \mathbb{N}$, i.e.,
	\begin{equation*}
		\mathbb{S}_d := \left\{ w \in \mathbb{C}^d : \sum_{j = 1}^d |w_j|^2 = 1 \right\}.
	\end{equation*}
	
	\begin{theorem}\label{thm:cyc.poly.Dirich-typ.sp.ball.d=2}
		If $P \in \mathcal{P}_2$ with $\mathcal{Z}_{\mathbb{B}_2}(P) = \emptyset$, then it is shift-cyclic in $\mathcal{D}_t(\mathbb{B}_2)$ for
		\begin{enumerate}
			\item $t \leq 3/2$,
			
			\item $3/2 < t \leq 2$ if and only if $\mathcal{Z}_{\mathbb{S}_2}(P)$ is a finite set, and
			
			\item $t > 2$ if and only if $\mathcal{Z}_{\mathbb{S}_2}(P) = \emptyset$.
		\end{enumerate}
	\end{theorem}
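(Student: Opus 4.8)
The plan is to handle the three ranges by combining a monotonicity principle in $t$ with two complementary mechanisms: a pointwise lower bound on $|P|$ coming from the {\L}ojasiewicz inequality (used to establish cyclicity) and a capacity obstruction on the boundary zero set (used to establish non-cyclicity). First I would record two reductions. Since $\mathcal{Z}_{\mathbb{B}_2}(P) = \emptyset$, Corollary \ref{cor:poly.necc.cond.for.cyc} shows the only obstruction to cyclicity lives on the sphere, so everything is governed by $E := \mathcal{Z}_{\mathbb{S}_2}(P)$. As the intersection of the complex hypersurface $\{P = 0\}$ (real dimension $2$) with $\mathbb{S}_2$ (real dimension $3$) inside $\mathbb{C}^2 \cong \mathbb{R}^4$, the set $E$ is semi-analytic of real dimension at most $1$; hence $E$ is either finite or contains a real-analytic arc of positive $1$-dimensional measure, and this dichotomy is exactly what separates cases (2) and (3). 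Second, because $\mathcal{D}_t(\mathbb{B}_2) \hookrightarrow \mathcal{D}_s(\mathbb{B}_2)$ continuously and with dense range whenever $t > s$, cyclicity in $\mathcal{D}_{t_0}(\mathbb{B}_2)$ forces cyclicity in $\mathcal{D}_t(\mathbb{B}_2)$ for every $t \leq t_0$. Thus it suffices to prove each \emph{sufficiency} direction only at the top of its range, while each \emph{necessity} direction will follow from a capacity obstruction valid at every relevant $t$.

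For the range $t > 2 = d$, the space $\mathcal{D}_t(\mathbb{B}_2)$ consists of functions continuous on $\overline{\mathbb{B}_2}$ and forms a Banach algebra under pointwise multiplication (its own multiplier algebra), exactly as for $\mathcal{D}_t(\mathbb{D}^d)$ with $t > 1$ in Theorem \ref{thm:cyc.poly.Dirich-type.d=2}. In this algebra $P$ is cyclic if and only if it is invertible, i.e. non-vanishing on $\overline{\mathbb{B}_2}$, which is precisely $E = \emptyset$; this settles case (3). For the necessity half of case (2) I would establish the $\mathbb{B}_2$-analogue of the Brown--Shields theorem (Theorem \ref{thm:shift-cyc.in.Dirich.sp.implies.cap=0}): if $f \in \mathcal{D}_t(\mathbb{B}_2)$ is cyclic, then its boundary zero set has vanishing Riesz $t$-capacity $\operatorname{cap}_t$. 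Granting this, the case analysis reduces to two capacity computations on $\mathbb{S}_2$: a set of Hausdorff dimension $1$ has positive $\operatorname{cap}_t$ precisely when $t > 3/2$, while a single point has positive $\operatorname{cap}_t$ precisely when $t > 2$ (these thresholds are exactly what force the numbers $3/2$ and $2$). Hence for $3/2 < t \leq 2$ an infinite $E$ contains an arc, so $\operatorname{cap}_t(E) > 0$ and $P$ fails to be cyclic.

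The heart of the argument is the \emph{sufficiency} in cases (1) and (2), which I would obtain from the {\L}ojasiewicz inequality. By the monotonicity reduction it suffices to produce cyclicity at $t = 3/2$ for arbitrary $E$ (case (1)) and at $t = 2$ for finite $E$ (case (2)). In either situation, applying the {\L}ojasiewicz inequality to the real-analytic function $|P|^2$ on a neighborhood of $\overline{\mathbb{B}_2}$ yields constants $C > 0$ and $n \in \mathbb{N}$ with $|P(z)| \geq C\,\operatorname{dist}(z, E)^{n}$ for $z$ near $\mathbb{S}_2$. By Proposition \ref{prop:equiv.def.of.shift.cyc}, cyclicity amounts to producing polynomials $Q_k$ with $\|1 - Q_k P\|_{\mathcal{D}_t(\mathbb{B}_2)} \to 0$. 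I would manufacture $Q_k$ by regularizing the reciprocal $1/P$ through the dilates $z \mapsto 1/P(r_k z)$ (holomorphic on a neighborhood of $\overline{\mathbb{B}_2}$, since $P$ does not vanish on $r_k\overline{\mathbb{B}_2} \subset \mathbb{B}_2$) followed by Taylor truncation, and control the error in the equivalent derivative-weighted integral norm of $\mathcal{D}_t(\mathbb{B}_2)$. The {\L}ojasiewicz bound converts the resulting error integrals into integrals of a power of $\operatorname{dist}(z,E)^{-1}$ against the weight, and the crucial point is that these converge exactly at the threshold set by the capacity of $E$: a $1$-dimensional $E$ is integrable at $t = 3/2$, and a finite $E$ at $t = 2$.

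The main obstacle is precisely this endpoint estimate. Away from the critical exponents the convergence $\|1 - Q_k P\|_{\mathcal{D}_t(\mathbb{B}_2)} \to 0$ is comfortable, but at $t = 3/2$ and $t = 2$ one sits exactly at the boundary between convergence and divergence, so the regularization $Q_k$ and the splitting of the norm must be chosen sharply enough to extract genuine convergence rather than mere boundedness. A secondary difficulty is that, unlike the polydisk proof of Theorem \ref{thm:cyc.poly.Dirich-type.d=2} (which exploits one-variable reductions and determinantal representations that are unavailable on $\mathbb{B}_2$), here one must argue intrinsically with the semi-analytic geometry of $E \subset \mathbb{S}_2$: quantifying $\operatorname{dist}(z,E)$, the {\L}ojasiewicz exponent $n$, and their interaction with the \emph{non-isotropic} boundary geometry of the ball, where it is the anisotropic Koranyi-type distance rather than the Euclidean one that governs the weighted estimates. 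Matching these geometric quantities to the capacity thresholds $3/2$ and $2$ is where the real work lies.
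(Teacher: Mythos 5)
The paper does not prove this theorem; it is quoted from Kosi\'nski--Vavitsas \cite{KV23}, with only a sketch of the toolkit (the {\L}ojasiewicz inequality, semi-analytic geometry, and Riesz $t$-capacity, by analogy with the polydisk case of Theorem \ref{thm:cyc.poly.Dirich-type.d=2}). Measured against that sketch, your architecture is right: monotonicity of cyclicity in $t$, the algebra structure of $\mathcal{D}_t(\mathbb{B}_2)$ for $t>2$ settling case (3), a Brown--Shields-type capacity obstruction for the necessity halves, and {\L}ojasiewicz for sufficiency. But the central sufficiency step, as you describe it, does not work. You propose to bound $\|1-Q_kP\|_{\mathcal{D}_t(\mathbb{B}_2)}$ by using $|P(z)|\geq C\operatorname{dist}(z,E)^{n}$ to control $1/P$ and then integrating a power of $\operatorname{dist}(z,E)^{-1}$ against the weight, claiming convergence ``exactly at the threshold.'' The {\L}ojasiewicz exponent $n$ is, however, completely uncontrolled: it depends on $P$ and can be arbitrarily large, so $\operatorname{dist}(z,E)^{-n}$ (and a fortiori $|1/P|^2$ and the derivative terms) is simply not integrable against the $\mathcal{D}_t$ weight at $t=3/2$ or $t=2$ once $n$ is large. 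The thresholds $3/2$ and $2$ cannot come out of a direct integrability computation for $1/P$. The way the exponent $n$ is actually absorbed --- and the way the paper describes the polydisk argument via \cite[Lemma 3.3]{BKKLSS16} --- is through a \emph{comparison} mechanism: {\L}ojasiewicz yields $|P|\gtrsim|Q|$ for a model polynomial $Q$ that is a product of powers of simple factors vanishing on $E$ (e.g.\ powers of $1-\langle z,w\rangle$ for $w\in E$), one proves cyclicity of those model factors by hand at the endpoint exponents, and then Proposition \ref{prop:poly.cyc.iff.irred.fac.cyc} makes the power $n$ harmless since $Q^n$ is cyclic iff $Q$ is. A comparison lemma transferring cyclicity from $Q$ to $P$ then finishes; no sharp endpoint integral for $1/P$ itself is ever needed.

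A second, related omission: your capacity and integrability computations are carried out with the Euclidean distance and Hausdorff dimension on $\mathbb{S}_2$, but the weighted estimates and the Riesz $t$-capacity relevant to $\mathcal{D}_t(\mathbb{B}_2)$ are governed by the non-isotropic (Kor\'anyi) geometry of the sphere, as you note only in passing. The decisive geometric input in the ball case is that the zero variety of a polynomial with $\mathcal{Z}_{\mathbb{B}_2}(P)=\emptyset$ meets $\mathbb{S}_2$ \emph{complex-tangentially}; it is this tangency, not the raw real dimension of $E$, that produces the constants $3/2$ and $2$ (which is also why they differ from the polydisk constants $1/2$ and $1$ in Theorem \ref{thm:cyc.poly.Dirich-type.d=2}). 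Without pinning down the anisotropic size of a complex-tangential arc, the claims ``positive $\operatorname{cap}_t$ precisely when $t>3/2$'' and the endpoint integrability at $t=3/2$ are unsupported assertions rather than computations, and the numbers in the statement are being assumed rather than derived.
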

	
	Here, the concept of Riesz $t$-capacity is also relevant. In fact, Vavitsas \cite[Theorem 16]{Vav23} shows that the capacity of the boundary zero set of a shift-cyclic polynomial on $\mathbb{B}_d$ must be $0$. In a recent preprint, Vavitsas and Zarvalis \cite{VZ24} use similar ideas and obtain the following result.
	
	\begin{theorem}\label{thm:non-cyc.poly.Dirich-type.ball}
		Let $P \in \mathcal{P}_d$ be non-vanishing on $\mathbb{B}_d$ and suppose $\mathcal{Z}_{\mathbb{S}_d}(P)$ contains a real submanifold (of $\mathbb{R}^{2d}$) of dimension $k$ with $1 \leq k \leq d - 1$, but no submanifold of higher dimension. Then, $P$ is not shift-cyclic in $\mathcal{D}_t(\mathbb{B}_d)$ for $t > \frac{2d - (k - 1)}{2}$.
	\end{theorem}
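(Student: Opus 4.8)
The plan is to deduce non-cyclicity from a capacity obstruction on the boundary sphere. Since $P$ is non-vanishing on $\mathbb{B}_d$, Corollary~\ref{cor:poly.necc.cond.for.cyc} is not violated inside the ball, so any failure of cyclicity must be detected on $\mathbb{S}_d$. The essential input is the necessary condition of Vavitsas \cite[Theorem~16]{Vav23}: if $P$ is shift-cyclic in $\mathcal{D}_t(\mathbb{B}_d)$, then $\operatorname{cap}_t(\mathcal{Z}_{\mathbb{S}_d}(P)) = 0$, where $\operatorname{cap}_t$ is the Riesz $t$-capacity intrinsic to $\mathcal{D}_t(\mathbb{B}_d)$ (as $P$ is a polynomial it is continuous on $\overline{\mathbb{B}_d}$, so $\mathcal{Z}_{\mathbb{S}_d}(P)$ is the honest zero set of $P|_{\mathbb{S}_d}$). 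It therefore suffices to prove
\begin{equation*}
	\operatorname{cap}_t(\mathcal{Z}_{\mathbb{S}_d}(P)) > 0 \FOR t > \frac{2d - (k-1)}{2},
\end{equation*}
after which the contrapositive of Vavitsas's theorem gives that $P$ is not shift-cyclic.

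First I would strip away the algebra of $P$ and reduce to a purely potential-theoretic statement about smooth submanifolds. Let $M \subseteq \mathcal{Z}_{\mathbb{S}_d}(P)$ be the $k$-dimensional real submanifold supplied by the hypothesis; by monotonicity of capacity, $\operatorname{cap}_t(\mathcal{Z}_{\mathbb{S}_d}(P)) \geq \operatorname{cap}_t(M)$, so it is enough to show that a compact piece of a smooth $k$-dimensional submanifold of $\mathbb{S}_d$ carries positive Riesz $t$-capacity in the stated range. To this end I would exhibit a positive measure on $M$ of finite Riesz $t$-energy, the natural candidate being the $k$-dimensional surface (Hausdorff) measure $\sigma_M$, and estimate
\begin{equation*}
	I_t(\sigma_M) = \int_M \int_M K_t(\zeta,\eta)\, d\sigma_M(\zeta)\, d\sigma_M(\eta),
\end{equation*}
with $K_t$ the kernel underlying $\operatorname{cap}_t$. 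Finiteness of $I_t(\sigma_M)$ forces $\operatorname{cap}_t(M) > 0$ through the standard variational description of capacity, closing the argument.

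The heart of the matter, and the step I expect to be the main obstacle, is performing this energy estimate in the gauge adapted to $\mathbb{S}_d$ and extracting the precise exponent threshold. The capacity attached to $\mathcal{D}_t(\mathbb{B}_d)$ is the non-isotropic one governed by the Kor\'anyi gauge $|1 - \langle \zeta,\eta\rangle|^{1/2}$ rather than the Euclidean distance; with respect to it $\mathbb{S}_d$ has homogeneous dimension $2d$, and a Kor\'anyi ball of radius $r$ is a box of Euclidean size $\sim r$ in the $2d-2$ complex-tangential directions and $\sim r^2$ in the one transverse direction. Consequently the effective local dimension of $M$, that is the exponent governing $\sigma_M$ of small Kor\'anyi balls, depends on how the tangent spaces of $M$ meet the complex tangent space of $\mathbb{S}_d$: a complex-tangential (isotropic) $k$-submanifold sees effective dimension exactly $k$, whereas any transverse tangent direction raises it, making the isotropic case the worst. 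In that worst case the single-variable potential $\int_M K_t(\zeta,\eta)\,d\sigma_M(\eta)$ converges uniformly precisely when the Kor\'anyi order of $K_t$, which under the normalization of $\operatorname{cap}_t$ works out to $2d+1-2t$, is strictly less than $k$, i.e. when $t > \frac{2d-(k-1)}{2}$. Since isotropic submanifolds of the contact manifold $\mathbb{S}_d$ exist only up to dimension $d-1$ (the Levi form restricts them to at most half of $\dim_{\mathbb{R}} T^{\mathbb{C}} = 2d-2$), this is exactly the regime $1 \leq k \leq d-1$, and there the threshold is sharp; a naive Euclidean count would instead suggest the smaller value $\frac{2d-k}{2}$, so the delicate point is to carry out the anisotropic estimate rather than lose it to the Euclidean one. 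Finally, the maximality hypothesis (no submanifold of higher dimension) singles out $k$ as the top stratum, making $\frac{2d-(k-1)}{2}$ the smallest, hence best, threshold this capacity method produces for $\mathcal{Z}_{\mathbb{S}_d}(P)$.
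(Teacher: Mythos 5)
Your skeleton is the right one. This is a survey, and Theorem~\ref{thm:non-cyc.poly.Dirich-type.ball} is quoted from \cite{VZ24} without proof; but the surrounding text indicates precisely the strategy you propose: combine the necessary condition of \cite[Theorem 16]{Vav23} (shift-cyclicity forces the Riesz $t$-capacity of the boundary zero set to vanish) with a positive-capacity estimate for the $k$-dimensional submanifold $M \subseteq \mathcal{Z}_{\mathbb{S}_d}(P)$, the latter obtained by exhibiting a measure of finite energy. The reduction to a compact piece of $M$ by monotonicity, the choice of surface measure as the candidate measure, and the observation that the zero variety of a polynomial non-vanishing on $\mathbb{B}_d$ meets $\mathbb{S}_d$ complex-tangentially (so that the Kor\'anyi gauge sees $M$ with effective dimension $k$, and isotropic submanifolds only exist up to dimension $d-1$) are all sound and relevant.

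The gap is exactly at the step you yourself call the heart of the matter: the energy estimate is never actually performed. The entire conclusion rests on your assertion that ``under the normalization of $\operatorname{cap}_t$'' the Kor\'anyi order of the kernel is $2d+1-2t$, and this number is not derived --- it appears to be back-solved from the threshold you are trying to prove. The natural kernel attached to $\mathcal{D}_t(\mathbb{B}_d)$ is $|1-\langle\zeta,\eta\rangle|^{-(d-t)}$, matching the reproducing kernel $(1-\langle z,w\rangle)^{-(d-t)}$, and its order in the gauge $|1-\langle\zeta,\eta\rangle|^{1/2}$ is $2(d-t)=2d-2t$, not $2d+1-2t$; feeding that into your own dyadic computation yields finite energy, hence positive capacity, already for $t>\frac{2d-k}{2}$. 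Moreover, your claim that the threshold $\frac{2d-(k-1)}{2}$ is sharp is contradicted by Theorem~\ref{thm:cyc.poly.Dirich-typ.sp.ball.d=2}: for $d=2$, a polynomial whose boundary zero set contains a curve (so $k=1$) already fails to be shift-cyclic for every $t>3/2=\frac{2d-k}{2}$, strictly below your claimed sharp value $2=\frac{2d-(k-1)}{2}$. So either the capacity of \cite{Vav23} is normalized with a kernel more singular than the reproducing kernel --- in which case you must extract that normalization from the actual definition and verify your manifold has positive capacity in \emph{that} sense --- or the honest computation proves something stronger than the stated theorem and the extra $\tfrac{1}{2}$ has a different origin. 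Until the kernel order is pinned down from the definition of $\operatorname{cap}_t$ rather than inferred from the answer, the decisive step of the argument is missing.
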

	
	The authors point out that if $\mathcal{Z}_{\mathbb{S}_d}(P)$ is a finite set (in other words, $k = 0$ in the statement of the above theorem), then one can argue as in the proof of Theorem \ref{thm:cyc.poly.Dirich-typ.sp.ball.d=2} to determine its shift-cyclicity in each $\mathcal{D}_t(\mathbb{B}_d)$. Thus, they conjecture that the following question has a positive answer, which then classifies all shift-cyclic polynomials in the Dirichlet-type spaces over $\mathbb{B}_d$.
	
	\begin{problem}\label{prob:cyc.poly.Dirich-typ.sp.ball}
		Let $P \in \mathcal{P}_d$ be as in the hypothesis of Theorem \ref{thm:non-cyc.poly.Dirich-type.ball}. Then, is it true that $P$ is shift-cyclic in $\mathcal{D}_t(\mathbb{B}_d)$ if and only if $t \leq \frac{2d - (k - 1)}{2}$?
	\end{problem}
	
	
	
	\section{Complete Pick spaces}\label{sec:complete.Pick.sp}

    For the remaining discussion, we shall shift gears and turn to an operator theoretic approach of studying shift-cyclicity of functions. As a result, we are able to say more about multiplier-cyclicity for function spaces that may not necessarily be spaces of analytic functions (or function spaces over domains in $\mathbb{C}^d$ for any $d \in \mathbb{N}$). First, we need to set up some important notation.
	
	\subsection{Reproducing kernel Hilbert spaces}\label{subsec:CP.prop}
	
	Let $X$ be a set and let $\mathcal{F}(X,\mathbb{C})$ be the space of all complex-valued functions on $X$. A space $\mathcal{H} \subset \mathcal{F}(X,\mathbb{C})$ is said to be a \emph{reproducing kernel Hilbert space (RKHS)} if it is a Hilbert space (with inner product $\langle \cdot, \cdot \rangle_\mathcal{H}$) such that the point-evaluations $\Lambda_x : f \mapsto f(x)$ are bounded linear functionals on $\mathcal{H}$. By the Riesz representation theorem, it then follows that there exist $K_x \in \mathcal{H}$ for each $x \in X$ such that
	\begin{equation*}
		f(x) = \langle f, K_x \rangle_\mathcal{H} \foral f \in \mathcal{H}.
	\end{equation*}
	The map $K : X \times X \to \mathbb{C}$ given by
	\begin{equation*}
		K(x,y) := \langle K_y, K_x \rangle_\mathcal{H} \foral x,y \in X
	\end{equation*}
	is called the \emph{reproducing kernel} (or, simply, \emph{kernel}) of $\mathcal{H}$, and the functions $K_x$ are called the \emph{kernel functions} for each $x \in X$. Using a standard orthogonality/separation argument, it is not difficult to show that $\mathcal{H} = \mathcal{H}(\mathcal{K})$, where
	\begin{equation*}
		\mathcal{H}(K) := \overline{\operatorname{span}} \{ K_x : x \in X \},
	\end{equation*}
	Indeed, if $f \in \mathcal{H} \ominus \mathcal{H}(K)$ then $f \equiv 0$, because
    \begin{equation*}
        \langle f, K_x\rangle_\mathcal{H} = 0 \foral x \in X.
    \end{equation*}
	
	\begin{example}\label{exam:RKHS.examples}
		\cite[Proposition 2.18]{AM02} shows that the reproducing kernel $K$ of a RKHS $\mathcal{H}$ can be obtained from any orthonormal basis $\{ f_\xi : \xi \in \Xi \}$ via
		\begin{equation*}
			K(x,y) = \sum_{\xi \in \Xi} f_\xi(x)\overline{f_\xi(y)}.
		\end{equation*}
		We use this fact to obtain the reproducing kernels of some familiar spaces.
		\begin{enumerate}
			\item For the Hardy spaces on $\mathbb{D}^d$,
			\begin{equation*}
				K_{H^2(\mathbb{D}^d)}(z,w) = \prod_{j = 1}^{d} \frac{1}{1 - z_j \overline{w}_j}.
			\end{equation*}
			
			\item For the Dirichlet space on $\mathbb{D}$,
			\begin{equation*}
				K_{\mathcal{D}}(z,w) = \frac{1}{z \overline{w}} \log \left( \frac{1}{1 - z \overline{w}} \right).
			\end{equation*}
			
			\item For the Bergman space on $\mathbb{D}$,
			\begin{equation*}
				K_{L^2_a(\mathbb{D})}(z,w) = \frac{1}{(1 - z \overline{w})^2}.
			\end{equation*}
		\end{enumerate}
	\end{example}
	
	An important property of every reproducing kernel is that it is a \emph{positive semi-definite (PSD)} function, i.e, given any $N \in \mathbb{N}$ and any $\{x_1, \dots, x_N\} \subset X$, the matrix $(K(x_j,x_k))_{N \times N}$ is PSD. Just like matrices/operators, we write $K \geq 0$ to mean $K$ is PSD. We also note that $K$ is \emph{self adjoint}, i.e.,
	\begin{equation*}
		K(x,y) = \overline{K(y,x)} \foral x, y \in X.
	\end{equation*}
	Here, the overline denotes complex conjugate. We end this subsection with the following theorem of Moore (communicated by Aronszajn in \cite[Part I Section 2 (4)]{Aro50}), which shows that these two properties determine reproducing kernels (see also \cite[Theorem 2.23]{AM02}).
	
	\begin{theorem}\label{thm:Moore-Aronsz.thm}
		If $K : X \times X \to \mathbb{C}$ is a self adjoint PSD map, then there exists a RKHS $\mathcal{H}(K) \subset \mathcal{F}(X,\mathbb{C})$ such that $K$ is the reproducing kernel of $\mathcal{H}(K)$.
	\end{theorem}
	
	\subsubsection{Multiplier algebras}\label{subsubsec:mult.alg}
	
	Every RKHS $\mathcal{H}$ comes with its \emph{multiplier algebra}
	\begin{equation*}
		\mathcal{M}(\mathcal{H}) := \{ \varphi : X \to \mathbb{C} : \varphi f \in \mathcal{H} \foral f \in \mathcal{H} \}.
	\end{equation*}
	Note that if the constant function $1 \in \mathcal{H}$, then $\mathcal{M}(\mathcal{H}) \subset \mathcal{H}$. Also, it is easy to see from the closed graph theorem that if $\varphi \in \mathcal{M}(\mathcal{H})$, then the map
    \begin{equation*}
        M_\varphi : f \mapsto \varphi f
    \end{equation*}
    is a bounded linear map from $\mathcal{H}$ to itself. If we endow $\mathcal{M}(\mathcal{H})$ with the norm
	\begin{equation*}
		\|\varphi\|_{\mathcal{M}(\mathcal{H})} := \|M_\varphi\| \foral \varphi \in \mathcal{M}(\mathcal{H}),
	\end{equation*}
	then $\mathcal{M}(\mathcal{H})$ turns into a Banach algebra.
	
	It is easy to see how the reproducing kernel $K$ of $\mathcal{H}$ relates to the multipliers. First, note that
	\begin{equation*}
		M_\varphi^* K_x = \overline{\varphi(x)} K_x \foral x \in X \AND \varphi \in \mathcal{M}(\mathcal{H}).
	\end{equation*}
	Then, it follows that
	\begin{equation*}
		\sup_{x \in X} |\varphi(x)| \leq \sup_{x \in X} \frac{\|M_\varphi^* K_x\|_\mathcal{H}}{\|K_x\|_\mathcal{H}} \leq \|\varphi\|_{\mathcal{M}(\mathcal{H})}.
	\end{equation*}
	This shows that every multiplier is a bounded function on $X$\footnote{We know from Section \ref{subsubsec:Dirich.sp} that the converse is not true for the Dirichlet space.}. Now, recall that if $T$ is a bounded operator on any Hilbert space, then
	\begin{equation*}
		\|T\| \leq 1 \iff I - T^* T \geq 0. 
	\end{equation*}
	A quick corollary of this fact is that $\varphi \in \mathcal{M}(\mathcal{H})$ with $\|\varphi\|_{\mathcal{M}(\mathcal{H})} \leq c$ if and only if
	\begin{equation}\label{eqn:mult.equiv}
		(c^2 - \varphi(x)\overline{\varphi(y)}) K(x,y) \geq 0.
	\end{equation}
	
	We wish to study the properties of functions in a RKHS through its multipliers. For instance, the multiplier algebra of $H^2(\mathbb{D})$ is $H^\infty(\mathbb{D})$, and we saw that the zero sets of every $H^2(\mathbb{D})$ is the same as that of a Blaschke product, which is an $H^\infty(\mathbb{D})$ function. This is also true for $\mathcal{D}$ and its multiplier algebra (see \cite[Theorem 2 (c)]{RS88}), but not true for $L^2_a(\mathbb{D})$, where $\mathcal{M}(L^2_a(\mathbb{D})) = H^\infty(\mathbb{D})$ (see \cite[Theorem 1]{Hor74}). The problem of determining zero sets is often related to determining \emph{interpolating sequences}. This brings us to the following question, which was first posed and answered by Pick \cite{Pic15} (and independently by Nevanlinna \cite{Nev19}) for $H^\infty(\mathbb{D})$. Given $N$ points $\{ x_1, \dots, x_N \} \subset X$, $N$ target values $\{c_1, \dots, c_N\} \subset \mathbb{C}$ and some $c > 0$, when can we find a multiplier $\varphi \in \mathcal{M}(\mathcal{H})$ with $\|\varphi\|_{\mathcal{M}(\mathcal{H})} \leq c$ such that
	\begin{equation*}
		f(x_j) = c_j \foral 1 \leq j \leq N?
	\end{equation*}
	This is called the \emph{Pick interpolation problem}. From \eqref{eqn:mult.equiv}, we get the following necessary condition for such a $\varphi$ to exist
	\begin{equation*}
		((c^2 - c_j \overline{c_k}) K(x_j,x_k))_{N \times N} \geq 0 \text{ for some } c > 0.
	\end{equation*}
	For $H^2(\mathbb{D})$, Pick \cite{Pic15} showed that this condition is also sufficient for the existence of such a $\varphi$. In this case, we say that $K$ has the \emph{Pick property}.
	
	The point of the above discussion is that one can construct $H^\infty(\mathbb{D})$ functions with a norm bound by just assigning target values on a finite set of points that help satisfy a certain PSD condition with the kernel of $H^2(\mathbb{D})$. It is now natural to ask when and for what kinds of kernels is the Pick property also sufficient.
	
	\subsubsection{Complete Pick property}\label{subsubsec:complete.Pick.prop}
	
	It is often convenient to work with a stronger version of the Pick property, called the \emph{complete Pick property}. The adjective `complete' is used when an object/property has a specific matrix-valued extension without any constraints on the size of the matrices. Fix a RKHS $\mathcal{H}$ on some set $X$ with kernel $K$. For any $n \in \mathbb{N}$, consider
	\begin{equation*}
		\mathcal{H} \otimes \mathbb{C}^n := \left\{ F = \begin{pmatrix}
			f_1 \\
			\vdots \\
			f_n
		\end{pmatrix} : f_j \in \mathcal{H} \foral 1 \leq j \leq n \right\},
	\end{equation*}
	and equip it with the inner product
	\begin{equation*}
		\left\langle \begin{pmatrix}
			f_1 \\
			\vdots \\
			f_n
		\end{pmatrix}, \begin{pmatrix}
		g_1 \\
		\vdots \\
		g_n
		\end{pmatrix} \right\rangle_{\mathcal{H} \otimes \mathbb{C}^n} := \sum_{j = 1}^n \langle f_j, g_j \rangle_{\mathcal{H}}.
	\end{equation*}
	For $m, n \in \mathbb{N}$, let $\mathcal{F}(X,\mathbb{C}) \otimes \mathbb{C}^{m \times n}$ be the collection of $m \times n$ matrix-valued functions on $X$ and define the multiplier algebra
	\begin{equation*}
		\mathcal{M}(\mathcal{H} \otimes \mathbb{C}^n, \mathcal{H} \otimes \mathbb{C}^m) :=\{ \Phi \in \mathcal{F}(X,\mathbb{C}) \otimes \mathbb{C}^{m \times n} : \Phi F \in \mathcal{H} \otimes \mathbb{C}^m \text{ if } F \in \mathcal{H} \otimes \mathbb{C}^n \}.
	\end{equation*}
	
	Following an idea similar to how we obtained \eqref{eqn:mult.equiv}, one can also show that $\Phi \in \mathcal{M}(\mathcal{H} \otimes \mathbb{C}^n, \mathcal{H} \otimes \mathbb{C}^m)$ with norm at most $C > 0$ if and only if
	\begin{equation*}
		(C^2 I_m - \Phi(x)\Phi(y)^*) K(x,y) \geq 0,
	\end{equation*}
	where $I_m$ is the $m \times m$ identity matrix. Let us then consider a matrix-valued interpolation problem, i.e., given $N$ points $\{x_1, \dots, x_N\} \subset X$, $N$ target matrices $\{C_1, \dots C_N\} \subset \mathbb{C}^{m \times n}$ and some $C > 0$, we wish to find a multiplier $\Phi \in \mathcal{M}(\mathcal{H} \otimes \mathbb{C}^n, \mathcal{H} \otimes \mathbb{C}^m)$ with norm at most $C$ such that
	\begin{equation*}
		\Phi(x_j) = C_j \foral 1 \leq j \leq N,
	\end{equation*}
	It is clear that the following is a necessary condition for such a $\Phi$ to exist
	\begin{equation*}
		((C^2 I_m - \Phi(x_j) \Phi(x_k)^*) K(x_j,x_k))_{mN \times mN} \geq 0 \text{ for some } C > 0.
	\end{equation*}
	When this condition is also sufficient for the existence of such a $\Phi$ with norm at most $C$, we say that the kernel $K$ has the complete Pick property, and we refer to it as a \emph{complete Pick kernel}.
	
	It turns out that if $K$ satisfies some minimal conditions, then it is possible to determine if it is a complete Pick kernel. We say that $K$ is \emph{irreducible} if $K(x,y) \neq 0$ for all $x,y \in X$, and $K_x$ and $K_y$ are linearly independent for any distinct $x, y \in X$
	Any irreducible kernel $K$ can be \emph{normalized} at a given point $x_0 \in X$, i.e., we can replace $K$ with another irreducible kernel $\widetilde{K}$ such that $\mathcal{H}(K)$ and $\mathcal{H}(\widetilde{K})$ are unitarily equivalent and $\widetilde{K}_{x_0} \equiv 1$ on $X$ (see \cite[Section 2.6]{AM02}). The following theorem of Agler and McCarthy \cite[Theorem 4.2]{AM00} gives the classification we need (also see \cite{McC92, McC94, Qui93}).
	
	\begin{theorem}\label{thm:class.complete.Pick.kernel}
		Let $K$ be an irreducible kernel on a set $X$ that is normalized at some $x_0 \in X$. Then, $K$ is a complete Pick kernel if and only if there exists a Hilbert space $\mathcal{K}$ and a map $\mathfrak{b} : X \to \mathcal{K}$ such that $\|\mathfrak{b}(x)\|_\mathcal{K} < 1$ for all $x \in X$, $\mathfrak{b}(x_0) = 0$ and
		\begin{equation*}
			K(x,y) = \frac{1}{1 - \langle \mathfrak{b}(y), \mathfrak{b}(x) \rangle_\mathcal{K}} \foral x,y \in X.
		\end{equation*}
		
		In particular, $K$ is a complete Pick kernel if and only if
        \begin{equation*}
            1 - \frac{1}{K} \geq 0.
        \end{equation*}
	\end{theorem}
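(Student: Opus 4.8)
The plan is to first dispose of the ``in particular'' clause and reduce everything to the equivalence ``$K$ is a complete Pick kernel $\iff 1 - 1/K \geq 0$'', since the passage between this positivity and the feature-map form is routine and uses only Theorem \ref{thm:Moore-Aronsz.thm}. If $K(x,y) = (1 - \langle \mathfrak{b}(y),\mathfrak{b}(x)\rangle_{\mathcal{K}})^{-1}$, then $1 - 1/K(x,y) = \langle \mathfrak{b}(y),\mathfrak{b}(x)\rangle_{\mathcal{K}}$ is a Gram kernel and hence PSD. Conversely, if $L := 1 - 1/K \geq 0$, then $L$ is self adjoint (because $K$ is) and PSD, so Theorem \ref{thm:Moore-Aronsz.thm} furnishes a RKHS $\mathcal{K} := \mathcal{H}(L)$; setting $\mathfrak{b}(x) := L_x$ gives $\langle \mathfrak{b}(y),\mathfrak{b}(x)\rangle_{\mathcal{K}} = L(x,y) = 1 - 1/K(x,y)$, which rearranges to the stated form. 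The normalization $K_{x_0}\equiv 1$ yields $K(x_0,x_0)=1$, and $L\ge 0$ forces $K(x,x)\ge 1$, so $\|\mathfrak{b}(x)\|_{\mathcal{K}}^2 = 1 - 1/K(x,x) \in [0,1)$ and $\mathfrak{b}(x_0)=0$, exactly as required.

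For sufficiency I would argue that $1 - 1/K \ge 0$ exhibits $K$ as a \emph{restriction} of the Drury--Arveson kernel. Writing $K = \sum_{n\ge 0}\langle \mathfrak{b}(y),\mathfrak{b}(x)\rangle^n$, we have $K(x,y) = s(\mathfrak{b}(x),\mathfrak{b}(y))$ where $s(\zeta,\eta) = (1-\langle \eta,\zeta\rangle)^{-1}$ on the unit ball of $\mathcal{K}$; moreover irreducibility makes $\mathfrak{b}$ injective (distinct $K_x, K_y$ force distinct $\mathfrak{b}(x),\mathfrak{b}(y)$), so $K$ is genuinely $s$ restricted to $\mathfrak{b}(X)$. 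Since the complete Pick property concerns only finitely many points and the kernel values among them, it is inherited by restrictions: a finite Pick problem for $K$ at $x_1,\dots,x_N$ is the same Pick problem for $s$ at $\mathfrak{b}(x_1),\dots,\mathfrak{b}(x_N)$, and a contractive solving multiplier restricts to one for $K$ of no larger norm. It thus suffices to prove that $s$ is a complete Pick kernel, which I would establish by the standard \emph{lurking isometry}/realization method: Kolmogorov-factor the (assumed PSD) Pick matrix into vectors, rearrange the resulting identity into a contraction between two finite-dimensional spaces, dilate it to a unitary colligation, and read off the interpolant as that colligation's transfer function. Verifying the transfer-function realization of contractive multipliers of $s$ is the only real computation here.

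The necessity direction is the crux, and I would prove it by contraposition: assuming $1 - 1/K \not\ge 0$, I produce a violation of the complete Pick property. So suppose there is a finite set $\{x_1,\dots,x_N\}\subset X$ and scalars $c_1,\dots,c_N$ with $\sum_{i,j} c_i\overline{c_j}\,(1 - 1/K(x_i,x_j)) < 0$, i.e.\ the Hermitian matrix $[\,1 - 1/K(x_i,x_j)\,]$ has a strictly negative square. From this negative square I would manufacture a matrix-valued interpolation problem on these points, with target contractions $W_1,\dots,W_N$ of some size $m$, whose Pick matrix $[(I_m - W_iW_j^*)K(x_i,x_j)]$ is positive semidefinite yet which admits \emph{no} multiplier $\Phi$ with $\|M_\Phi\|\le 1$ and $\Phi(x_i)=W_i$; this contradicts the complete Pick property. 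Two features are essential and must be handled carefully: the obstructing data is genuinely matrix-valued, so the \emph{complete} (arbitrary matrix size) property is used — scalar Pick can hold while $1-1/K\not\ge 0$ — and the normalization $K_{x_0}\equiv 1$ must be invoked to isolate the rank-one ``constant'' direction so that the negative square becomes a true interpolation obstruction. I expect this last step — engineering the admissible-but-unsolvable problem directly out of the negative square of $1 - 1/K$ — to be the hardest and most technical part of the argument, since it is precisely where the equivalence stops being formal.
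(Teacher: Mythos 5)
First, a point of comparison: the paper does not prove Theorem \ref{thm:class.complete.Pick.kernel} at all --- it is quoted from Agler--McCarthy \cite[Theorem 4.2]{AM00} (with pointers to McCullough and Quiggin), so there is no in-paper argument to measure you against. Judged on its own terms, your proposal correctly identifies the standard architecture of that proof. The reduction between the feature-map formulation and the condition $1 - 1/K \geq 0$ is complete and correct as you state it: irreducibility guarantees $K$ never vanishes so $L := 1 - 1/K$ makes sense, Theorem \ref{thm:Moore-Aronsz.thm} supplies $\mathcal{K} = \mathcal{H}(L)$ and $\mathfrak{b}(x) = L_x$, and the normalization $K_{x_0} \equiv 1$ gives $L_{x_0} \equiv 0$ and $\|\mathfrak{b}(x)\|^2 = 1 - 1/K(x,x) \in [0,1)$. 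The sufficiency direction via realizing $K$ as a restriction of the Drury--Arveson kernel $s(\zeta,\eta) = (1 - \langle \eta,\zeta\rangle)^{-1}$, together with the observation that the complete Pick property passes to restrictions (a contractive solving multiplier restricts along $\mathfrak{b}$ without norm increase, by the PSD characterization \eqref{eqn:mult.equiv}), is exactly the route taken in the literature.

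That said, as a proof the proposal has two genuine gaps, both of which you acknowledge but neither of which you fill. The lurking-isometry/transfer-function argument showing that $s$ itself is a complete Pick kernel is only named, not executed; this is a real (if standard) computation. More seriously, the necessity direction --- manufacturing, from a negative square of $[\,1 - 1/K(x_i,x_j)\,]$, a matrix-valued interpolation problem with PSD Pick matrix that admits no contractive solution --- is the entire content of that implication, and you explicitly leave it as "the hardest and most technical part." In Agler--McCarthy this is done with a concrete construction using $1\times 2$ (row) targets built from the eigenvector of the negative eigenvalue, and it is not a formal manipulation; without it you have proved only that the feature-map condition is \emph{sufficient} for the complete Pick property, not that it is necessary. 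Your strategic instincts (contraposition, the need for genuinely matrix-valued data, the role of the normalization in isolating the constant direction) are all correct, but the statement is not proved until that construction is supplied.
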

	
	\subsubsection{Examples}\label{subsubsec:examples.complete.Pick.sp}
	
	Let us determine which of the spaces we considered so far have kernels with the complete Pick property. We refer to those RKHSs whose kernels have the complete Pick property as \emph{complete Pick spaces}.
	
	\begin{enumerate}
		\item It is clear that $H^2(\mathbb{D})$ is a complete Pick space, since
		\begin{equation*}
			1 - \frac{1}{K_{H^2(\mathbb{D})}(z,w)} = z \overline{w} \geq 0.
		\end{equation*}
		
		\item It is also clear that the $L^2_a(\mathbb{D})$ is not a complete Pick space, since
		\begin{equation*}
			1 - \frac{1}{K_{L^2_a(\mathbb{D})}(z,w)} = z \overline{w} (z \overline{w} - 2) \not\geq 0.
		\end{equation*}
		
		\item Similarly, $H^2(\mathbb{D}^d)$ is not a complete Pick space for any $d > 1$.
		
		\item In order to determine whether $\mathcal{D}$ is a complete Pick space we require the following lemma, which follows from a classical result due to Kaluza \cite{Kal28} on the coefficients of the inverse of a power-series in one variable.
		
		\begin{lemma}\label{lem:Kaluza}
			If $K$ is a PSD kernel on $\mathbb{D}$ of the form
			\begin{equation*}
				K(z,w) = \sum_{k \in \mathbb{Z}_+} c_k z^k \overline{w}^k,
			\end{equation*}
			where $c_0 = 1$, $c_k > 0$ for all $k > 0$ and
			\begin{equation*}
				\frac{c_k}{c_{k - 1}} \leq \frac{c_{k + 1}}{c_k},
			\end{equation*}
			then $1 - \frac{1}{K} \geq 0$ and hence, $K$ is a complete Pick kernel.
		\end{lemma}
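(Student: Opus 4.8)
The plan is to reduce the two-variable kernel condition to a one-variable statement about reciprocal power series and then to invoke Theorem \ref{thm:class.complete.Pick.kernel}. First I would write the kernel as a function of the single variable $z\overline{w}$: set
\begin{equation*}
    f(x) = \sum_{k \in \mathbb{Z}_+} c_k x^k, \qquad \text{so that} \qquad K(z,w) = f(z\overline{w}),
\end{equation*}
with $f(0) = c_0 = 1$. Since $f(0) \neq 0$, the reciprocal admits a power-series expansion $1/f(x) = \sum_{k \in \mathbb{Z}_+} b_k x^k$ near $x = 0$ with $b_0 = 1$, determined by the convolution recurrence $\sum_{j=0}^{n} c_j b_{n-j} = 0$ for $n \geq 1$. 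By Theorem \ref{thm:class.complete.Pick.kernel}, it suffices to prove that $1 - 1/K \geq 0$ as a kernel. The guiding observation is that any kernel of the diagonal form $\sum_{k} a_k z^k \overline{w}^k$ with all $a_k \geq 0$ is automatically PSD, being a convergent sum of the rank-one PSD kernels $a_k z^k \overline{w}^k$.

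The heart of the matter is the sign of the coefficients $b_k$. Here I would invoke the classical result of Kaluza \cite{Kal28}: the hypothesis $c_k/c_{k-1} \leq c_{k+1}/c_k$ is precisely the statement that $(c_k)$ is \emph{log-convex}, and Kaluza's theorem asserts that the reciprocal of a log-convex power series normalized by $c_0 = 1$ (with $c_k > 0$) has $b_0 = 1$ and $b_k \leq 0$ for every $k \geq 1$. I expect this to be the main obstacle if one insists on a self-contained argument: the recurrence $b_n = -c_n - \sum_{j=1}^{n-1} c_j b_{n-j}$ mixes a negative leading term with nonnegative tail contributions, so a naive induction fails, and one genuinely needs log-convexity to push through a strengthened inductive hypothesis controlling the ratios $b_n/b_{n-1}$ against $c_{n+1}/c_n$. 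Since the result is classical, my plan is to cite it and concentrate on the translation back to kernels.

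Finally I would verify that the reciprocal series actually converges on all of $\mathbb{D}$, not merely in a neighborhood of the origin. Because every $c_k > 0$ and $c_0 = 1$, we have $f(x) \geq 1$ for $x \in [0,1)$, so $f$ is non-vanishing there and $1 - 1/f(x) = \sum_{k \geq 1} (-b_k) x^k \in [0,1)$ for such $x$; as all the coefficients $-b_k$ are nonnegative, boundedness of this series of nonnegative terms on $[0,1)$ forces the radius of convergence to be at least $1$ (by Pringsheim's theorem). Consequently, for all $z,w \in \mathbb{D}$,
\begin{equation*}
    1 - \frac{1}{K(z,w)} = \sum_{k \geq 1} (-b_k)\, z^k \overline{w}^k,
\end{equation*}
which by the diagonal observation above is PSD. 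Theorem \ref{thm:class.complete.Pick.kernel} then yields that $K$ is a complete Pick kernel, completing the argument.
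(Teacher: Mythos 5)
Your proposal is correct and follows essentially the route the paper itself indicates for this lemma: write $K(z,w)=f(z\overline{w})$, invoke Kaluza's classical theorem that the reciprocal of a normalized log-convex power series has nonpositive coefficients beyond the constant term, observe that a diagonal kernel $\sum_k a_k z^k\overline{w}^k$ with $a_k\geq 0$ is automatically PSD, and conclude via Theorem \ref{thm:class.complete.Pick.kernel}. The only step to phrase more carefully is the radius-of-convergence claim: ``boundedness'' is not quite what Pringsheim's theorem uses (a bounded series of nonnegative terms on $[0,r)$ only gives convergence at $r$, not a larger radius); the correct invocation is that a power series with nonnegative coefficients is singular at the positive real point of its circle of convergence, whereas $1-1/f$ extends analytically across every point of $[0,1)$ because $f\geq 1$ there, so the radius must be at least $1$.
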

		
		With the help of this lemma, it is easy to check that $\mathcal{D}_t$ is a complete Pick space for each $t > 0$. In particular, $\mathcal{D}$ is a complete Pick space.
		
		\item The most important example of a complete Pick space is the Drury--Arveson space $\mathcal{H}^2_d$ on $\mathbb{B}_d$ for $d \in \mathbb{N}$. We introduced $\mathcal{H}^2_d$ in Section \ref{subsec:Dirich.on.B_d} as the space $\mathcal{D}_{d - 1}(\mathbb{B}_d)$. It can also be defined as the RKHS with kernel\footnote{Note that both these definitions result in different inner products, but it can be checked that they generate the same topology. We omit the technical details.}
		\begin{equation*}
			K_{\mathcal{H}^2_d}(z,w) := \frac{1}{1 - \langle z,w \rangle_{\mathbb{C}^d}}.
		\end{equation*}
        
		An infinite variable version of the Drury--Arveson space is notable for being the `universal' complete Pick space. In essence, if $\mathcal{H}(K)$ is separable, then the space $\mathcal{K}$ in Theorem \ref{thm:class.complete.Pick.kernel} can instead be chosen to be $\ell^2(\mathbb{Z}_+)$, and the map $\mathfrak{b}$ can be chosen to be an embedding of $X$ into $\ell^2(\mathbb{Z}_+)$. If we now construct the Drury--Arveson space on the unit ball of $\ell^2(\mathbb{Z}_+)$, i.e. the RKHS over the unit ball of $\ell^2(\mathbb{Z}_+)$ with kernel
        \begin{equation*}
            K_{\ell^2}(v,w) := \frac{1}{1 - \langle v, w \rangle_{\ell^2(\mathbb{Z}_+)}},
        \end{equation*}
        then the map
        \begin{equation*}
            K_x \mapsto [K_{\ell^2}]_{\mathfrak{b}(x)}
        \end{equation*}
        gives an isometric linear embedding of $\mathcal{H}(K)$ into $\mathcal{H}(K_{\ell^2})$. One may think of $\mathcal{H}(K_{\ell^2})$ as an infinite variable version of the Drury--Arveson space. The reader may find more details on this topic in \cite[Chapter 8]{AM02}.
        
        It is also worth noting that infinitely many variables are needed in general, since the Dirichlet space, for instance, cannot be embedded in $H^2_d$ for any $d \in \mathbb{N}$ (see \cite[Corollary 11.9]{Har17}). Moreover, Hartz \cite{Har22} later showed that this cannot be achieved even under a weaker hypothesis.
		
		\item An example of a complete Pick space that is not a space of analytic functions is the \emph{Sobolev space} on the unit interval $[0,1] \subset \mathbb{R}$, defined as
		\begin{equation*}
			\mathcal{W}^{1,2} := \left\{ f : [0,1] \to \mathbb{C} : \int_0^1 (|f(t)|^2 + |f'(t)|^2) dt < \infty \right\}.
		\end{equation*}
		The reproducing kernel of $\mathcal{W}^{1,2}$ is given by
		\begin{equation*}
			K_{\mathcal{W}^{1,2}}(t,s) := \frac{2e}{e^2 - 1} \cosh(1 - \max{\{t,s\}}) \cosh(\min{\{t,s\}}).
		\end{equation*}
		It is not at all easy to verify using Theorem \ref{thm:class.complete.Pick.kernel} that $\mathcal{W}^{1,2}$ is a complete Pick space. See \cite[Theorem 7.43]{AM02} for a detailed proof of this fact.
	\end{enumerate}
	
	\subsection{Multiplier-cyclicity}\label{subsec:mult.cyc}
	
	Let $\mathcal{H}$ be a RKHS on some set $X$ with kernel $K$. We say that $f \in \mathcal{H}$ is \emph{multiplier-cyclic} if
	\begin{equation*}
		\mathcal{M}[f] := \overline{f \mathcal{M}(\mathcal{H})} = \mathcal{H}.
	\end{equation*}
	If we let
    \begin{equation*}
        \mathcal{M}_\mathcal{H} := \{ M_\varphi : \varphi \in \mathcal{M}(\mathcal{H}) \} \subset \mathcal{L}(\mathcal{H}),
    \end{equation*}
    then note that $\mathcal{M}_{\mathcal{H}}$ is $\mathcal{M}(\mathcal{H})$-compatible (as in Section \ref{subsec:cyclicity}) and that $\mathcal{M}_\mathcal{H}$-cyclicity is the same as multiplier-cyclicity. Let us now mention a generalization of Proposition \ref{prop:poly.cyc.iff.irred.fac.cyc} for multiplier-cyclicity.

	\begin{proposition}\label{prop:mult-cyc.of.prod}
		Let $\mathcal{M}(\mathcal{H})$ be dense in $\mathcal{H}$, and let $\varphi \in \mathcal{M}(\mathcal{H})$ and $f \in \mathcal{H}$. Then, $\varphi f$ is multiplier-cyclic if and only if both $\varphi$ and $f$ are multiplier-cyclic.
	\end{proposition}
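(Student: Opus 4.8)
The plan is to transplant the proof of Proposition \ref{prop:poly.cyc.iff.irred.fac.cyc} into the reproducing-kernel setting, with the multiplier algebra $\mathcal{M}(\mathcal{H})$ playing the role of $\mathcal{P}_d$, the multiplication operators $M_\psi$ playing the role of the shifts, and the invariant hull $\mathcal{M}[\cdot]$ replacing $S[\cdot]$. Throughout I would use that each $M_\psi$ is bounded (closed graph theorem), that $\mathcal{M}_\mathcal{H}$ is $\mathcal{M}(\mathcal{H})$-compatible, and that the standing density hypothesis forces $\mathcal{M}(\mathcal{H}) \subseteq \mathcal{H}$ so that ``dense in $\mathcal{H}$'' is meaningful.

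For the forward implication I would first record the two inclusions $\mathcal{M}[\varphi f] \subseteq \mathcal{M}[f]$ and $\mathcal{M}[\varphi f] \subseteq \mathcal{M}[\varphi]$. The first is immediate, since $\varphi f = M_\varphi f \in f\mathcal{M}(\mathcal{H}) \subseteq \mathcal{M}[f]$ and $\mathcal{M}[f]$ is a closed $\mathcal{M}_\mathcal{H}$-invariant subspace. For the second I would approximate $f$ by multipliers: choosing $\psi_n \in \mathcal{M}(\mathcal{H})$ with $\psi_n \to f$ in $\mathcal{H}$ (density) and applying the bounded operator $M_\varphi$ gives $\varphi\psi_n \to \varphi f$ with $\varphi\psi_n \in \varphi\mathcal{M}(\mathcal{H})$, whence $\varphi f \in \overline{\varphi\mathcal{M}(\mathcal{H})} = \mathcal{M}[\varphi]$. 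Intersecting yields $\mathcal{M}[\varphi f] \subseteq \mathcal{M}[\varphi] \cap \mathcal{M}[f]$, so if $\varphi f$ is multiplier-cyclic then both factors are.

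For the converse I would assume $\varphi$ and $f$ are both multiplier-cyclic. The key observation is that cyclicity of $f$ propagates through $M_\varphi$: for any $g \in \mathcal{H}$ I pick $\psi_n \in \mathcal{M}(\mathcal{H})$ with $f\psi_n \to g$ (possible since $\overline{f\mathcal{M}(\mathcal{H})} = \mathcal{H}$) and apply the bounded map $M_\varphi$ to obtain $(\varphi f)\psi_n = \varphi(f\psi_n) \to \varphi g$; as $(\varphi f)\psi_n \in \varphi f\,\mathcal{M}(\mathcal{H})$, this shows $\varphi g \in \mathcal{M}[\varphi f]$. Hence $\varphi\mathcal{H} \subseteq \mathcal{M}[\varphi f]$, and in particular $\varphi\mathcal{M}(\mathcal{H}) \subseteq \mathcal{M}[\varphi f]$. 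Taking closures and invoking cyclicity of $\varphi$ gives $\mathcal{H} = \overline{\varphi\mathcal{M}(\mathcal{H})} \subseteq \mathcal{M}[\varphi f]$, i.e.\ $\varphi f$ is multiplier-cyclic.

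The one point requiring care—and the reason this is not a verbatim copy of Proposition \ref{prop:poly.cyc.iff.irred.fac.cyc}—is that the distinguished cyclic vector $1$ exploited there (through the characterization $\|1 - P_n f\|_\mathcal{X} \to 0$ of Proposition \ref{prop:equiv.def.of.shift.cyc}) need not belong to $\mathcal{H}$. I would therefore avoid singling out $1$ and argue entirely through the invariant-subspace inclusions, letting the hypothesis that $\mathcal{M}(\mathcal{H})$ is dense in $\mathcal{H}$ play the structural role that \ref{item:P1} plays in the polynomial case; this density is exactly what is needed both to approximate $f$ by multipliers in the forward direction and to run the cyclicity-propagation step in the converse.
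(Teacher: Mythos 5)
Your proposal is correct and takes essentially the same approach as the paper: both directions rest on the boundedness of $M_\varphi$, the density of $\mathcal{M}(\mathcal{H})$ in $\mathcal{H}$, and chaining the two cyclicity hypotheses. You merely recast the paper's explicit $\epsilon/2$ estimate and its chain $\overline{\varphi\mathcal{M}(\mathcal{H})} \supset \varphi\mathcal{H} \supset \varphi f\,\mathcal{M}(\mathcal{H})$ as inclusions of the invariant subspaces $\mathcal{M}[\cdot]$, which is a presentational rather than a mathematical difference.
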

	
	\begin{proof}
		Suppose $\varphi, f$ are multiplier-cyclic. Let $g \in \mathcal{H}$ and $\epsilon > 0$ be arbitrary. Since $\varphi$ is multiplier-cyclic, there exists $\psi_0 \in \mathcal{M}(\mathcal{H})$ such that
		\begin{equation}\label{equation_first step}
			\| \psi_0 \varphi - g \| < \frac{\epsilon}{2}.
		\end{equation}
		Now, since $f$ is also multiplier-cyclic, there exists $\psi \in \mathcal{M}(\mathcal{H})$ such that
		\begin{equation*}
			\| \psi f - \psi_0 \| < \frac{\epsilon}{2 \| \varphi \|_{\mathcal{M}(\mathcal{H})}}.
		\end{equation*}
		It follows that
		\begin{equation}\label{equation_second_step}
			\| \psi \varphi f - \psi_0 \varphi \| < \frac{\epsilon}{2}.
		\end{equation}
		Combining (\ref{equation_first step}) and (\ref{equation_second_step}), we get
        \begin{equation*}
            \| \psi \varphi f - g \| < \epsilon.
        \end{equation*}
        As $\epsilon > 0$ and $g \in \mathcal{H}$ were arbitrarily chosen, we get that $\varphi f$ is multiplier-cyclic.
		
		Conversely, suppose $\varphi f$ is multiplier-cyclic, and note that
		\begin{equation*}
			\overline{f \mathcal{M}(\mathcal{H})} \supset \overline{f \left (\varphi \mathcal{M}(\mathcal{H}) \right )} = \overline{\varphi f \mathcal{M}(\mathcal{H})} = \mathcal{H}.
		\end{equation*}
		Thus, $f$ is multiplier-cyclic. To show that $\varphi$ is multiplier-cyclic as well, note that
		\begin{equation*}
			\overline{\varphi \mathcal{M}(\mathcal{H})} \supset \varphi \mathcal{H} \supset \varphi f \mathcal{M}(\mathcal{H}).
		\end{equation*}
		Since $\varphi f$ is multiplier-cyclic, it follows that $\varphi$ is multiplier-cyclic as well.
	\end{proof}
	
	We now list some important properties of the multiplier algebra of a complete Pick space that follow from Theorem \ref{thm:class.complete.Pick.kernel} (see \cite[Proposition 2.1]{CHMR22}).
	
	\begin{proposition}\label{prop:prop.mult.alg.cPsp}
		Let $\mathcal{H}$ be a complete Pick space with a normalized irreducible kernel $K$, and let $K$, $\mathcal{K}$ and $\mathfrak{b}$ be as in the statement of Theorem \ref{thm:class.complete.Pick.kernel}. Then,
		\begin{enumerate}
			\item The function
            \begin{equation*}
                \mathfrak{b}_x : y \mapsto \langle \mathfrak{b}(y), \mathfrak{b}(x) \rangle_\mathcal{K}
            \end{equation*}
            is a multiplier with $\|\mathfrak{b}_x\|_{\mathcal{M}(\mathcal{H})} < 1$ for all $x \in X$.
			
			\item For all $x \in X$, we have
            \begin{equation*}
                K_x = \frac{1}{1 - \mathfrak{b}_x}
            \end{equation*}
            Thus, $K_x \in \mathcal{M}(\mathcal{H})$ and $K_x$ is multiplier-cyclic for each $x \in X$.
			
			\item $\mathcal{M}(\mathcal{H})$ is dense in $\mathcal{H}$.
		\end{enumerate}
	\end{proposition}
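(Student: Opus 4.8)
The plan is to treat part (1) as the heart of the matter and then derive the first assertion of (2), then (3), then the cyclicity assertion of (2), in that order, since each feeds the next.

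For (1), fix $x \in X$ and aim to apply the multiplier criterion \eqref{eqn:mult.equiv}. By Theorem \ref{thm:class.complete.Pick.kernel} the (normalized, irreducible) kernel factors as $K(u,v) = (1 - \langle \mathfrak{b}(v), \mathfrak{b}(u)\rangle_{\mathcal{K}})^{-1}$, so that the scalar identity $(1 - \langle \mathfrak{b}(v),\mathfrak{b}(u)\rangle_{\mathcal{K}})\,K(u,v) \equiv 1$ holds on $X \times X$. First I would read this identity as saying that the $\mathcal{K}$-row $\Phi(u) := \langle\,\cdot\,,\mathfrak{b}(u)\rangle_{\mathcal{K}}$ is a contractive multiplier from $\mathcal{H} \otimes \mathcal{K}$ to $\mathcal{H}$: its associated Pick kernel $(I - \Phi(u)\Phi(v)^{*})K(u,v)$ is exactly the constant kernel $1 \geq 0$, and the complete Pick property of $K$ promotes this necessary positivity to the genuine (operator-coefficient) contractivity of $\Phi$. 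I would then restrict $\Phi$ to the constant section $\mathfrak{b}(x)$: because $K$ is normalized we have $1 \in \mathcal{H}$ with $\|1\|_{\mathcal{H}} = 1$, so the constant function $\mathfrak{b}(x)$ is a multiplier $\mathcal{H} \to \mathcal{H}\otimes\mathcal{K}$ of norm $\|\mathfrak{b}(x)\|_{\mathcal{K}}$, and composing it with $\Phi$ yields the scalar multiplier $\mathfrak{b}_x$ with $\|\mathfrak{b}_x\|_{\mathcal{M}(\mathcal{H})} \leq \|\mathfrak{b}(x)\|_{\mathcal{K}} < 1$. (Equivalently, one can verify the positivity of $(\,\|\mathfrak{b}(x)\|^2 - \mathfrak{b}_x(u)\overline{\mathfrak{b}_x(v)}\,)K(u,v)$ directly and invoke \eqref{eqn:mult.equiv}.)

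Given (1), the first assertion of (2) is a short Banach-algebra argument. Since $\mathcal{M}(\mathcal{H})$ is a unital Banach algebra and $\|\mathfrak{b}_x\|_{\mathcal{M}(\mathcal{H})} < 1$, the Neumann series $\sum_{n \geq 0}\mathfrak{b}_x^{\,n}$ converges in $\mathcal{M}(\mathcal{H})$ and inverts $1 - \mathfrak{b}_x$. Summing the same series pointwise gives the geometric series $(1 - \mathfrak{b}_x)^{-1}$, which by the kernel factorization is exactly $K_x$; hence $K_x \in \mathcal{M}(\mathcal{H})$ and $K_x$ is invertible there with inverse $1 - \mathfrak{b}_x$. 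For (3), I would now use that $\mathcal{H}$ is the closed span of its kernel functions (as recorded for every RKHS): since each $K_x$ lies in $\mathcal{M}(\mathcal{H})$, the subspace $\mathcal{M}(\mathcal{H})$ contains $\operatorname{span}\{K_x : x \in X\}$, whose closure is all of $\mathcal{H}$, so $\mathcal{M}(\mathcal{H})$ is dense. Finally, the cyclicity in (2) follows: from $K_x(1 - \mathfrak{b}_x) = 1$ we get $\chi = K_x\big((1 - \mathfrak{b}_x)\chi\big) \in K_x\mathcal{M}(\mathcal{H})$ for every $\chi \in \mathcal{M}(\mathcal{H})$, so $\mathcal{M}(\mathcal{H}) \subseteq K_x\mathcal{M}(\mathcal{H})$, and taking closures with (3) gives $\overline{K_x\mathcal{M}(\mathcal{H})} = \mathcal{H}$.

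The main obstacle is (1), and specifically the step where the scalar positivity $(1 - \langle\mathfrak{b}(v),\mathfrak{b}(u)\rangle)K(u,v)\equiv 1$ must be upgraded to the contractivity of the vector multiplier $\Phi$ with the (possibly infinite-dimensional) coefficient space $\mathcal{K}$. This is precisely where the word \emph{complete} in the complete Pick property is indispensable: the ordinary (scalar) Pick property would not license operator-coefficient interpolants, whereas Theorem \ref{thm:class.complete.Pick.kernel} guarantees the matrix/operator version. Everything after (1) is formal: a geometric-series inversion and the standard density of kernel functions. It is also worth noting that the cyclicity of $K_x$ could alternatively be deduced from Proposition \ref{prop:mult-cyc.of.prod} once density is known, but the direct invertibility argument is cleaner.
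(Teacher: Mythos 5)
Your argument is correct, and it is essentially the standard one: the paper gives no proof of this proposition at all, deferring to \cite[Proposition 2.1]{CHMR22}, and your route --- the row-contraction argument for (1), Neumann-series inversion of $1-\mathfrak{b}_x$ in the Banach algebra $\mathcal{M}(\mathcal{H})$ for the identity in (2), density of the kernel functions for (3), and invertibility of $K_x$ in $\mathcal{M}(\mathcal{H})$ for its multiplier-cyclicity --- is exactly the argument of that reference. Everything after (1) is as formal as you say.

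Two points of calibration on (1). First, you credit the complete Pick property with ``promoting'' the positivity of $(1-\langle\mathfrak{b}(v),\mathfrak{b}(u)\rangle_{\mathcal{K}})K(u,v)\equiv 1$ to contractivity of the row multiplier. In fact that promotion is just the operator-coefficient multiplier criterion, which is necessary \emph{and} sufficient for an arbitrary RKHS (the computation $M_\Phi^*(K_y\otimes\zeta)=K_y\otimes\Phi(y)^*\zeta$ behind the displayed criterion in Section \ref{subsubsec:complete.Pick.prop} works verbatim for an infinite-dimensional coefficient space $\mathcal{K}$); the complete Pick hypothesis is consumed entirely in producing the representation of Theorem \ref{thm:class.complete.Pick.kernel} in the first place. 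Second, your parenthetical fallback --- ``verify the positivity of $(\|\mathfrak{b}(x)\|_{\mathcal{K}}^2-\mathfrak{b}_x(u)\overline{\mathfrak{b}_x(v)})K(u,v)$ directly and invoke \eqref{eqn:mult.equiv}'' --- is less innocent than it sounds: the first factor $\|\mathfrak{b}(x)\|_{\mathcal{K}}^2-\mathfrak{b}_x(u)\overline{\mathfrak{b}_x(v)}$ is \emph{not} itself a positive kernel (compare $1-z\overline{w}$ on $\mathbb{D}$), so a bare Schur-product decomposition fails and one is forced back to factoring $\mathfrak{b}_x$ through the contractive row built from $\mathfrak{b}$, i.e., to your main argument. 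Finally, watch the conjugations when you contract the row $\Phi(u)=\langle\,\cdot\,,\mathfrak{b}(u)\rangle_{\mathcal{K}}$ against the constant vector $\mathfrak{b}(x)$: as literally written this produces multiplication by $\overline{\mathfrak{b}_x}$ rather than $\mathfrak{b}_x$, and the clean fix is to contract the coordinate row $(\langle\mathfrak{b}(\cdot),e_\xi\rangle)_\xi$ against the constant column $(\overline{\langle\mathfrak{b}(x),e_\xi\rangle})_\xi$, which has the same norm $\|\mathfrak{b}(x)\|_{\mathcal{K}}<1$. This is pure bookkeeping rather than a gap --- and it is muddied by the fact that the displayed formulas in Theorem \ref{thm:class.complete.Pick.kernel} and in item (2) of the proposition are not mutually consistent about which slot of the inner product carries which variable --- but it is the one place a careful reader would stumble.
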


	\begin{remark}\label{rem:non.cPness.of.RKHS}
		For a general RKHS $\mathcal{H}$, not only can we not guarantee that $\mathcal{M}(\mathcal{H})$ is dense, but it may not contain any non-constant functions at all (see \cite[Section 2]{AM02}). Moreover, it is not true for a general RKHS that the kernel functions are multiplier-cyclic (see \cite[Theorem 13]{Izu17}). Thus, Proposition \ref{prop:prop.mult.alg.cPsp} provides ways to check whether a given RKHS with a normalized irreducible kernel is not a complete Pick space.
	\end{remark}
	
	\subsubsection{Smirnov representation}\label{subsubsec:Smirnov.repn}
	
	We mentioned a result of F. and R. Nevanlinna \cite{NN22} in Section \ref{subsec:Hardy.on.D} (see the references under Theorem \ref{thm:radial.limits.H^p(D)}) which states that every $f \in H^2(\mathbb{D})$ can be written as $f = \varphi/\psi$ for some $\varphi, \psi \in H^\infty(\mathbb{D})$. Combining this fact with the Smirnov factorization theorem, it follows that we can choose $\psi$ such that it is outer. This is what is called a \emph{Smirnov representation of $f$}. Taking motivation from this we define the \emph{Smirnov class} of a RKHS $\mathcal{H}$ as
	\begin{equation*}
		\mathcal{N}^+(\mathcal{H}) := \left\{ f = \frac{\varphi}{\psi} : \varphi, \psi \in \mathcal{M}(\mathcal{H}) \AND \psi \text{ is multiplier-cyclic} \right\}.
	\end{equation*}
	It is then of interest to ask when we can guarantee the following inclusion
    \begin{equation*}
        \mathcal{H} \subset \mathcal{N}^+(\mathcal{H}).
    \end{equation*}
    
    First, we note that this is not true in general. Consider $L^2_a(\mathbb{D})$ and recall that its multiplier algebra is $H^\infty(\mathbb{D})$. This means that every function in $\mathcal{N}^+(L^2_a(\mathbb{D}))$ has radial limits a.e. on $\mathbb{T}$ (by Fatou's theorem \cite{Fat06}). However, it is possible to construct an $L^2_a(\mathbb{D})$ function that does not have radial limits at any point on $\mathbb{T}$ (e.g. use \cite[Theorem 1]{Mac62}). We therefore conclude that
    \begin{equation*}
        L^2_a(\mathbb{D}) \not\subset \mathcal{N}^+(L^2_a(\mathbb{D})).
    \end{equation*}
    The following result of Aleman, Hartz, McCarthy and Richter \cite[Theorem 1.1]{AHMR17} settles the case of complete Pick spaces.
	
	\begin{theorem}\label{thm:Smirnov.rep.cPsp}
		Let $\mathcal{H}$ be a complete Pick space on $X$ with an irreducible kernel $K$ that is normalized at some $x_0 \in X$, and let $f \in \mathcal{H}$ be such that $\|f\| \leq 1$. Then, there exist $\varphi, \psi \in \mathcal{M}(\mathcal{H})$ such that
        \begin{enumerate}
            \item $\|\varphi\|_{\mathcal{M}(\mathcal{H})} \leq 1$,

            \item $\psi(x_0) = 0$, and
            
            \item $\|\psi\|_{\mathcal{M}(\mathcal{H})} \leq 1$,
        \end{enumerate}
        for which
        \begin{equation*}
            f = \frac{\varphi}{1 - \psi}.
        \end{equation*}
		
		Moreover, $1 - \psi$ is multiplier-cyclic for any $\psi \in \mathcal{\mathcal{H}}$ that satisfies (2) and (3) above. Thus, $\mathcal{H} \subset \mathcal{N}^+(\mathcal{H})$ is always true for a complete Pick space.
	\end{theorem}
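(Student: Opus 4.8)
The plan is to reduce the whole statement to a single kernel positivity, feed that positivity into the complete Pick structure via a lurking-isometry (transfer-function) realization, and then dispatch the cyclicity claim separately. Throughout I would keep only the hypothesis $\|f\| \le 1$, since scaling $f$ by a nonzero constant affects neither membership in $\mathcal{N}^+(\mathcal{H})$ nor the shape of the representation.

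\textbf{Step 1 (the one place $\|f\|\le 1$ is used).} The elementary but crucial observation is that $\|f\|_{\mathcal{H}} \le 1$ is equivalent to the kernel positivity
\begin{equation*}
    K(x,y) - f(x)\overline{f(y)} \ge 0.
\end{equation*}
Indeed, for any finite set $x_1,\dots,x_N$ and scalars $c_1,\dots,c_N$, writing $u = \sum_j c_j K_{x_j}$ and using $f(x_j) = \langle f, K_{x_j} \rangle_{\mathcal{H}}$, the associated quadratic form equals $\|u\|^2 - |\langle u, f \rangle|^2$, which is $\ge 0$ precisely by Cauchy--Schwarz together with $\|f\| \le 1$. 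Note this holds in any RKHS; the complete Pick property is not yet used.

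\textbf{Step 2 (heart of the proof).} Now I would invoke Theorem \ref{thm:class.complete.Pick.kernel} to write $K = \frac{1}{1 - b}$ with $b(x,y) = \langle \mathfrak{b}(y), \mathfrak{b}(x) \rangle_{\mathcal{K}}$, $\mathfrak{b}(x_0) = 0$ and $K_{x_0} \equiv 1$. Since $K - 1 = Kb \ge 0$, I would factor this PSD kernel (Moore--Aronszajn, Theorem \ref{thm:Moore-Aronsz.thm}) as $K(x,y) b(x,y) = \langle \beta(y), \beta(x) \rangle_{\mathcal{G}}$ for an auxiliary Hilbert space $\mathcal{G}$ and a map $\beta : X \to \mathcal{G}$. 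Substituting $K = 1 + Kb$ into the Step 1 positivity converts it into
\begin{equation*}
    1 + \langle \beta(y), \beta(x) \rangle_{\mathcal{G}} - f(x)\overline{f(y)} \ge 0.
\end{equation*}
This is exactly the Gram inequality that drives a lurking-isometry argument: it says that the assignment sending the vectors built from $\big(f, (\text{defect of the inequality})\big)$ to those built from $\big(1, \beta\big)$ extends to a contraction, which after filling in a colligation becomes a unitary $V = \left(\begin{smallmatrix} a & B \\ C & D \end{smallmatrix}\right)$. Reading off the transfer function of $V$ and using that the Neumann series $(I - D\,\mathfrak{b}_x)^{-1}$ converges because $\|\mathfrak{b}(x)\|_{\mathcal{K}} < 1$ produces contractive multipliers $\varphi, \psi$ with $f = \varphi/(1 - \psi)$; the normalization $\mathfrak{b}(x_0) = 0$ forces $\psi(x_0) = 0$, and contractivity of $V$ yields $\|\varphi\|_{\mathcal{M}(\mathcal{H})} \le 1$ and $\|\psi\|_{\mathcal{M}(\mathcal{H})} \le 1$. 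I expect this colligation bookkeeping --- matching Gram data, extending to a genuine contraction, and verifying that the base-point normalization survives into $\psi$ --- to be the main obstacle, and it is the step that genuinely requires the complete Pick hypothesis.

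\textbf{Step 3 (the ``moreover'').} Finally I would show that \emph{any} $\psi \in \mathcal{M}(\mathcal{H})$ with $\|\psi\|_{\mathcal{M}(\mathcal{H})} \le 1$ and $\psi(x_0) = 0$ has $1 - \psi$ multiplier-cyclic, which both completes the representation and gives $\mathcal{H} \subset \mathcal{N}^+(\mathcal{H})$. Because $\mathcal{M}(\mathcal{H})$ is dense in $\mathcal{H}$ (Proposition \ref{prop:prop.mult.alg.cPsp}) and $M_{1-\psi}$ is bounded, we have $\overline{(1-\psi)\mathcal{M}(\mathcal{H})} = \overline{\operatorname{ran} M_{1-\psi}}$, so cyclicity is equivalent to $\ker M_{1-\psi}^* = \ker(I - M_\psi^*) = \{0\}$. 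Testing $(1 - \psi(x)\overline{\psi(y)})K(x,y) \ge 0$ on the two points $\{x_0, x\}$, and using $\psi(x_0) = 0$ together with $K_{x_0} \equiv 1$, the determinant of the resulting $2\times 2$ minor gives the Schwarz-type bound $|\psi(x)|^2 \le 1 - 1/K(x,x) < 1$ for every $x$. Since $M_\psi^* K_x = \overline{\psi(x)} K_x$, we get $M_\psi^{*n} K_x = \overline{\psi(x)}^{\,n} K_x \to 0$; as the $K_x$ span a dense subspace and $\|M_\psi^{*n}\| \le 1$ uniformly, it follows that $M_\psi^{*n} \to 0$ strongly. Hence any fixed vector of $M_\psi^*$ vanishes, so $\ker(I - M_\psi^*) = \{0\}$ and $1 - \psi$ is multiplier-cyclic, which completes the argument.
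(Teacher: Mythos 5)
First, note that the paper itself offers no proof of this theorem: it is quoted from Aleman--Hartz--McCarthy--Richter \cite{AHMR17}, so your proposal can only be compared with the argument in that source. Your outline is in fact the same as theirs: the equivalence $\|f\|\le 1 \iff K - f(x)\overline{f(y)}\ge 0$, a lurking-isometry/transfer-function realization exploiting $K = (1-\langle\mathfrak{b}(y),\mathfrak{b}(x)\rangle_{\mathcal{K}})^{-1}$, and a separate Schwarz-lemma argument for the cyclicity of $1-\psi$. Steps 1 and 3 are complete and correct as written; Step 3 in particular is a clean self-contained proof (the two-point test gives $|\psi(x)|^2\le 1-1/K(x,x)<1$, so $M_\psi^{*n}K_x\to 0$, hence $M_\psi^{*n}\to 0$ strongly by uniform boundedness and density of the kernel functions, hence $\ker(I-M_\psi^*)=\{0\}$; and the identification of $\overline{(1-\psi)\mathcal{M}(\mathcal{H})}$ with $\overline{\operatorname{ran}M_{1-\psi}}$ via density of $\mathcal{M}(\mathcal{H})$ is correct).

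The one genuine gap is in Step 2, and it sits one step before the "colligation bookkeeping" you flag. The identity $1+\langle\beta(y),\beta(x)\rangle = f(x)\overline{f(y)}+\langle\gamma(y),\gamma(x)\rangle$, with $\beta$ factoring $Kb$ and $\gamma$ factoring the defect, has unrelated state spaces on its two sides, so the lurking isometry it produces has no feedback loop: its first row reads $\overline{f(x)} = a + B\beta(x)$ outright, there is no equation to solve by a Neumann series, and no $\psi$ ever appears --- you cannot "read off" $f=\varphi/(1-\psi)$ from this positivity. The fix is to first factor the defect $K(x,y)-f(x)\overline{f(y)} = \langle\Gamma(y),\Gamma(x)\rangle$ and substitute $K = f\overline{f}+\langle\Gamma,\Gamma\rangle$ back into the term $Kb$, which yields
\begin{equation*}
1+\left\langle \overline{f(y)}\,\mathfrak{b}(y)\oplus\bigl(\mathfrak{b}(y)\otimes\Gamma(y)\bigr),\ \overline{f(x)}\,\mathfrak{b}(x)\oplus\bigl(\mathfrak{b}(x)\otimes\Gamma(x)\bigr)\right\rangle = f(x)\overline{f(y)}+\langle\Gamma(y),\Gamma(x)\rangle,
\end{equation*}
so that the output state $\Gamma$ reappears, tensored with $\mathfrak{b}$, in the input. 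Extending the resulting isometry to a contraction $\left(\begin{smallmatrix} a & B_1\; B_2 \\ C & D\end{smallmatrix}\right)$ then gives $\overline{f(x)}\bigl(1-B_1\mathfrak{b}(x)\bigr) = a + B_2\bigl(\mathfrak{b}(x)\otimes\Gamma(x)\bigr)$, from which $\psi$, $\varphi$, their contractivity, and $\psi(x_0)=0$ (via $\mathfrak{b}(x_0)=0$) all drop out exactly as you describe. So the strategy is right and matches the original proof, but the specific positivity you propose to feed into the lurking isometry is not the one that closes.
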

	
	Jury and Martin \cite[Corollary 3.4]{MJ21} used completely different techniques to obtain the same result but with a different flavor of Smirnov representation.
	
	\begin{theorem}\label{thm:Jury--Martin.Smirn.rep}
		Let $\mathcal{H}$ be a complete Pick space with a normalized irreducible kernel. Then, for any $f \in \mathcal{H}$ there exist $\varphi, \psi \in \mathcal{M}(\mathcal{H})$ such that $1/\psi \in \mathcal{H}$ and
        \begin{equation*}
            f = \frac{\varphi}{\psi}.
        \end{equation*}
	\end{theorem}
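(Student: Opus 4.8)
The plan is to reduce to the contractive case and then build the denominator by an extremal construction, using the complete Pick structure in the form given by Theorem \ref{thm:class.complete.Pick.kernel}. Since the conclusion is scale-invariant (replacing $f$ by $f/c$ only rescales the eventual numerator $\varphi$), I would first assume $\|f\|_{\mathcal H}\le 1$. The starting point is the standard characterization of the closed unit ball of a reproducing kernel Hilbert space: $f\in\mathcal H$ with $\|f\|_{\mathcal H}\le 1$ if and only if the kernel $K(x,y)-f(x)\overline{f(y)}$ is positive semi-definite. Together with the complete Pick identity $1/K(x,y)=1-\langle\mathfrak b(y),\mathfrak b(x)\rangle_{\mathcal K}$ supplied by Theorem \ref{thm:class.complete.Pick.kernel}, this gives two positive semi-definite kernels to play against one another.

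The construction I would carry out is to seek the denominator as a contractive multiplier $\psi$ for which $f\psi$ is \emph{also} a contractive multiplier, and then set $\varphi:=f\psi$, so that $f=\varphi/\psi$ holds formally. The entire content is thus to produce such a $\psi$ with the extra property $1/\psi\in\mathcal H$. The guiding model is $H^2(\mathbb D)$, where one takes $\psi$ to be the outer function with boundary modulus $\min(1,1/|f^*|)$: then $|f\psi|\le 1$ forces $\varphi=f\psi\in H^\infty$, while $|1/\psi|=\max(1,|f^*|)\in L^2$ gives $1/\psi\in H^2$. To mimic this abstractly I would take $\psi$ \emph{extremal} among all contractive multipliers for which $f\psi$ is contractive, say maximizing $\operatorname{Re}\psi(x_0)$ over this convex, weak-$*$ compact family, with $x_0$ the normalization point; in the $H^2$ model the extremizer is exactly the outer function above. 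A lurking-isometry/transfer-function realization built from the two positive semi-definite kernels of the first step then produces a genuine contractive multiplier $\psi$ attaining the extremum, with $\varphi=f\psi\in\mathcal M(\mathcal H)$ by construction.

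The hard part is the final step: upgrading the extremality of $\psi$ to the membership $1/\psi\in\mathcal H$, rather than merely the cyclicity of the denominator that Theorem \ref{thm:Smirnov.rep.cPsp} would already supply. Concretely I would aim to establish a positive-semidefiniteness estimate of the shape $C\,K(x,y)-\tfrac{1}{\psi(x)}\,\overline{\tfrac{1}{\psi(y)}}\ge 0$ for some finite constant $C$, which by the unit-ball characterization from the first step is precisely the assertion $1/\psi\in\mathcal H$ with $\|1/\psi\|_{\mathcal H}\le\sqrt C$; in the $H^2$ model this is the elementary bound $|1/\psi|^2=\max(1,|f^*|^2)\le 1+|f^*|^2$. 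It is exactly here that the complete Pick hypothesis is indispensable, since the same recipe collapses in a non-complete-Pick space, matching the fact recorded above that $L^2_a(\mathbb D)\not\subset\mathcal N^+(L^2_a(\mathbb D))$. An alternative organization would take the Smirnov representation $f=\varphi_0/(1-\psi_0)$ of Theorem \ref{thm:Smirnov.rep.cPsp} as a black box and then replace the cyclic denominator $1-\psi_0$ by an extremal one with reciprocal in $\mathcal H$; but since the denominator from Theorem \ref{thm:Smirnov.rep.cPsp} need not itself have reciprocal in $\mathcal H$ (already $1/(1-z)\notin H^2$), this reorganization does not avoid the extremal/positive-definiteness crux, which I therefore expect to be the main obstacle.
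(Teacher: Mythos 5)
The survey does not prove this statement: it is quoted from Jury and Martin \cite[Corollary 3.4]{MJ21}, with the explicit remark that their techniques are completely different from those behind Theorem \ref{thm:Smirnov.rep.cPsp}. So your attempt can only be judged against the statement itself, and as a proof it has a genuine gap --- the one you yourself flag at the end. The preparatory steps are fine: the family of contractive multipliers $\psi$ for which $f\psi$ is also a contractive multiplier is convex, weak-$*$ (equivalently WOT) compact, and contains elements with $\operatorname{Re}\psi(x_0)>0$ (for instance $\tfrac12(1-\psi_0)$ with $\psi_0$ as in Theorem \ref{thm:Smirnov.rep.cPsp}), so an extremizer $\psi$ exists and $\varphi:=f\psi\in\mathcal M(\mathcal H)$ comes for free. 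But the passage from extremality of $\psi$ to the kernel inequality $C\,K(x,y)-\psi(x)^{-1}\overline{\psi(y)^{-1}}\geq 0$ \emph{is} the theorem, and your proposal offers no mechanism for it. In the $H^2(\mathbb D)$ model that step is carried entirely by machinery with no abstract counterpart: boundary values, the logarithmic integral that identifies the extremizer as the outer function with modulus $\min(1,1/|f^*|)$, and the Smirnov maximum principle upgrading $1/\psi\in N^+$ with $L^2$ boundary data to $1/\psi\in H^2(\mathbb D)$. You name the realization formula of Theorem \ref{thm:class.complete.Pick.kernel} as the intended substitute, but no variational identity, lurking-isometry computation, or de Branges--Rovnyak argument is indicated that would convert ``$\psi$ is extremal'' into the required positivity. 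A secondary unaddressed point: for $1/\psi$ even to be a function on $X$ the extremizer must be zero-free, which is automatic for the outer function on the disc but needs proof in general.

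Two smaller remarks. First, your closing observation is correct and valuable: the denominator $1-\psi_0$ supplied by Theorem \ref{thm:Smirnov.rep.cPsp} is only multiplier-cyclic and need not have reciprocal in $\mathcal H$ (already $1/(1-z)\notin H^2(\mathbb D)$), so the present theorem is genuinely stronger and cannot be obtained by recycling that denominator; you have located the right difficulty, but locating it is where the proposal stops. Second, the route actually taken in \cite[Corollary 3.4]{MJ21} does not run an extremal problem inside $\mathcal M(\mathcal H)$ at all; it exploits the universality of the Drury--Arveson space recorded in item (5) of Section \ref{subsubsec:examples.complete.Pick.sp} to transfer the problem to a setting where a factorization theory for the denominator is available, and then descends to $\mathcal H$. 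If you want to pursue your extremal scheme, the minimal missing ingredient is a lemma of the form: an extremal $\psi$ in your family admits a realization whose transfer function controls $1/\psi$ by a multiple of $K$. Without that lemma the argument is a plan, not a proof.
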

	
	It is obvious that if $\psi \in \mathcal{M}(\mathcal{H})$ is such that $1/\psi \in \mathcal{H}$ then $\psi$ is multiplier-cyclic. Thus, Jury and Martin's result also provides a Smirnov representation for functions in complete Pick spaces.
	
	Let us turn to some easy applications of the above theorems. First, we noticed in Section \ref{subsubsec:mult.alg} (see the paragraph after \eqref{eqn:mult.equiv}) that the zero sets of functions in $H^2(\mathbb{D})$ is the same as that of its multiplier algebra, but this is not the case for $L^2_a(\mathbb{D})$. One immediately sees from Theorems \ref{thm:Smirnov.rep.cPsp} and \ref{thm:Jury--Martin.Smirn.rep} that the zero sets of functions in a complete Pick space are the same as that of its multiplier algebra. We also obtain the following corollary using Proposition \ref{prop:mult-cyc.of.prod} and Theorem \ref{thm:Jury--Martin.Smirn.rep}.
	
	\begin{corollary}\label{cor:mult.in.cPsp}
		Let $\mathcal{H}$ be a complete Pick space with a normalized irreducible kernel, and let $f = \varphi/\psi$ be as in Theorem \ref{thm:Jury--Martin.Smirn.rep}. Then, $f$ is multiplier-cyclic if and only if $\varphi$ is multiplier-cyclic.
	\end{corollary}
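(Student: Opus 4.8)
The plan is to recognize this corollary as an immediate consequence of Proposition \ref{prop:mult-cyc.of.prod}, once the Smirnov representation is rewritten as a product and the hypotheses of that proposition are checked. The essential point is that the decomposition $f = \varphi/\psi$ supplied by Theorem \ref{thm:Jury--Martin.Smirn.rep} is nothing but the factorization $\varphi = \psi \cdot f$, in which $\psi \in \mathcal{M}(\mathcal{H})$ plays the role of the multiplier and $f \in \mathcal{H}$ the role of the function, with their product equal to $\varphi \in \mathcal{H}$.

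First I would verify the standing hypothesis of Proposition \ref{prop:mult-cyc.of.prod}, namely that $\mathcal{M}(\mathcal{H})$ is dense in $\mathcal{H}$; this is guaranteed by Proposition \ref{prop:prop.mult.alg.cPsp}~(3), since $\mathcal{H}$ is a complete Pick space with a normalized irreducible kernel. Next I would record that $\psi$ is itself multiplier-cyclic: this is exactly the observation made just before the statement of the corollary, following from $1/\psi \in \mathcal{H}$. (If one wants a direct argument, approximate $1/\psi$ in $\mathcal{H}$ by a sequence $\psi_n \in \mathcal{M}(\mathcal{H})$; boundedness of $M_\psi$ then gives $\psi \psi_n \to 1$ in $\mathcal{H}$, so $1 \in \mathcal{M}[\psi]$, and since $\mathcal{M}[\psi]$ absorbs all multiplier multiples and $\mathcal{M}(\mathcal{H})$ is dense, $\mathcal{M}[\psi] = \mathcal{H}$.)

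With these two facts in place, the conclusion follows directly. Applying Proposition \ref{prop:mult-cyc.of.prod} to the multiplier $\psi$ and the function $f$, the product $\psi f = \varphi$ is multiplier-cyclic if and only if both $\psi$ and $f$ are multiplier-cyclic. Because $\psi$ is always multiplier-cyclic in this setting, the biconditional collapses to the assertion that $\varphi$ is multiplier-cyclic if and only if $f$ is multiplier-cyclic, which is precisely the claim.

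I expect essentially no real obstacle here; the only care needed is the bookkeeping of roles in the factorization $\varphi = \psi \cdot f$ — it is $\psi$, not $\varphi$, that is the multiplier — together with the observation that the automatic cyclicity of $\psi$, inherited from $1/\psi \in \mathcal{H}$, is exactly what reduces the two-sided equivalence of Proposition \ref{prop:mult-cyc.of.prod} to the single equivalence asserted in the corollary.
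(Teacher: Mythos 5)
Your proposal is correct and follows exactly the route the paper intends: rewrite $f=\varphi/\psi$ as $\varphi=\psi f$, note that $1/\psi\in\mathcal{H}$ forces $\psi$ to be multiplier-cyclic, and apply Proposition \ref{prop:mult-cyc.of.prod} (whose density hypothesis is supplied by Proposition \ref{prop:prop.mult.alg.cPsp}(3)) so that the two-sided equivalence collapses to the stated one. The paper gives no further detail beyond citing these same ingredients, so there is nothing to add.
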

	
	The above corollary shows that in order to identify all multiplier-cyclic functions in a complete Pick space, we only need to identify multipliers that are multiplier-cyclic, and also multipliers that are invertible. With this in mind, let us now establish certain algebraic properties of multiplier-cyclic functions.
	
	\subsubsection{Algebraic properties of multiplier-cyclic functions}\label{subsubsec:alg.prop.mult-cyc.func}
	
	A large part of our previous discussion was based around the properties \ref{item:NI}, \ref{item:Mult} and \ref{item:Inv} for the Hardy spaces. We gave examples of when these properties generalize and, if not, then whether there are known counter-examples in this direction. However, these properties -- as mentioned -- only made sense for families of spaces. Let us begin by defining a localized version of these properties and summarize what we know from before. Let $\mathcal{H}$ be a RKHS such that its kernel is normalized at some point (so, in particular, $1 \in \mathcal{H}$). Then, we say that $\mathcal{H}$ satisfies
	\begin{enumerate}
		\myitem[\textbf{Mult}]\label{item:LocMult} if for every $f_1, f_2 \in \mathcal{H}$ such that $f_1 f_2 \in \mathcal{H}$, we have $f_1 f_2$ is multiplier-cyclic if and only if $f_1, f_2$ are both multiplier-cyclic.
	
		\myitem[\textbf{Inv}]\label{item:LocInv} if for every $f \in \mathcal{H}$ such that $1/f \in \mathcal{H}$, we know that $f$ is multiplier-cyclic.
	\end{enumerate}
	Clearly, \ref{item:LocMult} implies \ref{item:LocInv} by taking $f_1 = f$ and $f_2 = 1/f$ and thus, a counter-example to \ref{item:LocInv} serves as a counter-example to \ref{item:LocMult}. Also, Proposition \ref{prop:mult-cyc.of.prod} shows that if we restrict to the case when either of $f, f_1, f_2$ is a multiplier, then \ref{item:LocMult} and \ref{item:LocInv} will hold for any RKHS $\mathcal{H}$.
	
	We saw that both these properties hold for $H^2(\mathbb{D})$, but we do not know the answer for $H^2(\mathbb{D}^2)$ (see Problems \ref{prob:ni.Hardy.d>1}--\ref{prob:inv.Hardy.d>1}). A famous result of Borichev and Hedenmalm \cite[Theorem 1.4]{BH97} shows that neither of these properties hold for $L^2_a(\mathbb{D})$. In fact, they construct an explicit example of a function $f \in L^2_a(\mathbb{D})$ for which $1/f \in L^2_a(\mathbb{D})$, but it is not shift-cyclic. They also highlight (see \cite[p. 763]{BH97}) that \ref{item:MC} holds for $L^2_a(\mathbb{D})$, i.e., multiplier-cyclicity is equivalent to shift-cyclicity of the above $f$. The following result by the author settles the case of complete Pick spaces (see \cite[Theorem 2.5.4]{Sam22}).
	
	\begin{theorem}\label{thm:mult.inv.for.cPsp}
		Let $\mathcal{H}$ be a complete Pick space with a normalized irreducible kernel. Then, both \ref{item:LocMult} and \ref{item:LocInv} hold for $\mathcal{H}$.
	\end{theorem}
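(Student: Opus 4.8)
The plan is to push everything down to the level of multipliers, where the cyclicity of products is already controlled by Proposition \ref{prop:mult-cyc.of.prod}, and to access the multipliers via the Smirnov representation. First I would observe that it suffices to prove \ref{item:LocMult}: taking $f_1 = f$ and $f_2 = 1/f$ gives the product $f_1 f_2 = 1$, which is multiplier-cyclic because $\mathcal{M}[1] = \overline{\mathcal{M}(\mathcal{H})} = \mathcal{H}$ by the density in Proposition \ref{prop:prop.mult.alg.cPsp}(3), so \ref{item:LocMult} immediately yields \ref{item:LocInv}. So fix $f_1, f_2 \in \mathcal{H}$ with $f_1 f_2 \in \mathcal{H}$, and invoke the Jury--Martin representation (Theorem \ref{thm:Jury--Martin.Smirn.rep}) to write $f_1 = \varphi_1/\psi_1$ and $f_2 = \varphi_2/\psi_2$ with $\varphi_i, \psi_i \in \mathcal{M}(\mathcal{H})$ and $1/\psi_i \in \mathcal{H}$; in particular each $\psi_i$ is multiplier-cyclic. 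By Corollary \ref{cor:mult.in.cPsp} this already converts the outer factors into their numerators: $f_i$ is multiplier-cyclic if and only if $\varphi_i$ is, for $i = 1,2$.

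The heart of the argument is to show that $f_1 f_2$ is multiplier-cyclic exactly when the multiplier product $\varphi_1 \varphi_2$ is. The key algebraic identity is
\[
\varphi_1 \varphi_2 = (\psi_1 \psi_2)(f_1 f_2),
\]
in which $\psi_1 \psi_2 \in \mathcal{M}(\mathcal{H})$ and $f_1 f_2 \in \mathcal{H}$ by hypothesis. Since $\psi_1$ and $\psi_2$ are multiplier-cyclic multipliers, Proposition \ref{prop:mult-cyc.of.prod} shows their product $\psi_1 \psi_2$ is multiplier-cyclic; applying Proposition \ref{prop:mult-cyc.of.prod} a second time to the multiplier $\psi_1 \psi_2$ and the function $f_1 f_2$ then gives that $\varphi_1 \varphi_2$ is multiplier-cyclic if and only if $f_1 f_2$ is. Finally, because $\varphi_1 \varphi_2$ is an honest product of two multipliers, one more application of Proposition \ref{prop:mult-cyc.of.prod} yields that $\varphi_1 \varphi_2$ is multiplier-cyclic if and only if both $\varphi_1$ and $\varphi_2$ are. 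Chaining these equivalences with the reduction of the first paragraph gives that $f_1 f_2$ is multiplier-cyclic if and only if $\varphi_1, \varphi_2$ are both multiplier-cyclic, if and only if $f_1, f_2$ are both multiplier-cyclic, which is exactly \ref{item:LocMult}.

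The step I expect to be the main (if subtle) obstacle is the middle equivalence. It is tempting to feed $f_1 f_2$ directly into Theorem \ref{thm:Jury--Martin.Smirn.rep} and Corollary \ref{cor:mult.in.cPsp} through the representation $f_1 f_2 = \varphi_1 \varphi_2 / (\psi_1 \psi_2)$, but this is illegitimate: $1/(\psi_1 \psi_2) = (1/\psi_1)(1/\psi_2)$ is a product of two $\mathcal{H}$-functions and need not lie in $\mathcal{H}$, so $\psi_1 \psi_2$ is not known to be invertible within $\mathcal{H}$. The way around this is precisely the observation recorded after Theorem \ref{thm:Jury--Martin.Smirn.rep}: Proposition \ref{prop:mult-cyc.of.prod} requires only that the multiplier factor be \emph{multiplier-cyclic}, not invertible in $\mathcal{H}$, and multiplier-cyclicity of $\psi_1 \psi_2$ comes for free. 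Throughout, the complete Pick hypothesis enters only through Proposition \ref{prop:prop.mult.alg.cPsp}, which supplies the density of $\mathcal{M}(\mathcal{H})$ in $\mathcal{H}$ needed to apply Proposition \ref{prop:mult-cyc.of.prod}, and through the Smirnov representation of Theorem \ref{thm:Jury--Martin.Smirn.rep} used to produce the quotient decompositions in the first place.
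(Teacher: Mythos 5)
Your proposal is correct and follows essentially the same route as the paper: reduce \ref{item:LocMult} to statements about the Jury--Martin numerators via Corollary \ref{cor:mult.in.cPsp}, and transfer cyclicity between $f_1 f_2$ and $\varphi_1\varphi_2$ using Proposition \ref{prop:mult-cyc.of.prod} with a multiplier-cyclic denominator product. Your single identity $\varphi_1\varphi_2 = (\psi_1\psi_2)(f_1 f_2)$ handles both implications at once, where the paper argues the two directions separately (via a containment chain and via $(\varphi_1\varphi_2)(1/\psi_1) = \psi_2(f_1 f_2)$), but the ingredients and the logic are the same.
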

	
	\begin{proof}
		By the previous discussion, it suffices to show that \ref{item:LocMult} holds. Thus, let $f_1, f_2 \in \mathcal{H}$ be such that $f_1, f_2 \in \mathcal{H}$ as well, and let
        \begin{equation*}
            f_1 = \frac{\varphi_1}{\psi_1} \AND f_2 = \frac{\varphi_2}{\psi_2}
        \end{equation*}
        be as in Theorem \ref{thm:Jury--Martin.Smirn.rep}. Note that
		\begin{align*}
			f_1 f_2 \mathcal{M}(\mathcal{H}) &\supset \frac{\varphi_1 \varphi_2}{\psi_1 \psi_2} \mathcal{M}(\mathcal{H}) \\
            &\supset \frac{\varphi_1 \varphi_2}{\psi_1 \psi_2} (\psi_1 \psi_2 \mathcal{M}(\mathcal{H})) \\
            &\supset \varphi_1 \varphi_2 \mathcal{M}(\mathcal{H}).
		\end{align*}
		Now, if $f_1$ and $f_2$ are both multiplier-cyclic then $\varphi_1 \varphi_2$ is multiplier-cyclic as well, since both $\varphi_1$ and $\varphi_2$ are multiplier-cyclic (follows from Proposition \ref{prop:mult-cyc.of.prod} and Corollary \ref{cor:mult.in.cPsp}). Consequently, Corollary \ref{cor:mult.in.cPsp} shows that $f_1 f_2$ is multiplier-cyclic.
		
		Conversely, suppose $f_1 f_2$ is multiplier-cyclic. By Corollary \ref{cor:mult.in.cPsp}, it suffices to show that $\varphi_1$ and $\varphi_2$ are multiplier-cyclic. Now, note that
		\begin{equation*}
			(\varphi_1 \varphi_2) \left( \frac{1}{\psi_1} \right) = \psi_2 (f_1 f_2).
		\end{equation*}
		Since $\psi_2$ is obviously multiplier-cyclic, we can use Proposition \ref{prop:mult-cyc.of.prod} to show that the R.H.S. above is multiplier-cyclic. Now, since $1/\psi_1 \in \mathcal{H}$, we can use Proposition \ref{prop:mult-cyc.of.prod} to conclude that $\varphi_1 \varphi_2$ is multiplier-cyclic. Finally, using Proposition \ref{prop:mult-cyc.of.prod} once again, we get that both $\varphi_1$ and $\varphi_2$ are multiplier-cyclic.
	\end{proof}
	
	Of course, there are several other functions spaces that one might wish to consider. Therefore, we close this section with the following open problems.
	
	\begin{problem}\label{prob:mult.inv.in.general}
		Classify all kernels $K$ such that $\mathcal{H}(K)$ satisfies \ref{item:LocMult} and \ref{item:LocInv}.
	\end{problem}
	
	\begin{problem}\label{prob:mult-cyc.equiv.shift-cyc}
		Classify all Hilbert spaces of analytic functions (say, over $\mathbb{D}^d$ or $\mathbb{B}_d$) for which multiplier-cyclicity is equivalent to shift-cyclicity.
	\end{problem}
		
	
	\section{Cyclicity preserving operators}\label{sec:CPO}
	
	Up to this point, we were exclusively interested in determining when functions/polynomials in a space are shift/multiplier-cyclic. In cases when we cannot answer with certainty, we were interested in understanding what sort of properties do these functions/polynomials enjoy. In the last part of our discussion, we take a slightly unorthodox approach and try to understand other objects that are derived from shift-cyclicity. To be precise, suppose $\mathcal{X}$ and $\mathcal{Y}$ are Banach spaces of analytic functions on some domains (possibly in different dimensions). In this section, we wish to study bounded linear operators $T : \mathcal{X} \to \mathcal{Y}$ that preserve shift-cyclicity, i.e. $Tf$ is shift-cyclic in $\mathcal{Y}$ whenever $f$ is shift-cyclic in $\mathcal{X}$. We refer to such a $T$ as a \emph{cyclicity preserving operator}. Let us motivate this problem through an example that we already saw.
	
	Recall Theorem \ref{thm:ex.outer.but.not.shift-cyc} above, which is a result of Rudin \cite[Theorem 4.4.8]{Rud69} that shows that the function
	\begin{equation*}
		f_o(z_1,z_2) := \exp \left( \frac{z_1 + z_2 + 2}{z_1 + z_2 - 2} \right)
	\end{equation*}
	is outer but not shift-cyclic in $H^2(\mathbb{D}^2)$. The proof of this fact is indirect and interesting in its own right. Roughly, the steps are as follows:
	\begin{enumerate}
		\item Show that the operator $T_{out} : H^\infty(\mathbb{D}) \to H^2(\mathbb{D}^2)$ given by
		\begin{equation*}
			T_{out}f(z_1, z_2) = f\left( \frac{z_1 + z_2}{2} \right)
		\end{equation*}
		preserves outer functions.
		
		\item Show that the operator $T_{cyc} : H^2(\mathbb{D}^2) \to H^{\frac{1}{4}}(\mathbb{D})$ given by
		\begin{equation*}
			T_{cyc}f(z) = f\left( \frac{1 + z}{2}, \frac{1 + z}{2} \right)
		\end{equation*}
		preserves shift-cyclic functions.
		
		\item Show that $f_o$ is the image under $T_{out}$ of an outer function, but the image under $T_{cyc}$ of $f_o$ is not shift-cyclic.
	\end{enumerate}
	
	Gibson, Lamoureux and Margrave \cite{GLM11} had a very different motivation for studying cyclicity preserving operators $T : H^2(\mathbb{D}) \to H^2(\mathbb{D})$, coming from a geophysical modelling and signal processing point of view. They noted that all such operators must be \emph{weighted composition operators}, i.e., there exist holomorphic maps $\varphi : \mathbb{D} \to \mathbb{D}$ and $\psi : \mathbb{D} \to \mathbb{C}$ such that
	\begin{equation*}
		Tf = \psi \cdot (f \circ \varphi) \foral f \in H^2(\mathbb{D}).
	\end{equation*}
	Mashreghi and Ransford gave a different proof of this result using the \emph{Gleason--Kahane--{\.Z}elazko theorem} \cite{Gle67, KZ68}, and generalized it to Banach spaces of analytic functions on $\mathbb{D}$ (see \cite[Theorem 3.2]{MR15}).
	
	\begin{theorem}\label{thm:cyc.preserv.op.on.D}
		Let $\mathcal{X}, \mathcal{Y}$ be Banach spaces of analytic functions on $\mathbb{D}$. If $T : \mathcal{X} \to \mathcal{Y}$ is a bounded cyclicity preserving operator, then
		\begin{equation*}
			Tf = \psi \cdot (f \circ \varphi) \foral f \in \mathcal{X},
		\end{equation*}
		where $\psi = T1$ is shift-cyclic and $\varphi = Tz/T1 \in \operatorname{Hol}(\mathbb{D})$.
	\end{theorem}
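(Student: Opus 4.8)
The plan is to fix a point $w \in \mathbb{D}$ and study the bounded linear functional $\Lambda_w : \mathcal{X} \to \mathbb{C}$ given by $\Lambda_w(f) := (Tf)(w)$; this is well defined and bounded because $T$ is bounded and point-evaluation on $\mathcal{Y}$ is bounded (property~\ref{item:P2}). Since the constant function $1$ is shift-cyclic (property~\ref{item:P1}) and $T$ preserves cyclicity, $\psi := T1$ is shift-cyclic in $\mathcal{Y}$; by Corollary~\ref{cor:poly.necc.cond.for.cyc} it is therefore non-vanishing on $\mathbb{D}$, so $\Lambda_w(1) = \psi(w) \neq 0$. This already yields the claim that $\psi = T1$ is shift-cyclic, and it lets me normalize $\ell_w := \Lambda_w / \psi(w)$, a bounded functional with $\ell_w(1) = 1$. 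The whole theorem then reduces to showing that $\ell_w$ agrees with evaluation at the single point $\varphi(w) := \ell_w(z) = (Tz)(w)/(T1)(w)$.

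The heart of the argument is a Gleason--Kahane--{\.Z}elazko step proving that $\ell_w$ is multiplicative on the polynomial algebra $\mathcal{P}$. The observation that powers cyclicity-preservation is that $\Lambda_w$ cannot vanish on a shift-cyclic function: if $f$ is cyclic in $\mathcal{X}$ then $Tf$ is cyclic in $\mathcal{Y}$, hence $(Tf)(w) \neq 0$. I then feed this into the classical exponential trick. For a polynomial $a$, property~\ref{item:P3} (with the closed graph theorem) makes multiplication by $a$ bounded, so $\exp(\zeta a) = \sum_n \zeta^n a^n / n!$ converges in the multiplier norm and defines an $\mathcal{X}$-valued entire function of $\zeta$ of exponential type; moreover $\exp(\zeta a)$ is a multiplier whose reciprocal $\exp(-\zeta a)$ also lies in $\mathcal{X}$, so $1 \in \overline{\exp(\zeta a)\mathcal{P}}$ and hence $\exp(\zeta a)$ is shift-cyclic by Proposition~\ref{prop:equiv.def.of.shift.cyc}(3). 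Consequently $F(\zeta) := \ell_w(\exp(\zeta a))$ is a scalar entire function of exponential type that never vanishes, with $F(0) = 1$ and $F'(0) = \ell_w(a)$. By Hadamard factorization $F(\zeta) = e^{\ell_w(a)\zeta}$, and comparing second derivatives at $0$ gives $\ell_w(a^2) = \ell_w(a)^2$; polarizing over $a, b, a+b \in \mathcal{P}$ yields $\ell_w(ab) = \ell_w(a)\ell_w(b)$ for all polynomials.

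A multiplicative functional on $\mathcal{P}$ is determined by its value on $z$, so $\ell_w(P) = P(\varphi(w))$ for every polynomial $P$, where $\varphi(w) = \ell_w(z)$. I would then check that $\varphi(w) \in \mathbb{D}$: interpreting $\ell_w$ as a character of the multiplier algebra $\mathcal{M}(\mathcal{X})$, the value $\varphi(w)$ lies in the spectrum of the shift, and one rules out $|\varphi(w)| \geq 1$ using that a linear factor with a zero outside the closed domain would be shift-cyclic while its image under $T$ vanishes at $w$. Once $\varphi(w) \in \mathbb{D}$, the identity $(Tf)(w) = \psi(w) f(\varphi(w))$ extends from polynomials to all $f \in \mathcal{X}$ by approximating $f$ by polynomials (property~\ref{item:P1}), using boundedness of $T$ together with continuity of evaluation at $w$ and at $\varphi(w)$ (property~\ref{item:P2}). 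Letting $w$ vary, $\varphi = Tz/T1$ is a ratio of holomorphic functions with non-vanishing denominator, hence holomorphic, and $Tf = \psi \cdot (f\circ\varphi)$. The main obstacle is the multiplicativity step: one must manufacture a supply of genuinely shift-cyclic functions on which $\Lambda_w$ is guaranteed non-zero, and the exponentials of polynomials are precisely the cyclic vectors that make the zero-free entire-function argument run; the subsidiary difficulty is confirming $\varphi(\mathbb{D}) \subseteq \mathbb{D}$, so that the composition appearing in the conclusion is meaningful.
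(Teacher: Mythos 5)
Your overall strategy is the one the paper (following Mashreghi--Ransford) actually uses: reduce to the functionals $\Lambda_w \circ T$, observe that a cyclicity preserving operator forces these functionals to be non-zero on every shift-cyclic vector, and then run a Gleason--Kahane--\.Zelazko exponential argument (entire function of exponential type, zero-free, Hadamard factorization) to conclude that the normalized functional is multiplicative on polynomials and hence a point evaluation. The only cosmetic difference is that the paper's version (Theorem \ref{thm:preliminary.res}) tests only against the exponentials $e^{\lambda z}$ of the coordinate function, which gives $\ell_w(z^n) = \ell_w(z)^n$ for all $n$ directly by comparing Taylor coefficients of $e^{\ell_w(z)\lambda}$, with no need for polarization; your use of $\exp(\zeta a)$ for arbitrary polynomials $a$ plus polarization is the classical GKZ route and works equally well. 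Your verification that $\exp(\zeta a)$ is shift-cyclic (truncations of $\exp(-\zeta a)$ converging in multiplier norm) and the final density argument are both sound.

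The one genuine flaw is your step ``confirming $\varphi(\mathbb{D}) \subseteq \mathbb{D}$.'' This containment is false in general, and the theorem does not claim it: the correct target of $\varphi$ is the maximal domain $\Omega_{max}$ of $\mathcal{X}$ (see Section \ref{subsec:max.dom} and the general statement in Theorem \ref{thm:CPO}), and $f \circ \varphi$ is meaningful because every $f \in \mathcal{X}$ extends to $\Omega_{max}$ via $f(w) := \Lambda_w(f)$. For a concrete counterexample to your claim, take $\mathcal{X} = \mathcal{Y} = \mathcal{D}_2$, whose maximal domain is $\overline{\mathbb{D}}$ and in which shift-cyclicity is equivalent to non-vanishing on $\overline{\mathbb{D}}$; the operator $Tf = f(1) \cdot 1$ is bounded and cyclicity preserving, yet $\varphi = Tz/T1 \equiv 1 \notin \mathbb{D}$. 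Your sketched argument for this step also cannot be pushed through as stated: the linear factor $z - \lambda$ is shift-cyclic precisely when $\lambda \notin \Omega_{max}$ (Corollary \ref{cor:shift-cyc.poly.1-var}), not when $\lambda \notin \mathbb{D}$ or $\lambda \notin \overline{\mathbb{D}}$, so what your ``cyclic linear factor whose image vanishes at $w$'' contradiction actually delivers is exactly $\varphi(w) \in \Omega_{max}$ --- which is the right conclusion, and on $\Omega_{max}$ the point evaluations are continuous by definition, so your closing density argument goes through unchanged. Replacing $\mathbb{D}$ by $\Omega_{max}$ in that one step repairs the proof completely.
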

	
	In order to extend this theorem at the level of generality we seek, we need take a detour and introduce some more notation.
	
	\subsection{Maximal domains}\label{subsec:max.dom}
	
	Fix $d \in \mathbb{N}$ and let $\mathcal{X}$ be a Banach space of analytic functions on some open set $\Omega \subset \mathbb{C}^d$. Recall that $\mathcal{P}_d$ is the set of all polynomials in $d$ complex variables. We define the \emph{maximal domain of $\mathcal{X}$} as
	\begin{equation*}
		\Omega_{max} := \left\{ w \in \mathbb{C}^d : \Lambda_w (P) := P(w), \forall P \in \mathcal{P}_d \text{ extends continuously to } \mathcal{X} \right\}.
	\end{equation*}
 
    We borrow this terminology from a similar concept related to \emph{algebraic consistency}. The reader can find more details on this topic in \cite[Section 5]{Har17} and \cite[Section 2]{MS17}.
	In essence, $\Omega_{max}$ is the largest domain over which the point-evaluations $\Lambda_w$ make sense for all functions in $\mathcal{X}$. The following result of the author \cite[Theorem 3.2]{Sam21} establishes an important geometric property of $\Omega_{max}$. Recall that the \emph{right Harte spectrum $\sigma_r(T)$} of a $d$-tuple of commuting operators $T = (T_1,\dots, T_d) \in \mathcal{L}(\mathcal{X})$ is a compact set (see \cite[Theorem 14.53 (v)]{AM02}), given by the complement in $\mathbb{C}^d$ of
	\begin{equation*}
		\rho_r(T) := \left\{ w \in \mathbb{C}^d : \exists \{A_j\} \subset \mathcal{L}(\mathcal{X}) \text{ such that } \sum_{j = 1}^d (T_j - w_j I) A_j = I \right\}.
	\end{equation*}
	
	\begin{theorem}\label{thm:max.dom.in.rHspec}
		If $\mathcal{X}$ is a Banach space of analytic functions on $\Omega \subset \mathbb{C}^d$, then\footnote{A recent result of Mironov and the author \cite{MS24} shows that $\Omega_{max} = \sigma_p(S^*)$, i.e, the collection of \emph{joint eigenvalues} of the $d$-tuple $S^* = (S_1^*,\dots, S_d^*)$ of the topological adjoint of $S_j$'s.}
		\begin{equation*}
			\Omega \subseteq \Omega_{max} \subseteq \sigma_r(S).
		\end{equation*}
		
		In particular, $\Omega_{max}$ is a bounded set in $\mathbb{C}^d$.
	\end{theorem}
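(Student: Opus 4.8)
The plan is to prove the two inclusions separately and then read off boundedness from the compactness of $\sigma_r(S)$. The first inclusion $\Omega \subseteq \Omega_{max}$ is immediate from the axioms: for $w \in \Omega$, property \ref{item:P2} guarantees that $\Lambda_w : f \mapsto f(w)$ is already a bounded linear functional on all of $\mathcal{X}$, so its restriction to $\mathcal{P}_d$ certainly extends continuously to $\mathcal{X}$ (by itself). Hence $w \in \Omega_{max}$.

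For the second inclusion $\Omega_{max} \subseteq \sigma_r(S)$, I would argue by contraposition: fix $w \in \rho_r(S)$ and show $w \notin \Omega_{max}$. By definition of $\rho_r(S)$ there exist $A_1, \dots, A_d \in \mathcal{L}(\mathcal{X})$ with $\sum_{j=1}^d (S_j - w_j I) A_j = I$. Suppose, toward a contradiction, that $\Lambda_w$ extends to a bounded functional on $\mathcal{X}$, which I continue to denote $\Lambda_w$. The crucial observation is the intertwining relation: on polynomials one has $(S_j P)(w) = w_j P(w)$, that is, $\Lambda_w(S_j P) = w_j \Lambda_w(P)$ for all $P \in \mathcal{P}_d$. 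Since $S_j \in \mathcal{L}(\mathcal{X})$ (Remark \ref{rem:props.P1-P3}) and the extended $\Lambda_w$ is continuous, both sides are continuous in $P$; as $\mathcal{P}_d$ is dense in $\mathcal{X}$ by \ref{item:P1}, the relation persists for every $f \in \mathcal{X}$. In other words, $\Lambda_w$ annihilates the range of $S_j - w_j I$ for each $j$.

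Now apply $\Lambda_w$ to the resolvent identity: for any $f \in \mathcal{X}$,
\begin{equation*}
\Lambda_w(f) = \Lambda_w\left( \sum_{j=1}^d (S_j - w_j I)(A_j f) \right) = \sum_{j=1}^d \Lambda_w\big( (S_j - w_j I)(A_j f) \big) = 0.
\end{equation*}
Taking $f = 1$, which lies in $\mathcal{X}$ since $\mathcal{P}_d \subseteq \mathcal{X}$ by \ref{item:P1}, gives $1 = \Lambda_w(1) = 0$, a contradiction. Therefore $\Lambda_w$ admits no continuous extension and $w \notin \Omega_{max}$, which proves $\Omega_{max} \subseteq \sigma_r(S)$. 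Finally, boundedness of $\Omega_{max}$ is inherited from $\sigma_r(S)$, which is compact by \cite[Theorem 14.53 (v)]{AM02}.

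The two displayed identities are routine; the point requiring care is the passage of the eigen-relation $\Lambda_w \circ S_j = w_j \Lambda_w$ from $\mathcal{P}_d$ to all of $\mathcal{X}$, which is exactly where all three axioms \ref{item:P1}--\ref{item:P3} enter at once (density to propagate the relation, boundedness of evaluation even to speak of the extension, and boundedness of the shifts). Conceptually, this step says that a continuous point-evaluation at $w$ is precisely a joint eigenvector of the adjoint tuple $S^* = (S_1^*, \dots, S_d^*)$ at eigenvalue $w$, and no such eigenvector can survive once the tuple $(S_1 - w_1 I, \dots, S_d - w_d I)$ admits a right inverse—matching the sharper identity $\Omega_{max} = \sigma_p(S^*)$ recorded in the footnote.
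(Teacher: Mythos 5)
Your proof is correct and complete. The survey itself states this theorem without proof, deferring to \cite[Theorem 3.2]{Sam21}, and your argument is exactly the standard one used there: $\Omega \subseteq \Omega_{max}$ is immediate from \ref{item:P2}, and for $\Omega_{max} \subseteq \sigma_r(S)$ one observes that a continuous extension of $\Lambda_w$ annihilates each range $(S_j - w_j I)\mathcal{X}$ (by density of $\mathcal{P}_d$ and boundedness of $S_j$), so applying it to $\sum_j (S_j - w_j I)A_j = I$ at $f = 1$ yields the contradiction $1 = 0$; boundedness then follows from compactness of $\sigma_r(S)$. Your closing remark correctly identifies the conceptual content, namely that bounded point-evaluations are precisely the joint eigenvectors of $S^* = (S_1^*,\dots,S_d^*)$, which is the sharper statement $\Omega_{max} = \sigma_p(S^*)$ recorded in the footnote.
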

	
	One usually finds that $\sigma_r(S) = \overline{\Omega}$. Then, it is not difficult to determine the maximal domain as one only needs to identify $w \in \partial \Omega$ for which there is an $f \in \mathcal{X}$ with a singularity at $w$. In this case, we conclude that $w \not\in \Omega_{max}$. Using this idea, it is straightforward to check that the maximal domain of $H^p(\mathbb{D}^d)$ is $\mathbb{D}^d$ for all $1 \leq p < \infty$ and $d \in \mathbb{N}$. Similarly, for $\mathcal{D}_t(\mathbb{D}^d)$ one can check that
	\begin{equation*}
		\Omega_{max} = \begin{cases}
			\mathbb{D}^d &\foral t \leq 1,\\
			\overline{\mathbb{D}^d} &\foral t > 1. 
		\end{cases}
	\end{equation*}
	See the recent preprint of Mironov and the author \cite{MS24} for more details and other interesting examples. We also provide the following alternate description of $\Omega_{max}$ and relate it to shift-cyclicity of polynomials (see \cite[Theorem 3.1]{MS24})\footnote{It is interesting to note that this result holds even if $\mathcal{X}$ is just a topological vector space. We do not need local convexity of $\mathcal{X}$ for this either.}.
	
	\begin{theorem}\label{thm:max.dom.descr}
		Let $\mathcal{X}$ be a Banach space of analytic functions on $\Omega \subset \mathbb{C}^d$. Then, $w \in \Omega_{max}$ if and only if
        \begin{equation*}
            S[z_1 - w_1, \dots, z_d - w_d] \neq \mathcal{X},
        \end{equation*}
        where
		\begin{equation*}
			S[z_1 - w_1, \dots, z_d - w_d] = \overline{\operatorname{span}}_{1 \leq j \leq d} S[z_j - w_j].
		\end{equation*}
	\end{theorem}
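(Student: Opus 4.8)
The plan is to reduce both implications to a single algebraic observation: the (un-closed) linear span appearing in $S[z_1 - w_1,\dots,z_d - w_d]$ is exactly the ideal of polynomials vanishing at $w$. Indeed, by Proposition \ref{prop:equiv.def.of.shift.cyc} each $S[z_j - w_j]$ is the $\mathcal{X}$-closure of $(z_j - w_j)\mathcal{P}_d$, so that
\[
	S[z_1 - w_1,\dots,z_d - w_d] = \overline{\sum_{j=1}^d (z_j - w_j)\mathcal{P}_d}.
\]
Writing $\mathfrak{m}_w := \{ Q \in \mathcal{P}_d : Q(w) = 0 \}$, a Taylor expansion of any polynomial around $w$ shows that $\sum_{j} (z_j - w_j)\mathcal{P}_d = \mathfrak{m}_w$: the constant term of such an expansion is $Q(w)$, so $Q$ lies in the ideal generated by the $z_j - w_j$ precisely when $Q(w) = 0$. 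Hence $S[z_1 - w_1,\dots,z_d - w_d] = \overline{\mathfrak{m}_w}$, and the theorem becomes the statement that $w \in \Omega_{max}$ if and only if $\overline{\mathfrak{m}_w} \neq \mathcal{X}$.

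For the forward implication, I would suppose $w \in \Omega_{max}$, so that $\Lambda_w$ extends to a bounded functional $\lambda$ on $\mathcal{X}$. Every element of $\overline{\mathfrak{m}_w}$ is an $\mathcal{X}$-limit of polynomials vanishing at $w$, so by continuity $\lambda$ vanishes on $\overline{\mathfrak{m}_w}$. On the other hand $1 \in \mathcal{X}$ by \ref{item:P1} and $\lambda(1) = 1 \neq 0$, whence $1 \notin \overline{\mathfrak{m}_w}$ and therefore $\overline{\mathfrak{m}_w} \neq \mathcal{X}$.

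For the converse I would pass to the quotient. Assuming $\overline{\mathfrak{m}_w} \neq \mathcal{X}$, let $q : \mathcal{X} \to \mathcal{X}/\overline{\mathfrak{m}_w}$ be the quotient map. Since $\mathcal{P}_d = \mathbb{C}\cdot 1 \oplus \mathfrak{m}_w$ (every $P$ splits as $P(w)\cdot 1 + (P - P(w))$) and $\mathfrak{m}_w \subseteq \overline{\mathfrak{m}_w}$, the image $q(\mathcal{P}_d)$ equals $\mathbb{C}\cdot q(1)$. By \ref{item:P1} this image is dense, so the one-dimensional space $\mathbb{C}\cdot q(1)$ --- being closed --- must be all of $\mathcal{X}/\overline{\mathfrak{m}_w}$; as the quotient is nonzero, this forces $q(1) \neq 0$ and $\dim \mathcal{X}/\overline{\mathfrak{m}_w} = 1$. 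Composing $q$ with the coordinate functional $q(1) \mapsto 1$ then produces a bounded functional $\lambda$ on $\mathcal{X}$ with $\lambda(P) = P(w)$ for every polynomial, i.e. $\Lambda_w$ extends continuously and $w \in \Omega_{max}$. The one delicate point --- and the step I expect to check most carefully --- is the dimension count in this last paragraph: it is exactly here that density of $\mathcal{P}_d$ (property \ref{item:P1}) is indispensable, since it is what collapses $\mathcal{X}/\overline{\mathfrak{m}_w}$ onto the single coset $q(1)$ and thereby guarantees a \emph{bounded} extension rather than merely an algebraic one.
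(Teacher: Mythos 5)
Your argument is correct. The reduction of $S[z_1-w_1,\dots,z_d-w_d]$ to $\overline{\mathfrak{m}_w}$ is right (closed span of closures equals closure of the span, and the Taylor expansion about $w$ does show $\sum_j (z_j-w_j)\mathcal{P}_d = \mathfrak{m}_w$), the forward direction is immediate as you say, and the quotient argument for the converse is sound: $q(\mathcal{P}_d)=\mathbb{C}\,q(1)$ is finite-dimensional hence closed, density of $\mathcal{P}_d$ forces it to be the whole (nonzero) quotient, and the resulting one-dimensional quotient yields a bounded extension of $\Lambda_w$.

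The survey itself does not prove this theorem --- it defers to \cite[Theorem 3.1]{MS24} --- so there is no in-paper proof to compare against line by line. One remark worth making: the more obvious route for the converse would be to invoke Hahn--Banach to produce a bounded functional annihilating the proper closed subspace $\overline{\mathfrak{m}_w}$ and then normalize it by $\lambda(1)$; your quotient argument deliberately or not avoids Hahn--Banach entirely, and this is exactly what makes the result survive in the generality claimed in the paper's footnote (arbitrary topological vector spaces, no local convexity), since finite-dimensional Hausdorff quotients still carry the unique vector topology and all linear functionals on them remain continuous. Your closing emphasis on where \ref{item:P1} enters --- collapsing the quotient to the single coset of $1$ --- is also the right place to locate the crux of the argument.
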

	
	For $d = 1$, we combine the above description of $\Omega_{max}$ with the final assertion of Proposition \ref{prop:poly.cyc.iff.irred.fac.cyc} to solve \ref{item:SCP} in the one variable case.
	
	\begin{corollary}\label{cor:shift-cyc.poly.1-var}
		Let $\mathcal{X}$ be a Banach space of analytic functions on $\Omega \subset \mathbb{C}$. Then, a polynomial $P$ is shift-cyclic if and only if
        \begin{equation*}
            P(w) \neq 0 \foral w \in \Omega_{max}.
        \end{equation*}
	\end{corollary}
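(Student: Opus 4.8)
The plan is to reduce the statement to the single linear-factor case, where it becomes a direct specialization of Theorem \ref{thm:max.dom.descr}. Over $\mathbb{C}$ the polynomial ring $\mathcal{P}_1 = \mathbb{C}[z]$ has only linear irreducibles, so by the fundamental theorem of algebra I would first write $P(z) = c \prod_{j=1}^n (z - w_j)$ with $c \neq 0$ and $w_1, \dots, w_n$ the roots of $P$ listed with multiplicity. Since multiplication by the nonzero constant $c$ is a bijection of $\mathcal{P}_1$, the sets of polynomial multiples of $P$ and of $\prod_j (z - w_j)$ coincide, so these two polynomials are simultaneously shift-cyclic and the constant factor may be discarded.

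Next I would invoke the final assertion of Proposition \ref{prop:poly.cyc.iff.irred.fac.cyc}, which states that a polynomial is shift-cyclic if and only if each of its irreducible factors is. Applied to $\prod_j (z - w_j)$, this reduces the problem to the shift-cyclicity of each individual linear factor $z - w_j$: the product is cyclic precisely when every $z - w_j$ is cyclic.

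The key step is then to identify exactly when $z - w$ is shift-cyclic, and this is where the maximal domain enters. Specializing Theorem \ref{thm:max.dom.descr} to $d = 1$ and the single point $w$ yields $w \in \Omega_{max}$ if and only if $S[z - w] \neq \mathcal{X}$; equivalently, $z - w$ is shift-cyclic if and only if $w \notin \Omega_{max}$. Combining the three steps, $P$ is shift-cyclic if and only if none of its roots $w_j$ lies in $\Omega_{max}$, which is precisely the assertion that $P(w) \neq 0$ for all $w \in \Omega_{max}$.

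I expect no serious obstacle: the heavy lifting has already been carried out in Theorem \ref{thm:max.dom.descr}, and this corollary is essentially a bookkeeping exercise that splices together the multiplicativity of cyclicity over irreducible factors with the characterization of $\Omega_{max}$ through the non-cyclicity of the degree-one polynomials $z - w$. The only point requiring a moment's care is confirming that the nonzero leading coefficient $c$ may be dropped and that, since $P$ is a genuine (nonzero) polynomial, all of its irreducible factors are accounted for by the finitely many linear factors $z - w_j$.
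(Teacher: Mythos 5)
Your proposal is correct and follows essentially the same route as the paper: the text derives the corollary precisely by combining the $d=1$ case of Theorem \ref{thm:max.dom.descr} (so that $z-w$ is shift-cyclic exactly when $w \notin \Omega_{max}$) with the final assertion of Proposition \ref{prop:poly.cyc.iff.irred.fac.cyc} applied to the linear factorization of $P$. The bookkeeping about the nonzero leading coefficient and the constant-polynomial case is handled correctly.
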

	
	Thus, we clearly need to keep the maximal domain in mind when talking about shift-cyclicity in a general Banach space of analytic functions $\mathcal{X}$. For this reason, we note that every such $\mathcal{X}$ can be thought of as consisting of functions that are defined over each $w \in \Omega_{max}$, since we may just define
	\begin{equation*}
		f(w) := \Lambda_w(f) \foral w \in \Omega_{max} \setminus \Omega.
	\end{equation*}
	
	\subsection{Classification of cyclicity preserving operators}\label{subsec:main.res.cpo}
	
	Let $\mathcal{X}$ be a Banach space of analytic functions on $\Omega \subset \mathbb{C}^d$. It is straightforward to check that
    \begin{equation*}
        e^{w \cdot z} := \exp\left( \sum_{j=1}^d w_j z_j \right) \in \mathcal{X} \foral w \in \mathbb{C}^d.
    \end{equation*}
    Indeed, this follows by checking that the truncations of the Taylor series of $e^{w \cdot z}$ converge in the norm of $\mathcal{X}$ to $e^{w \cdot z}$. One can also show that $e^{w \cdot z}$ is shift-cyclic in $\mathcal{X}$ by using Proposition \ref{prop:equiv.def.of.shift.cyc} and noting that
    \begin{equation*}
        \lim_{n \to \infty} \|1 - P_n e^{z \cdot w}\|_\mathcal{X} = 0,
    \end{equation*}
    where $P_n$'s are the truncations of the Taylor series of $e^{-w \cdot z}$. The following theorem by the author \cite[Theorem 2.1]{Sam21} is the first step in identifying cyclicity preserving operators.
	
	\begin{theorem}\label{thm:preliminary.res}
		Suppose $\mathcal{X}$ is a Banach space of analytic functions on $\Omega \subset \mathbb{C}^d$. If $\Lambda$ is a bounded linear functional on $\mathcal{X}$ such that
        \begin{equation*}
            \Lambda(e^{w \cdot z}) \neq 0 \foral w \in \mathbb{C}^d,
        \end{equation*}
        then there exists $c \in \mathbb{C} \setminus \{0\}$ and some $w_0 \in \Omega_{max}$ such that $\Lambda = c \Lambda_{w_0}$.
	\end{theorem}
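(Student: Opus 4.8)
The plan is to reduce everything to a single scalar entire function and show it must be exponential-linear, which is precisely the signature of a point evaluation. Define $\Phi : \mathbb{C}^d \to \mathbb{C}$ by $\Phi(w) := \Lambda(e^{w \cdot z})$. Using $z_j = S_j 1$ (so $z^\alpha = S^\alpha 1$), I would first record the operator-exponential representation
\begin{equation*}
    e^{w \cdot z} = \sum_{n \geq 0} \frac{1}{n!}\left( \sum_{j=1}^d w_j S_j \right)^n 1 = \exp(w \cdot S)\, 1, \qquad w \cdot S := \sum_{j=1}^d w_j S_j \in \mathcal{L}(\mathcal{X}).
\end{equation*}
Since $w \mapsto \exp(w \cdot S)$ is an entire $\mathcal{L}(\mathcal{X})$-valued map, $w \mapsto e^{w \cdot z}$ is an entire $\mathcal{X}$-valued map and hence $\Phi$ is entire. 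The same representation yields the growth bound
\begin{equation*}
    |\Phi(w)| \leq \|\Lambda\|\, \|e^{w \cdot z}\|_{\mathcal{X}} \leq \|\Lambda\|\, \|1\|_{\mathcal{X}}\, e^{M \sum_j |w_j|}, \qquad M := \max_{1 \leq j \leq d} \|S_j\|,
\end{equation*}
so $\Phi$ is an entire function of exponential type.

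Next I would exploit the hypothesis. By assumption $\Phi$ is nowhere vanishing on $\mathbb{C}^d$; since $\mathbb{C}^d$ is simply connected, $\Phi$ admits an entire logarithm, i.e.\ $\Phi = e^{g}$ for some entire $g$. The exponential-type bound then gives $\operatorname{Re} g(w) \leq A + M\sum_j |w_j|$ for a constant $A$, so the real part of $g$ grows at most linearly.

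The hard part will be concluding that such a $g$ must be affine. I would argue slice-wise: for fixed $a, b \in \mathbb{C}^d$ the function $\zeta \mapsto g(a + \zeta b)$ is entire in $\zeta \in \mathbb{C}$ with real part bounded by $A' + M'|\zeta|$, so a Borel--Carath\'eodory estimate (take $R = 2r$) forces it to grow at most linearly in $|\zeta|$, hence to be a polynomial of degree $\leq 1$. Since the restriction of $g$ to every complex line is affine, inspecting the homogeneous parts of the Taylor expansion of $g$ at $0$ shows that all terms of degree $\geq 2$ vanish, so $g(w) = g(0) + w \cdot w_0$ for some $w_0 \in \mathbb{C}^d$. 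Consequently $\Phi(w) = c\, e^{w \cdot w_0}$ with $c := e^{g(0)} \neq 0$.

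Finally I would identify the functional. Matching Taylor coefficients at $w = 0$ gives, for every $\alpha \in \mathbb{Z}_+^d$,
\begin{equation*}
    \Lambda(z^\alpha) = \partial_w^\alpha \Phi(0) = c\, w_0^\alpha = c\, \Lambda_{w_0}(z^\alpha).
\end{equation*}
Since $\mathcal{P}_d$ is dense in $\mathcal{X}$ by \ref{item:P1} and $\Lambda$ is bounded, the functional $P \mapsto P(w_0) = c^{-1}\Lambda(P)$ on $\mathcal{P}_d$ is bounded and extends continuously to $\mathcal{X}$; by definition of the maximal domain this means $w_0 \in \Omega_{max}$, and the extension is precisely $\Lambda_{w_0}$. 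Hence $\Lambda = c\, \Lambda_{w_0}$ on a dense subspace and, by continuity, on all of $\mathcal{X}$. The only genuinely analytic input is the exponential-type plus Borel--Carath\'eodory step; everything else is bookkeeping with the operator exponential and density of polynomials.
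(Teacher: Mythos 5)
Your argument is correct and is essentially the proof of this result in the cited source \cite{Sam21} (itself the several-variable adaptation of the Gleason--Kahane--{\.Z}elazko-style argument of Mashreghi and Ransford \cite{MR15}): the crux in both is that $w \mapsto \Lambda(e^{w\cdot z})$ is a zero-free entire function of exponential type, hence equal to $c\,e^{w\cdot w_0}$ by taking an entire logarithm and running a Borel--Carath\'eodory/Hadamard estimate on one-dimensional slices, after which matching Taylor coefficients, density of $\mathcal{P}_d$, and the definition of $\Omega_{max}$ yield $\Lambda = c\Lambda_{w_0}$. The survey states the theorem without reproducing a proof, so there is nothing further to compare; your write-up fills that gap correctly.
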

	
	Now, if $T : \mathcal{X} \to \mathcal{Y}$ is a bounded cyclicity preserving operator between two Banach spaces of analytic functions, then it is clear that $\Lambda := \Lambda_w \circ T$ satisfies the hypothesis of the above theorem for each $w$ in the maximal domain of $\mathcal{Y}$. This immediately gives us the first part of the following characterization of cyclicity preserving operators in general (see \cite[Theorem 1.5]{Sam21}).
	
	\begin{theorem}\label{thm:CPO}
		Suppose $\mathcal{X}, \mathcal{Y}$ are Banach spaces of analytic functions on $\Omega \subset \mathbb{C}^d$ and $\Omega' \subset \mathbb{C}^{d'}$ respectively. Then, every bounded cyclicity preserving operator $T : \mathcal{X} \to \mathcal{Y}$ is of the form
		\begin{equation}\label{eqn:weight.comp.op}
			Tf = \psi \cdot (f \circ \varphi) \foral f \in \mathcal{X},
		\end{equation}
		where $\psi = T1 \in \mathcal{Y}$ is shift-cyclic and $\varphi = (\frac{Tz_1}{T1},\dots,\frac{Tz_d}{T1}) : \Omega'_{max} \to \Omega_{max}$.
		
		Moreover, if $\mathcal{Y} = H^p(\mathbb{D}^{d'})$ for some $1 \leq p < \infty$, then every bounded operator as in \eqref{eqn:weight.comp.op} preserves cyclicity whenever $\psi$ is shift-cyclic.
	\end{theorem}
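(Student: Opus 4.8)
The plan is to treat the two assertions separately: the characterization (first part) drops out almost immediately from Theorem~\ref{thm:preliminary.res}, while the sufficiency over $H^p(\mathbb{D}^{d'})$ (the ``Moreover'') is where the real work lies and where the Hardy-space multiplier structure must be used. Before anything else, I would record a small sharpening of Corollary~\ref{cor:poly.necc.cond.for.cyc}: a shift-cyclic $g \in \mathcal{Y}$ is non-vanishing not only on the domain but on all of $\Omega'_{max}$. Indeed, if $\|1 - Q_n g\|_\mathcal{Y} \to 0$ for polynomials $Q_n$, then applying the bounded functional $\Lambda_{w'}$ for $w' \in \Omega'_{max}$ gives $Q_n(w') g(w') \to 1$, which is impossible when $g(w') = 0$.

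For the first part, fix $w' \in \Omega'_{max}$ and set $\Lambda := \Lambda_{w'} \circ T$, a bounded functional on $\mathcal{X}$. Since each $e^{w \cdot z}$ is shift-cyclic in $\mathcal{X}$ and $T$ preserves cyclicity, $T e^{w \cdot z}$ is shift-cyclic in $\mathcal{Y}$, so by the sharpened corollary $\Lambda(e^{w \cdot z}) = (T e^{w \cdot z})(w') \neq 0$ for every $w \in \mathbb{C}^d$. Theorem~\ref{thm:preliminary.res} then produces $c(w') \in \mathbb{C} \setminus \{0\}$ and $\varphi(w') \in \Omega_{max}$ with $(Tf)(w') = c(w')\, f(\varphi(w'))$ for all $f \in \mathcal{X}$. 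Evaluating at $f = 1$ gives $c(w') = (T1)(w') = \psi(w')$, and at $f = z_j$ gives $\varphi(w')_j = (Tz_j)(w')/(T1)(w')$; that $\psi = T1$ is shift-cyclic is immediate from the cyclicity of $1$ (property~\ref{item:P1}) and the hypothesis on $T$.

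For the converse, suppose $\mathcal{Y} = H^p(\mathbb{D}^{d'})$, that $T$ has the form \eqref{eqn:weight.comp.op} with $\psi = T1$ shift-cyclic, and let $f \in \mathcal{X}$ be shift-cyclic. Pick polynomials $P_n$ with $\|1 - P_n f\|_\mathcal{X} \to 0$ (Proposition~\ref{prop:equiv.def.of.shift.cyc}). Applying the bounded operator $T$ and using that $T(P_n f) = \psi \cdot \big((P_n f) \circ \varphi\big) = (P_n \circ \varphi) \cdot Tf$, I obtain
\begin{equation*}
\lim_{n \to \infty} \big\| \psi - (P_n \circ \varphi)\, Tf \big\|_{H^p(\mathbb{D}^{d'})} = 0.
\end{equation*}
The goal is then to show $\psi \in S[Tf]$, for then Proposition~\ref{prop:equiv.def.of.shift.cyc}(3) forces $Tf$ to be shift-cyclic, since $\psi$ is.

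The crux is verifying that $P_n \circ \varphi$ is a multiplier of $H^p(\mathbb{D}^{d'})$. Here $\Omega'_{max} = \mathbb{D}^{d'}$, so $\varphi = (Tz_1/T1, \dots, Tz_d/T1)$ is holomorphic on $\mathbb{D}^{d'}$ because $\psi = T1$ is non-vanishing there (again by the sharpened corollary). Moreover $\varphi$ takes values in $\Omega_{max}$ (so that $f \circ \varphi$ makes sense for $f \in \mathcal{X}$), and $\Omega_{max}$ is bounded by Theorem~\ref{thm:max.dom.in.rHspec}; hence each $\varphi_j \in H^\infty(\mathbb{D}^{d'})$. Since $H^\infty(\mathbb{D}^{d'})$ is an algebra and is exactly the multiplier algebra of $H^p(\mathbb{D}^{d'})$, it follows that $P_n \circ \varphi \in H^\infty(\mathbb{D}^{d'})$, whence $(P_n \circ \varphi)\, Tf \in Tf \cdot H^\infty(\mathbb{D}^{d'}) \subseteq S[Tf]$ by property~\ref{item:MC}. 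As $S[Tf]$ is closed and $\psi = \lim_n (P_n \circ \varphi)\, Tf$, we conclude $\psi \in S[Tf]$, finishing the proof. I expect this boundedness-of-$\varphi$ step to be the main obstacle: it is exactly what ties sufficiency to the Hardy target, since for a general $\mathcal{Y}$ (the Dirichlet space, say) bounded holomorphic functions need not be multipliers, and the analogue of \ref{item:MC} would be far harder to exploit.
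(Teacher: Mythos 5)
Your proposal is correct and follows essentially the same route as the paper: the first part is exactly the paper's derivation from Theorem~\ref{thm:preliminary.res} via $\Lambda_{w'} \circ T$ (with your explicit extension of Corollary~\ref{cor:poly.necc.cond.for.cyc} to $\Omega'_{max}$ making precise what the paper leaves implicit), and your treatment of the ``moreover'' part is precisely the mechanism the paper points to, namely that $\mathcal{M}(H^p(\mathbb{D}^{d'})) = H^\infty(\mathbb{D}^{d'})$ together with \ref{item:MC} lets you push $P_n \circ \varphi$ (bounded because $\Omega_{max}$ is bounded) into $S[Tf]$. No gaps; your closing observation about which properties of $\mathcal{Y}$ are actually used matches the paper's own remark following the theorem.
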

	
	The second part of the above theorem holds for any $\mathcal{Y}$ that satisfies:
    \begin{enumerate}
    \item the multiplier algebra of $\mathcal{Y}$ is $H^\infty(\mathbb{D}^{d'})$, and
    
    \item multiplier-cyclicity is equivalent to shift-cyclicity in $\mathcal{Y}$.

    \end{enumerate}
    However, we do not have a satisfactory answer in general. Therefore, we arrive at the following problem.
 
	\begin{problem}
		For what spaces $\mathcal{Y}$ does the second part of Theorem \ref{thm:CPO} hold?
	\end{problem}
	
	It is also important to note that the first part of Theorem \ref{thm:CPO} holds for \emph{outerness preserving operators} between Hardy spaces as well\footnote{Of course, we need to replace `$\psi$ is shift-cyclic' with `$\psi$ is outer' here.}. However, one can check that the operator $T_{cyc}$ from Rudin's example at the beginning of our discussion in Section \ref{sec:CPO} is a (weighted) composition operator that does not preserve outer functions. Indeed, the function $f_o$ from the example is an outer function whose image is not outer. Therefore, we arrive at the following question.
	
	\begin{problem}
		Classify all bounded weighted composition operators between Hardy spaces that preserve outer functions.
	\end{problem}
	
	Lastly, we note that a result of Chu, Hartz, Mashreghi and Ransford \cite[Theorem 1.2]{CHMR22} generalizes Theorem \ref{thm:preliminary.res} -- in some sense -- to complete Pick spaces (where maximal domains need to be considered as in \cite{Har17, MS17}). Thus, we close our discussion with one final open problem.
	
	\begin{problem}
		For any two complete Pick spaces $\mathcal{H}$ and $\mathcal{K}$, describe all operators $T : \mathcal{H} \to \mathcal{K}$ that preserve multiplier-cyclicity.
	\end{problem}
	
	
	\section*{Acknowledgements} The author would like to thank Orr Moshe Shalit for motivating him to write this article and for providing some important remarks and corrections in the previous draft of the article. Thanks also to Agniva Roy for many helpful suggestions on how to improve the introduction, and to Daniel Seco for his input on the content surrounding Dirichlet-type spaces.
    
    \bibliographystyle{plain}
    \bibliography{SCAFS}

\begin{thebibliography}{100}

\bibitem{AM00}
J.~Agler and J.E. McCarthy.
\newblock Complete {N}evanlinna--{P}ick kernels.
\newblock {\em Journal of Functional Analysis}, \textbf{175}(1):111--124, 2000.

\bibitem{AM02}
J.~Agler and J.E. McCarthy.
\newblock {\em {P}ick interpolation and {H}ilbert function spaces}, volume
  \textbf{44}.
\newblock American Mathematical Society, 2002.

\bibitem{AC70}
P.~Ahern and D.~Clark.
\newblock Invariant subspaces and analytic continuation in several variables.
\newblock {\em Journal of Mathematics and Mechanics}, \textbf{19}(11):963--969,
  1970.

\bibitem{AHMR17}
A.~Aleman, M.~Hartz, J.E. McCarthy, and S.~Richter.
\newblock The {S}mirnov class for spaces with the complete {P}ick property.
\newblock {\em Journal of the London Mathematical Society},
  \textbf{96}(1):228--242, 2017.

\bibitem{AH11}
S.A. Argyros and R.G. Haydon.
\newblock A hereditarily indecomposable {$L_\infty$}-space that solves the
  scalar-plus-compact problem.
\newblock {\em Acta Mathematica}, \textbf{206}(1):1--54, 2011.

\bibitem{Aro50}
N.~Aronszajn.
\newblock Theory of reproducing kernels.
\newblock {\em Transactions of the American mathematical society},
  \textbf{68}(3):337--404, 1950.

\bibitem{AS54}
N.~Aronszajn and K.T. Smith.
\newblock Invariant subspaces of completely continuous operators.
\newblock {\em Annals of Mathematics}, \textbf{60}(2):345--350, 1954.

\bibitem{Atz83}
A.~Atzmon.
\newblock An operator without invariant subspaces on a nuclear {F}r{\'e}chet
  space.
\newblock {\em Annals of Mathematics}, \textbf{117}(3):669--694, 1983.

\bibitem{AC97}
D.~Augot and P.~Camion.
\newblock On the computation of minimal polynomials, cyclic vectors, and
  {F}robenius forms.
\newblock {\em Linear Algebra and its Applications}, \textbf{260}:61--94, 1997.

\bibitem{Bae03}
L.~B{\'a}ez-Duarte.
\newblock A strengthening of the {N}yman-{B}eurling criterion for the {R}iemann
  hypothesis.
\newblock {\em Atti della Accademia Nazionale dei Lincei},
  \textbf{14}(1):5--11, 2003.

\bibitem{Bag06}
B.~Bagchi.
\newblock On {N}yman, {B}eurling and {B}aez-{D}uarte’s {H}ilbert space
  reformulation of the {R}iemann hypothesis.
\newblock {\em Proceedings of the Indian Academy of Sciences-Mathematical
  Sciences}, \textbf{116}:137--146, 2006.

\bibitem{Bea85}
B.~Beauzamy.
\newblock Un operateur sans sous-espace invariant: simplification de l'exemple
  de {P}. {E}nflo.
\newblock {\em Integral Equations and Operator Theory}, \textbf{8}:314--384,
  1985.

\bibitem{BCLSS151}
C.~B{\'e}n{\'e}teau, A.~Condori, C.~Liaw, D.~Seco, and A.~Sola.
\newblock Cyclicity in {D}irichlet-type spaces and extremal polynomials.
\newblock {\em Journal d'Analyse Math{\'e}matique}, \textbf{126}(1):259--286,
  2015.

\bibitem{BCLSS152}
C.~B{\'e}n{\'e}teau, A.~Condori, C.~Liaw, D.~Seco, and A.~Sola.
\newblock Cyclicity in {D}irichlet-type spaces and extremal polynomials {II}:
  functions on the bidisk.
\newblock {\em Pacific Journal of Mathematics}, \textbf{276}(1):35--58, 2015.

\bibitem{BKKLSS16}
C.~B{\'e}n{\'e}teau, G.~Knese, {\L}.~Kosi{\'n}ski, C.~Liaw, D.~Seco, and
  A.~Sola.
\newblock Cyclic polynomials in two variables.
\newblock {\em Transactions of the American Mathematical Society},
  \textbf{368}(12):8737--8754, 2016.

\bibitem{Ber18}
L.~Bergqvist.
\newblock A note on cyclic polynomials in polydiscs.
\newblock {\em Analysis and Mathematical Physics}, \textbf{8}:197--211, 2018.

\bibitem{BR66}
A.~Bernstein and A.~Robinson.
\newblock Solution of an invariant subspace problem of {KT S}mith and {PR
  H}almos.
\newblock {\em Pacific Journal of Mathematics}, \textbf{16}(3):421--431, 1966.

\bibitem{Beu40}
A.~Beurling.
\newblock Ensembles exceptionnels.
\newblock {\em Acta Mathematica}, \textbf{72}:1--13, 1940.

\bibitem{Beu49}
A.~Beurling.
\newblock On two problems concerning linear transformations in {H}ilbert space.
\newblock {\em Acta Mathematica}, \textbf{81}:239--255, 1949.

\bibitem{Beu55}
A.~Beurling.
\newblock A closure problem related to the {R}iemann zeta-function.
\newblock {\em Proceedings of the National Academy of Sciences},
  \textbf{41}(5):312--314, 1955.

\bibitem{Boc69}
S.~Bochner.
\newblock Classes of holomorphic functions of several variables in circular
  domains.
\newblock {\em Proceedings of the National Academy of Sciences},
  \textbf{46}(5):721--723, 1960.

\bibitem{Boh13}
H.~Bohr.
\newblock Ueber die bedeutung der potenzreihen unendlich vieler variablen in
  der theorie der {D}irichletschen reien $\sum \frac{a_n}{n^s}$.
\newblock {\em Nachrichten von der Gesellschaft der Wissenschaften zu
  G{\"o}ttingen, Mathematisch-Physikalische Klasse}, \textbf{1913}:441--488,
  1913.

\bibitem{BH97}
A.~Borichev and H.~Hedenmalm.
\newblock Harmonic functions of maximal growth: {I}nvertibility and cyclicity
  in {B}ergman spaces.
\newblock {\em Journal of the American Mathematical Society},
  \textbf{10}(4):761--796, 1997.

\bibitem{BS84}
L.~Brown and A.~Shields.
\newblock Cyclic vectors in the {D}irichlet space.
\newblock {\em Transactions of the American Mathematical Society},
  \textbf{285}(1):269--303, 1984.

\bibitem{Car60}
L.~Carleson.
\newblock A representation formula for the {D}irichlet integral.
\newblock {\em Mathematische Zeits-chrift}, \textbf{73}:190--196, 1960.

\bibitem{CG03}
X.~Chen and K.~Guo.
\newblock {\em Analytic {H}ilbert modules}.
\newblock Chapman and Hall/CRC, 2003.

\bibitem{Cho54}
G.~Choquet.
\newblock Theory of capacities.
\newblock In {\em Annales de l'institut Fourier}, volume \textbf{5}, pages
  131--295, 1954.

\bibitem{CHMR22}
C.~Chu, M.~Hartz, J.~Mashreghi, and T.~Ransford.
\newblock A {G}leason--{K}ahane--\.{Z}elazko theorem for reproducing kernel
  {H}ilbert spaces.
\newblock {\em Bulletin of the London Mathematical Society},
  \textbf{54}(3):1120--1130, 2022.

\bibitem{DG21}
H.~Dan and K.~Guo.
\newblock The periodic dilation completeness problem: cyclic vectors in the
  {H}ardy space over the infinite-dimensional polydisk.
\newblock {\em Journal of the London Mathematical Society},
  \textbf{103}(1):1--34, 2021.

\bibitem{DGH18}
H.~Dan, K.~Guo, and H.~Huang.
\newblock Submodules of the {H}ardy module in infinitely many variables.
\newblock {\em Journal of Operator Theory}, \textbf{80}(2):375--397, 2018.

\bibitem{DHS15}
K.R. Davidson, M.~Hartz, and O.M. Shalit.
\newblock Multipliers of embedded discs.
\newblock {\em Complex Analysis and Operator Theory}, \textbf{9}(2):287--321,
  2015.

\bibitem{DP89}
R.~Douglas and V.~Paulsen.
\newblock Hilbert modules over function algebras.
\newblock {\em Pitman Research Notes in Mathematics}, \textbf{217}, 1989.

\bibitem{Dur70}
P.L. Duren.
\newblock {\em Theory of $H^p$ spaces}.
\newblock Academic press, 1970.

\bibitem{Edw01}
H.M. Edwards.
\newblock {\em Riemann's zeta function}, volume \textbf{58}.
\newblock Courier Corporation, 2001.

\bibitem{EKMR14}
O.~El-Fallah, K.~Kellay, J.~Mashreghi, and T.~Ransford.
\newblock {\em A primer on the {D}irichlet space}, volume \textbf{203}.
\newblock Cambridge University Press, 2014.

\bibitem{Enf75}
P.~Enflo.
\newblock On the invariant subspace problem in {B}anach spaces.
\newblock {\em Seminaire Maurey-Schwartz}, pages 1--6, 1975.

\bibitem{Fat06}
P.~Fatou.
\newblock S{\'e}ries trigonom{\'e}triques et s{\'e}ries de {T}aylor.
\newblock {\em Acta mathematica}, \textbf{30}(1):335--400, 1906.

\bibitem{Gam66}
T.~Gamelin.
\newblock {$H^p$} {S}paces and {E}xtremal {F}unctions in {$H^1$}.
\newblock {\em Transactions of the American Mathematical Society},
  \textbf{124}(1):158--167, 1966.

\bibitem{Gar06}
J.~Garnett.
\newblock {\em Bounded analytic functions}, volume \textbf{236}.
\newblock Springer Science \& Business Media, 2006.

\bibitem{GLM11}
P.C. Gibson, M.P. Lamoureux, and F.F. Margrave.
\newblock Outer preserving linear operators.
\newblock {\em Journal of Functional Analysis}, \textbf{261}(9):2656--2668,
  2011.

\bibitem{GNN70}
J.~Ginsberg, J.H. Neuwirth, and D.J. Newman.
\newblock Approximation by $\{f(kx)\}$.
\newblock {\em Journal of Functional Analysis}, \textbf{5}(2):194--203, 1970.

\bibitem{Gle67}
A.M. Gleason.
\newblock A characterization of maximal ideals.
\newblock {\em Journal d’Analyse Math{\'e}matique}, \textbf{19}:171--172,
  1967.

\bibitem{GT08}
B.~Green and T.~Tao.
\newblock The primes contain arbitrarily long arithmetic progressions.
\newblock {\em Annals of mathematics}, \textbf{167}:481--547, 2008.

\bibitem{Hal61}
P.R. Halmos.
\newblock Shifts on {H}ilbert spaces.
\newblock {\em Journal f{\"u}r die reine und angewandte Mathematik},
  \textbf{208}:102--112, 1961.

\bibitem{Hal63}
P.R. Halmos.
\newblock A glimpse into {H}ilbert space.
\newblock {\em Lectures on modern mathematics}, \textbf{1}:1--22, 1963.

\bibitem{Hal66}
P.R. Halmos.
\newblock Invariant subspaces of polynomially compact operators.
\newblock {\em Pacific journal of Mathematics}, \textbf{16}(3):433--437, 1966.

\bibitem{Har15}
G.H. Hardy.
\newblock The mean value of the modulus of an analytic function.
\newblock {\em Proceedings of the London Mathemathical Society},
  \textbf{2}(1):269--277, 1915.

\bibitem{Har17}
M.~Hartz.
\newblock On the isomorphism problem for multiplier algebras of
  {N}evanlinna-{P}ick spaces.
\newblock {\em Canadian Journal of Mathematics}, \textbf{69}(1):54--106, 2017.

\bibitem{Har22}
M.~Hartz.
\newblock Embedding dimension of the {D}irichlet space.
\newblock {\em New York Journal of Mathematics}, \textbf{28}:154--174, 2022.

\bibitem{HLS97}
H.~Hedenmalm, P.~Lindqvist, and K.~Seip.
\newblock A {H}ilbert space of {D}irichlet series and systems of dilated
  functions in {$L^2(0,1)$}.
\newblock {\em Duke Mathematical Journal}, \textbf{90}(1):1--37, 1997.

\bibitem{Hil09}
D.~Hilbert.
\newblock Wesen und ziele einer analysis der unendlichvielen unabhangigen
  variablen.
\newblock {\em Rendiconti del Circolo Matematico di Palermo},
  \textbf{27}:59--74, 1909.

\bibitem{Hof07}
K.~Hoffman.
\newblock {\em Banach spaces of analytic functions}.
\newblock Courier Corporation, 2007.

\bibitem{HK73}
K.~Hoffman and R.~Kunze.
\newblock {\em Linear Algebra}.
\newblock Prentice Hall, 1973.

\bibitem{Hor74}
C.A. Horowitz.
\newblock {\em Zeros Of functions In The {B}ergman Spaces}.
\newblock PhD thesis, University of Michigan, 1974.

\bibitem{Izu17}
K.H. Izuchi.
\newblock Cyclicity of reproducing kernels in weighted {H}ardy spaces over the
  bidisk.
\newblock {\em Journal of Functional Analysis}, \textbf{272}(2):546--558, 2017.

\bibitem{Jen99}
J.~Jensen.
\newblock Sur un nouvel et important th{\'e}or{\`e}me de la th{\'e}orie des
  fonctions.
\newblock {\em Acta Mathematica}, \textbf{22}, 1899.

\bibitem{MJ21}
M.~Jury and R.~Martin.
\newblock The {S}mirnov classes for the {F}ock space and complete {P}ick
  spaces.
\newblock {\em Indiana University Mathematics Journal}, \textbf{70}:269--284,
  2021.

\bibitem{KZ68}
J.-P. Kahane and W.~\.{Z}elazko.
\newblock A characterization of maximal ideals in commutative {B}anach
  algebras.
\newblock {\em Studia Mathematica}, \textbf{29}:339--343, 1968.

\bibitem{Kal28}
T.~Kaluza.
\newblock {\"U}ber die koeffizienten reziproker potenzreihen.
\newblock {\em Mathematische Zeitschrift}, \textbf{28}(1):161--170, 1928.

\bibitem{Kne101}
G.~Knese.
\newblock Polynomials defining distinguished varieties.
\newblock {\em Transactions of the American Mathematical Society},
  \textbf{362}(11):5635--5655, 2010.

\bibitem{Kne102}
G.~Knese.
\newblock Polynomials with no zeros on the bidisk.
\newblock {\em Analysis \& PDE}, \textbf{3}(2):109--149, 2010.

\bibitem{KKRS19}
G.~Knese, {\L}.~Kosi{\'n}ski, T.~Ransford, and A.~Sola.
\newblock Cyclic polynomials in anisotropic {D}irichlet spaces.
\newblock {\em Journal d'Analyse Math{\'e}matique}, \textbf{138}(1), 2019.

\bibitem{Kor81}
B.~Korenblum.
\newblock Cyclic elements in some spaces of analytic functions.
\newblock {\em Bulletin of the American Mathematical Society},
  \textbf{5}(3):317--318, 1981.

\bibitem{KV23}
{\L}.~Kosi{\'n}ski and D.~Vavitsas.
\newblock Cyclic polynomials in {D}irichlet-type spaces in the unit ball of
  $\mathbb{C}^2$.
\newblock {\em Constructive Approximation}, \textbf{58}(2):343--361, 2023.

\bibitem{KR00}
M.~Kreuzer and L.~Robbiano.
\newblock {\em Computational commutative algebra}, volume \textbf{1}.
\newblock Springer, 2000.

\bibitem{Lax59}
P.~Lax.
\newblock Translation invariant spaces.
\newblock {\em Acta Mathematica}, \textbf{10}:163--178, 1959.

\bibitem{Loj59}
S.~{\L}ojasiewicz.
\newblock Sur le probleme de la division.
\newblock {\em Studia Mathematica}, \textbf{18}(1):87--136, 1959.

\bibitem{Lom73}
V.I. Lomonosov.
\newblock Invariant subspaces for the family of operators which commute with a
  completely continuous operator.
\newblock {\em Functional Analysis and Its Applications},
  \textbf{7}(3):213--214, 1973.

\bibitem{Mac62}
G.R. MacLane.
\newblock Holomorphic functions, of arbitrarily slow growth, without radial
  limits.
\newblock {\em Michigan Mathematical Journal}, \textbf{9}:21--24, 1962.

\bibitem{MMSS19}
A.~Maji, A.~Mundayadan, J.~Sarkar, and T.R. Sankar.
\newblock Characterization of invariant subspaces in the polydisc.
\newblock {\em Journal of Operator Theory}, \textbf{82}(2):445--468, 2019.

\bibitem{Man88}
V.~Mandrekar.
\newblock The validity of {B}eurling theorems in polydiscs.
\newblock {\em Proceedings of the American Mathematical Society},
  \textbf{103}(1):145--148, 1988.

\bibitem{MR15}
J.~Mashreghi and T.~Ransford.
\newblock A {G}leason--{K}ahane--\.{Z}elazko theorem for modules and
  applications to holomorphic function spaces.
\newblock {\em Bulletin of the London Mathematical Society},
  \textbf{47}(6):1014--1020, 2015.

\bibitem{MS17}
J.E. McCarthy and O.M. Shalit.
\newblock Spaces of {D}irichlet series with the complete {P}ick property.
\newblock {\em Israel Journal of Mathematics}, \textbf{220}:509--530, 2017.

\bibitem{McC92}
S.~McCullough.
\newblock Carath{\'e}odory interpolation kernels.
\newblock {\em Integral Equations and Operator Theory}, \textbf{15}:43--71,
  1992.

\bibitem{McC94}
S.~McCullough.
\newblock The local de {B}ranges-{R}ovnyak construction and complete
  {N}evanlinna {P}ick kernels.
\newblock In {\em Algebraic methods in operator theory}, pages 15--24.
  Springer, 1994.

\bibitem{Mil73}
J.~Miles.
\newblock Zero sets in {$H^p(U^n)$}.
\newblock {\em Illinois Journal of Mathematics}, \textbf{17}(3):458--464, 1973.

\bibitem{Mil75}
J.~Miles.
\newblock A factorization theorem in {$H^1(U^3)$}.
\newblock {\em Proceedings of the American Mathematical Society},
  \textbf{52}(1):319--322, 1975.

\bibitem{MS24}
M.~Mironov and J.~Sampat.
\newblock Jointly cyclic polynomials and maximal domains.
\newblock {\em arXiv preprint, arXiv:2407.15997}, 2024.

\bibitem{NN22}
F.~Nevanlinna and R.~Nevanlinna.
\newblock {\"U}ber die eigenschaften analytischer funktionen in der umgebung
  einer singul{\"a}ren stelle oder linie.
\newblock {\em Acta Societatis Scientiarum Fennicae}, \textbf{50}(5), 1922.

\bibitem{Nev19}
R.~Nevanlinna.
\newblock {\"U}ber beschr{\"a}nkte funktionen, die in gegeben punkten
  vorgesc-hrieben werte annehmen.
\newblock {\em Annales Academiae Scientiarum Fennicae Mathematica}, 1919.

\bibitem{Nik12}
N.~Nikolski.
\newblock In a shadow of the {RH}: {C}yclic vectors of {H}ardy spaces on the
  {H}ilbert multidisc.
\newblock {\em Annales de l'Institut Fourier}, \textbf{62}(5):1601--1626, 2012.

\bibitem{Nik18}
N.~Nikolski.
\newblock A correction to “{I}n a shadow of the {RH}: {C}yclic vectors of
  {H}ardy spaces on the {H}ilbert multidisc”.
\newblock {\em Annales de l'Institut Fourier}, \textbf{68}(2):563--567, 2018.

\bibitem{Nik19}
N.~Nikolski.
\newblock {\em Hardy spaces}, volume \textbf{179}.
\newblock Cambridge University Press, 2019.

\bibitem{Noo19}
S.W. Noor.
\newblock A {H}ardy space analysis of the {B\'a}ez-{D}uarte criterion for the
  {RH}.
\newblock {\em Advances in Mathematics}, \textbf{350}:242--255, 2019.

\bibitem{Nym50}
B.~Nyman.
\newblock {\em On the One-Dimensional Translation Group and Semi-Group In
  certain Function Spaces}.
\newblock PhD thesis, Uppsala, 1950.

\bibitem{Pic15}
G.~Pick.
\newblock {\"U}ber die beschr{\"a}nkungen analytischer funktionen, welche durch
  vorge-gebene funktionswerte bewirkt werden.
\newblock {\em Mathematische Annalen}, 77(1):7--23, 1915.

\bibitem{Qui93}
P.~Quiggin.
\newblock For which reproducing kernel {H}ilbert spaces is {P}ick's theorem
  true?
\newblock {\em Integral Equations and Operator Theory}, \textbf{16}:244--266,
  1993.

\bibitem{Rea85}
C.J. Read.
\newblock A solution to the invariant subspace problem on the space $l_1$.
\newblock {\em Bulletin of the London Mathematical Society},
  \textbf{17}(4):305--317, 1985.

\bibitem{RS88}
S.~Richter and A.~Shields.
\newblock Bounded analytic functions in the {D}irichlet space.
\newblock {\em Mathematische Zeitschrift}, \textbf{198}:151--159, 1988.

\bibitem{Rie23}
F.~Riesz.
\newblock {\"U}ber die randwerte einer analytischen funktion.
\newblock {\em Mathematische Zeitschrift}, \textbf{18}(1):87--95, 1923.

\bibitem{Ros75}
J.-P. Rosay.
\newblock Sur la non factorisation des elements de l'espace de {H}ardy
  {$H^1(U^2)$}.
\newblock {\em Illinois Journal of Mathematics}, \textbf{19}(3):479--482, 1975.

\bibitem{Rud66}
W.~Rudin.
\newblock Zeros and factorizations of holomorphic functions.
\newblock {\em Bulletin of the American Mathematical Society},
  \textbf{72}:1064--1067, 1966.

\bibitem{Rud69}
W.~Rudin.
\newblock {\em Function theory in polydiscs}, volume \textbf{41}.
\newblock W. A. Benjamin, 1969.

\bibitem{Rud74}
W.~Rudin.
\newblock {\em Real and Complex Analysis}.
\newblock McGraw-Hill, 1974.

\bibitem{Rud80}
W.~Rudin.
\newblock {\em Function theory in the unit ball of $\mathbb{C}^n$}, volume
  \textbf{241}.
\newblock Springer Science \& Business Media, 1980.

\bibitem{Sam21}
J.~Sampat.
\newblock Cyclicity preserving operators on spaces of analytic functions in
  $\mathbb{C}^n$.
\newblock {\em Integral Equations and Operator Theory}, \textbf{93}:1--20,
  2021.

\bibitem{Sam22}
J.~Sampat.
\newblock {\em Properties of Cyclic Functions}.
\newblock PhD thesis, Washington University in St. Louis, 2022.

\bibitem{SSW13}
J.~Sarkar, A.~Sasane, and B.~Wick.
\newblock Doubly commuting submodules of the {H}ardy module over polydiscs.
\newblock {\em Studia Mathematica}, \textbf{2}(217):179--192, 2013.

\bibitem{Sli08}
W.~{\'S}liwa.
\newblock The invariant subspace problem for non-{A}rchimedean {B}anach spaces.
\newblock {\em Canadian Mathematical Bulletin}, \textbf{51}(4):604--617, 2008.

\bibitem{Smi29}
V.I. Smirnov.
\newblock Sur les valeurs limites des fonctions, r{\'e}guli{\`e}res {\`a}
  l'int{\'e}rieur d'un cercle.
\newblock {\em Journal de la Soci{\'e}t{\'e} Physico-Mathematique de
  Leningrade}, \textbf{2}(2):22--37, 1929.

\bibitem{Smi32}
V.I. Smirnov.
\newblock Sur les formules de {C}auchy et de {G}reen et quelques probl{\`e}mes
  qui s'y rattachent.
\newblock {\em Izvestia Akademii nauk SSSR}, \textbf{3}:337--372, 1932.

\bibitem{Sol84}
A.R. Soltani.
\newblock Extrapolation and moving average representation for stationary random
  fields and {B}eurling's theorem.
\newblock {\em The Annals of Probability}, \textbf{12}(1):120--132, 1984.

\bibitem{Tay66}
G.D. Taylor.
\newblock Multipliers on {$D_\alpha$}.
\newblock {\em Transactions of the American Mathematical Society},
  \textbf{123}(1):229--240, 1966.

\bibitem{Vav23}
D.~Vavitsas.
\newblock A note on cyclic vectors in {D}irichlet-type spaces in the unit ball
  of $\mathbb{C}^n$.
\newblock {\em Canadian Mathematical Bulletin}, \textbf{66}(3):886--902, 2023.

\bibitem{VZ24}
D.~Vavitsas and K.~Zarvalis.
\newblock Non-cyclicity and polynomials in {D}irichlet-type spaces of the unit
  ball.
\newblock {\em arXiv preprint, arXiv:2405.03020}, 2024.

\bibitem{Win44}
A.~Wintner.
\newblock Diophantine approximations and {H}ilbert's space.
\newblock {\em American Journal of Mathematics}, \textbf{66}(4):564--578, 1944.

\bibitem{Zhu05}
K.~Zhu.
\newblock {\em Spaces of holomorphic functions in the unit ball}, volume
  \textbf{226}.
\newblock Springer, 2005.

\end{thebibliography}
\end{document}